\documentclass[11pt]{article}

\usepackage{amssymb}
\usepackage{amsthm}
\usepackage{graphicx, gastex}
\usepackage{amsmath}
\usepackage{latexsym}
\usepackage{amsfonts}
\usepackage{color}

\DeclareSymbolFont{rsfscript}{OMS}{rsfs}{m}{n}
\DeclareSymbolFontAlphabet{\mathrsfs}{rsfscript} \DeclareSymbolFont{rsfscript}{OMS}{rsfs}{m}{n} \newcommand{\rf}{\rightarrow}
\newcommand{\sub}{\subseteq}
\newcommand{\men}{\setminus}
\newtheorem{theorem}{Theorem}
\newtheorem{proposition}{Proposition}
\newtheorem{definition}{Definition}
\newtheorem{lemma}{Lemma}
\newtheorem{corollary}{Corollary}
\newtheorem{conj}{Conjecture}

\newtheorem{rem}{Remark}


\hyphenation{de-fi-ni-tion}

\title{Union-Closed vs Upward-Closed Families of Finite Sets}
\author{
        Emanuele Rodaro\\
        Centro de Matem\'{a}tica, Faculdade de Ci\^{e}ncias\\
        Universidade do Porto, 4169-007 Porto, Portugal\\
        emanuele.rodaro@fc.up.pt
}
\date{}

\begin{document}
\maketitle

\begin{abstract}
A finite family $\mathrsfs{F}$ of subsets of a finite set $X$ is union-closed whenever $f,g\in\mathrsfs{F}$ implies $f\cup g\in\mathrsfs{F}$. These families are well known because of Frankl's conjecture \cite{Duffus0}. In this paper we developed further the connection between union-closed families and upward-closed families started in \cite{Reimer} using rising operators. With these techniques we are able to obtain tight lower bounds to the average of the length of the elements of $\mathrsfs{F}$ and to prove that the number of joint-irreducible elements of $\mathrsfs{F}$ can not exceed $2{n\choose \lfloor n/2\rfloor}+{n\choose \lfloor n/2\rfloor+1}$ where $|X| = n$.
\end{abstract}

\tableofcontents

\section{Introduction}
Consider a finite set $X=\{a_1,\ldots, a_n\}$ formed by $n\ge 1$ elements. A family of subsets $\mathrsfs{F}$ of the powerset $2^X$ such that for any $f,g\in \mathrsfs{F}$, $f\cup g\in\mathrsfs{F}$ is called union-closed (briefly $\cup$-closed). Without loss of generality we can assume that $X = \bigcup_{f\in\mathrsfs{F}} f$ and for the rest of the paper, when it is not differently stated, $\mathrsfs{F}$ will denote a $\cup$-closed family on $X = \{a_1,\ldots, a_n\}$ with $X = \bigcup_{f\in\mathrsfs{F}} f$. In $1979$, Frankl stated the following conjecture
\begin{conj}\label{cj:Frankl}
  For all union-closed families $\mathrsfs{F}$, there exists an $a\in X$ such that $|\{f\in\mathrsfs{F}: a\in f\}|\ge \frac{|\mathrsfs{F}|}{2}$.
\end{conj}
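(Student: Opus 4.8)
\section{Proof proposal}

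The plan is to reduce Frankl's conjecture to the case of \emph{upward-closed} families, where it is immediate, using rising operators. Call $\mathrsfs{U}\sub 2^X$ upward-closed if $u\in\mathrsfs{U}$ and $u\sub w\sub X$ force $w\in\mathrsfs{U}$; every such family is $\cup$-closed, and for each $a\in X$ the map $u\mapsto u\cup\{a\}$ injects $\{u\in\mathrsfs{U}:a\notin u\}$ into $\{u\in\mathrsfs{U}:a\in u\}$, so that \emph{every} element of $X$ lies in at least half of the members of $\mathrsfs{U}$. Hence the conjecture holds on upward-closed families with the strongest possible conclusion, and it suffices to propagate an abundant element from an upward-closed ``limit'' of $\mathrsfs{F}$ back to $\mathrsfs{F}$ itself.

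First I would set up the rising operators: for $a\in X$, let $R_a(\mathrsfs{F})$ be obtained from $\mathrsfs{F}$ by replacing each $f\in\mathrsfs{F}$ with $a\notin f$ and $f\cup\{a\}\notin\mathrsfs{F}$ by $f\cup\{a\}$, leaving all other members fixed. The preliminary lemmas should establish: (i) $R_a$ is injective on members, so $|R_a(\mathrsfs{F})|=|\mathrsfs{F}|$ and the ground set is unchanged; (ii) $R_a(\mathrsfs{F})$ is again $\cup$-closed, via a short case analysis based on $f\cup g\sub R_a(f)\cup R_a(g)\sub(f\cup g)\cup\{a\}$, using that if $R_a(f)$ and $R_a(g)$ both omit $a$ then $f\cup\{a\},g\cup\{a\}\in\mathrsfs{F}$ and hence $(f\cup g)\cup\{a\}\in\mathrsfs{F}$; (iii) $\sum_{f\in R_a(\mathrsfs{F})}|f|\ge\sum_{f\in\mathrsfs{F}}|f|$, with equality iff $R_a$ fixes $\mathrsfs{F}$; and (iv) a $\cup$-closed family is fixed by all of $R_{a_1},\dots,R_{a_n}$ iff it is upward-closed (one direction is clear; for the other, $R_a(\mathrsfs{F})=\mathrsfs{F}$ for all $a$ says $f\in\mathrsfs{F},\,a\notin f\Rightarrow f\cup\{a\}\in\mathrsfs{F}$, and one iterates). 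Applying $R_{a_1},\dots,R_{a_n},R_{a_1},\dots$ cyclically then stabilises after finitely many steps by (iii) and $\sum_f|f|\le n|\mathrsfs{F}|$, and by (iv) the family $\mathrsfs{U}$ obtained is upward-closed with $|\mathrsfs{U}|=|\mathrsfs{F}|$.

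Alongside the iteration I would track, for each fixed $a\in X$, the count $d_{\mathrsfs{F}}(a)=|\{f\in\mathrsfs{F}:a\in f\}|$. Applying $R_b$ with $b\ne a$ maps $\{f\in\mathrsfs{F}:a\in f\}$ bijectively onto $\{u\in R_b(\mathrsfs{F}):a\in u\}$, so it leaves $d(a)$ unchanged, while $R_a$ can only increase $d(a)$. Therefore $d_{\mathrsfs{U}}(a)\ge d_{\mathrsfs{F}}(a)$ for every $a$, and we know $d_{\mathrsfs{U}}(a)\ge|\mathrsfs{F}|/2$ for every $a$.

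The decisive and by far hardest step is that this monotonicity runs the wrong way: that $a$ is abundant in $\mathrsfs{U}$ tells us nothing directly about $\mathrsfs{F}$, so one cannot choose the abundant coordinate at random. To close the gap one must produce a coordinate $a_0$ whose rising is \emph{inessential} — concretely, such that no application of $R_{a_0}$ in the iteration ever displaces a set already present before the first rising along $a_0$, so that $d_{\mathrsfs{F}}(a_0)=d_{\mathrsfs{U}}(a_0)\ge|\mathrsfs{F}|/2$. Identifying such an $a_0$ seems to require an ingredient beyond the rising machinery itself: for instance an invariant that is exactly preserved (not merely weakly increased) by enough of the operators $R_b$, or a scheduling of the risings that is provably gentle on one coordinate; a natural line is to follow the join-irreducible members of $\mathrsfs{F}$ — the only sets whose displacement can create new members — through the process and bound, via the estimate on their number obtained earlier in the paper, how many coordinates they can jointly ``block.'' Making any such argument robust against the interleaving of the distinct $R_b$'s, together with handling small $n$ directly, is the main obstacle; steps (i)--(iv) and the monotonicity of $d$ are routine verifications.
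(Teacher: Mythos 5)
There is a genuine gap, and you have in fact named it yourself. The statement you are trying to prove is Conjecture \ref{cj:Frankl} -- Frankl's union-closed conjecture -- which the paper does not prove and which is a famous open problem; the paper only states it and then develops partial results (bounds on average set size and on $|J(\mathrsfs{F})|$) via exactly the rising machinery you reconstruct. Your steps (i)--(iv) and the monotonicity of $d(a)$ are essentially Reimer's setup as presented in Sections \ref{sec:II}--\ref{sec:permutation dependancy} of the paper (your ``fixed by all $R_a$ iff upward-closed'' is Proposition \ref{prop:elements fixed by the rising function}, your stabilisation is the iterated $\varphi_w$), and they are fine. But the conclusion they yield is $d_{\mathrsfs{F}}(a)\le d_{\mathrsfs{U}}(a)$ together with $d_{\mathrsfs{U}}(a)\ge|\mathrsfs{F}|/2$ for every $a$, which, as you say, points in the wrong direction: it gives no lower bound on any $d_{\mathrsfs{F}}(a)$. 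The entire content of the conjecture is concentrated in the ``decisive step'' you leave open, namely producing a coordinate $a_0$ along which the rising is never actually performed (equivalently, for which $\mathrsfs{F}_{\overline{a_0}}$ injects into $\mathrsfs{F}_{a_0}$ already at the level of $\mathrsfs{F}$); this is precisely what Proposition \ref{prop:alternative formulation} of the paper reformulates as $|\pi_w(a)|\ge|\sigma_w(a)|$ for some $a$, and the paper explicitly does not establish it.

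The specific rescue you sketch -- tracking join-irreducible members and using the bound $|J(\mathrsfs{F})|\le 2{n\choose\lfloor n/2\rfloor}+{n\choose\lfloor n/2\rfloor+1}$ from Corollary \ref{cor:bound irreducible} to limit how many coordinates they can ``block'' -- cannot work as stated: that bound is exponential in $n$ while there are only $n$ coordinates to protect, so a counting argument of this shape has no chance of isolating an untouched coordinate; moreover a single irreducible set can be displaced along many coordinates in the interleaved iteration, and nothing in the paper (or in the literature) controls this. So the proposal is not a proof but a restatement of the known reduction plus an open problem; it should be presented as such, not as a proof of Conjecture \ref{cj:Frankl}.
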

Although many attempts to solve this simple-sounding conjecture have been made, this remains open and has become known as the \emph{union-closed conjecture} or \emph{Frankl's conjecture}. A simple argument in \cite{Knill} shows that there is an $a\in X$ which is contained in at least $|\mathrsfs{F}|/\log_2(|\mathrsfs{F}|)$ elements of $\mathrsfs{F}$. In \cite{Wojcik}, this bound is improved by a multiplicative constant. The conjecture holds if $|\mathrsfs{F}| < 40$ (see \cite{Lo Faro, Roberts}) or $|X|\le 11$ (see \cite{Markovich, Morris}) or $|\mathrsfs{F}|> \frac{5}{8} 2^{|X|}$ (see \cite{Czedeli1, CzedeliMaroti,CzedeliSchmidt}) or $\mathrsfs{F}$ contains some collection of small sets (see \cite{Markovich, Morris}).
\\
The family $\mathrsfs{F}$ is a semilattice with respect to the union operation, furthermore, since $\mathrsfs{F}$ is finite we can endow $\mathrsfs{F}\cup \{\{\emptyset\}\}$ with a structure of lattice. In this direction it is possible to give another formulation of Frankl's conjecture in the framework of lattice theory. Let $(L,\vee,\wedge)$ be a finite lattice, we denote by $J(L)$ the set of \emph{join-irreducible} elements, i.e., the elements $z\in L$ such that if $g=x\vee y$ then $x=z$ or $y=z$. Denoting by $V_x=\{y\in L:y\le x\}$ the principal filter generated by $x$, Frankl's conjecture is equivalent to the following lattice theoretic conjecture
\begin{conj}
    $$
  \frac{1}{|L|}\min\{|V_x|:x\in J(L)\}\le\frac{1}{2}.
    $$
\end{conj}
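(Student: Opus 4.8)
The plan is to establish the stated equivalence between Conjecture~\ref{cj:Frankl} and the lattice-theoretic conjecture (in which I read $V_x$ as the principal filter $\{y\in L:x\le y\}$, as the word ``filter'' dictates) by building an explicit dictionary between $\cup$-closed families and finite lattices, after two routine normalizations. First, Conjecture~\ref{cj:Frankl} is equivalent to its restriction to families $\mathrsfs{F}$ with $\emptyset\in\mathrsfs{F}$: the nontrivial direction follows by applying the restricted statement to $\mathrsfs{F}\cup\{\emptyset\}$ and noting that adjoining $\emptyset$ leaves $|\{f\in\mathrsfs{F}:a\in f\}|$ unchanged while increasing $|\mathrsfs{F}|$ by one, so the witness found for $\mathrsfs{F}\cup\{\emptyset\}$ works (even with room to spare) for $\mathrsfs{F}$. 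Second, the order-duality $L\mapsto L^{\mathrm{op}}$ exchanges join-irreducibles with meet-irreducibles and principal filters with principal ideals, so the lattice-theoretic conjecture is equivalent to the assertion that \emph{every finite lattice $L$ with $|L|\ge 2$ has a meet-irreducible element $m$ with $|\{y\in L:y\le m\}|\le|L|/2$}. It thus suffices to show that this dual assertion (over all finite lattices) is equivalent to Conjecture~\ref{cj:Frankl} for $\emptyset$-containing families; this also lets us dispense with adjoining a bottom, since such an $\mathrsfs{F}$ is already a finite lattice under $\sub$, with join $\cup$, bottom $\emptyset$, and top $X=\bigcup\mathrsfs{F}$.

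For the dictionary, fix a $\cup$-closed $\mathrsfs{F}$ with $\emptyset\in\mathrsfs{F}$. For $a\in X$ the set $I_a=\{f\in\mathrsfs{F}:a\notin f\}$ is nonempty (it contains $\emptyset$) and closed under union, hence has a maximum $M_a=\bigcup I_a$, so $I_a=\{f\in\mathrsfs{F}:f\sub M_a\}$ is the principal ideal of $\mathrsfs{F}$ generated by $M_a$; and since $|\{f\in\mathrsfs{F}:a\in f\}|=|\mathrsfs{F}|-|I_a|$, Conjecture~\ref{cj:Frankl} for $\mathrsfs{F}$ says exactly that $|I_a|\le|\mathrsfs{F}|/2$ for some $a$. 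Now for ``Conjecture~\ref{cj:Frankl} $\Rightarrow$ dual lattice assertion'', given a finite lattice $L$ with $|L|\ge2$, let $P$ be the set of its meet-irreducible elements (those with a unique upper cover) and define $\phi:L\to 2^{P}$ by $\phi(x)=\{p\in P:x\not\le p\}$. Since every element of a finite lattice is the meet of the meet-irreducibles above it, $\phi$ is injective, and $x\vee y\not\le p$ iff $x\not\le p$ or $y\not\le p$ gives $\phi(x\vee y)=\phi(x)\cup\phi(y)$; hence $\mathrsfs{F}:=\phi(L)$ is a $\cup$-closed family on $P$ with $\emptyset=\phi(\hat 0)\in\mathrsfs{F}$ and $\bigcup\mathrsfs{F}=\phi(\hat 1)=P$, and (being a bijective $\vee$-homomorphism of finite lattices) $\phi$ is a lattice isomorphism $L\cong\mathrsfs{F}$, so $|\mathrsfs{F}|=|L|$. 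Applying the restricted Conjecture~\ref{cj:Frankl} to $\mathrsfs{F}$ gives $a\in P$ with $|I_a|\le|\mathrsfs{F}|/2=|L|/2$; since $a\notin\phi(x)$ exactly when $x\le a$, we have $I_a=\phi(\{x\in L:x\le a\})$, whence $|\{x\in L:x\le a\}|=|I_a|\le|L|/2$, and $a\in P$ is meet-irreducible in $L$ — precisely the dual lattice assertion for $L$.

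For the converse let $\mathrsfs{F}$ be $\cup$-closed with $\emptyset\in\mathrsfs{F}$; the case $|\mathrsfs{F}|=1$ is trivial, so take $|\mathrsfs{F}|\ge2$ and view $\mathrsfs{F}$ as the finite lattice it is. The dual lattice assertion yields a meet-irreducible $m\in\mathrsfs{F}$ with $|\{f\in\mathrsfs{F}:f\sub m\}|\le|\mathrsfs{F}|/2$; let $m^{+}$ be its unique upper cover (so $m\ne X$ and $m\subsetneq m^{+}$) and choose any $a\in m^{+}\men m$. Then $I_a=\{f\in\mathrsfs{F}:f\sub m\}$: the inclusion ``$\supseteq$'' holds since $a\notin m$, and if $a\notin f$ but $f\not\sub m$ then $f\cup m\in\mathrsfs{F}$ strictly contains $m$, so $f\cup m\supseteq m^{+}\ni a$, contradicting $a\notin f\cup m$. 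Hence $|I_a|\le|\mathrsfs{F}|/2$, i.e.\ $|\{f\in\mathrsfs{F}:a\in f\}|\ge|\mathrsfs{F}|/2$, which is Conjecture~\ref{cj:Frankl} for $\mathrsfs{F}$. Combining this with the previous paragraph and the two normalizations shows that Conjecture~\ref{cj:Frankl} and the lattice-theoretic conjecture are each a consequence of the other.

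The step I expect to be the crux is the faithful representation of an arbitrary finite lattice as a union-closed family via the meet-irreducible map $\phi$ — establishing injectivity, the join-to-union identity $\phi(x\vee y)=\phi(x)\cup\phi(y)$, and that the induced bijection is a lattice isomorphism — together with its mirror image in the converse, where a ground-set element $a$ must be recovered from a meet-irreducible $m$ of $\mathrsfs{F}$ through the choice $a\in m^{+}\men m$ and the identity $I_a=\{f\in\mathrsfs{F}:f\sub m\}$. The remaining delicate point is purely one of normalization: without the passage to $\emptyset$-containing families and to the order-dual form of the conjecture, the discrepancies $|\mathrsfs{F}\cup\{\emptyset\}|=|\mathrsfs{F}|+1$ and $J(L)\leftrightarrow M(L)$ would leave stray off-by-one terms, and the two reductions in the first paragraph are exactly what remove them.
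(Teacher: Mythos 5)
The statement you were asked about is itself an open problem: it is the lattice-theoretic reformulation of Frankl's conjecture, so the paper contains no proof of it, and the surrounding text merely asserts, without argument, that it is equivalent to Conjecture \ref{cj:Frankl}. What you have actually supplied is a proof of that asserted equivalence, and as such it is correct; it is essentially the standard argument from the literature the paper cites (Poonen's correspondence). Your dictionary $\phi(x)=\{p\in P:\ x\not\le p\}$, with $P$ the meet-irreducibles of $L$, together with the converse step that recovers a ground-set element $a\in m^{+}\setminus m$ from a meet-irreducible $m$ of the lattice $(\mathrsfs{F},\subseteq)$ and verifies $\{f\in\mathrsfs{F}:a\notin f\}=\{f\in\mathrsfs{F}:f\subseteq m\}$ via the uniqueness of the upper cover, is exactly how the equivalence is established; the two normalizations (adjoining $\emptyset$, which only strengthens the Frankl inequality, and passing to the order dual, which exchanges $J(L)$ with meet-irreducibles and filters with ideals) are sound. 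You were also right to read $V_x$ as the principal filter $\{y\in L:x\le y\}$: as literally printed, $V_x=\{y\in L:y\le x\}$ would make the conjecture essentially trivial (any atom is join-irreducible with $|V_x|=2$), so the displayed definition is a typo and your reading is the intended one. One cosmetic remark: the degenerate case $\mathrsfs{F}=\{\emptyset\}$ that you wave off as trivial is in fact excluded by the paper's standing assumptions $n\ge 1$ and $X=\bigcup_{f\in\mathrsfs{F}}f$, so it needs no separate treatment. Just be clear in your write-up that what is proved is the equivalence of the two conjectures, not the displayed inequality itself, which remains open.
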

This approach has received a significant amount of attention (see \cite{Abe1, Abe2, Abe3, Abe4, Czedeli1, CzedeliMaroti, CzedeliSchmidt, Duffus, Poonen, Reinhold, Stanley}). Although much research has been done on union-closed families, it seems there is no general tool to tackle this problem. In a different direction, Reimer in \cite{Reimer} developed a connection between $\cup$-closed sets and upward-closed sets by using a repeated application of rising operators. From this connection he provided a lower bound on the average of the length of the elements of $\mathrsfs{F}$ showing
$$
\frac{1}{|\mathrsfs{F}|}\sum_{f\in \mathrsfs{F}}|f|\ge \frac{\log_2(|\mathrsfs{F}|)}{2}
$$
The aim of this paper is to develop further the correspondence introduced by Reimer and study more deeply the consequences and some of the results that can be achieved from this point of view. The hope is to give an approach to the study of $\cup$-closed families of sets that can be helpful to give some insight to a possible solution of the Conjecture \ref{cj:Frankl}.\\
The paper is organized as following: in Section \ref{sec:preliminaries} we give some definitions and we fix the notation, in Sections \ref{sec:II} and \ref{sec:permutation dependancy} we extend the results of \cite{Reimer}, in Section \ref{sec:frankl} we use this approach to obtain some lower bounds on the localized average of the length of the elements of $\mathrsfs{F}$. More precisely, given $S\subseteq\mathrsfs{F}$, we provide lower bounds to the quantity
$$
\frac{1}{|\{f\in\mathrsfs{F}: \exists z\in S, z\subseteq f\}|}\sum_{f\in\mathrsfs{F}: \exists z\in S, z\subseteq f}|f|
$$
Finally in Section \ref{sec: bounf join irreducible} we use these techniques to prove that the number of joint-irreducible elements of $\mathrsfs{F}\subseteq 2^X$ is at most $2{n\choose \lfloor n/2\rfloor}+{n\choose \lfloor n/2\rfloor+1}$ where $|X| = n$.

\section{Preliminaries}\label{sec:preliminaries}
For an element $t\in 2^X$ we denote the cardinality of $t$ by $|t|$. Let us fix a subset $S\subseteq 2^X$ (note that the cardinality of $S$ is also denoted by $|S|$). Let $a\in X$, $S$ is partitioned into two subsets $S_a,S_{\overline{a}}$ of the elements of $S$ containing $a$, not containing $a$, respectively. We can see $S$ endowed with the order induced by the relation $\subseteq$ as a poset $(S,\subseteq)$, thus an \emph{antichain} $A\subseteq S$ is a non-empty subset such that any pair of elements of $A$ is incomparable. We denote the set of minimal (maximal) elements of $S$ by $\min(S)$ ($\max(S)$). Both $\min(S), \max(S)$ are clearly antichains. Let us denote the set of all $f^c$ for $f\in S$, where $f^c = X\setminus f$ is the complement set of $f$, by $S^c$. Given two different elements $f,g \in S$ we say that $g$ \emph{covers} $f$, written $f \lessdot g$ if there is no $h\in S$ such that $f\subsetneq h\subsetneq g$.
\\
An \emph{upward-closed family} (also called upset or filter, see \cite{Anderson}) is a subset $\mathcal{F}\sub 2^X$ such that if $f\sub g$ for some $f\in\mathcal{F}, g\in 2^X$, then $g\in\mathcal{F}$, and a \emph{downward-closed family} (also downset or simplicial complex) is defined analogously. Note that an upward-closed family is a $\cup$-closed set.
\\
Let us consider a $\cup$-closed family $\mathrsfs{F}$ of $2^X$. An ideal of $\mathrsfs{F}$ is a subset $I\sub \mathrsfs{F}$ such that $I\cup g\sub I$ for all $g\in \mathrsfs{F}$. Let $z\in 2^X$, the \emph{principal ideal of $z$} denoted by $\mathrsfs{F}[z]$, is the (possibly empty) set of all the elements of $\mathrsfs{F}$ containing $z$. Clearly if $z\in \mathrsfs{F}$, then $\mathrsfs{F}[z]$ is the principal ideal generated by $z$, i.e., $\mathrsfs{F}[z]=z\cup\mathrsfs{F} = \{z\cup f, f\in \mathrsfs{F}\}$. This definition can be extended to any sub-family $S\sub\mathrsfs{F}$, thus the \emph{principal ideal generated by $S$} is the set $\mathrsfs{F}[S]=\cup_{z\in S}\mathrsfs{F}[z]$. It is straightforward to see that $\mathrsfs{F}[\min(\mathrsfs{F})]=\mathrsfs{F}$. In the particular case $\mathrsfs{F} = 2^X$, we use the shorter notation $S^{\uparrow}$ for the set $2^X[S]$.
\\
An element $g\in \mathrsfs{F}$ is called \emph{irreducible} whenever $g = h\cup t$ implies $h = g$ or $t = g$. We denote by $J(\mathrsfs{F})$ the set of irreducible elements of $\mathrsfs{F}$. It is evident that $min(\mathrsfs{F})\subseteq J(\mathrsfs{F})$. This set plays an important role since it is the minimal set of generators of the semilattice $(\mathrsfs{F}, \cup)$. This set is \emph{$\cup$-independent}, in the following sense. Given $S\sub 2^X$, we say that $S$ is $\cup$-independent whenever no element $z\in S$ can be written as a union of elements in $S\setminus\{z\}$.

\section{Union-closed and upward-closed families}\label{sec:II}
In this section we further explore the connection between $\cup$-closed families and upward-closed families. This connection has been already established in \cite{Reimer} using the concept of \emph{rising function}, a well-known operator used also by Frankl in \cite{Frankl}. We briefly recall  such operator. Given an element $a\in X$ and a family (not necessarily $\cup$-closed) of subsets $S\sub 2^X$, the rising function $\varphi_{S,a}:S\rightarrow 2^X$ is the function defined for all $z\in S$ by
$$
\begin{array}{l}
\varphi_{S,a}(z)=\left\{\begin{array}{l}
z\cup\{a\}\;\text{ if } z\cup\{a\}\notin S,\\
z \text{ otherwise};
\end{array}\right.
\end{array}
$$
This is a one-to-one function $\varphi_{S,a}:S\rf 2^X$ and the image $\varphi_{S,a}(S)$ is called the $a$-rising of $S$. In \cite{Reimer}, the author iterates these rising functions in the following way. Let $X=\{a_1,\ldots,a_n\}$ and let $\varphi_0$ be the identity function on $2^X$, and $S_0=S$, then for all $1\le j\le n$ let
$$
S_j=\varphi_{j}(S_{j-1}),\;\;\varphi_{j}=\varphi_{S_{j-1},a_j}\circ\varphi_{j-1}
$$
We call $\varphi_n$ the \emph{rising function with respect to the word} $w=a_1a_2\ldots a_n$ of the family $S$ and we denote it by $\varphi_w$ to underline the dependency of this map from the order used to perform these iterations. We call each $S_i$ the \emph{$i$-section} and for any $z\in S$ the elements $z_{i}=\varphi_{i}(z)$ for $i=0,\ldots, n$ is called the \emph{trajectory} of $z$ through the iterated application of the rising functions.
\\
We immediately note that this definition depends on the order in which the rising functions are iterated. Indeed consider the set $X=\{a,b,c\}$ and the $\cup$-closed family $\mathrsfs{F}=\{\{a\},\{a,b,c\}\}$, it is evident that $\varphi_w(\mathrsfs{F})\neq\varphi_{w'}(\mathrsfs{F})$ when $w=abc, w'=acb$. We denote by $\mathfrak{S}_X$ the permutation group of the set of objects $X = \{a_1,\ldots, a_n\}$, and for a word $w=a_1\ldots a_n$, we use the notation $w\theta = \theta(a_1)\ldots\theta(a_n)$. There is an evident action of $\mathfrak{S}_X$ on the set $\{\varphi_{w\vartheta}(g): g\in\mathrsfs{F}, \vartheta\in \mathfrak{S}_X\}$ given by $\sigma\cdot \varphi_{w\vartheta}(g) = \varphi_{w\vartheta\sigma}$. In Section \ref{sec:permutation dependancy} we characterize the orbits of such action and we explore some consequences.
\\
It is not difficult to see that the rising function $\varphi_w$ is a bijection between the family $S$ and its image $\varphi_w(S)$, moreover from the definition it is easy to verify that, independently from the condition that $S$ is $\cup$-closed, $\varphi_w(S)$ is upward-closed. We have the following lemma:
\begin{lemma}\label{lem:matching property}
  With the notation above, if there are two different elements $z,z'\in S$ such that $\varphi_i(z)=\varphi_i(z')\cup\{a_{i+1}\}$, then $a_{i+1}\in z\men\varphi_w(z')$.
\end{lemma}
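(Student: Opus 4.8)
The plan is to follow the two trajectories $z_j:=\varphi_j(z)$ and $z'_j:=\varphi_j(z')$ through the iterated rising functions and to exploit two elementary features of the construction: each section $z_j$ lies in $S_j=\varphi_j(S)$, and the step from $S_{j-1}$ to $S_j$ can only adjoin the single letter $a_j$, so that $z_j\setminus z_{j-1}\subseteq\{a_j\}$ and the trajectories are $\subseteq$-increasing. In particular $a_{i+1}$ can be inserted into any trajectory only at the one step $j=i+1$, and once a letter $a_k$ with $k\le j$ has been settled it appears in all later sections or in none of them.

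First I would record that $a_{i+1}\notin z'_i$. Indeed, if $a_{i+1}\in z'_i$ then $z'_i\cup\{a_{i+1}\}=z'_i$, so the hypothesis $\varphi_i(z)=\varphi_i(z')\cup\{a_{i+1}\}$ would read $\varphi_i(z)=\varphi_i(z')$; since each $\varphi_j$ is one-to-one on $S$ (being a composition of the one-to-one rising functions $\varphi_{S_{k-1},a_k}$ on the appropriate domains), this forces $z=z'$, contrary to assumption. Hence $z_i=z'_i\cup\{a_{i+1}\}$ is a disjoint union and $a_{i+1}\in z_i$.

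Next, to get $a_{i+1}\in z$: the letter $a_{i+1}$ already belongs to the $i$-section $z_i$, and the only rising function that can adjoin $a_{i+1}$ is $\varphi_{S_i,a_{i+1}}$, applied at step $i+1>i$. Therefore $a_{i+1}$ cannot have been added at any of the steps $1,\dots,i$, so it was present from the beginning, i.e.\ $a_{i+1}\in z_0=z$. To get $a_{i+1}\notin\varphi_w(z')$: the key observation is that $z'_i\cup\{a_{i+1}\}=z_i=\varphi_i(z)\in S_i$, so at step $i+1$ we fall into the second (``otherwise'') branch of the rising function and $z'_{i+1}=\varphi_{S_i,a_{i+1}}(z'_i)=z'_i$, which does not contain $a_{i+1}$. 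For every later step $j>i+1$ we have $z'_j\setminus z'_{j-1}\subseteq\{a_j\}$, so $a_{i+1}$ is never reinserted, whence $a_{i+1}\notin z'_n=\varphi_w(z')$. Combining, $a_{i+1}\in z\setminus\varphi_w(z')$, as claimed.

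I do not expect a genuine obstacle here: the argument is forced once the trajectories are set up, and the only care required is in the index bookkeeping and in the remark that membership of a settled letter is preserved across all subsequent sections. The single pivotal step is the identity $z'_i\cup\{a_{i+1}\}=\varphi_i(z)\in S_i$, which is exactly what pushes the rising operator into its trivial branch at stage $i+1$.
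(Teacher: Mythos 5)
Your proof is correct and follows essentially the same route as the paper: injectivity of $\varphi_i$ rules out $a_{i+1}\in\varphi_i(z')$, the fact that $a_{i+1}$ can only be inserted at step $i+1$ forces $a_{i+1}\in z$, and the identity $\varphi_i(z')\cup\{a_{i+1}\}=\varphi_i(z)\in S_i$ pushes the step-$(i+1)$ rising into its trivial branch so $a_{i+1}$ never enters the trajectory of $z'$. You merely spell out the last step, which the paper compresses into ``and so $a_{i+1}\notin\varphi_w(z')$.''
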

\begin{proof}
  If $a_{i+1}\notin z$, then $a_{i+1}\notin\varphi_k(z)$ for all $k\le i$, but this contradicts $\varphi_i(z)=\varphi_i(z')\cup\{a_{i+1}\}$, thus $a_{i+1}\in z$. Since $\varphi_i$ is a bijection and $\varphi_i(z)=\varphi_i(z')\cup\{a_{i+1}\}$ with $z\neq z'$, we get $a_{i+1}\notin\varphi_i(z')$ and so $a_{i+1}\notin\varphi_w(z')$.
\end{proof}
If we add the $\cup$-closed condition, we have the following lemma:
\begin{lemma}\cite{Reimer}\label{lem:the family is upward-closed}
  Let $\mathrsfs{F}$ be a $\cup$-closed family of subsets of $X=\{a_1,\ldots,a_n\}$. For each $0\le i\le n$ the $i$-section $\mathrsfs{F}_i$ is a $\cup$-closed family.
\end{lemma}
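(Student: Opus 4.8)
The plan is to argue by induction on $i$, with the base case $i=0$ being trivial since $\mathrsfs{F}_0=\mathrsfs{F}$ is $\cup$-closed by hypothesis. For the inductive step, I would assume $\mathrsfs{F}_{i}$ is $\cup$-closed and show that $\mathrsfs{F}_{i+1}=\varphi_{\mathrsfs{F}_i,a_{i+1}}(\mathrsfs{F}_i)$ is $\cup$-closed as well. So fix two elements $u,v\in\mathrsfs{F}_{i+1}$; each is the image under $\varphi_{\mathrsfs{F}_i,a_{i+1}}$ of a unique element of $\mathrsfs{F}_i$, say $u=\varphi_{\mathrsfs{F}_i,a_{i+1}}(x)$ and $v=\varphi_{\mathrsfs{F}_i,a_{i+1}}(y)$ with $x,y\in\mathrsfs{F}_i$. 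I want to conclude that $u\cup v\in\mathrsfs{F}_{i+1}$, and the natural candidate for its preimage is $x\cup y$, which lies in $\mathrsfs{F}_i$ by the inductive hypothesis.

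The key observation is that the rising function at $a_{i+1}$ only ever adds or removes the single element $a_{i+1}$, and whether it acts depends solely on membership of $z\cup\{a_{i+1}\}$ in $\mathrsfs{F}_i$. I would split into cases according to whether $a_{i+1}$ belongs to $x$, to $y$, and to $x\cup y$, and whether the various sets $x\cup\{a_{i+1}\}$, $y\cup\{a_{i+1}\}$, $(x\cup y)\cup\{a_{i+1}\}$ lie in $\mathrsfs{F}_i$. The heart of the matter is this: if $a_{i+1}\notin x\cup y$, then $u$ and $v$ agree with $x$ and $y$ outside of possibly $a_{i+1}$, so $u\cup v$ equals either $x\cup y$ or $(x\cup y)\cup\{a_{i+1}\}$; one then checks that in either situation $u\cup v=\varphi_{\mathrsfs{F}_i,a_{i+1}}(x\cup y)$, using that $x\cup y\in\mathrsfs{F}_i$ and the defining rule of $\varphi$. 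The case where $a_{i+1}$ already belongs to $x$ or $y$ is easier, since then $\varphi$ fixes that element and $a_{i+1}\in u\cup v$, forcing $u\cup v=(x\cup y)\cup\{a_{i+1}\}$, which is again the $\varphi$-image of $x\cup y$ (as $x\cup y$ already contains $a_{i+1}$, $\varphi$ fixes it).

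The one subtlety — and the step I expect to require the most care — is ruling out the situation where $u\cup v$ contains $a_{i+1}$ but $\varphi_{\mathrsfs{F}_i,a_{i+1}}(x\cup y)$ does not; this would happen exactly if $a_{i+1}\notin x\cup y$, $(x\cup y)\cup\{a_{i+1}\}\in\mathrsfs{F}_i$, yet at least one of $u,v$ contains $a_{i+1}$ because the corresponding rising step fired. Suppose $a_{i+1}\in u$, so $u=x\cup\{a_{i+1}\}$ with $x\cup\{a_{i+1}\}\notin\mathrsfs{F}_i$; but $x\subseteq x\cup y$, hence $x\cup\{a_{i+1}\}\subseteq (x\cup y)\cup\{a_{i+1}\}\in\mathrsfs{F}_i$, and since $\mathrsfs{F}_i$ is upward-closed within the relevant range — more precisely, since $(x\cup y)\cup\{a_{i+1}\}=(x\cup\{a_{i+1}\})\cup y$ with both $x\cup\{a_{i+1}\}$ a set we are trying to place and $y\in\mathrsfs{F}_i$ — one derives a contradiction from $\cup$-closure of $\mathrsfs{F}_i$ together with $x\cup\{a_{i+1}\}\notin\mathrsfs{F}_i$. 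Wait: more directly, if $x\cup\{a_{i+1}\}\notin\mathrsfs{F}_i$ then $u=x\cup\{a_{i+1}\}$, and I claim $(x\cup y)\cup\{a_{i+1}\}\notin\mathrsfs{F}_i$ too, so that $\varphi$ does fire on $x\cup y$ and $u\cup v=(x\cup y)\cup\{a_{i+1}\}=\varphi_{\mathrsfs{F}_i,a_{i+1}}(x\cup y)$; this claim follows because if $(x\cup y)\cup\{a_{i+1}\}$ were in $\mathrsfs{F}_i$, then so would be $x\cup\{a_{i+1}\}$ — no, that inference needs downward-closure, which we don't have. The correct fix is to track preimages one level down and invoke the inductive structure together with Lemma~\ref{lem:matching property}; concretely, I would show that $x\cup\{a_{i+1}\}\notin\mathrsfs{F}_i$ forces, via the injectivity of $\varphi_{i}$ and the fact that $x$ arose as $\varphi_i(x')$ for some $x'\in\mathrsfs{F}$ with $a_{i+1}\in x'$, that $(x\cup y)\cup\{a_{i+1}\}$ likewise has no preimage in $\mathrsfs{F}_i$, closing the argument. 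Assembling these cases yields $u\cup v=\varphi_{\mathrsfs{F}_i,a_{i+1}}(x\cup y)\in\mathrsfs{F}_{i+1}$, completing the induction.
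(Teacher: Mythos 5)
Your overall induction and the claim that the candidate preimage of $u\cup v$ is $x\cup y$ work in most cases, but the ``subtle'' case you isolate is handled incorrectly: you try to rule it out, and it cannot be ruled out. It genuinely can happen that $a_{i+1}\notin x\cup y$, $x\cup\{a_{i+1}\}\notin\mathrsfs{F}_i$ (so the rise fires on $x$ and $a_{i+1}\in u$), and yet $(x\cup y)\cup\{a_{i+1}\}\in\mathrsfs{F}_i$. Concrete example: $X=\{a,b,c\}$, $w=abc$, $\mathrsfs{F}=\mathrsfs{F}_0=\{\{b\},\{c\},\{b,c\},\{a,b,c\}\}$, $x=\{b\}$, $y=\{c\}$. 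Here $x\cup\{a\}=\{a,b\}\notin\mathrsfs{F}_0$ while $(x\cup y)\cup\{a\}=\{a,b,c\}\in\mathrsfs{F}_0$; one gets $u=\{a,b\}$, $v=\{a,c\}$, $u\cup v=\{a,b,c\}$, whereas $\varphi_{\mathrsfs{F}_0,a}(x\cup y)=\{b,c\}\neq u\cup v$. So your final claim --- that $x\cup\{a_{i+1}\}\notin\mathrsfs{F}_i$ forces $(x\cup y)\cup\{a_{i+1}\}\notin\mathrsfs{F}_i$ via Lemma~\ref{lem:matching property} and injectivity of $\varphi_i$ --- is false, and (as you yourself noticed for your first attempt) any inference of this shape would need downward closure, which is not available. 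This is the one step where your argument breaks.

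The fix is easier than what you attempted: in that case you should not insist that $x\cup y$ is the preimage. If $(x\cup y)\cup\{a_{i+1}\}\in\mathrsfs{F}_i$, then this set contains $a_{i+1}$, hence it is fixed by $\varphi_{\mathrsfs{F}_i,a_{i+1}}$, and therefore $u\cup v=(x\cup y)\cup\{a_{i+1}\}\in\mathrsfs{F}_{i+1}$ as the image of itself; if instead $(x\cup y)\cup\{a_{i+1}\}\notin\mathrsfs{F}_i$, then $\varphi_{\mathrsfs{F}_i,a_{i+1}}(x\cup y)=(x\cup y)\cup\{a_{i+1}\}=u\cup v$. Note that the paper does not prove this lemma (it is quoted from Reimer), but its proof of the closely related Lemma~\ref{lem:closure in sections} uses exactly this maneuver in the analogous subcase (``Otherwise $\overline{t}\cup f\cup\{a_{i+1}\}\in\mathrsfs{F}_i$, thus $t\cup f\in\mathrsfs{F}_i$, hence $\varphi_{\mathrsfs{F}_i,a_{i+1}}(t\cup f)=t\cup f$''), i.e.\ it switches to a different preimage rather than forcing the union of the two preimages to work. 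With that one case repaired, your induction goes through.
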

The following lemma is a consequence of \cite[Lemma 3.3]{Reimer}, but for the sake of completeness we present here with proof.
\begin{lemma}\label{lem:closure in sections}
  Let $\mathrsfs{F}$ be a $\cup$-closed family of sets and let $f\in\mathrsfs{F}$. Consider the rising function $\varphi_w$ with respect to the word $w=a_1a_2\ldots a_n$ of the family $\mathrsfs{F}$.
  Then if $t$ belongs to the $i$-section $\mathrsfs{F}_i$, for some $1\le i\le n$, then also $t\cup f\in\mathrsfs{F}_i$.
\end{lemma}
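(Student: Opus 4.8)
The plan is to induct on $i$, exploiting the recursive structure: the $i$-section $\mathrsfs{F}_i$ is the $a_i$-rising of the $(i-1)$-section, i.e.\ $\mathrsfs{F}_i=\varphi_{\mathrsfs{F}_{i-1},a_i}(\mathrsfs{F}_{i-1})$. The base case $i=0$ is immediate, since $\mathrsfs{F}_0=\mathrsfs{F}$ is $\cup$-closed and hence $t\cup f\in\mathrsfs{F}$. For the inductive step I would write $a=a_i$ and $\psi=\varphi_{\mathrsfs{F}_{i-1},a}$, and keep in mind the defining dichotomy of the rising map: $\psi(z)=z\cup\{a\}$ when $z\cup\{a\}\notin\mathrsfs{F}_{i-1}$, and $\psi(z)=z$ otherwise, so always $z\subseteq\psi(z)\subseteq z\cup\{a\}$. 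Given $t\in\mathrsfs{F}_i$, choose $s\in\mathrsfs{F}_{i-1}$ with $t=\psi(s)$, so $s\subseteq t\subseteq s\cup\{a\}$, and set $u=s\cup f$. The inductive hypothesis applied to $s\in\mathrsfs{F}_{i-1}$ and $f\in\mathrsfs{F}$ gives $u\in\mathrsfs{F}_{i-1}$, and intersecting the inclusions with $f$ yields $u\subseteq t\cup f\subseteq u\cup\{a\}$. Thus $t\cup f$ is one of the two sets $u$, $u\cup\{a\}$, and it remains only to show that whichever one it is belongs to $\mathrsfs{F}_i=\psi(\mathrsfs{F}_{i-1})$.

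I would then split on whether $u\cup\{a\}\in\mathrsfs{F}_{i-1}$. If it is, then $\psi$ fixes $u\cup\{a\}$ (this set already contains $a$), so $u\cup\{a\}\in\mathrsfs{F}_i$; moreover $\psi(u)$ equals $u$ when $a\notin u$ (precisely because $u\cup\{a\}$ lies in $\mathrsfs{F}_{i-1}$) and equals $u\cup\{a\}$ when $a\in u$, so $u\in\mathrsfs{F}_i$ as well, and both candidates are in $\mathrsfs{F}_i$. If instead $u\cup\{a\}\notin\mathrsfs{F}_{i-1}$, then $a\notin u$, so $a\notin f$ and $a\notin s$; moreover $s\cup\{a\}\notin\mathrsfs{F}_{i-1}$, since otherwise the inductive hypothesis applied to $s\cup\{a\}\in\mathrsfs{F}_{i-1}$ and $f$ would give $u\cup\{a\}=(s\cup\{a\})\cup f\in\mathrsfs{F}_{i-1}$, a contradiction. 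Hence $t=\psi(s)=s\cup\{a\}$, so $t\cup f=u\cup\{a\}$, while $u\in\mathrsfs{F}_{i-1}$ together with $u\cup\{a\}\notin\mathrsfs{F}_{i-1}$ forces $\psi(u)=u\cup\{a\}$; therefore $t\cup f=\psi(u)\in\mathrsfs{F}_i$.

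The single delicate point — the actual content of the argument — is the last subcase: one must exclude the configuration in which $u\cup\{a\}\notin\mathrsfs{F}_{i-1}$ yet $s\cup\{a\}\in\mathrsfs{F}_{i-1}$, because then $t$ would equal $s$ and the argument would stall. Ruling this out is exactly where the full strength of the inductive hypothesis is needed, with $s\cup\{a\}$ in the role of the section element and $f$ unchanged; the bare $\cup$-closedness of $\mathrsfs{F}_{i-1}$ from Lemma~\ref{lem:the family is upward-closed} would not be enough. Everything else reduces to routine case-checking of the two possible values of the rising map $\psi$.
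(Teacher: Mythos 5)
Your proposal is correct and takes essentially the same approach as the paper: induction on the section index, pulling $t$ back through the rising map to $s\in\mathrsfs{F}_{i-1}$, applying the inductive hypothesis to get $s\cup f\in\mathrsfs{F}_{i-1}$, and resolving the delicate case by a second application of the inductive hypothesis to $s\cup\{a_i\}$ — exactly the step the paper uses in its last subcase. The only difference is organizational: you split on whether $s\cup f\cup\{a_i\}\in\mathrsfs{F}_{i-1}$ (using the second application of the hypothesis in contrapositive form), while the paper splits first on whether $a_{i+1}\in\overline{t}$ and then on whether $a_{i+1}\in t$.
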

\begin{proof}
  We prove it by induction on the index $i$. Suppose that $i=0$, since $\mathrsfs{F}_0=\mathrsfs{F}$ is a $\cup$-closed family, if $t\in\mathrsfs{F}$ then, since $f\in\mathrsfs{F}$, $t\cup f\in\mathrsfs{F}=\mathrsfs{F}_0$. Suppose that the statement of the theorem is true for $i$ and let us prove it for $i+1$. Suppose $t\in\mathrsfs{F}_{i+1}$ and let $\overline{t}=\varphi_{\mathrsfs{F}_{i}, a_{i+1}}^{-1}(t)\in\mathrsfs{F}_{i}$. By the inductive hypothesis $\overline{t}\cup f\in\mathrsfs{F}_{i}$, we consider several cases.
  \\
  \emph{Case 1}. Suppose $a_{i+1}\in \overline{t}$. Thus $a_{i+1}\in \overline{t}\cup f\in\mathrsfs{F}_{i}$, hence $\overline{t}=\varphi_{\mathrsfs{F}_{i}, a_{i+1}}(\overline{t})=t$ and $\varphi_{\mathrsfs{F}_{i}, a_{i+1}}(\overline{t}\cup f)=\overline{t}\cup f=t\cup f$ and so $t\cup f\in\mathrsfs{F}_{i+1}$.
  \\
  \emph{Case 2}. Suppose $a_{i+1}\notin \overline{t}$. We consider two further subcases.
  \begin{itemize}
    \item $a_{i+1}\in t$, hence necessarily by definition of the rising function $\varphi_{\mathrsfs{F}_{i}, a_{i+1}}$, $\overline{t}\cup\{a_{i+1}\}\notin\mathrsfs{F}_{i}$. Therefore $\varphi_{\mathrsfs{F}_{i}, a_{i+1}}(\overline{t})=\overline{t}\cup\{a_{i+1}\}=t$ and, if $\overline{t}\cup f\cup\{a_{i+1}\}\notin \mathrsfs{F}_{i}$ then $\varphi_{\mathrsfs{F}_{i}, a_{i+1}}(\overline{t}\cup f)=\overline{t}\cup f\cup\{a_{i+1}\}=t\cup f\in\mathrsfs{F}_{i+1}$. Otherwise $\overline{t}\cup f\cup\{a_{i+1}\}\in \mathrsfs{F}_{i}$, thus $t\cup f\in\mathrsfs{F}_{i}$, hence $\varphi_{\mathrsfs{F}_{i}, a_{i+1}}(t\cup f)=t\cup f\in\mathrsfs{F}_{i+1}$.
    \item $a_{i+1}\notin t$, hence necessarily $\overline{t}\cup a_{i+1}\in\mathrsfs{F}_{i}$ and $t=\overline{t}$. Thus, if $a_{i+1}\in \overline{t}\cup f$ then $\varphi_{\mathrsfs{F}_{i}, a_{i+1}}(\overline{t}\cup f)=\overline{t}\cup f=t\cup f\in \mathrsfs{F}_{i+1}$. Otherwise $a_{i+1}\notin \overline{t}\cup f$. Since $\overline{t}\cup a_{i+1}\in\mathrsfs{F}_{i}$, then by the inductive hypothesis $\overline{t}\cup a_{i+1}\cup f\in\mathrsfs{F}_{i}$ whence $\varphi_{\mathrsfs{F}_{i}, a_{i+1}}(\overline{t}\cup f)=\overline{t}\cup f=t\cup f\in\mathrsfs{F}_{i+1}$.
  \end{itemize}
\end{proof}
The following theorem establishes an interesting property of the associate upward-closed family $\mathcal{F}=\varphi_w(\mathrsfs{F})$.
\begin{theorem}\label{theo:ideal correspondence}
  For each $f\in \mathrsfs{F}$, $\varphi_w$ is a bijection between the principal ideals
  $$
  \varphi_w:\mathrsfs{F}[f]\rf\mathcal{F}[f]
  $$
\end{theorem}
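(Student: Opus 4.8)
The plan is to push the subfamily of members containing $f$ through the sections $\mathrsfs{F}_0=\mathrsfs{F},\mathrsfs{F}_1,\dots,\mathrsfs{F}_n=\mathcal{F}$ and to check that each single rising step restricts to a bijection on it. For $0\le i\le n$ set $\mathrsfs{F}_i[f]=\{t\in\mathrsfs{F}_i:f\sub t\}$, so that $\mathrsfs{F}_0[f]=\mathrsfs{F}[f]$ and $\mathrsfs{F}_n[f]=\mathcal{F}[f]$. Recall that a rising step never deletes a letter, i.e.\ $t\sub\varphi_{\mathrsfs{F}_i,a_{i+1}}(t)$ for all $t$; consequently $g\sub\varphi_w(g)$ for all $g$, so $\varphi_w$ maps $\mathrsfs{F}[f]$ into $\mathcal{F}[f]$, and being one-to-one on $\mathrsfs{F}$ it is one-to-one on $\mathrsfs{F}[f]$. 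It is therefore enough, by an induction on $i$, to prove the claim that for every $i$ the map $\varphi_{\mathrsfs{F}_i,a_{i+1}}$ restricts to a bijection $\mathrsfs{F}_i[f]\rf\mathrsfs{F}_{i+1}[f]$; here the inclusion $\varphi_{\mathrsfs{F}_i,a_{i+1}}(\mathrsfs{F}_i[f])\sub\mathrsfs{F}_{i+1}[f]$ is immediate (rising only enlarges sets), so the point is surjectivity onto $\mathrsfs{F}_{i+1}[f]$.

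To prove this surjectivity, fix $t'\in\mathrsfs{F}_{i+1}[f]$ and put $t=\varphi_{\mathrsfs{F}_i,a_{i+1}}^{-1}(t')\in\mathrsfs{F}_i$; the goal is $f\sub t$. Directly from the definition of the rising function there are only two possibilities: either $t=t'$, or $a_{i+1}\in t'$, $t=t'\men\{a_{i+1}\}$ and $t'\notin\mathrsfs{F}_i$ (this last because $\varphi_{\mathrsfs{F}_i,a_{i+1}}(t'\men\{a_{i+1}\})=t'$ can hold only when $t'\notin\mathrsfs{F}_i$). In the first case $f\sub t'=t$ holds trivially. In the second case $f\sub t'$ already yields $f\sub t$ provided $a_{i+1}\notin f$, so the whole argument reduces to excluding $a_{i+1}\in f$. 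Assume $a_{i+1}\in f$; since $f\sub t'$ we get $f\men\{a_{i+1}\}\sub t'\men\{a_{i+1}\}=t$, whence $t\cup f=t\cup\{a_{i+1}\}=t'$. But $t\in\mathrsfs{F}_i$ and $f\in\mathrsfs{F}$, so Lemma \ref{lem:closure in sections} forces $t\cup f\in\mathrsfs{F}_i$, i.e.\ $t'\in\mathrsfs{F}_i$, contradicting $t'\notin\mathrsfs{F}_i$. This establishes the claim.

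Finally, writing $\varphi_w=\varphi_{\mathrsfs{F}_{n-1},a_n}\circ\cdots\circ\varphi_{\mathrsfs{F}_0,a_1}$ and applying the claim once at each stage gives $\varphi_{\mathrsfs{F}_0,a_1}(\mathrsfs{F}_0[f])=\mathrsfs{F}_1[f]$, then $\varphi_{\mathrsfs{F}_1,a_2}(\mathrsfs{F}_1[f])=\mathrsfs{F}_2[f]$, and so on, so that $\varphi_w(\mathrsfs{F}[f])=\mathrsfs{F}_n[f]=\mathcal{F}[f]$; together with the injectivity of $\varphi_w$ noted above, this is exactly the asserted bijection. I expect the single genuine obstacle to be the sub-case $a_{i+1}\in f$ of the surjectivity step: a section $\mathrsfs{F}_i$ is $\cup$-closed but not downward closed, so from $t'\in\mathrsfs{F}_{i+1}$ with $f\sub t'$ one cannot deduce $t'\in\mathrsfs{F}_i$ by any purely local inspection of one rising step, and it is precisely Lemma \ref{lem:closure in sections} — which supplies the global $\cup$-closure of $\mathrsfs{F}$ through $f$ — that breaks the impasse. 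Everything else is routine bookkeeping about the two possible behaviours of a single rising function.
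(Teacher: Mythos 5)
Your proof is correct and follows essentially the same route as the paper: both reduce the theorem to surjectivity and hinge on Lemma \ref{lem:closure in sections} at the identical critical point, namely that if $a_{i+1}\in f$ and $f\setminus\{a_{i+1}\}$ already lies in the preimage $t$, then $t\cup\{a_{i+1}\}=t\cup f$ belongs to $\mathrsfs{F}_i$, so the rising step could not have added $a_{i+1}$. The only difference is organizational — you run an induction over the sections $\mathrsfs{F}_i[f]$ while the paper tracks the trajectory of a single preimage and argues at the minimal index where $f$ first becomes contained — but the underlying mechanism is the same.
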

\begin{proof}
  Since $\varphi_w$ is a one to one function it is sufficient to prove that $
  \varphi_w:\mathrsfs{F}[f]\rf\mathcal{F}[f]$ is also surjective. Since $f\in\mathrsfs{F}$, then $\varphi_w(\mathrsfs{F}[f])\sub\mathcal{F}[f]$, in particular $\mathcal{F}[f]$ is non-empty. Consider an element $\eta\in\mathcal{F}[f]$ and let $\eta^*=\varphi_w^{-1}(\eta)$. We claim that $f\sub \eta^*$ and so $\eta^*\in\mathrsfs{F}[f]$. Suppose, contrary to our claim, that $f\nsubseteq \eta^*$. Let $\eta_0=\eta^*$ and $\eta_{i}=\varphi_{i}(\eta_0)$ for $i = 1,\ldots, n$ be the trajectory of $\eta^*$. Since $f\sub \eta=\eta_n$, there is a minimal index $j\le n$ such that $f\sub\eta_j$ and $j>0$ ($f\nsubseteq \eta_0$). By the minimality of $j$, $f\nsubseteq\eta_{j-1}$. Since $f\sub\eta_j$, we have $a_{j}\notin\eta_{j-1}$ and so $a_{j}\in f$. By Lemma \ref{lem:closure in sections}, since $\eta_{j-1}\in\mathrsfs{F}_{j-1}$, we get also $\eta_{j-1}\cup f\in\mathrsfs{F}_{j-1}$. Therefore, since $a_{j}\in f$, $f\sub\eta_{j-1}\cup a_j$ and so $\eta_{j-1}\cup a_j=\eta_{j-1}\cup f\in\mathrsfs{F}_{j-1}$, hence $f\nsubseteq\eta_j=\eta_{j-1}$, a contradiction.
\end{proof}
\noindent We have the following corollary:
\begin{corollary}\label{cor:ideal correspondence}
  For each $S\sub\mathrsfs{F}$, $\varphi_w$ is a bijection between the principal ideals
  $$
  \varphi_w:\mathrsfs{F}[S]\rf\mathcal{F}[S]
  $$
  Moreover the inverse of $\varphi_w:\mathrsfs{F}\rightarrow\mathcal{F}$ is given by
  $$
  \varphi_w^{-1}(\eta)=\bigcup_{\{f\in\mathrsfs{F}:f\sub \eta\}}f
  $$
\end{corollary}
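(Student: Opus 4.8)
The plan is to obtain both assertions as essentially bookkeeping consequences of Theorem \ref{theo:ideal correspondence}, together with the one elementary remark that each rising function only enlarges sets: since $\varphi_{S,a}(z)\in\{z,z\cup\{a\}\}$ we have $z\sub\varphi_{S,a}(z)$, and composing along the trajectory gives $z\sub\varphi_w(z)$ for every $z$. I would state this increasing property first, as it is the only ingredient not already recorded above.

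For the first claim, I would unwind the definitions. Because $S\sub\mathrsfs{F}$, every $z\in S$ is an element of $\mathrsfs{F}$, so $\mathrsfs{F}[S]=\bigcup_{z\in S}\mathrsfs{F}[z]$ and $\mathcal{F}[S]=\bigcup_{z\in S}\mathcal{F}[z]$. The map $\varphi_w$ is one-to-one on all of $\mathrsfs{F}$, hence injective on the subset $\mathrsfs{F}[S]$; and applying Theorem \ref{theo:ideal correspondence} to each $z\in S$ gives $\varphi_w(\mathrsfs{F}[z])=\mathcal{F}[z]$, so taking the union over $z\in S$ yields $\varphi_w(\mathrsfs{F}[S])=\mathcal{F}[S]$. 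Thus $\varphi_w$ restricts to a bijection $\mathrsfs{F}[S]\rf\mathcal{F}[S]$.

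For the inverse formula, fix $\eta\in\mathcal{F}$ and set $\eta^*=\varphi_w^{-1}(\eta)\in\mathrsfs{F}$. I would prove the two inclusions separately. For $\eta^*\sub\bigcup_{\{f\in\mathrsfs{F}:f\sub\eta\}}f$: by the increasing property $\eta^*\sub\varphi_w(\eta^*)=\eta$, so $\eta^*$ itself is one of the sets $f$ appearing in the union, whence it is contained in the union. For the reverse inclusion: take any $f\in\mathrsfs{F}$ with $f\sub\eta$; then $\eta\in\mathcal{F}[f]$, and since by Theorem \ref{theo:ideal correspondence} the restriction $\varphi_w:\mathrsfs{F}[f]\rf\mathcal{F}[f]$ is onto (and $\varphi_w$ is globally injective), its preimage $\eta^*$ of $\eta$ lies in $\mathrsfs{F}[f]$, i.e. $f\sub\eta^*$. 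Taking the union over all such $f$ gives $\bigcup_{\{f\in\mathrsfs{F}:f\sub\eta\}}f\sub\eta^*$, and combining the inclusions proves the stated formula.

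There is no real obstacle here: both statements follow mechanically once Theorem \ref{theo:ideal correspondence} is in hand, and the only extra input is the trivial monotonicity $z\sub\varphi_w(z)$ of the trajectory; I would simply be careful to record the latter explicitly and to note that $S\sub\mathrsfs{F}$ is what lets us reduce the first part to the principal-ideal case $z\in\mathrsfs{F}$.
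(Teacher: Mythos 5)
Your proof is correct and takes essentially the same route as the paper: both parts are reduced to Theorem \ref{theo:ideal correspondence} together with the trivial monotonicity $z\sub\varphi_w(z)$, via a two-inclusion argument. The only (harmless) difference is that for the inverse formula the paper first invokes $\cup$-closedness to see that $\bigcup_{\{f\in\mathrsfs{F}:f\sub\eta\}}f$ is itself an element of $\mathrsfs{F}$ and applies the theorem to that single element, whereas you apply the theorem to each $f\sub\eta$ separately and so never need the union to lie in $\mathrsfs{F}$ a priori.
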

\begin{proof}
  Since $\varphi_w$ is injective, it is sufficient to prove that it is also surjective. Thus, consider $\eta\in\mathcal{F}[S]$, then there is an $f\in S$ such that $f\sub \eta$, and so $\eta\in\mathcal{F}[f]$. Therefore, by Theorem \ref{theo:ideal correspondence} $\varphi_w^{-1}(\eta)\in\mathrsfs{F}[f]\sub\mathrsfs{F}[S]$.
  \\
  Let us prove the last statement, so consider an element $\eta\in\mathcal{F}$. By the previous statement $\mathcal{F}=\mathcal{F}[\min(\mathrsfs{F})]$. Hence the set $\{f\in\mathrsfs{F}:f\sub \eta\}$ is non-empty and so, since $\mathrsfs{F}$ is union-closed:
  $$
  \bigcup_{\{f\in\mathrsfs{F}:f\sub \eta\}}f=\eta^*\in\mathrsfs{F}
  $$
  By Theorem \ref{theo:ideal correspondence} and $\eta^*\subseteq\eta$, we get $\varphi_w^{-1}(\eta)\sub \eta$ and $\eta^*\sub\varphi_w^{-1}(\eta)$. Therefore we get $\varphi_w^{-1}(\eta)\sub\eta^*\sub\varphi_w^{-1}(\eta)$, i.e. $\eta^*=\varphi_w^{-1}(\eta)$.
\end{proof}
We give a lemma useful in the sequel.
\begin{lemma}\label{lem:embedding of the elements no having a}
    The map $\psi(z)=z\cup\{a\}$ is an embedding
  $$
  \psi:\mathcal{F}_{\overline{a}}\hookrightarrow\varphi_w(\mathrsfs{F}_a)
  $$
\end{lemma}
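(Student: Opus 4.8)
The plan is to peel off the trivial part of the statement and concentrate on the one genuine assertion, which is handled by the explicit inverse formula of Corollary~\ref{cor:ideal correspondence}. First I would note that since no member of $\mathcal{F}_{\overline a}$ contains $a$, the assignment $z\mapsto z\cup\{a\}$ is injective, order-preserving and order-reflecting on $\mathcal{F}_{\overline a}$; so the only thing left to prove is that it takes values in $\varphi_w(\mathrsfs{F}_a)$. Equivalently: for every $\eta\in\mathcal{F}$ with $a\notin\eta$ one must show that $\varphi_w^{-1}(\eta\cup\{a\})$ contains $a$ --- note that $\eta\cup\{a\}$ does lie in $\mathcal{F}$ because $\mathcal{F}=\varphi_w(\mathrsfs{F})$ is upward-closed.

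For this I would set $\zeta=\varphi_w^{-1}(\eta\cup\{a\})$ and $\eta^{*}=\varphi_w^{-1}(\eta)$, which by Corollary~\ref{cor:ideal correspondence} equal $\bigcup_{\{f\in\mathrsfs{F}: f\sub\eta\cup\{a\}\}}f$ and $\bigcup_{\{f\in\mathrsfs{F}: f\sub\eta\}}f$ respectively, and argue by contradiction. If $a\notin\zeta$, then from $\zeta\sub\eta\cup\{a\}$ together with $a\notin\eta$ we get $\zeta\sub\eta$, so $\zeta$ is one of the sets indexing the union defining $\eta^{*}$, whence $\zeta\sub\eta^{*}$; conversely $\eta^{*}\sub\eta\sub\eta\cup\{a\}$ shows that $\eta^{*}$ indexes the union defining $\zeta$, whence $\eta^{*}\sub\zeta$. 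Thus $\zeta=\eta^{*}$, i.e.\ $\varphi_w^{-1}(\eta\cup\{a\})=\varphi_w^{-1}(\eta)$, contradicting the injectivity of $\varphi_w$ because $\eta\neq\eta\cup\{a\}$. Hence $a\in\zeta$, so $\zeta\in\mathrsfs{F}_a$ and $\psi(\eta)=\eta\cup\{a\}=\varphi_w(\zeta)\in\varphi_w(\mathrsfs{F}_a)$, as required.

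There is no serious obstacle here: the whole weight of the argument sits in Corollary~\ref{cor:ideal correspondence}, which already packages the union-closedness of $\mathrsfs{F}$ and guarantees that $\eta^{*}$ and $\zeta$ are themselves members of $\mathrsfs{F}$, so that the comparisons $\zeta\sub\eta^{*}$ and $\eta^{*}\sub\zeta$ are meaningful. If one preferred to avoid the closed form, the same conclusion is reachable directly from the trajectory of $\zeta$ and Lemma~\ref{lem:closure in sections}, in the style of the proof of Theorem~\ref{theo:ideal correspondence}: were $a$ absent from $\zeta$ it would never be inserted along its trajectory, forcing that trajectory to coincide with the trajectory of $\eta^{*}$, and again $\eta=\eta\cup\{a\}$. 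Either way the single point to keep in mind is that the strict inclusion $\eta\subsetneq\eta\cup\{a\}$ must, through the bijection $\varphi_w$, produce distinct preimages, and this is exactly what excludes $a\notin\zeta$.
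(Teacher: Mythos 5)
Your proof is correct and follows essentially the same route as the paper's: assume the preimage $\zeta=\varphi_w^{-1}(\eta\cup\{a\})$ misses $a$, deduce $\zeta\sub\eta$, establish the two inclusions $\zeta\sub\varphi_w^{-1}(\eta)$ and $\varphi_w^{-1}(\eta)\sub\zeta$, and contradict the injectivity of $\varphi_w$ via $\eta\neq\eta\cup\{a\}$. The only cosmetic difference is that you invoke the explicit inverse formula of Corollary \ref{cor:ideal correspondence} where the paper cites Theorem \ref{theo:ideal correspondence} directly, and you make explicit the (implicit in the paper) observation that $\eta\cup\{a\}\in\mathcal{F}$ by upward-closedness.
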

\begin{proof}
  Since $\psi:\mathcal{F}_{\overline{a}}\rf 2^X$ is already injective, it is sufficient to prove $\psi(\mathcal{F}_{\overline{a}})\sub \varphi_w(\mathrsfs{F}_a)$. Suppose, contrary to our claim, that there is $\eta\in\mathcal{F}_{\overline{a}}$ such that $z=\varphi_w^{-1}(\eta\cup\{a\})$ is not in $\mathrsfs{F}_a$. Since $a\notin z$ and $z\sub \eta\cup\{a\}$, then $z\sub \eta$. Let $z'=\varphi_w^{-1}(\eta)$, since $z\sub \eta$, then by Theorem \ref{theo:ideal correspondence} we get $z\sub z'$. On the other side, since $z'\sub\eta\sub \eta\cup\{a\}$ then by Theorem \ref{theo:ideal correspondence} $z'\sub \varphi_w^{-1}(\eta\cup\{a\})=z$, whence $z=z'$ which implies $\eta=\eta\cup\{a\}$, a contradiction.
\end{proof}
We say that $g\in\mathrsfs{F}$ is \emph{fixed by $\varphi_w$} whenever $\varphi_w(g)=g$ holds. The following proposition characterized the elements of $\mathrsfs{F}$ with this property.
\begin{proposition}\label{prop:elements fixed by the rising function}
  $\varphi_w(g)=g$ if and only if $g\cup a\in\mathrsfs{F}$ for all $a\in X$. Moreover if $S\sub\mathrsfs{F}$ then $\mathcal{F}\cap S$ is the set of elements of $S$ fixed by $\varphi_w$.
\end{proposition}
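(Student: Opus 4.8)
The plan is to prove the two directions of the equivalence separately and then read off the ``moreover'' part, using only Theorem~\ref{theo:ideal correspondence}, Corollary~\ref{cor:ideal correspondence}, the fact (noted above) that $\mathcal{F}=\varphi_w(\mathrsfs{F})$ is upward-closed, and the elementary observation that $z\sub\varphi_w(z)$ for every $z\in\mathrsfs{F}$, since each rising step can only add letters. Throughout I take $g\in\mathrsfs{F}$, so that $\varphi_w(g)$ is defined.

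I would first prove that $g\cup a\in\mathrsfs{F}$ for all $a\in X$ implies $\varphi_w(g)=g$. Put $\eta=\varphi_w(g)$, so $g\sub\eta$. If $\eta\ne g$, choose $a\in\eta\men g$; by hypothesis $g\cup\{a\}\in\mathrsfs{F}$, and $g\cup\{a\}\sub\eta$ because $g\sub\eta$ and $a\in\eta$, so $\eta\in\mathcal{F}[g\cup\{a\}]$. Applying Theorem~\ref{theo:ideal correspondence} with $f=g\cup\{a\}$ then yields $g=\varphi_w^{-1}(\eta)\in\mathrsfs{F}[g\cup\{a\}]$, i.e.\ $g\cup\{a\}\sub g$, contradicting $a\notin g$; hence $\eta=g$.

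For the converse, assume $\varphi_w(g)=g$, so $g\in\mathcal{F}$. Fix $a\in X$; if $a\in g$ then $g\cup\{a\}=g\in\mathrsfs{F}$, so assume $a\notin g$. Since $\mathcal{F}$ is upward-closed, $g\cup\{a\}\in\mathcal{F}$, hence $t:=\varphi_w^{-1}(g\cup\{a\})$ lies in $\mathrsfs{F}$ and, by the formula in Corollary~\ref{cor:ideal correspondence} (equivalently, by monotonicity), $t\sub g\cup\{a\}$. On the other hand $g\in\mathrsfs{F}$ with $g\sub g\cup\{a\}$, so Theorem~\ref{theo:ideal correspondence} applied with $f=g$ gives $g\sub t$. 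Therefore $t$ equals $g$ or $g\cup\{a\}$; the first case would force $\varphi_w(g)=g\cup\{a\}\ne g$, so $t=g\cup\{a\}$, which is thus in $\mathrsfs{F}$.

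Finally, for $S\sub\mathrsfs{F}$: if $g\in S$ is fixed by $\varphi_w$ then $g=\varphi_w(g)\in\varphi_w(\mathrsfs{F})=\mathcal{F}$, so $g\in\mathcal{F}\cap S$; conversely, if $g\in\mathcal{F}\cap S$ then $g\in\mathrsfs{F}$ and $g\sub g$, so $g$ itself is one of the sets in the union $\varphi_w^{-1}(g)=\bigcup_{\{f\in\mathrsfs{F}:\,f\sub g\}}f$ provided by Corollary~\ref{cor:ideal correspondence}, which forces $g\sub\varphi_w^{-1}(g)\sub g$, that is $\varphi_w(g)=g$. I do not expect a genuine obstacle here beyond bookkeeping; the single point that must be handled carefully is to invoke the standing hypothesis $g\in\mathrsfs{F}$ exactly where one needs $g$ to appear among the $f\in\mathrsfs{F}$ with $f\sub g$ (resp.\ with $f\sub g\cup\{a\}$), since it is precisely this that upgrades the various one-sided inclusions coming from Theorem~\ref{theo:ideal correspondence} to equalities.
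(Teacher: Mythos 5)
Your proof is correct, but it takes a genuinely different route from the paper's. The paper argues inside the rising process: for the direction ``$g\cup a\in\mathrsfs{F}$ for all $a$ implies $\varphi_w(g)=g$'' it walks along the trajectory using Lemma~\ref{lem:closure in sections} to see that $g\cup\{a_{i+1}\}$ survives in every section $\mathrsfs{F}_i$, so $g$ is never raised; for the converse it again works section by section, using Lemma~\ref{lem:matching property} to place $a_{i+1}$ in the preimage $\varphi_i^{-1}(g\cup\{a_{i+1}\})$ and Corollary~\ref{cor:ideal correspondence} to squeeze that preimage, concluding $g\cup\{a_{i+1}\}\in\mathrsfs{F}$; the ``moreover'' part is left to the reader. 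You instead stay entirely at the level of the completed map: both directions follow from Theorem~\ref{theo:ideal correspondence}, the inverse formula of Corollary~\ref{cor:ideal correspondence}, the upward-closedness of $\mathcal{F}$, injectivity of $\varphi_w$, and the trivial inclusion $g\sub\varphi_w(g)$ --- in the converse you trap $t=\varphi_w^{-1}(g\cup\{a\})$ between $g$ and $g\cup\{a\}$ and exclude $t=g$ because that would give $\varphi_w(g)=g\cup\{a\}$. There is no circularity, since Theorem~\ref{theo:ideal correspondence} and Corollary~\ref{cor:ideal correspondence} precede the proposition and do not depend on it. What your approach buys is a cleaner, section-free argument (and an explicit proof of the ``moreover'' claim that the paper omits); what the paper's approach shows is how the fixed-point property interacts step by step with the sections, which is the viewpoint it keeps exploiting in later sections.
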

\begin{proof}
  Using the definition of $\varphi_w$ and Lemma \ref{lem:closure in sections} it is straightforward to check that if $g\cup a\in\mathrsfs{F}$ for all $a\in X$ then $\varphi_w(g)=g$. Conversely, suppose that $\varphi_w(g)=g$ and let us prove that $g\cup a_i\in \mathrsfs{F}$. Since $\varphi_w(g)=g$, then if $g_i$ is the trajectory of $g$ in the rising process, then $g_i=g$ for all $i=1,\ldots, n$. In particular $g\cup{a_{i+1}}\in\mathrsfs{F}_i$ by definition of the rising function $\varphi_{\mathrsfs{F}_i,a_{i+1}}$. By Lemma \ref{lem:matching property} $a_{i+1}\in \varphi^{-1}_{i}(g\cup{a_{i+1}})\setminus g$ and by Corollary \ref{cor:ideal correspondence} $g\subseteq\varphi^{-1}_{i}(g\cup{a_{i+1}})$, whence $g\cup a_{i+1}\subseteq \varphi^{-1}_{i}(g\cup{a_{i+1}})\subseteq g\cup{a_{i+1}}$, i.e. $g\cup\{a_{i+1}\}=\varphi^{-1}_{i}(g\cup{a_{i+1}})\in\mathrsfs{F}$. The proof of the last statement of the lemma is also a consequence of Corollary \ref{cor:ideal correspondence} and it is left to the reader.
\end{proof}
The last proposition shows that all the upward-closed families of sets are leaved unchanged by the rising operator $\varphi_w$.
\\
We remind that if $A\sub B$ are two subsets of $X$ then the \emph{interval} $[A,B]$ is defined by $\{D\sub X:A\sub D\sub B\}$. In \cite[Lemma 1.3 (ii)]{Reimer} the author shows that if $g\neq f$ are two distinct elements of $\mathrsfs{F}$ then $[g,\varphi_w(g)]\cap [f,\varphi_w(f)]=\emptyset$. This facts is independent from the order with which we rise the set, indeed we have the following proposition.
\begin{proposition}\label{prop:interval property}
   Let $f,g\in\mathrsfs{F}$ and $\sigma,\theta\in\mathfrak{S}_X$. Then $f\neq g$ if and only if $[f,\varphi_{w\theta}(f)]\cap [g,\varphi_{w\sigma}(g)]=\emptyset$.
\end{proposition}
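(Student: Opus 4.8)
The plan is to deduce the statement from the explicit description of the inverse rising map in Corollary \ref{cor:ideal correspondence}, using the crucial observation that this description is insensitive to the ordering of $X$ chosen to perform the rising: it applies verbatim to the words $w\theta$ and $w\sigma$. This is precisely what upgrades \cite[Lemma 1.3 (ii)]{Reimer} from a single permutation to a pair of possibly distinct permutations.

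One implication is immediate. If $f=g$, then since each rising function only adds points we have $f\subseteq\varphi_{w\theta}(f)$ and $f\subseteq\varphi_{w\sigma}(f)$, so $f$ itself lies in $[f,\varphi_{w\theta}(f)]\cap[g,\varphi_{w\sigma}(g)]$; hence if that intersection is empty then $f\neq g$. For the converse, assume $f\neq g$ and suppose towards a contradiction that some $\eta$ belongs to $[f,\varphi_{w\theta}(f)]\cap[g,\varphi_{w\sigma}(g)]$. Then in particular $g\subseteq\eta\subseteq\varphi_{w\theta}(f)$ and $f\subseteq\eta\subseteq\varphi_{w\sigma}(g)$. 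Applying Corollary \ref{cor:ideal correspondence} to the word $w\theta$ and the element $\varphi_{w\theta}(f)$, whose $\varphi_{w\theta}$-preimage is $f$, gives $f=\bigcup\{h\in\mathrsfs{F}:h\subseteq\varphi_{w\theta}(f)\}$; since $g\in\mathrsfs{F}$ and $g\subseteq\varphi_{w\theta}(f)$, this forces $g\subseteq f$. Symmetrically, using the word $w\sigma$ and the element $\varphi_{w\sigma}(g)$, we obtain $f\subseteq g$. Hence $f=g$, contradicting our assumption, so the intersection must be empty.

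The single point that needs care — the nearest thing to an obstacle — is justifying that Theorem \ref{theo:ideal correspondence} and Corollary \ref{cor:ideal correspondence}, although phrased for the fixed word $w=a_1\cdots a_n$, remain valid for the rising function associated with an arbitrary word, since their proofs use no property of that particular order. Granting this, the proof is just the two short inclusions above. If one prefers to avoid the explicit union formula, one can argue directly from Theorem \ref{theo:ideal correspondence}: $g\in\mathrsfs{F}$ with $g\subseteq\varphi_{w\theta}(f)$ means $\varphi_{w\theta}(f)$ lies in the principal ideal generated by $g$ in the upward-closed family $\varphi_{w\theta}(\mathrsfs{F})$, so its preimage $f$ lies in $\mathrsfs{F}[g]$, i.e. $g\subseteq f$.
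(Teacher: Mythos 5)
Your proof is correct and follows essentially the same route as the paper: both derive the nontrivial direction by noting that an element of the intersection gives $g\subseteq\varphi_{w\theta}(f)$ and $f\subseteq\varphi_{w\sigma}(g)$, then apply the union formula for the inverse in Corollary \ref{cor:ideal correspondence} (valid for any ordering of $X$) to obtain $g\subseteq f$ and $f\subseteq g$. Your remark that the corollary, though stated for the fixed word $w$, holds verbatim for $w\theta$ and $w\sigma$ is exactly the (implicit) point the paper relies on as well.
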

\begin{proof}
  Suppose that $z\in[f,\varphi_{w\theta}(f)]\cap [g,\varphi_{w\sigma}(g)]\neq\emptyset$. By Corollary \ref{cor:ideal correspondence} and $f\sub c\sub\varphi_{w\sigma}(g)$ we get $g=\bigcup_{\{h\in\mathrsfs{F}:h\sub \varphi_{w\sigma}(g)\}}h\supseteq f$. Changing $g$ with $f$ we obtain the other inclusion $g\sub f$, whence $g=f$. The other side of the implication is trivial.
\end{proof}

\section{The invariant upward-closed family associated to a union-closed family}\label{sec:permutation dependancy}
In this section we introduce an upward-closed family associated to $\mathrsfs{F}$ which do not depend on a parameter like the case obtained using the rising functions in Section \ref{sec:II}. From Theorem \ref{theo:ideal correspondence} we have that $\varphi_{w}(\mathrsfs{F})$ is an upward-closed family, moreover since the union of upward-closed families is still an upward-closed family, we can associate to $\mathrsfs{F}$ the upward-closed family
\begin{equation}\label{eq: upward-closed family}
\textbf{U}(\mathrsfs{F})=\bigcup_{\vartheta\in \mathfrak{S}_X}\varphi_{w\vartheta}(\mathrsfs{F})
\end{equation}
where $w=a_1\ldots a_n$. We call $\textbf{U}(\mathrsfs{F})$ the \emph{invariant upward-closed family} associated to $\mathrsfs{F}$. We have already noted in Section \ref{sec:II} that there is an action of $\mathfrak{S}_X$ on this set given by $\beta\cdot\varphi_{w\vartheta}(g)=\varphi_{w\vartheta\beta}(g)$. So it seems natural to characterize the orbits $
\mathfrak{S}_X\cdot \varphi_{w}(g) = \{\varphi_{w\vartheta}(g), \vartheta\in \mathfrak{S}_X\}$.
Before giving this characterization we need first some definitions. The rising function $\varphi_w$ depends on the parameter $w$, however by Corollary \ref{cor:ideal correspondence} the inverse of $\varphi_w$ does not. Moreover, by the same Corollary, $\varphi_w(\mathrsfs{F})\subseteq\min(\mathrsfs{F})^{\uparrow}$ and so $\textbf{U}(\mathrsfs{F})\subseteq\min(\mathrsfs{F})^{\uparrow}$. For this reason it is important to extend the map $\varphi_w^{-1}$ to an operator
$$
\circ^*:min(\mathrsfs{F})^{\uparrow}\rightarrow \mathrsfs{F}
$$
which associates to each element $z\in min(\mathrsfs{F})^{\uparrow}$ the element
$$
  z^* = \bigcup_{\{h\in\mathrsfs{F}, h\subseteq z\}} h
$$
Using the fact that $\mathrsfs{F}$ is $\cup$-closed and the domain is $min(\mathrsfs{F})^{\uparrow}$, it is immediate to see that this operator is well defined. Moreover $\circ^*$ preserves inclusion, i.e. if $z\subseteq y$ then $z^*\subseteq y^*$ and it is clearly surjective, thus we can define the \emph{fiber} of each $g\in\mathrsfs{F}$ as
$$
Fib(g) = \{h\in min(\mathrsfs{F})^{\uparrow}: h^* = g\}.
$$
The following proposition characterizes the union-closed families in term of the operator $\circ^*$.
\begin{proposition}
  Let $\mathrsfs{H}$ be a family of subsets of $X$ and consider the operator
$$
\circ^*:min(\mathrsfs{H})^{\uparrow}\rightarrow 2^X
$$
defined by sending each $z\in min(\mathrsfs{H})^{\uparrow}$ into $z^* = \bigcup_{\{h\in\mathrsfs{F}, h\subseteq z\}} h$. Then $\mathrsfs{H}$ is a $\cup$-closed family if and only if the image of $\circ^*$ is contained in $\mathrsfs{H}$.
\end{proposition}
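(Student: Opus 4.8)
The plan is to prove both implications directly from the definition of $\circ^{*}$, exploiting the single observation that for every $z$ in the domain $\min(\mathrsfs{H})^{\uparrow}$ the collection $\{h\in\mathrsfs{H}:h\subseteq z\}$ is nonempty, finite, and consists of sets contained in $z$; here nonemptiness holds because some $m\in\min(\mathrsfs{H})$ satisfies $m\subseteq z$, and finiteness holds because $\mathrsfs{H}\subseteq 2^{X}$ with $X$ finite. (In the displayed formula of the statement, $\mathrsfs{F}$ is of course a misprint for $\mathrsfs{H}$.)

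For the forward direction I would suppose $\mathrsfs{H}$ is $\cup$-closed and take $z\in\min(\mathrsfs{H})^{\uparrow}$. Then $z^{*}=\bigcup_{\{h\in\mathrsfs{H}:\,h\subseteq z\}}h$ is a union of a nonempty finite subcollection of $\mathrsfs{H}$, and by an immediate induction binary union-closedness forces closure under all such finite unions; hence $z^{*}\in\mathrsfs{H}$, that is, the image of $\circ^{*}$ is contained in $\mathrsfs{H}$.

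For the converse I would assume the image of $\circ^{*}$ lies in $\mathrsfs{H}$ and pick $f,g\in\mathrsfs{H}$, aiming to show $f\cup g\in\mathrsfs{H}$. Put $z=f\cup g$. First I verify that $z$ lies in the domain of $\circ^{*}$: since $\mathrsfs{H}$ is finite, $f$ contains some $m\in\min(\mathrsfs{H})$, and then $m\subseteq f\subseteq z$ shows $z\in\min(\mathrsfs{H})^{\uparrow}$. Now both $f$ and $g$ belong to $\{h\in\mathrsfs{H}:h\subseteq z\}$, so $f\cup g\subseteq z^{*}$; conversely every member of this collection is a subset of $z=f\cup g$, so $z^{*}\subseteq f\cup g$. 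Hence $z^{*}=f\cup g$, and by hypothesis $z^{*}\in\mathrsfs{H}$, giving $f\cup g\in\mathrsfs{H}$. If $\mathrsfs{H}=\emptyset$ the statement is trivially true, as the domain of $\circ^{*}$ is empty and $\mathrsfs{H}$ is vacuously $\cup$-closed.

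The argument is essentially routine, and no deep ideas are needed; the only step that calls for a moment of care is the verification, in the converse direction, that $f\cup g\in\min(\mathrsfs{H})^{\uparrow}$, which is precisely where the finiteness of $\mathrsfs{H}$ — ensuring that every element of $\mathrsfs{H}$ contains a minimal one — enters. Everything else is just bookkeeping with the two inclusions $f\cup g\subseteq z^{*}$ and $z^{*}\subseteq f\cup g$.
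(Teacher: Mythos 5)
Your proof is correct and follows essentially the same route as the paper: the forward direction is the observation that a nonempty finite union of members of a $\cup$-closed family stays in the family, and the converse is exactly the paper's two-inclusion argument showing $(f\cup g)^{*}=f\cup g$ (the paper phrases it via the fiber $Fib(t)$, but the content is identical). Your extra remarks on the misprint $\mathrsfs{F}$ for $\mathrsfs{H}$, the membership of $f\cup g$ in $\min(\mathrsfs{H})^{\uparrow}$, and the empty-family case are fine and only make explicit what the paper leaves implicit.
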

\begin{proof}
As we have already noticed before if $\mathrsfs{H}$ is $\cup$-closed then $\circ^*$ is well defined map $\circ^*:min(\mathrsfs{H})^{\uparrow}\rightarrow \mathrsfs{H}$. Conversely, let $g,h\in \mathrsfs{H}$ and let $\mathrsfs{H}'\subseteq \mathrsfs{H}$ be the image of $\mathrsfs{H}$ by means of $\circ^*$. The element $g\cup h\in min(\mathrsfs{H})^{\uparrow}$ and so $g\cup h\in Fib(t)$ for some $t\in \mathrsfs{H}'$. Since $Fib(t)$ is formed by the elements $z$ such that $z^* = t$ and $t\in\mathrsfs{H}'$ we have that $t\subseteq z$ for all $z\in Fib(t)$, in particular $t\subseteq g\cup h$. On the other hand $g,h\subseteq g\cup h$ and so $g,h\subseteq (g\cup h)^* = t$,
whence $g\cup h\subseteq t$ and so $g\cup h = t\in\mathrsfs{H}'\subseteq \mathrsfs{H}$.
\end{proof}
Given a word $u = w\theta = a_{i_1}\ldots a_{i_n}$ for some $\theta\in \mathfrak{S}_X$ and a subset $\gamma\subseteq X=\{a_{1},\ldots, a_{n}\}$ we say that $\gamma$ is \emph{contained in a prefix} of $u$ (or $u$ has a prefix containing $\gamma$) whenever either $\gamma$ is empty or there is a prefix $u'=a_{i_1}\ldots a_{i_l}$ of $u$ for some $l$ with $n \ge l \ge 1$ with $\gamma = \{a_{i_1},\ldots, a_{i_l}\}$.
\begin{lemma}\label{lem:word generatin a maximal fiber}
  Let $\mathrsfs{F}$ be a $\cup$-closed family of sets of $X=\{a_1,\ldots,a_n\}$, let $g\in \mathrsfs{F}$ and $\eta\in \max(Fib(g))$. Then for any word $u = a_{i_1}\ldots a_{i_n}$ having a prefix containing $\eta\setminus\eta^*$ we have $\varphi_{u}(g) = \eta$.
\end{lemma}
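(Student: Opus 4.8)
The plan is to follow the trajectory $g=g_0,g_1,\ldots,g_n$ of $g$ under the iterated rising functions determined by $u=a_{i_1}\ldots a_{i_n}$, to show that it reaches $\eta$ precisely after the prefix whose letter set equals $\eta\men\eta^*$ has been processed, and then to use the maximality of $\eta$ in $Fib(g)$ to conclude that it cannot move afterwards. Write $l=|\eta\men\eta^*|$, so that by hypothesis $\{a_{i_1},\ldots,a_{i_l}\}=\eta\men\eta^*$; since $\eta\in Fib(g)$ we have $\eta^*=g$, and since $\eta^*=\bigcup_{h\in\mathrsfs{F},\,h\sub\eta}h$ we also have $g\sub\eta$, so the prefix letter set is exactly $\eta\men g$. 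Let $S_0=\mathrsfs{F},S_1,\ldots,S_n$ be the sections and $\varphi_0,\ldots,\varphi_n$ the iterated rising functions of $\mathrsfs{F}$ for the word $u$, so that $g_j=\varphi_j(g)=\varphi_{S_{j-1},a_{i_j}}(g_{j-1})$, $\varphi_u(g)=g_n$, the letters of $u$ are pairwise distinct, and all trajectories are nondecreasing.

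First I would prove, by induction on $j$, that $g_j=g\cup\{a_{i_1},\ldots,a_{i_j}\}$ for all $0\le j\le l$. The base case $j=0$ is immediate. For the inductive step fix $1\le j\le l$ and put $a=a_{i_j}$; then $a\in\eta\men g$, so $a\notin g$, and by the inductive hypothesis together with distinctness of the letters of $u$ we get $a\notin g_{j-1}$. Suppose, for contradiction, that $g_{j-1}\cup\{a\}\in S_{j-1}$, and let $t\in\mathrsfs{F}$ be the (unique) element with $\varphi_{j-1}(t)=g_{j-1}\cup\{a\}$. Since the trajectory of $t$ only gains letters, $t\sub g_{j-1}\cup\{a\}=g\cup\{a_{i_1},\ldots,a_{i_j}\}\sub\eta$, whence $t\sub\eta^{*}=g$ by definition of $\circ^{*}$; in particular $a\notin t$. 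But $\varphi_{j-1}(t)$ can differ from $t$ only in letters among $a_{i_1},\ldots,a_{i_{j-1}}$, so $a\in\varphi_{j-1}(t)$ would force $a\in\{a_{i_1},\ldots,a_{i_{j-1}}\}$, contradicting distinctness of the letters of $u$. Hence $g_{j-1}\cup\{a\}\notin S_{j-1}$, and since $g_{j-1}=\varphi_{j-1}(g)\in S_{j-1}$ the definition of the rising function gives $g_j=\varphi_{S_{j-1},a}(g_{j-1})=g_{j-1}\cup\{a\}=g\cup\{a_{i_1},\ldots,a_{i_j}\}$, completing the induction.

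Taking $j=l$ gives $g_l=g\cup\{a_{i_1},\ldots,a_{i_l}\}=g\cup(\eta\men g)=\eta$, so $\varphi_u(g)=g_n\supseteq g_l=\eta$ since trajectories are nondecreasing. On the other hand $g\sub\varphi_u(g)$ and $g$ contains a minimal element of $\mathrsfs{F}$, so $\varphi_u(g)\in\min(\mathrsfs{F})^{\uparrow}$, while Corollary \ref{cor:ideal correspondence} yields $\varphi_u^{-1}(\varphi_u(g))=\bigcup_{h\in\mathrsfs{F},\,h\sub\varphi_u(g)}h=(\varphi_u(g))^{*}$, that is $(\varphi_u(g))^{*}=g$; hence $\varphi_u(g)\in Fib(g)$. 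Since $\eta\sub\varphi_u(g)$, $\varphi_u(g)\in Fib(g)$, and $\eta$ is a maximal element of $Fib(g)$, we conclude $\varphi_u(g)=\eta$. When $\eta\men\eta^{*}=\emptyset$, i.e.\ $l=0$, the induction is vacuous, $g_l=g=\eta$, and this last step applies unchanged.

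The only real obstacle is the inductive step, namely excluding $g_{j-1}\cup\{a\}\in S_{j-1}$. The argument above isolates the two facts that make it work: any member of $\mathrsfs{F}$ whose $u$-trajectory has so far remained inside $\eta$ must already be a subset of $\eta^{*}=g$, and the letter $a=a_{i_j}$ cannot have entered any trajectory during the first $j-1$ steps of the rising process. Everything else is routine bookkeeping with the definitions of $\varphi_u$ and $\circ^{*}$ and with Corollary \ref{cor:ideal correspondence}.
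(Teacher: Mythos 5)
Your proof is correct and follows essentially the same route as the paper: force the trajectory of $g$ to absorb exactly the letters of $\eta\setminus\eta^*$ along the given prefix, so that the $l$-th section element equals $\eta$, and then combine $(\varphi_u(g))^*=g$ (Corollary \ref{cor:ideal correspondence}) with the maximality of $\eta$ in $Fib(g)$. The only minor difference is in ruling out $g_{j-1}\cup\{a_{i_j}\}\in S_{j-1}$: the paper derives the double inclusion $g\subsetneq f\subseteq \eta^*=g$ for the preimage $f$, while you observe that the preimage lies in $\eta^*=g$, hence misses $a_{i_j}$, a letter that could not have been added during the first $j-1$ rising steps — an equally valid contradiction within the same proof skeleton.
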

\begin{proof}
Suppose $\eta\setminus\eta^* \neq \emptyset$ (the empty case can be treated analogously) and let $u = a_{i_1}\ldots a_{i_n}$ be a word with the property of the statement and so there is some $l$ with $n \ge l \ge 1$ such that $\eta\setminus\eta^*=\{a_{i_1},\ldots, a_{i_l}\}$. Let $\eta_0=g$ and $\eta_{j}=\varphi_{j}(g)$ for $j=1,\ldots, n$ be the trajectory of $g$ trough the iterated application of the rising functions with respect to $u$ and let $\mathrsfs{F}_j$ be the associated sections. Suppose that there is an integer $s$ with $0\le s < l$ such that $\eta_s\cup a_{i_{s+1}}\in \mathrsfs{F}_s$ and let us suppose without loss of generality that such $s$ is minimum between the integers with this property. Since $\eta_s\cup a_{i_{s+1}}\in \mathrsfs{F}_s$ there is an element $f\in \mathrsfs{F}$ with $f\neq g$ such that $\eta_s\cup a_{i_{s+1}} = \varphi_s(f)$. Thus, since $g\subseteq \eta_s$ we get $g\subseteq \eta_s\cup a_{i_{s+1}} = \varphi_s(f)\subseteq\varphi_w(f)$ and so by Corollary \ref{cor:ideal correspondence} we have $g\subsetneq f$. Since $a_{i_1}\ldots a_{i_l}$ is a prefix of $u$ and $\{a_{i_1},\ldots, a_{i_l}\} = \eta\setminus\eta^*$, $g= \eta_0\subseteq\eta$ then $\eta_s\subseteq\eta$, moreover since $s<l$ then $a_{i_{s+1}}\in \eta$, hence $\eta_s\cup a_{i_{s+1}}\subseteq\eta$. Therefore we have the contradiction:
$$
g = \eta^* \supseteq (\eta_s\cup a_{i_{s+1}})^* = f \supsetneq g
$$
since by Corollary \ref{cor:ideal correspondence} $f = \varphi_{u}(f)^*\supseteq(\eta_s\cup a_{i_{s+1}})^*\supseteq f$.
Hence we can suppose that for all $0\le s<l$ we have $\eta_s\cup a_{i_{s+1}}\notin \mathrsfs{F}_s$ and so we have $\eta_l = \eta$. Thus $\eta\subseteq\varphi_{u}(g)$. Let us prove that actually $\eta = \varphi_{u}(g)$. Suppose on the contrary that $\eta\subsetneq\varphi_{u}(g)$, since by Corollary \ref{cor:ideal correspondence} $g = (\varphi_{u}(g))^*$ then $\varphi_{u}(g)\in Fib(g)$, however $\eta\subsetneq\varphi_{w'}(g)$ contradicts the maximality of $\eta$, hence $\eta = \varphi_{u}(g)$.
\end{proof}
The following theorem characterizes the orbits of $\textbf{U}(\mathrsfs{F})$.
\begin{theorem}\label{theo: characterization orbits}
  Let $\mathrsfs{F}$ be a $\cup$-closed family of sets of $X=\{a_1,a_2,\ldots,a_n\}$ and let $w=a_1a_2\ldots a_n$. Let $g\in \mathrsfs{F}$ then:
$$
\mathfrak{S}_X\cdot \varphi_{w}(g) = \max(Fib(g))
$$
\end{theorem}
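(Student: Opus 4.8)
The plan is to establish the equality by proving the two inclusions separately, using Lemma \ref{lem:word generatin a maximal fiber} for one direction and Corollary \ref{cor:ideal correspondence} together with a ``blocking set'' argument for the other. Throughout I rely on the facts that $\varphi_u(\mathrsfs{F})$ is upward-closed and contained in $\min(\mathrsfs{F})^{\uparrow}$ for every word $u$, that each $\varphi_s$ is injective with $\varphi_s(\mathrsfs{F})=\mathrsfs{F}_s$, that trajectories are non-decreasing, and that by Corollary \ref{cor:ideal correspondence} the inverse $\varphi_u^{-1}$ coincides with $\circ^*$ on $\varphi_u(\mathrsfs{F})$, independently of $u$.

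First I would prove $\max(Fib(g))\subseteq\mathfrak{S}_X\cdot\varphi_w(g)$. Given $\eta\in\max(Fib(g))$ we have $\eta^*=g$ by definition of $Fib(g)$. I would enumerate the (possibly empty) set $\eta\setminus\eta^*=\eta\setminus g$ as the initial block $b_1,\ldots,b_l$ of an enumeration $b_1,\ldots,b_n$ of all of $X$; then $u=b_1\cdots b_n$ equals $w\vartheta$ for the permutation $\vartheta$ with $\vartheta(a_j)=b_j$, and $u$ has a prefix containing $\eta\setminus\eta^*$. Lemma \ref{lem:word generatin a maximal fiber} then gives $\varphi_{w\vartheta}(g)=\varphi_u(g)=\eta$, so $\eta$ lies in the orbit of $\varphi_w(g)$.

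Next I would prove $\mathfrak{S}_X\cdot\varphi_w(g)\subseteq\max(Fib(g))$. Fix $\vartheta\in\mathfrak{S}_X$, set $u=w\vartheta$ and $\eta=\varphi_u(g)$. Membership in $Fib(g)$ is immediate: $\eta\in\varphi_u(\mathrsfs{F})\subseteq\min(\mathrsfs{F})^{\uparrow}$ and, by Corollary \ref{cor:ideal correspondence} applied with the word $u$, $\eta^*=\varphi_u^{-1}(\eta)=g$. For maximality I argue by contradiction: assume $\eta\subsetneq\eta'$ with $\eta'\in Fib(g)$ and pick $a\in\eta'\setminus\eta$, say $a=a_{i_{s+1}}$ where $u=a_{i_1}\cdots a_{i_n}$. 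Let $g=\eta_0,\eta_1,\ldots,\eta_n=\eta$ be the trajectory of $g$ and $\mathrsfs{F}_0,\ldots,\mathrsfs{F}_n$ the associated sections. Since trajectories are non-decreasing and $a\notin\eta$, we have $a\notin\eta_s$; if $\eta_s\cup\{a\}\notin\mathrsfs{F}_s$ the $(s+1)$-st rising step would force $a\in\eta_{s+1}\subseteq\eta$, a contradiction, so $\eta_s\cup\{a\}\in\mathrsfs{F}_s=\varphi_s(\mathrsfs{F})$. By injectivity of $\varphi_s$ and $\varphi_s(g)=\eta_s\neq\eta_s\cup\{a\}$, there is $f\in\mathrsfs{F}$ with $f\neq g$ and $\varphi_s(f)=\eta_s\cup\{a\}$. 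On one hand $g\subseteq\eta_s\subseteq\eta_s\cup\{a\}=\varphi_s(f)\subseteq\varphi_u(f)$, so by Corollary \ref{cor:ideal correspondence} $g\subseteq\varphi_u(f)^*=f$, hence $g\subsetneq f$; on the other hand $f=\varphi_0(f)\subseteq\varphi_s(f)=\eta_s\cup\{a\}\subseteq\eta\cup\{a\}\subseteq\eta'$, so $f\subseteq(\eta')^*=g$. These are incompatible, so $\eta\in\max(Fib(g))$.

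The step I expect to be the main obstacle is the maximality argument in the second inclusion: for each letter $a$ absent from $\eta$ one must locate the set $f\in\mathrsfs{F}$ that ``blocked'' $a$ during the rising process along $u$, and then check that the ideal correspondence squeezes $f$ simultaneously into $g\subsetneq f$ and $f\subseteq g$. No new idea is involved beyond Corollary \ref{cor:ideal correspondence} and Lemma \ref{lem:word generatin a maximal fiber}; the delicacy lies in the bookkeeping — exploiting monotonicity of trajectories, reading a non-added letter as a certificate of an occupied slot, and chaining the inclusions $f\subseteq\varphi_s(f)\subseteq\eta\cup\{a\}\subseteq\eta'$ correctly.
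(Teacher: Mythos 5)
Your proof is correct, and its skeleton matches the paper's: both directions are split the same way, with Lemma \ref{lem:word generatin a maximal fiber} (applied to a word whose initial block enumerates $\eta\setminus\eta^*$) giving $\max(Fib(g))\subseteq\mathfrak{S}_X\cdot\varphi_w(g)$, and Corollary \ref{cor:ideal correspondence} giving $(\varphi_{w\vartheta}(g))^*=g$, i.e. membership in $Fib(g)$. The only place you diverge is the maximality step. The paper disposes of it in two lines: if $\varphi_{w\vartheta}(g)\subsetneq\eta'$ with $\eta'\in Fib(g)$, then since $\varphi_{w\vartheta}(\mathrsfs{F})$ is upward-closed, $\eta'$ itself lies in $\varphi_{w\vartheta}(\mathrsfs{F})$, and Corollary \ref{cor:ideal correspondence} forces $(\eta')^*$ to strictly contain $g$ while equalling $g$, a contradiction. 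You instead re-run a trajectory-level ``blocking'' argument: for a letter $a\in\eta'\setminus\eta$ you locate the set $f\in\mathrsfs{F}$ with $\varphi_s(f)=\eta_s\cup\{a\}$ that prevented $a$ from being added, and squeeze it into $g\subsetneq f$ and $f\subseteq(\eta')^*=g$. This is sound (the bookkeeping with non-decreasing trajectories and injectivity of $\varphi_s$ checks out), but it essentially re-proves, in a special case, the mechanism already packaged in the proof of Lemma \ref{lem:word generatin a maximal fiber}; the paper's route buys brevity by exploiting that the image family is upward-closed, which makes $\eta'\in\varphi_{w\vartheta}(\mathrsfs{F})$ immediate and avoids any reference to intermediate sections.
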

\begin{proof}
  The inclusion $\max(Fib(g)) \subseteq \mathfrak{S}_X\cdot \varphi_{w}(g)$ is a consequence of Lemma \ref{lem:word generatin a maximal fiber}.
  On the other hand, let $\varphi_{w'}(g)\in \mathfrak{S}_X\cdot \varphi_{w}(g)$ for some $w'=w\theta$, $\theta\in \mathfrak{S}_X$. By Corollary \ref{cor:ideal correspondence} $(\varphi_{w'}(g))^* = g$, thus we have $\varphi_{w'}(g)\in Fib(g)$. Suppose, contrary to the statement of the lemma, that $\varphi_{w'}(g)$ is not maximal in $Fib(g)$ and so let $\eta'\in Fib(g)$ such that $\varphi_{w'}(g)\subsetneq\eta'$. Since $\varphi_{w'}(\mathrsfs{F})$ is an upward-closed set and $\varphi_{w'}(g)\in \varphi_{w'}(\mathrsfs{F})$ with $\varphi_{w'}(g)\subsetneq\eta'$, then we get $\eta'\in\varphi_{w'}(\mathrsfs{F})$. However, by Corollary \ref{cor:ideal correspondence} we have the contradiction $g\subsetneq (\eta')^* = g$. Hence $\varphi_{w'}(g)\in \max(Fib(g))$ and so $\mathfrak{S}_X\cdot \varphi_{w}(g) \subseteq \max(Fib(g))$.
\end{proof}
Note that Theorem \ref{theo: characterization orbits}, together with the fact that $Fib(g)\cap Fib(f)=\emptyset$ iff $g\neq f$, implies Proposition \ref{prop:interval property}, in particular we have
$$
Fib(g) = \bigcup_{\vartheta\in \mathfrak{S}_X} [g, \varphi_{w\vartheta}(g)]
$$
Using the invariant upward-closed family $\textbf{U}(\mathrsfs{F})$ we can give tights upper and lower bounds to $|\mathrsfs{F}|$ depending on $rk(\mathrsfs{F})=\min\{|\eta|:\eta\in\min(\textbf{U}(\mathrsfs{F}))\}$. We have the following proposition:
\begin{proposition}
  $$
  2^{n-rk(\mathrsfs{F})}\le|\mathrsfs{F}|\le \sum_{i\ge rk(\mathrsfs{F})}{n\choose i}
  $$
  and these bounds are tights.
\end{proposition}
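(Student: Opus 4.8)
The plan is to squeeze $|\mathrsfs{F}|$ between the size of a single principal filter (from below) and the size of a ``rank‑truncated'' powerset (from above), using repeatedly that every rising image $\varphi_{w\vartheta}(\mathrsfs{F})$ is an upward‑closed family having exactly $|\mathrsfs{F}|$ elements.

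First, for the upper bound: since $\textbf{U}(\mathrsfs{F})$ is upward‑closed, every $z\in\textbf{U}(\mathrsfs{F})$ contains a minimal element $\eta\in\min(\textbf{U}(\mathrsfs{F}))$, so $|z|\ge|\eta|\ge rk(\mathrsfs{F})$. Hence $\textbf{U}(\mathrsfs{F})\sub\{z\sub X:|z|\ge rk(\mathrsfs{F})\}$, and the latter set has $\sum_{i\ge rk(\mathrsfs{F})}{n\choose i}$ elements. Because $\varphi_w$ is a bijection between $\mathrsfs{F}$ and $\varphi_w(\mathrsfs{F})\sub\textbf{U}(\mathrsfs{F})$, one gets $|\mathrsfs{F}|=|\varphi_w(\mathrsfs{F})|\le|\textbf{U}(\mathrsfs{F})|\le\sum_{i\ge rk(\mathrsfs{F})}{n\choose i}$.

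Next, for the lower bound: fix $\eta_0\in\min(\textbf{U}(\mathrsfs{F}))$ with $|\eta_0|=rk(\mathrsfs{F})$. By the very definition of $\textbf{U}(\mathrsfs{F})$ as a union over $\mathfrak{S}_X$, there is a $\vartheta\in\mathfrak{S}_X$ with $\eta_0\in\varphi_{w\vartheta}(\mathrsfs{F})$. The point is that $\varphi_{w\vartheta}(\mathrsfs{F})$ is itself upward‑closed, so it contains the whole principal filter $\{\eta_0\}^{\uparrow}=\{z\sub X:\eta_0\sub z\}$, which has $2^{n-|\eta_0|}=2^{n-rk(\mathrsfs{F})}$ elements; since $\varphi_{w\vartheta}$ is a bijection from $\mathrsfs{F}$ onto $\varphi_{w\vartheta}(\mathrsfs{F})$, this yields $|\mathrsfs{F}|=|\varphi_{w\vartheta}(\mathrsfs{F})|\ge 2^{n-rk(\mathrsfs{F})}$. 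Finally, to check tightness, for each $0\le k\le n$ take $\mathrsfs{F}=\{z\sub X:|z|\ge k\}$: it is upward‑closed, satisfies $X=\bigcup_{f\in\mathrsfs{F}}f$, and by Proposition \ref{prop:elements fixed by the rising function} every element of an upward‑closed family is fixed by every rising operator, so $\textbf{U}(\mathrsfs{F})=\mathrsfs{F}$ with $\min(\textbf{U}(\mathrsfs{F}))$ equal to the set of $k$‑subsets; thus $rk(\mathrsfs{F})=k$ and $|\mathrsfs{F}|=\sum_{i\ge k}{n\choose i}$, meeting the upper bound. Dually, fixing $Y\sub X$ with $|Y|=k$, the principal filter $\mathrsfs{F}=\{z\sub X:Y\sub z\}$ is upward‑closed with $\textbf{U}(\mathrsfs{F})=\mathrsfs{F}$, $\min(\textbf{U}(\mathrsfs{F}))=\{Y\}$, $rk(\mathrsfs{F})=k$ and $|\mathrsfs{F}|=2^{n-k}$, meeting the lower bound.

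The argument is short, so there is no real obstacle; the one point that requires care is that the lower bound must be read off from a single orbit representative $\varphi_{w\vartheta}(\mathrsfs{F})$ — which is upward‑closed \emph{and} equinumerous with $\mathrsfs{F}$ — and not from $\textbf{U}(\mathrsfs{F})$, which is in general strictly larger than any $\varphi_{w\vartheta}(\mathrsfs{F})$ and hence provides no lower bound on $|\mathrsfs{F}|$. One also has to verify the standing hypothesis $X=\bigcup_{f\in\mathrsfs{F}}f$ for the two extremal families, which is routine (it holds for all $0\le k\le n$, with the degenerate cases $k=0$ giving $\mathrsfs{F}=2^X$ and $k=n$ giving $\mathrsfs{F}=\{X\}$).
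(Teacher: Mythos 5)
Your proof is correct and follows essentially the same route as the paper: the lower bound comes from locating a minimum-size element of $\min(\textbf{U}(\mathrsfs{F}))$ inside a single upward-closed image $\varphi_{w\vartheta}(\mathrsfs{F})$, which then contains its whole principal filter, and tightness for the lower bound is witnessed by a principal filter exactly as in the paper. The only additions are that you spell out the upper bound and its tightness (which the paper leaves to the reader) and you bypass the appeal to the orbit-characterization theorem by using directly that $\textbf{U}(\mathrsfs{F})$ is a union of the families $\varphi_{w\vartheta}(\mathrsfs{F})$; both are fine.
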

\begin{proof}
   Let $z\in\min(\textbf{U}(\mathrsfs{F}))$ with $|z|=rk(\mathrsfs{F})$, then by Theorem \ref{theo: characterization orbits} $z=\varphi_{w\theta}(g)$ for some $\theta\in \mathfrak{S}_X$, thus $z^{\uparrow}\subseteq \varphi_{w\theta}(\mathrsfs{F})$. Thus $|\mathrsfs{F}|=|\varphi_{w\theta}(\mathrsfs{F})|\ge |z^{\uparrow}|=2^{n-rk(\mathrsfs{F})}$. This bound is attained considering the $\cup$-closed family $\{\overline{z}\}^{\uparrow}$. By Proposition \ref{prop:elements fixed by the rising function} $\varphi_{w\theta}(\mathrsfs{F})=\{\overline{z}\}^{\uparrow}$ for all $\theta\in \mathfrak{S}_X$, thus $\textbf{U}(\mathrsfs{F})=\{\overline{z}\}^{\uparrow}$ and so $rk(\mathrsfs{F})=|\overline{z}|$. The upper bound is obtained in a similar way and its proof is left to the reader.
\end{proof}
Let $x\in\textbf{U}(\mathrsfs{F})$, $\mathfrak{S}_x$ denotes the stabilizer subgroup of $x$ and as usual by $\textbf{U}(\mathrsfs{F})^{\vartheta}$ the set of elements of $\textbf{U}(\mathrsfs{F})$ fixed by an element $\vartheta\in \mathfrak{S}_X$. As a consequence of Theorem \ref{theo: characterization orbits} and Burnside's Lemma we have the following corollary:
\begin{corollary}
    $$
        |\mathrsfs{F}| = \frac{1}{n!}\sum_{x\in \textbf{U}(\mathrsfs{F})} |\mathfrak{S}_x|
    $$
    In particular we have the following inequality:
    $$
    \frac{1}{|\mathrsfs{F}|}\sum_{f\in\mathrsfs{F}} \sum_{x\in \max(Fib(f))}\frac{1}{{n\choose |x\setminus x^*|}}\le 1
    $$
\end{corollary}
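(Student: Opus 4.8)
The plan is to read the first identity straight off Burnside's Lemma, and then to extract the inequality from it via a lower bound on each stabilizer $|\mathfrak{S}_x|$ coming from Lemma~\ref{lem:word generatin a maximal fiber}.

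For the identity, recall that $\mathfrak{S}_X$ acts on $\textbf{U}(\mathrsfs{F})$ by $\beta\cdot\varphi_{w\vartheta}(g)=\varphi_{w\vartheta\beta}(g)$. By Theorem~\ref{theo: characterization orbits} the orbit of $\varphi_w(g)$ is $\max(Fib(g))$, and since $Fib(g)\cap Fib(f)=\emptyset$ whenever $f\neq g$ these orbits are pairwise disjoint, so the number of orbits is exactly $|\mathrsfs{F}|$. Burnside's Lemma then yields $|\mathrsfs{F}|=\frac1{n!}\sum_{\vartheta\in\mathfrak{S}_X}|\textbf{U}(\mathrsfs{F})^{\vartheta}|$, and double counting the pairs $(\vartheta,x)$ with $\vartheta\cdot x=x$ rewrites the right-hand side as $\frac1{n!}\sum_{x\in\textbf{U}(\mathrsfs{F})}|\mathfrak{S}_x|$. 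This step is routine.

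For the inequality it suffices to show that $|\mathfrak{S}_x|\ge|x\setminus x^*|!\,(n-|x\setminus x^*|)!=n!/{n\choose|x\setminus x^*|}$ for every $x\in\textbf{U}(\mathrsfs{F})$: substituting this into the identity and using $\textbf{U}(\mathrsfs{F})=\bigsqcup_{f\in\mathrsfs{F}}\max(Fib(f))$ gives $|\mathrsfs{F}|\ge\sum_{f\in\mathrsfs{F}}\sum_{x\in\max(Fib(f))}{n\choose|x\setminus x^*|}^{-1}$, which is the claim after dividing by $|\mathrsfs{F}|$. To bound $|\mathfrak{S}_x|$, set $g=x^*$; by Corollary~\ref{cor:ideal correspondence} and Theorem~\ref{theo: characterization orbits} we have $x\in\max(Fib(g))$. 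Fixing one $\vartheta_0$ with $\varphi_{w\vartheta_0}(g)=x$, as $\beta$ ranges over $\mathfrak{S}_X$ the word $w\vartheta_0\beta$ ranges bijectively over all orderings of $X$, so $|\mathfrak{S}_x|$ equals the number of orderings $u$ of $X$ with $\varphi_u(g)=x$. By Lemma~\ref{lem:word generatin a maximal fiber} (applied with $\eta=x$, so $\eta\setminus\eta^*=x\setminus g$) every ordering $u$ having the set $x\setminus g$ as a prefix satisfies $\varphi_u(g)=x$, and the number of such orderings is $|x\setminus g|!\,(n-|x\setminus g|)!$, obtained by ordering the first $|x\setminus g|$ letters over $x\setminus g$ and the remaining letters over $X\setminus(x\setminus g)$; this gives the desired lower bound on $|\mathfrak{S}_x|$.

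The one point requiring care is the combinatorial reading of ``contained in a prefix'' in Lemma~\ref{lem:word generatin a maximal fiber}: one must check that, for $\eta=x$, its hypothesis is met precisely by the orderings that place the $|x\setminus x^*|$ elements of $x\setminus x^*$ (in any order) in the first $|x\setminus x^*|$ positions, so that the count $k!\,(n-k)!$ with $k=|x\setminus x^*|$ is correct. Everything else is bookkeeping with the orbit--stabilizer correspondence already supplied by Theorem~\ref{theo: characterization orbits}. I would also note that the argument in fact establishes the stronger statement $\sum_{x\in\max(Fib(f))}{n\choose|x\setminus x^*|}^{-1}\le1$ for each individual $f\in\mathrsfs{F}$.
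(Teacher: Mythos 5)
Your proof is correct and follows essentially the same route as the paper: Burnside's Lemma together with Theorem \ref{theo: characterization orbits} identifies the number of orbits with $|\mathrsfs{F}|$, and the stabilizer bound $|\mathfrak{S}_x|\ge |x\setminus x^*|!\,(n-|x\setminus x^*|)!$ comes from counting the orderings with $x\setminus x^*$ as a prefix via Lemma \ref{lem:word generatin a maximal fiber}, exactly as in the paper. Your explicit identification of $|\mathfrak{S}_x|$ with the number of orderings $u$ satisfying $\varphi_u(x^*)=x$, and the observation that the argument already yields $\sum_{x\in\max(Fib(f))}{n\choose |x\setminus x^*|}^{-1}\le 1$ for each fixed $f$, are fine refinements of the same argument rather than a different approach.
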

\begin{proof}
  Using Burnside's Lemma
  $$
  |\textbf{U}(\mathrsfs{F})/\mathfrak{S}_X| = \frac{1}{|\mathfrak{S}_X|}\sum_{\vartheta\in\mathfrak{S}_X}|\textbf{U}(\mathrsfs{F})^{\vartheta}|= \frac{1}{n!}\sum_{x\in \textbf{U}(\mathrsfs{F})}|\mathfrak{S}_x|
  $$
  by Theorem \ref{theo: characterization orbits} the set of orbits $\textbf{U}(\mathrsfs{F})/\mathfrak{S}_X$ is in one to one correspondence with $\mathrsfs{F}$, thus $|\textbf{U}(\mathrsfs{F})/\mathfrak{S}_X| = |\mathrsfs{F}|$ and so the equality of the corollary is proved. To prove the inequality we give a lower bound to $|\mathfrak{S}_{x}|$ for $x\in \max(Fib(f))$. By Lemma \ref{lem:word generatin a maximal fiber} we have that for any word $u = a_{i_1}\ldots a_{i_n}$ having a prefix containing $x\setminus x^*$, $\varphi_{u}(g) = x$. There are $|x\setminus x^*|! (n-|x\setminus x^*|)!$ such words and so $|\mathfrak{S}_{x}|\ge |x\setminus x^*|! (n-| x\setminus x^*|)!$. Therefore by Theorem \ref{theo: characterization orbits} we have
  \begin{eqnarray*}
    1 &=& \frac{1}{|\mathrsfs{F}| n!}\sum_{x\in \textbf{U}(\mathrsfs{F})} |\mathfrak{S}_{x}|\ge \frac{1}{|\mathrsfs{F}|}\sum_{x\in \textbf{U}(\mathrsfs{F})} \frac{(|x \setminus x^*|)! (n-|x\setminus x^*|)!}{n!} \\
     &=& \frac{1}{|\mathrsfs{F}|}\sum_{f\in\mathrsfs{F}}\sum_{x\in \max(Fib(f))} \frac{1}{{n\choose | x\setminus x^*|}}
  \end{eqnarray*}
\end{proof}
The following lemma characterizes the elements not containing an $a\in X$ for which in the rising process, for some order of rising, the elements will also not contain $a$.
\begin{lemma}\label{lem: covering property}
  Let $g\in \mathrsfs{F}_{\overline{a}}$ and $\eta\in \max(Fib(g))$. Then $a\notin\eta$ if and only if there is $h\in \mathrsfs{F}_a$ such that $h\subseteq \eta\cup \{a\}$ and $g\lessdot h$.
\end{lemma}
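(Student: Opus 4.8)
The plan is to prove both implications directly from the definition of the fiber and the elementary properties of $\circ^*$ (monotonicity, together with the fact that $\eta^*=g$ because $\eta\in Fib(g)$); in particular the rising functions will not be needed here.

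First I would dispose of the ``if'' direction. Suppose $h\in\mathrsfs{F}_a$ with $h\subseteq\eta\cup\{a\}$ and $g\lessdot h$, and assume toward a contradiction that $a\in\eta$. Then $\eta\cup\{a\}=\eta$, so $h\subseteq\eta$; since $h\in\mathrsfs{F}$ this gives $h\subseteq\eta^*=g$, contradicting $g\subsetneq h$ (which is part of $g\lessdot h$). Hence $a\notin\eta$.

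For the ``only if'' direction, assume $a\notin\eta$. Since $\eta\in Fib(g)\subseteq\min(\mathrsfs{F})^{\uparrow}$ we get $\eta\cup\{a\}\in\min(\mathrsfs{F})^{\uparrow}$, so $h_0=(\eta\cup\{a\})^*$ is a well-defined element of $\mathrsfs{F}$. By monotonicity of $\circ^*$, $g=\eta^*\subseteq h_0\subseteq\eta\cup\{a\}$, and this inclusion is strict: if $h_0=g$ then $(\eta\cup\{a\})^*=g$, i.e.\ $\eta\cup\{a\}\in Fib(g)$, contradicting $\eta\in\max(Fib(g))$ since $\eta\subsetneq\eta\cup\{a\}$. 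Also $a\in h_0$, since otherwise $h_0\subseteq\eta$ and then $h_0\subseteq\eta^*=g$, again a contradiction. Thus the family $\mathcal{C}=\{h'\in\mathrsfs{F}:a\in h',\ g\subsetneq h'\subseteq\eta\cup\{a\}\}$ contains $h_0$ and is in particular nonempty.

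The remaining, and only delicate, point is that $h_0$ itself need not cover $g$, so one cannot simply take $h=h_0$; instead I would let $h$ be a $\subseteq$-minimal element of the finite nonempty family $\mathcal{C}$. Then $g\lessdot h$: were there $h''\in\mathrsfs{F}$ with $g\subsetneq h''\subsetneq h$, then $h''\subseteq\eta\cup\{a\}$, and $a\in h''$ (otherwise $h''\subseteq\eta$, hence $h''\subseteq\eta^*=g$, contradicting $g\subsetneq h''$), so $h''\in\mathcal{C}$, contradicting minimality of $h$. This $h\in\mathrsfs{F}_a$ with $h\subseteq\eta\cup\{a\}$ and $g\lessdot h$ is exactly what is required. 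The main obstacle, then, is just recognizing that the natural candidate $(\eta\cup\{a\})^*$ may be too large and that descending to a minimal element of $\mathcal{C}$ repairs this, minimality being inherited precisely because any element of $\mathrsfs{F}$ lying strictly between $g$ and a member of $\mathcal{C}$ is forced to contain $a$.
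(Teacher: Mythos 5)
Your proof is correct. It starts from the same key object as the paper's proof, namely the element $(\eta\cup\{a\})^*$, and your ``if'' direction is the same one-line argument ($a\in\eta$ forces $h\subseteq\eta$, hence $h\subseteq\eta^*=g$, which is absurd). The ``only if'' direction, however, is handled in a genuinely more elementary and self-contained way: where the paper establishes $(\eta\cup\{a\})^*\in\mathrsfs{F}_a$ by invoking Lemma \ref{lem:embedding of the elements no having a} and Theorem \ref{theo: characterization orbits}, i.e.\ the rising-function machinery, you deduce both $g\subsetneq(\eta\cup\{a\})^*$ and $a\in(\eta\cup\{a\})^*$ directly from the maximality of $\eta$ in $Fib(g)$ (if $(\eta\cup\{a\})^*=g$ then $\eta\cup\{a\}$ would be a strictly larger member of the fiber) together with the definition and monotonicity of $\circ^*$. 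The concluding step also differs in form: the paper proves $(h'\men\{a\})^*=g$ and uses it to rule out covers of $g$ that avoid $a$, leaving implicit the existence of some cover of $g$ below $h'$, whereas you make that existence explicit by taking a $\subseteq$-minimal element of the family $\mathcal{C}$ and checking that minimality forces it to cover $g$ in $\mathrsfs{F}$. Both arguments ultimately rest on the same observation --- any member of $\mathrsfs{F}$ contained in $\eta\cup\{a\}$ and missing $a$ is contained in $\eta$, hence in $\eta^*=g$ --- but your version needs only the definitions of $Fib$ and $\circ^*$, making the lemma independent of the earlier embedding and orbit results, while the paper's version is shorter on the page because it re-uses machinery already established.
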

\begin{proof}
  Suppose that $a\notin\eta$ and let $h' = (\eta\cup \{a\})^*$. Since $\eta\in Fib(g)$, then $g\subseteq \eta$ and so $g\subseteq h'$. We claim $(h'\setminus \{a\})^* = g$. By Lemma \ref{lem:embedding of the elements no having a} and Theorem \ref{theo: characterization orbits} we get $h'\in\mathrsfs{F}_a$, and by definition of the operator $\circ^*$, $h' \subseteq \eta\cup \{a\}$. Since $g\subseteq h'$ and $g\in\mathrsfs{F}_{\overline{a}}$, then $g\subseteq (h'\setminus \{a\})^*\subseteq \eta^* = g$ and so the claim $(h'\setminus \{a\})^* = g$. Reasoning by contradiction suppose that there is a $g'\in \mathrsfs{F}_{\overline{a}}$ such that $g\lessdot g'\subseteq h'$. Thus $g'\subseteq (h'\setminus \{a\})$ and so we get the contradiction $g\lessdot g'\subseteq (h'\setminus \{a\})^* = g$, whence there is an $h\in \mathrsfs{F}_{a}$ such that $g\lessdot h\subseteq h'$.
  \\
  On the other side, suppose, contrary to the statement of the lemma, that $a\in\eta$. Thus $h\subseteq \eta$, hence we have $h\subseteq \eta^* = g$. However $h\in\mathrsfs{F}_a$ and $g\in \mathrsfs{F}_{\overline{a}}$, a contradiction.
\end{proof}
The following lemma characterizes the elements of $\mathrsfs{F}_{\overline{a}}$ that have at least one maximal element in their fiber that do not contain the element $a$.
\begin{lemma}\label{lem: characterization covering}
  Let $a\in X$ and let $g\in \mathrsfs{F}_{\overline{a}}$. Then there is an $\eta\in \max(Fib(g))$ with $a\notin\eta$ if and only if there is an $h\in\mathrsfs{F}_{a}$ such that $g\lessdot h$.
\end{lemma}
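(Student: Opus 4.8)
The plan is to deduce both directions from Lemma~\ref{lem: covering property}. The forward implication is immediate: if some $\eta\in\max(Fib(g))$ has $a\notin\eta$, then that lemma already supplies an $h\in\mathrsfs{F}_a$ with $h\sub\eta\cup\{a\}$ and $g\lessdot h$, so in particular an $h\in\mathrsfs{F}_a$ with $g\lessdot h$ exists.

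For the converse I will run Lemma~\ref{lem: covering property} in the other direction. Starting from $h\in\mathrsfs{F}_a$ with $g\lessdot h$, the first step is to exhibit a concrete element of $Fib(g)$, namely $h\men\{a\}$. Since $g\in\mathrsfs{F}_{\overline a}$ we have $a\notin g$, and $g\subsetneq h$, so $g\sub h\men\{a\}$; because $g$ contains a minimal element of $\mathrsfs{F}$ (recall $\mathrsfs{F}=\mathrsfs{F}[\min(\mathrsfs{F})]$), so does $h\men\{a\}$, hence $h\men\{a\}$ lies in the domain $\min(\mathrsfs{F})^{\uparrow}$ of the operator $\circ^*$, and $g$ is among the sets in the union defining $(h\men\{a\})^*$, so that $(h\men\{a\})^*\supseteq g$.

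The one step that really uses the covering hypothesis is promoting this to the equality $(h\men\{a\})^*=g$. As $\mathrsfs{F}$ is $\cup$-closed, $(h\men\{a\})^*$ is an element of $\mathrsfs{F}$ contained in $h\men\{a\}$, hence strictly contained in $h$ (since $a\in h$); if it were strictly larger than $g$, it would be an element of $\mathrsfs{F}$ lying strictly between $g$ and $h$, contradicting $g\lessdot h$. Thus $(h\men\{a\})^*=g$, i.e.\ $h\men\{a\}\in Fib(g)$. Now I pick any maximal element $\eta$ of the finite, nonempty poset $Fib(g)$ with $h\men\{a\}\sub\eta$; then $h\sub(h\men\{a\})\cup\{a\}\sub\eta\cup\{a\}$, while $h\in\mathrsfs{F}_a$ and $g\lessdot h$, so the hypothesis on the right-hand side of Lemma~\ref{lem: covering property} holds for this $\eta\in\max(Fib(g))$, and the lemma yields $a\notin\eta$, as required.

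I expect the only genuinely delicate point to be the equality $(h\men\{a\})^*=g$, which is exactly where $g\lessdot h$ enters and which serves as the bridge letting Lemma~\ref{lem: covering property} do the rest. The remaining ingredients — that $h\men\{a\}$ lies in $\min(\mathrsfs{F})^{\uparrow}$, that a maximal $\eta\supseteq h\men\{a\}$ exists in $Fib(g)$, and the containment $h\sub\eta\cup\{a\}$ — are routine unwinding of the definitions of $Fib$ and $\circ^*$.
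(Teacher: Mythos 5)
Your proof is correct, and the interesting half (from the cover $h$ to the maximal fiber element avoiding $a$) follows a genuinely different, and shorter, route than the paper's. The paper proves that direction by an explicit construction: it chooses a rising order $w'$ that lists the elements of $g$ first, then those of $h\men g$ with $a$ last among them, and then tracks the trajectory of $g$ through the sections to show that no swallowing into $\mathrsfs{F}_s$ can occur before reaching $h\men\{a\}$, that the presence of $h$ in the $(l-1)$-st section blocks the addition of $a$, and finally invokes Theorem \ref{theo: characterization orbits} to certify that $\varphi_{w'}(g)$ is maximal in $Fib(g)$. You bypass all of this trajectory analysis: you observe that $g\lessdot h$ forces $(h\men\{a\})^{*}=g$ (this is exactly the easy half of Proposition \ref{prop: covered in a}, which you reprove inline, so there is no circularity), hence $h\men\{a\}\in Fib(g)$; you then pick any maximal $\eta\supseteq h\men\{a\}$ in the finite poset $Fib(g)$ and feed $h\sub\eta\cup\{a\}$, $h\in\mathrsfs{F}_a$, $g\lessdot h$ into the reverse implication of Lemma \ref{lem: covering property}, whose proof uses nothing beyond $\eta^{*}=g$ and so is available here, to conclude $a\notin\eta$. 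What the paper's construction buys is an explicit word $w'$ realizing the witness as $\varphi_{w'}(g)$, in keeping with the constructive flavour of the surrounding results (compare Lemma \ref{lem:containing spurious}); what your argument buys is brevity and independence from Theorem \ref{theo: characterization orbits}. The only points needing the care you already gave them are that $h\men\{a\}$ lies in $\min(\mathrsfs{F})^{\uparrow}$ (it contains $g$, hence a minimal element of $\mathrsfs{F}$) and that $g\lessdot h$ is read, as the paper intends, with $g\subsetneq h$, which is what licenses $g\sub h\men\{a\}$.
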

\begin{proof}
  Suppose that there is an $\eta\in\max(Fib(g))$ with $a\notin\eta$. By Lemma \ref{lem: covering property} there is an $h\in\mathrsfs{F}_{a}$ such that $g\lessdot h$.
We prove the other side of the equivalence using an argument similar to the one in Lemma \ref{lem: covering property}. Indeed consider the permutation $(a_{i_1},\ldots, a_{i_n})$ of $X$ with $g = \{a_{i_1},\ldots, a_{i_k}\}$, $h = \{a_{i_1},\ldots, a_{i_l}\}$, $a_{i_l} = a$ for some $n\ge l\ge k$. Consider the word $w' = a_{i_1},\ldots, a_{i_n}$, put $\eta_0=g$ and $\eta_{j}=\varphi_{j}(\eta_0)$ for $j=1,\ldots, n$ be the trajectory of $g$ trough the iterated application of the rising functions with respect to $w'$ and let $\mathrsfs{F}_j$ be the associated sections. We claim that $\varphi_{l-1}(g)= g\cup \{a_{i_{k+1}},\ldots,a_{i_{l-1}}\}$. Clearly $\eta_{k} = g$ and suppose, contrary to our claim, that there is an integer $s$ with $k \le  s < l-1$ such that $\eta_s\cup a_{i_{s+1}}\in \mathrsfs{F}_s$ and let us suppose that $s$ is the minimum between the integers with this property. Since $\eta_s\cup a_{i_{s+1}}\in \mathrsfs{F}_s$ there is an element $g'\in \mathrsfs{F}$ with $g'\neq g$ such that $\eta_s\cup a_{i_{s+1}} = \varphi_s(g')$. Thus, since $g\subseteq \eta_s$ we get $g\subseteq \eta_s\cup a_{i_{s+1}} = \varphi_s(g)\subseteq\varphi_{w'}(g')$ and so by Corollary \ref{cor:ideal correspondence} we have $g\subsetneq g'$. Since $a_{i_1}\ldots a_{i_l}$ is a prefix of $w'$, $h = \{a_{i_1},\ldots, a_{i_l}\}$, $\eta_0\subseteq h$, $s<l-1$ and $a_{i_l} = a$ then $\eta_s\cup a_{i_{s+1}}\subseteq h\setminus \{a\}$. Since $g\lessdot h$ and $g\in \mathrsfs{F}_{\overline{a}}$ it is straightforward to check that $g = (h\setminus \{a\})^*$ and so we have the contradiction:
$$
g = (h\setminus \{a\})^* \supseteq (\eta_s\cup a_{i_{s+1}})^* = g' \supsetneq g
$$
since by Corollary \ref{cor:ideal correspondence} we have $g' = \varphi_{w'}(g')^*\supseteq(\eta_s\cup a_{i_{s+1}})^*\supseteq g'$. Therefore $\eta_s\cup a_{i_{s+1}}\notin \mathrsfs{F}_s$ for all $k\le s < l-1$ and so $\varphi_{l-1}(g)= g\cup \{a_{i_{k+1}},\ldots,a_{i_{l-1}}\}$. Since $h = \varphi_{l-1}(h)\in \mathrsfs{F}_{l-1}$ and $\varphi_{l-1}(g) \cup\{a_{i_l}\} = h$ we have $\varphi_{l-1}(g) \cup\{a_{i_l}\} \in \mathrsfs{F}_{l-1}$ hence $\varphi_{l}(g) = \varphi_{l-1}(g) = h\setminus{a}$ ($a = a_{i_l}$) and so $a\notin\eta_m$ for all $m\ge l$. In particular $a\notin \varphi_{w'}(g)$, whence by Theorem \ref{theo: characterization orbits} $\varphi_{w'}(g)\in\max(Fib(g))$ is the element $\eta$ satisfying the condition of the lemma.
\end{proof}
In view of Lemma \ref{lem: characterization covering} we say that $g\in\mathrsfs{F}_{\overline{a}}$ is \emph{covered in} $a$ if there is an $h\in\mathrsfs{F}_{a}$ such that $g\lessdot h$. In this case we say that $h$ covers $g$ in $a$. The following proposition gives an equivalent formulation of this definition.
\begin{proposition}\label{prop: covered in a}
  $g\in\mathrsfs{F}_{\overline{a}}$ is \emph{covered in} $a$ iff there is an $h\in\mathrsfs{F}_{a}$ such that $(h\setminus\{a\})^* = g$.
\end{proposition}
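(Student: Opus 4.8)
The plan is to prove both implications directly from the elementary properties of the operator $\circ^*$ recorded just before the statement, namely that $\circ^*$ preserves inclusion, that $g^*=g$ for every $g\in\mathrsfs{F}$, and that $z^*\in\mathrsfs{F}$ with $z^*\subseteq z$ for every $z\in\min(\mathrsfs{F})^{\uparrow}$. No fiber machinery is needed, although the forward implication is in fact already essentially contained in the computation inside the proof of Lemma \ref{lem: covering property}.

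For the forward direction, assume $g$ is covered in $a$, so there is $h\in\mathrsfs{F}_a$ with $g\lessdot h$. Since $g\in\mathrsfs{F}_{\overline{a}}$ we have $a\notin g$, hence $g\subseteq h\setminus\{a\}$; in particular $h\setminus\{a\}$ contains a minimal element of $\mathrsfs{F}$ (namely one sitting below $g$), so $h\setminus\{a\}\in\min(\mathrsfs{F})^{\uparrow}$ and $(h\setminus\{a\})^*$ is defined. Using that $\circ^*$ is monotone and fixes elements of $\mathrsfs{F}$ we get $g=g^*\subseteq(h\setminus\{a\})^*\subseteq h\setminus\{a\}\subsetneq h$. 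Since $(h\setminus\{a\})^*\in\mathrsfs{F}$ and there is no element of $\mathrsfs{F}$ strictly between $g$ and $h$, this sandwiched element must equal $g$, i.e. $(h\setminus\{a\})^*=g$.

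For the converse, assume there is $h\in\mathrsfs{F}_a$ with $(h\setminus\{a\})^*=g$. Then $g\subseteq h\setminus\{a\}\subsetneq h$, so the poset $P=\{f\in\mathrsfs{F}:g\subsetneq f\subseteq h\}$ is non-empty (it contains $h$); pick a $\subseteq$-minimal element $h'$ of $P$, which exists by finiteness of $\mathrsfs{F}$. By minimality no element of $\mathrsfs{F}$ lies strictly between $g$ and $h'$, that is $g\lessdot h'$. It remains to check that $a\in h'$: if not, then $h'\subseteq h\setminus\{a\}$, whence $h'=(h')^*\subseteq(h\setminus\{a\})^*=g$, contradicting $g\subsetneq h'$. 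Thus $h'\in\mathrsfs{F}_a$ with $g\lessdot h'$, i.e. $g$ is covered in $a$.

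The argument is short; the only points requiring care are, in the forward direction, checking that $\circ^*$ is applied only to sets lying in $\min(\mathrsfs{F})^{\uparrow}$ — which is exactly where the hypothesis $g\in\mathrsfs{F}_{\overline{a}}$ enters, via $g\subseteq h\setminus\{a\}$ — and, in the converse, noting that the inclusion-preserving property of $\circ^*$ is precisely what forces the minimal witness $h'$ to contain $a$. I expect no genuine obstacle beyond this bookkeeping.
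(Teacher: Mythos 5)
Your proof is correct and takes essentially the same route as the paper: the forward direction is the monotonicity sandwich $g=g^*\subseteq(h\setminus\{a\})^*\subseteq h\setminus\{a\}\subsetneq h$ forced to collapse by the covering hypothesis, and the converse rests on the same absorption argument that any $f\in\mathrsfs{F}$ with $a\notin f$ and $g\subsetneq f\subseteq h$ would satisfy $f\subseteq(h\setminus\{a\})^*=g$. Your explicit choice of a minimal element of $\{f\in\mathrsfs{F}:g\subsetneq f\subseteq h\}$ merely spells out the step the paper leaves implicit, so there is no substantive difference.
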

\begin{proof}
  Suppose that $h\in\mathrsfs{F}_{a}$ such that $g\lessdot h$, then it is straightforward to see that $(h\setminus\{a\})^* = g$. Conversely suppose that there is an $h\in\mathrsfs{F}_{a}$ such that $(h\setminus\{a\})^* = g$. Arguing by contradiction suppose that $g$ is not covered in $a$ and so for any $t\in \mathrsfs{F}_{a}$ there is a $g'\in \mathrsfs{F}_{\overline{a}}$ such that $g\subsetneq g'\subsetneq t$. In particular this occurs for $h$, hence there is a $g'\in \mathrsfs{F}_{\overline{a}}$  with $g\subsetneq g'\subsetneq h$. Thus we have the contradiction $g'\subseteq (h\setminus\{a\})^* = g \subsetneq g'$.
\end{proof}
From this proposition we have that the set
$$
Cov_a(g) = \{h\in\mathrsfs{F}_{a}:(h\setminus\{a\})^* = g \}
$$
is non-empty iff $g$ is covered in $a$.

\section{Some results around Frankl's conjecture}\label{sec:frankl}
The connection between upward-closed families and $\cup$-closed families that we have established in the previous two sections can be useful to try to tackle Frankl's conjecture. The aim of this section is to introduce some subsets which are related to this conjecture. In particular in the first part we fix a word $w$ and we introduce these sets using the rising function $\varphi_w$, in the second part we draw some consequences of this approach giving some lower bounds on the quantity $\frac{1}{|\mathrsfs{F}[S]|}\sum_{f\in\mathrsfs{F}[S]}|f|$, for any $S\subseteq \mathrsfs{F}$, and in the last part we consider the invariant case.

\subsection{Some useful subsets}\label{subsec: useful subsets}
\begin{definition}\label{def:P,S}
  Let $\mathrsfs{H}$ be a family of sets of $X=\{a_1,\ldots,a_n\}$ and let $a\in X$. We denote by $S(\mathrsfs{H},a)$ the set of all the elements $z\in\mathrsfs{H}$ such that $z\cup\{a\}\notin\mathrsfs{H}$. Dually we put $P(\mathrsfs{H},a)$ as the set of all the elements $z\in\mathrsfs{H}$ such that $z\men\{a\}\notin\mathrsfs{H}$.
\end{definition}
Note that $P(\mathrsfs{H},a)$ is non-empty since $\min\{\mathrsfs{H}\}_a \subseteq P(\mathrsfs{H},a)$. We have the following proposition.
\begin{proposition}\label{prop:another caracherization without filter}
  Let $\mathrsfs{H}$ be a family of sets of $X$, then for any $a\in X$:
  $$
  |\mathrsfs{H}_a|-|\mathrsfs{H}_{\overline{a}}|=|P(\mathrsfs{H},a)|-|S(\mathrsfs{H},a)|
  $$
  Moreover if $\mathrsfs{H}$ is $\cup$-closed, then Frankl's conjecture holds for $\mathrsfs{H}$ if and only if there is some $a\in X$ such that
  $$
  |P(\mathrsfs{H},a)|\ge |S(\mathrsfs{H},a)|
  $$
\end{proposition}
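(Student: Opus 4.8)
The plan is to read both assertions off a single explicit bijection between suitable subsets of $\mathrsfs{H}$. First I would record two elementary containments: $S(\mathrsfs{H},a)\sub\mathrsfs{H}_{\overline{a}}$ and $P(\mathrsfs{H},a)\sub\mathrsfs{H}_a$. Indeed, if $z\in\mathrsfs{H}$ contains $a$ then $z\cup\{a\}=z\in\mathrsfs{H}$, so $z\notin S(\mathrsfs{H},a)$; dually, if $a\notin z$ then $z\men\{a\}=z\in\mathrsfs{H}$, so $z\notin P(\mathrsfs{H},a)$. These two facts are what let the counting go through cleanly at the end.

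Next I would consider the map $\psi\colon z\mapsto z\cup\{a\}$, which is injective on $2^X$, and show that it restricts to a bijection
$$
\psi\colon\ \mathrsfs{H}_{\overline{a}}\men S(\mathrsfs{H},a)\ \longrightarrow\ \mathrsfs{H}_a\men P(\mathrsfs{H},a).
$$
For the forward direction: if $z\in\mathrsfs{H}_{\overline{a}}$ with $z\notin S(\mathrsfs{H},a)$, then $\psi(z)=z\cup\{a\}\in\mathrsfs{H}$ (by definition of $S$), $\psi(z)$ contains $a$, and $\psi(z)\men\{a\}=z\in\mathrsfs{H}$, so $\psi(z)\in\mathrsfs{H}_a\men P(\mathrsfs{H},a)$. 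For surjectivity/inverse: given $\eta\in\mathrsfs{H}_a\men P(\mathrsfs{H},a)$, put $z=\eta\men\{a\}$; then $z\in\mathrsfs{H}$ because $\eta\notin P(\mathrsfs{H},a)$, $a\notin z$, and $z\cup\{a\}=\eta\in\mathrsfs{H}$ shows $z\notin S(\mathrsfs{H},a)$, while $\psi(z)=\eta$. Injectivity is inherited from $\psi$. Taking cardinalities and using the two containments from the first paragraph, $|\mathrsfs{H}_{\overline{a}}|-|S(\mathrsfs{H},a)|=|\mathrsfs{H}_a|-|P(\mathrsfs{H},a)|$, which is exactly the claimed identity after rearranging.

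For the ``moreover'' part I would simply combine this identity with $|\mathrsfs{H}|=|\mathrsfs{H}_a|+|\mathrsfs{H}_{\overline{a}}|$: the inequality $|\mathrsfs{H}_a|\ge|\mathrsfs{H}|/2$ is equivalent to $|\mathrsfs{H}_a|-|\mathrsfs{H}_{\overline{a}}|\ge 0$, hence by the identity to $|P(\mathrsfs{H},a)|\ge|S(\mathrsfs{H},a)|$, for each fixed $a$. Quantifying over $a\in X$ gives the stated equivalence with Conjecture~\ref{cj:Frankl} for $\mathrsfs{H}$. I do not expect any real obstacle here; the only thing to be careful about is the bookkeeping of which of the four sets $\mathrsfs{H}_a,\mathrsfs{H}_{\overline{a}},P(\mathrsfs{H},a),S(\mathrsfs{H},a)$ sit where, and the $\cup$-closed hypothesis is needed only to make the phrase ``Frankl's conjecture holds for $\mathrsfs{H}$'' meaningful, not in the counting argument itself.
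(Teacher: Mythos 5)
Your argument is correct and is essentially the paper's: both hinge on the bijection between $\{f\in\mathrsfs{H}_a: f\men\{a\}\in\mathrsfs{H}\}=\mathrsfs{H}_a\men P(\mathrsfs{H},a)$ and $\{f\in\mathrsfs{H}_{\overline{a}}: f\cup\{a\}\in\mathrsfs{H}\}=\mathrsfs{H}_{\overline{a}}\men S(\mathrsfs{H},a)$ (you use $z\mapsto z\cup\{a\}$, the paper its inverse $z\mapsto z\men\{a\}$), and both settle the ``moreover'' claim via $2|\mathrsfs{H}_a|-|\mathrsfs{H}|=|\mathrsfs{H}_a|-|\mathrsfs{H}_{\overline{a}}|$. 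The only difference is that you count directly using the containments $P(\mathrsfs{H},a)\sub\mathrsfs{H}_a$ and $S(\mathrsfs{H},a)\sub\mathrsfs{H}_{\overline{a}}$, whereas the paper routes the same count through the complemented family $\mathrsfs{H}^c$; your bookkeeping is the more streamlined of the two.
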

\begin{proof}
  It is straightforward to check that the function $\psi$ from the set $\{f\in\mathrsfs{H}_a:f\men\{a\}\in\mathrsfs{H}\}$ onto the set $\{f\in\mathrsfs{H}_{\overline{a}}:f\cup\{a\}\in\mathrsfs{H}\}$ defined by $\psi(z)=z\men\{a\}$ is a bijection. Furthermore $\{f\in\mathrsfs{H}_a:f\men\{a\}\in\mathrsfs{H}\}$ is in bijection with the set $\{f\in\mathrsfs{H}^c_{\overline{a}}:f\cup\{a\}\in\mathrsfs{H}^c\}$ and so $|\{f\in\mathrsfs{H}^c_{\overline{a}}:f\cup\{a\}\in\mathrsfs{H}^c\}|=
  |\{f\in\mathrsfs{H}_{\overline{a}}:f\cup\{a\}\in\mathrsfs{H}\}|$, whence
  \begin{align}
  \nonumber &|\mathrsfs{H}\men\{f\in\mathrsfs{H}_{\overline{a}}:f\cup\{a\}\in\mathrsfs{H}\}|=
  |\mathrsfs{H}|-|\{f\in\mathrsfs{H}_{\overline{a}}:f\cup\{a\}\in\mathrsfs{H}\}|=\\
  \nonumber &=|\mathrsfs{H}^c|-|\{f\in\mathrsfs{H}^c_{\overline{a}}:f\cup\{a\}\in\mathrsfs{H}^c\}|=
  |\mathrsfs{H}^c\men\{f\in\mathrsfs{H}^c_{\overline{a}}:f\cup\{a\}\in\mathrsfs{H}^c\}|
  \end{align}
  Hence from $|\mathrsfs{H}\men\{f\in\mathrsfs{H}_{\overline{a}}:f\cup\{a\}\in\mathrsfs{H}\}|=
  |\mathrsfs{H}^c\men\{f\in\mathrsfs{H}^c_{\overline{a}}:f\cup\{a\}\in\mathrsfs{H}^c\}|$ we get the equality
  $$
  |\mathrsfs{H}_a|+|\{f\in\mathrsfs{H}_{\overline{a}}:f\cup\{a\}\notin\mathrsfs{H}\}|=
  |\mathrsfs{H}^c_a|+|\{f\in\mathrsfs{H}^c_{\overline{a}}:f\cup\{a\}\notin\mathrsfs{H}^c\}|
  $$
  and so the statement follows from $|\mathrsfs{H}^c_a|=|\mathrsfs{H}_{\overline{a}}|$, $|\{f\in\mathrsfs{H}^c_{\overline{a}}:f\cup\{a\}\notin\mathrsfs{H}^c\}|=|
  \{f\in\mathrsfs{H}_{a}:f\men\{a\}\notin\mathrsfs{H}\}|=|P(\mathrsfs{H},a)|$ and $\{f\in\mathrsfs{H}_{\overline{a}}:f\cup\{a\}\notin\mathrsfs{H}\}=S(\mathrsfs{H},a)$.
  The last claim of the proposition is a consequence of $2|\mathrsfs{H}_a|-|\mathrsfs{H}|=|\mathrsfs{H}_a|-|\mathrsfs{H}_{\overline{a}}|$.
\end{proof}
Therefore the study of the sets $S(\mathrsfs{H},a)$ and $P(\mathrsfs{H},a)$ seems important in a possible proof of the Frankl's conjecture. Let us fix a $\cup$-closed family $\mathrsfs{F}$ on $X$, let $\mathcal{F}=\varphi_w(\mathrsfs{F})$ be the associated upward-closed family for some fixed word $w$. We introduce now two analogous sets which are important to give a lower bound to the quantity $\frac{1}{|\mathrsfs{F}[S]|}\sum_{f\in\mathrsfs{F}[S]}|f|$, for any $S\subseteq \mathrsfs{F}$ and which are somehow related to $S(\mathrsfs{H},a)$ and $P(\mathrsfs{H},a)$.
\begin{definition}\label{def:spurious and pure elements}
  Let $a\in X$, the set $\sigma_w(\mathrsfs{F},a)=\{\eta\in\varphi_w(\mathrsfs{F}):a\in \eta\setminus\varphi_w^{-1}(\eta)\}$ is called the set of spurious elements of $\mathcal{F}$ with respect to $a$. The set $\pi_w(\mathrsfs{F},a)=\{\eta\in\varphi_w(\mathrsfs{F}_a):\eta\men\{a\}\notin\varphi_w(\mathrsfs{F})\}$ is called the set of pure elements of $\mathcal{F}$ with respect to $a$.
  \\
  Let $\eta\in\mathcal{F}$, the set of \emph{pure elements of $\eta$}, denoted by $\pi_w(\mathrsfs{F}, \eta)$, is the set $\{a\in X: \eta\in\pi_w(a)\}$ and analogously the set of \emph{spurious elements of $\eta$} is the set $\sigma_w(\mathrsfs{F}, \eta)=\{a\in X: \eta\in\sigma_w(a)\}$.
\end{definition}
  When the $\cup$-closed set $\mathrsfs{F}$ is clear from the context, we drop $\mathrsfs{F}$ from $\sigma_w(\mathrsfs{F},a), \sigma_w(\mathrsfs{F},\eta),\pi_w(\mathrsfs{F},a), \pi_w(\mathrsfs{F},\eta)$ and we use instead $\sigma_w(a)$, $\sigma_w(\eta)$, $\pi_w(a)$,  $\pi_w(\eta)$.
\noindent We have the following lemma.
\begin{lemma}\label{lem:partition of mathcal F_a}
 The two sets $\varphi_w(\mathrsfs{F}_a)$, $\sigma_w(a)$ form a partition of $\mathcal{F}_a$. In turn $\varphi_w(\mathrsfs{F}_a)$ is partitioned by $\pi_w(a)$, $\psi(\mathcal{F}_{\overline{a}})$ where $\psi(z)=z\cup\{a\}$. Moreover $\sigma_w(a)\cup\pi_w(a)=\{z\in\mathcal{F}:z\men\{a\}\notin\mathcal{F}\}$ and $$
  |\mathcal{F}_a|=|\mathcal{F}_{\overline{a}}|+|\pi_w(a)|+|\sigma_w(a)|.
 $$
\end{lemma}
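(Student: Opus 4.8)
The plan is to decompose $\mathcal{F}_a$ (the elements of the upward-closed family $\mathcal{F}=\varphi_w(\mathrsfs{F})$ containing $a$) according to whether they lie in the image $\varphi_w(\mathrsfs{F}_a)$ of the sets of $\mathrsfs{F}$ containing $a$, or not. First I would show that $\mathcal{F}_a$ is the disjoint union of $\varphi_w(\mathrsfs{F}_a)$ and $\sigma_w(a)$. For the containment $\varphi_w(\mathrsfs{F}_a)\subseteq\mathcal{F}_a$: if $f\in\mathrsfs{F}_a$ then $a\in f\subseteq\varphi_w(f)$, so $\varphi_w(f)\in\mathcal{F}_a$. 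For $\sigma_w(a)\subseteq\mathcal{F}_a$: if $\eta\in\sigma_w(a)$ then by definition $a\in\eta$, so $\eta\in\mathcal{F}_a$. These two are disjoint because $\eta\in\varphi_w(\mathrsfs{F}_a)$ means $\eta=\varphi_w(f)$ with $a\in f=\varphi_w^{-1}(\eta)$, contradicting $a\in\eta\setminus\varphi_w^{-1}(\eta)$. Finally, to see they cover $\mathcal{F}_a$, take $\eta\in\mathcal{F}_a$ and set $f=\varphi_w^{-1}(\eta)$; if $a\in f$ then $\eta\in\varphi_w(\mathrsfs{F}_a)$, and if $a\notin f$ then $a\in\eta\setminus\varphi_w^{-1}(\eta)$, so $\eta\in\sigma_w(a)$.

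Next I would establish the second partition: $\varphi_w(\mathrsfs{F}_a)=\pi_w(a)\sqcup\psi(\mathcal{F}_{\overline{a}})$. By Lemma~\ref{lem:embedding of the elements no having a}, $\psi(\mathcal{F}_{\overline{a}})\subseteq\varphi_w(\mathrsfs{F}_a)$, and an element $\eta\in\varphi_w(\mathrsfs{F}_a)$ lies in $\psi(\mathcal{F}_{\overline{a}})$ exactly when $\eta\setminus\{a\}\in\mathcal{F}_{\overline{a}}$, i.e.\ exactly when $\eta\setminus\{a\}\in\mathcal{F}$ (note $a\in\eta$ for all such $\eta$ since they lie in $\varphi_w(\mathrsfs{F}_a)$, and one checks $a\in\varphi_w(f)$ whenever $a\in f$). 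The complement inside $\varphi_w(\mathrsfs{F}_a)$ is by definition precisely $\pi_w(a)=\{\eta\in\varphi_w(\mathrsfs{F}_a):\eta\setminus\{a\}\notin\varphi_w(\mathrsfs{F})\}$. Injectivity of $\psi$ gives $|\psi(\mathcal{F}_{\overline{a}})|=|\mathcal{F}_{\overline{a}}|$.

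For the identity $\sigma_w(a)\cup\pi_w(a)=\{z\in\mathcal{F}:z\setminus\{a\}\notin\mathcal{F}\}$, I would argue both inclusions. If $z\setminus\{a\}\notin\mathcal{F}$ then certainly $a\in z$ (otherwise $z\setminus\{a\}=z\in\mathcal{F}$), so $z\in\mathcal{F}_a=\varphi_w(\mathrsfs{F}_a)\sqcup\sigma_w(a)$; if $z\in\varphi_w(\mathrsfs{F}_a)$ then $z\notin\psi(\mathcal{F}_{\overline{a}})$ (since that would force $z\setminus\{a\}\in\mathcal{F}$), so $z\in\pi_w(a)$. Conversely, if $z\in\pi_w(a)$ then $z\setminus\{a\}\notin\mathcal{F}$ by definition; and if $z\in\sigma_w(a)$ then $a\in z\setminus\varphi_w^{-1}(z)$, and since $\varphi_w^{-1}(z)=\bigcup_{f\in\mathrsfs{F},f\subseteq z}f$ by Corollary~\ref{cor:ideal correspondence}, no $f\in\mathrsfs{F}$ with $f\subseteq z$ contains $a$, hence every such $f$ satisfies $f\subseteq z\setminus\{a\}$, which means no element of $\mathrsfs{F}$ lies below $z\setminus\{a\}\cup\{a\}$ in a way forcing membership — more carefully, if $z\setminus\{a\}\in\mathcal{F}$ then $\varphi_w^{-1}(z\setminus\{a\})\subseteq z\setminus\{a\}$ is an element of $\mathrsfs{F}$ below $z$, hence below $\varphi_w^{-1}(z)$, but also $z\setminus\{a\}\subseteq z$ gives by Theorem~\ref{theo:ideal correspondence}-style reasoning $\varphi_w^{-1}(z)\subseteq\varphi_w^{-1}(z\setminus\{a\})\cup\{a\}$, and combined with $a\notin\varphi_w^{-1}(z)$ this yields $\varphi_w^{-1}(z)=\varphi_w^{-1}(z\setminus\{a\})$, forcing $z=z\setminus\{a\}$ by injectivity, a contradiction. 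The cardinality formula $|\mathcal{F}_a|=|\mathcal{F}_{\overline a}|+|\pi_w(a)|+|\sigma_w(a)|$ then follows by adding $|\varphi_w(\mathrsfs{F}_a)|=|\pi_w(a)|+|\mathcal{F}_{\overline a}|$ to $|\mathcal{F}_a|=|\varphi_w(\mathrsfs{F}_a)|+|\sigma_w(a)|$.

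The main obstacle I anticipate is the careful handling of the last inclusion $\sigma_w(a)\subseteq\{z\in\mathcal{F}:z\setminus\{a\}\notin\mathcal{F}\}$: one must rule out that stripping $a$ from a spurious element lands back in $\mathcal{F}$, and this requires combining the explicit formula for $\varphi_w^{-1}$ from Corollary~\ref{cor:ideal correspondence} with the inclusion-preservation of $\varphi_w^{-1}$ and injectivity of $\varphi_w$, rather than any single cited statement. Everything else is bookkeeping on disjoint unions.
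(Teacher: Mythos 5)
Your proof is correct and follows essentially the same route as the paper: the partition $\mathcal{F}_a=\varphi_w(\mathrsfs{F}_a)\sqcup\sigma_w(a)$, the partition of $\varphi_w(\mathrsfs{F}_a)$ into $\pi_w(a)$ and $\psi(\mathcal{F}_{\overline{a}})$ via Lemma \ref{lem:embedding of the elements no having a}, and the final count using injectivity of $\psi$. The only difference is that the inclusion $\sigma_w(a)\subseteq\{z\in\mathcal{F}:z\setminus\{a\}\notin\mathcal{F}\}$, which you single out as the main obstacle and prove through Corollary \ref{cor:ideal correspondence} and injectivity of $\varphi_w$, is already an immediate consequence of your two partitions (the paper treats it as evident): $\sigma_w(a)\subseteq\mathcal{F}_a\setminus\varphi_w(\mathrsfs{F}_a)\subseteq\mathcal{F}_a\setminus\psi(\mathcal{F}_{\overline{a}})$, and any $z\in\mathcal{F}_a$ with $z\setminus\{a\}\in\mathcal{F}$ would satisfy $z=\psi(z\setminus\{a\})\in\psi(\mathcal{F}_{\overline{a}})$, so no appeal to the explicit inverse formula is needed.
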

\begin{proof}
  Since $\sigma_w(a)\sub\mathcal{F}_a$ and $\varphi_w(\mathrsfs{F}_a)\sub\mathcal{F}_a$, then $\mathcal{F}_a\men\varphi_w(\mathrsfs{F}_a)$ is formed by elements $z\in\mathcal{F}_a$ for which $a$ is a spurious element of $z$, i.e. $\mathcal{F}_a\men\varphi_w(\mathrsfs{F}_a)=\sigma_w(a)$.
  By Lemma \ref{lem:embedding of the elements no having a} $\psi(\mathcal{F}_{\overline{a}})\sub\varphi_w(\mathrsfs{F}_a)$ and if $z\in\varphi_w(\mathrsfs{F}_a)\men\psi(\mathcal{F}_{\overline{a}})$ then $z\men\{a\}\notin\mathcal{F}$, otherwise $z=\psi(z\men\{a\})$. Therefore $\pi_w(a)=\varphi_w(\mathrsfs{F}_a)\men\psi(\mathcal{F}_{\overline{a}})$.
  By the previous statements it is also evident that:
  $$
  \sigma_w(a)\cup\pi_w(a)=\mathcal{F}_a\men\psi(\mathcal{F}_{\overline{a}})=\{z\in\mathcal{F}:z\men\{a\}\notin\mathcal{F}\}
  $$
  Since $\mathcal{F}_a$ is partitioned into the two sets  and $\sigma_w(a)$ $\varphi_w(\mathrsfs{F}_a)$ which in turn is partition by the two sets $\pi_w(a)$, $\psi(\mathcal{F}_{\overline{a}})$, and $\psi$ is an injective map we have:
  $$
  |\mathcal{F}_a|=|\psi(\mathcal{F}_{\overline{a}})|+|\sigma_w(a)|+|\pi_w(a)|=
  |\mathcal{F}_{\overline{a}}|+|\sigma_w(a)|+|\pi_w(a)|
  $$
  and this completes the proof of the lemma.
\end{proof}
The following proposition gives an alternative formulation of Frankl's conjecture which is the analogous of Proposition \ref{prop:another caracherization without filter}.
\begin{proposition}\label{prop:alternative formulation}
 For any $a\in X$
  $$
  |\pi_w(a)|-|\sigma_w(a)|=|P(\mathrsfs{F},a)|-|S(\mathrsfs{F},a)|
  $$
  and so Frankl's conjecture holds for $\mathrsfs{F}$ if and only if $
  |\pi_w(a)|\ge |\sigma_w(a)|$ for some $a\in X$. Moreover $|\pi_w(a)|\le |P(\mathrsfs{F},a)|$, $|\sigma_w(a)|\le |S(\mathrsfs{F},a)|$.
\end{proposition}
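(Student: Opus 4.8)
The plan is to reduce the whole statement to elementary counting: I would express each of $|\pi_w(a)|,|\sigma_w(a)|,|P(\mathrsfs{F},a)|,|S(\mathrsfs{F},a)|$ through the four numbers $|\mathrsfs{F}_a|,|\mathrsfs{F}_{\overline a}|,|\mathcal{F}_a|,|\mathcal{F}_{\overline a}|$, using Lemma~\ref{lem:partition of mathcal F_a} on one side and Proposition~\ref{prop:another caracherization without filter} on the other. Concretely, from Lemma~\ref{lem:partition of mathcal F_a}: since $\mathcal{F}_a$ is the disjoint union of $\varphi_w(\mathrsfs{F}_a)$ and $\sigma_w(a)$ and $\varphi_w$ is a bijection, $|\sigma_w(a)|=|\mathcal{F}_a|-|\mathrsfs{F}_a|$; and since $\varphi_w(\mathrsfs{F}_a)$ is the disjoint union of $\pi_w(a)$ and $\psi(\mathcal{F}_{\overline a})$ with $\psi$ injective, $|\pi_w(a)|=|\mathrsfs{F}_a|-|\mathcal{F}_{\overline a}|$. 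Subtracting and using $|\mathcal{F}_a|+|\mathcal{F}_{\overline a}|=|\mathcal{F}|=|\mathrsfs{F}|=|\mathrsfs{F}_a|+|\mathrsfs{F}_{\overline a}|$ (as $\varphi_w\colon\mathrsfs{F}\to\mathcal{F}$ is a bijection) gives
$$|\pi_w(a)|-|\sigma_w(a)|=2|\mathrsfs{F}_a|-|\mathrsfs{F}|=|\mathrsfs{F}_a|-|\mathrsfs{F}_{\overline a}|,$$
and Proposition~\ref{prop:another caracherization without filter} applied with $\mathrsfs{H}=\mathrsfs{F}$ rewrites the right-hand side as $|P(\mathrsfs{F},a)|-|S(\mathrsfs{F},a)|$, which is the first claimed equality.

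The reformulation of Frankl's conjecture is then immediate: Conjecture~\ref{cj:Frankl} for $\mathrsfs{F}$ asserts the existence of an $a\in X$ with $|\mathrsfs{F}_a|\ge|\mathrsfs{F}|/2$, i.e. $2|\mathrsfs{F}_a|-|\mathrsfs{F}|\ge 0$, i.e. (by the identity just proved) $|\pi_w(a)|\ge|\sigma_w(a)|$.

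For the inequality $|\sigma_w(a)|\le|S(\mathrsfs{F},a)|$ I would show that $\varphi_w^{-1}$ maps $\sigma_w(a)$ into $S(\mathrsfs{F},a)$; since $\varphi_w$ is a bijection this map is injective, which is enough. So fix $\eta\in\sigma_w(a)$ and set $z=\varphi_w^{-1}(\eta)$. By definition of $\sigma_w(a)$ we have $a\in\eta$ and $a\notin z$, so $z\in\mathrsfs{F}_{\overline a}$; it remains to see $z\cup\{a\}\notin\mathrsfs{F}$. If $z\cup\{a\}$ were in $\mathrsfs{F}$, then from $z\sub\eta$ (Corollary~\ref{cor:ideal correspondence}) and $a\in\eta$ we would get $z\cup\{a\}\sub\eta$ with $z\cup\{a\}\in\mathrsfs{F}$; but by the closed form $z=\bigcup_{f\in\mathrsfs{F},\,f\sub\eta}f$ of Corollary~\ref{cor:ideal correspondence} this forces $a\in z$, a contradiction. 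Hence $z\in S(\mathrsfs{F},a)$.

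For the last inequality one must be careful: $\varphi_w^{-1}$ does \emph{not} in general send $\pi_w(a)$ into $P(\mathrsfs{F},a)$ (small examples already show this), so I would derive $|\pi_w(a)|\le|P(\mathrsfs{F},a)|$ formally from the identities in hand. Rewriting $|\sigma_w(a)|=|\mathcal{F}_a|-|\mathrsfs{F}_a|$ via $|\mathcal{F}|=|\mathrsfs{F}|$ as $|\sigma_w(a)|=|\mathrsfs{F}_{\overline a}|-|\mathcal{F}_{\overline a}|$, and combining $|\pi_w(a)|=|\mathrsfs{F}_a|-|\mathcal{F}_{\overline a}|$ with $|P(\mathrsfs{F},a)|=|\mathrsfs{F}_a|-|\mathrsfs{F}_{\overline a}|+|S(\mathrsfs{F},a)|$ (Proposition~\ref{prop:another caracherization without filter}), one obtains $|P(\mathrsfs{F},a)|-|\pi_w(a)|=|S(\mathrsfs{F},a)|-|\sigma_w(a)|\ge 0$ by the previous paragraph. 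Thus the whole argument is bookkeeping except for the single contradiction $a\in z$ above, which is the only place the specific structure of $\varphi_w^{-1}$ (via Corollary~\ref{cor:ideal correspondence}) is really used; the main thing to get right is not to assume the mirror-image injection for $\pi_w(a)$.
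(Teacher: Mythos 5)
Your proof is correct and follows essentially the same route as the paper: the identity comes from the partition of $\mathcal{F}_a$ in Lemma~\ref{lem:partition of mathcal F_a} combined with Proposition~\ref{prop:another caracherization without filter}, the bound $|\sigma_w(a)|\le|S(\mathrsfs{F},a)|$ from the same injection $\varphi_w^{-1}(\sigma_w(a))\subseteq S(\mathrsfs{F},a)$ via Corollary~\ref{cor:ideal correspondence}, and $|\pi_w(a)|\le|P(\mathrsfs{F},a)|$ as a formal consequence of these, exactly as in the paper.
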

\begin{proof}
  It is not difficult to check that $|\mathrsfs{F}_{\overline{a}}|-| \sigma_w(a)|=|\mathcal{F}_{\overline{a}}|$ and by Lemma \ref{lem:partition of mathcal F_a} we have $|\mathcal{F}_{a}|=| \mathrsfs{F}_a|+|\sigma_w(a)|$.
  Thus by the same Lemma \ref{lem:partition of mathcal F_a} we get
  $$
  |\mathrsfs{F}_a|=|\mathrsfs{F}_{\overline{a}}|+|\pi_w(a)|-|\sigma_w(a)|
  $$
  and so, by Proposition \ref{prop:another caracherization without filter} we get the statement $|P(\mathrsfs{F},a)|-|S(\mathrsfs{F},a)|=|\mathrsfs{F}_a|-|\mathrsfs{F}_{\overline{a}}|=
  |\pi_w(a)|-|\sigma_w(a)|$.
  \\
  Let us prove the last statement showing that $\sigma_w(a)\sub\varphi_w(S(\mathrsfs{F},a))$. Let $\eta\in\sigma_w(a)$. Reasoning by contradiction, suppose that $z=\varphi_w^{-1}(\eta)\notin S(\mathrsfs{F},a)$ and so $z\cup\{a\}\in\mathrsfs{F}$. Since $z\cup\{a\}\sub \eta$, by Corollary \ref{cor:ideal correspondence} we get $\eta\in\mathcal{F}[z\cup\{a\}]\backsimeq\mathrsfs{F}[z\cup\{a\}]$ and so $a\in z\cup\{a\}\subseteq\varphi_w^{-1}(\eta)=z$ which contradicts $\eta\in\sigma_w(a)$. The statement $|\pi_w(a)|\le |P(\mathrsfs{F},a)|$ is a consequence of $|\pi_w(a)|-|\sigma_w(a)|=|P(\mathrsfs{F},a)|-|S(\mathrsfs{F},a)|$ and $|\sigma_w(a)|\le |S(\mathrsfs{F},a)|$.
\end{proof}
In view of Proposition \ref{prop:alternative formulation} it is interesting to give a lower bound to the set $|\pi_w(a)|$. The following proposition gives a partial answer, we recall that $\circ^*$ is the operator introduced in Section \ref{sec:permutation dependancy}.
\begin{proposition}\label{prop:lower bound pure}
For any $a\in X$ we have:
  $$
  \{\varphi_w(g): g\in\mathrsfs{F}_a, (g\setminus\{a\})^* = \emptyset \}\sub\pi_w(a)
  $$
\end{proposition}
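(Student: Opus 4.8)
The plan is to argue by contradiction, the whole argument resting on the inversion formula $\varphi_w^{-1}(\eta)=\bigcup_{\{f\in\mathrsfs{F}:f\subseteq\eta\}}f$ from Corollary~\ref{cor:ideal correspondence} together with the inclusion $\varphi_w(\mathrsfs{F})\subseteq\min(\mathrsfs{F})^{\uparrow}$ noted in Section~\ref{sec:permutation dependancy}.

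Fix $a\in X$, take $g\in\mathrsfs{F}_a$ with $(g\setminus\{a\})^*=\emptyset$, and put $\eta=\varphi_w(g)$. First I would dispatch the membership bookkeeping: since $g\in\mathrsfs{F}_a$ we have $\eta\in\varphi_w(\mathrsfs{F}_a)$, and since $a\in g$ while the rising process only ever adds letters to an element, $a\in\eta$. By Definition~\ref{def:spurious and pure elements}, to conclude $\eta\in\pi_w(a)$ it then remains to prove that $\eta\setminus\{a\}\notin\mathcal{F}$, where $\mathcal{F}=\varphi_w(\mathrsfs{F})$.

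So suppose $\eta\setminus\{a\}\in\mathcal{F}$. Since $\mathcal{F}\subseteq\min(\mathrsfs{F})^{\uparrow}$, there is a minimal element $m\in\min(\mathrsfs{F})\subseteq\mathrsfs{F}$ with $m\subseteq\eta\setminus\{a\}$ (in particular $m\neq\emptyset$). Now $m\in\mathrsfs{F}$ and $m\subseteq\eta$, so the inversion formula applied to $\eta$, namely $g=\varphi_w^{-1}(\eta)=\bigcup_{\{f\in\mathrsfs{F}:f\subseteq\eta\}}f$, gives $m\subseteq g$; and since $a\notin m$ this upgrades to $m\subseteq g\setminus\{a\}$. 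But then $m$ is among the sets whose union is $(g\setminus\{a\})^*$, so $\emptyset\neq m\subseteq(g\setminus\{a\})^*=\emptyset$, a contradiction. Hence $\eta\setminus\{a\}\notin\mathcal{F}$ and $\eta\in\pi_w(a)$.

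I do not foresee a genuine obstacle here: the only place asking for attention is the reading of the hypothesis $(g\setminus\{a\})^*=\emptyset$, which (under the running convention $\emptyset\notin\mathrsfs{F}$) means exactly that no element of $\mathrsfs{F}$ lies inside $g\setminus\{a\}$ — and that is precisely what the witness $m$ contradicts. The one mildly substantive move is recognizing that the inversion formula of Corollary~\ref{cor:ideal correspondence} is what transports the inclusion $m\subseteq\eta$ back down to $m\subseteq g$; everything else is a short chain of set inclusions.
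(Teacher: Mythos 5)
Your proof is correct and follows essentially the same route as the paper's: both argue by contradiction and use the inversion formula of Corollary \ref{cor:ideal correspondence} to transport an element of $\mathrsfs{F}$ lying inside $\varphi_w(g)\setminus\{a\}$ down into $g\setminus\{a\}$, contradicting $(g\setminus\{a\})^*=\emptyset$. The only cosmetic difference is that you extract a minimal witness $m\in\min(\mathrsfs{F})$ via $\mathcal{F}\subseteq\min(\mathrsfs{F})^{\uparrow}$, whereas the paper works directly with $\varphi_w^{-1}(\varphi_w(g)\setminus\{a\})$; both arguments, as you note, implicitly require $\emptyset\notin\mathrsfs{F}$.
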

\begin{proof}
Let $g\in\mathrsfs{F}_a, (g\setminus\{a\})^* = \emptyset$ and suppose, contrary to the statement, that $\varphi_w(g)\setminus\{a\}\in \mathcal{F}$. By Corollary \ref{cor:ideal correspondence} we have
$$
\varphi_w^{-1}(\varphi_w(g)\setminus\{a\}) = \bigcup_{f\subseteq \varphi_w(g)\setminus\{a\}} f \subseteq \bigcup_{f\subseteq g\setminus\{a\}} f = (g\setminus\{a\})^*
$$
whence $(g\setminus\{a\})^* \neq\emptyset$, a contradiction.
\end{proof}
We remark that the set $\{g\in \mathrsfs{F}_a: (g\setminus\{a\})^* = \emptyset\}$ is non-empty since it contains $\min(\mathrsfs{F})_a$.
\\
The subsets $\pi_w(\eta),\sigma_{w}(\eta)$ introduced in Definition \ref{def:spurious and pure elements} are the ``local" version of $\pi_w(a),\sigma_w(a)$ in the following sense:
$$
\sum_{a\in X}|\pi_w(a)|=\sum_{\eta\in\mathcal{F}}|\pi_w(\eta)|, \; \sum_{a\in X}|\sigma_w(a)|=\sum_{\eta\in\mathcal{F}}|\sigma_{w}(\eta)|
$$
We also note that by Lemma \ref{lem:partition of mathcal F_a} $\pi_w(\eta),\sigma_{w}(\eta)$ are two disjoint subsets of $\eta$ and in particular by the definition we get $\sigma_{w}(\eta)=\eta\men\varphi_w^{-1}(\eta)$. The interest in introducing such subsets is given by the following characterization:
\begin{proposition}\label{prop:characterization of pi sigma}
  For any $\eta\in\mathcal{F}$ we have:
  $$
  \sigma_{w}(\eta)=\bigcap_{\xi\sub\eta}\sigma_{w}(\xi),\;\pi_w(\eta)=
  \bigcap_{\xi\sub\eta}\xi\cap\varphi_w^{-1}(\eta)
  $$
\end{proposition}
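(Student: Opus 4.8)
The plan is to unfold both equalities directly from the definitions of $\pi_w(\eta)$ and $\sigma_w(\eta)$, using two facts already available: the description $\sigma_w(a)\cup\pi_w(a)=\{z\in\mathcal{F}:z\setminus\{a\}\notin\mathcal{F}\}$ from Lemma \ref{lem:partition of mathcal F_a}, and the formula $\varphi_w^{-1}(\zeta)=\bigcup_{f\in\mathrsfs{F},\,f\subseteq\zeta}f$ from Corollary \ref{cor:ideal correspondence}, which is visibly monotone in $\zeta$. (Here and below the intersections range over $\xi\in\mathcal{F}$ with $\xi\subseteq\eta$, the only reading that makes $\sigma_w(\xi)$ meaningful; this index set is nonempty since it contains $\eta$.) The one genuinely set-theoretic ingredient I would isolate first is that, because $\mathcal{F}$ is upward-closed, for $a\in\eta$ one has $\eta\setminus\{a\}\notin\mathcal{F}$ if and only if every $\xi\in\mathcal{F}$ with $\xi\subseteq\eta$ contains $a$, i.e. $a\in\bigcap_{\xi\subseteq\eta}\xi$.

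For the identity for $\pi_w$, I would rewrite $a\in\pi_w(\eta)$ as $\eta\in\pi_w(a)$, which by Definition \ref{def:spurious and pure elements} means $a\in\varphi_w^{-1}(\eta)$ (equivalently $\eta\in\varphi_w(\mathrsfs{F}_a)$) and $\eta\setminus\{a\}\notin\mathcal{F}$. Substituting the preliminary observation for the second condition turns this into ``$a\in\varphi_w^{-1}(\eta)$ and $a\in\bigcap_{\xi\subseteq\eta}\xi$'', which is exactly $\pi_w(\eta)=\bigcap_{\xi\subseteq\eta}\xi\cap\varphi_w^{-1}(\eta)$. No case distinction is needed because $a\in\varphi_w^{-1}(\eta)$ already forces $a\in\eta$.

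For the identity for $\sigma_w$, recall $\sigma_w(\eta)=\eta\setminus\varphi_w^{-1}(\eta)$. The inclusion $\bigcap_{\xi\subseteq\eta}\sigma_w(\xi)\subseteq\sigma_w(\eta)$ is immediate since $\eta$ itself appears in the intersection. For the reverse inclusion, take $a\in\sigma_w(\eta)$, so $\eta\in\sigma_w(a)$ and hence, by Lemma \ref{lem:partition of mathcal F_a}, $\eta\setminus\{a\}\notin\mathcal{F}$; by the preliminary observation this forces $a\in\xi$ for every $\xi\in\mathcal{F}$ with $\xi\subseteq\eta$. Fix such a $\xi$: by monotonicity of $\varphi_w^{-1}$ we get $\varphi_w^{-1}(\xi)\subseteq\varphi_w^{-1}(\eta)$, so $a\notin\varphi_w^{-1}(\xi)$ (as $a\notin\varphi_w^{-1}(\eta)$), whence $a\in\xi\setminus\varphi_w^{-1}(\xi)=\sigma_w(\xi)$. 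Since this holds for all admissible $\xi$, $a$ lies in the intersection.

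The unravelling of Definition \ref{def:spurious and pure elements} and the manipulations with $\varphi_w^{-1}$ are routine; the step I expect to require the most care — though it is modest — is ``$a\in\sigma_w(\eta)\Rightarrow\eta\setminus\{a\}\notin\mathcal{F}$'', i.e. that a spurious letter of $\eta$ cannot already be removed inside $\mathcal{F}$. I would simply cite Lemma \ref{lem:partition of mathcal F_a} for this; a self-contained alternative is that $a\notin\varphi_w^{-1}(\eta)$ implies no $f\in\mathrsfs{F}$ with $f\subseteq\eta$ contains $a$, so $\varphi_w^{-1}(\eta\setminus\{a\})=\varphi_w^{-1}(\eta)$, and then injectivity of $\varphi_w$ gives $\eta\setminus\{a\}=\eta$, a contradiction.
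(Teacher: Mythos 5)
Your proof is correct, and its skeleton coincides with the paper's: both arguments rest on the identity $\{a\in X:\eta\setminus\{a\}\notin\mathcal{F}\}=\bigcap_{\xi\subseteq\eta}\xi$ (your preliminary observation; the paper obtains the left-hand side as $\pi_w(\eta)\cup\sigma_{w}(\eta)$ from Lemma \ref{lem:partition of mathcal F_a}) together with $\sigma_{w}(\eta)=\eta\setminus\varphi_w^{-1}(\eta)$ and $\pi_w(\eta)\subseteq\varphi_w^{-1}(\eta)$, which immediately gives the formula for $\pi_w(\eta)$. The one place where your route genuinely differs is the hard half of the $\sigma_w$ identity, namely the containment $\sigma_{w}(\eta)\subseteq\sigma_{w}(\xi)$ for $\xi\in\mathcal{F}$ with $\xi\subseteq\eta$: the paper proves $\eta\setminus\xi\subseteq\varphi_w^{-1}(\eta)$ by invoking Lemma \ref{lem:embedding of the elements no having a} together with Theorem \ref{theo:ideal correspondence}, whereas you deduce $a\in\xi$ directly from $\eta\setminus\{a\}\notin\mathcal{F}$ and the upward-closedness of $\mathcal{F}$, after which only the monotonicity of $\varphi_w^{-1}$ from Corollary \ref{cor:ideal correspondence} is needed. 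Your version is slightly more elementary and self-contained (it never uses the embedding $\psi$, and your closing remark even makes the step $a\in\sigma_{w}(\eta)\Rightarrow\eta\setminus\{a\}\notin\mathcal{F}$ independent of Lemma \ref{lem:partition of mathcal F_a}), while the paper's argument records in passing the somewhat stronger structural fact $\eta\setminus\xi\subseteq\varphi_w^{-1}(\eta)$. Your explicit reading of the intersections as ranging over $\xi\in\mathcal{F}$ with $\xi\subseteq\eta$ is indeed the intended one, and the observation that $a\in\varphi_w^{-1}(\eta)$ forces $a\in\eta$ correctly disposes of any case distinction in the $\pi_w$ part.
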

\begin{proof}
  By Lemma \ref{lem:partition of mathcal F_a}, $\sigma_w(a)\cup\pi_w(a)=\{z\in\mathcal{F}:z\men\{a\}\notin\mathcal{F}\}$, thus it is straightforward to check
  $$
  \pi_w(\eta)\cup\sigma_{w}(\eta)=\{a\in X:\eta\men\{a\}\notin\mathcal{F}\}=\bigcap_{\xi\sub\eta}\xi
  $$
  Since $\pi_w(\eta)\sub\varphi_w^{-1}(\eta)$ and $\sigma_{w}(\eta)=\eta\men\varphi_w^{-1}(\eta)$ then
  $$
  \pi_w(\eta)=\bigcap_{\xi\sub\eta}\xi\cap\varphi_w^{-1}(\eta),\;\;
  \sigma_{w}(\eta)=\bigcap_{\xi\sub\eta}\xi\cap\sigma_{w}(\eta)
  $$
  We claim that if $\xi\sub\eta$ then $\sigma_{w}(\eta)\sub\sigma_{w}(\xi)$ from which it follows $\sigma_{w}(\eta)=\bigcap_{\xi\sub\eta}\sigma_{w}(\xi)$. Indeed by Lemma \ref{lem:embedding of the elements no having a} for all $b\in\eta\men\xi$, $b\in\varphi_w^{-1}(\xi\cup\{b\})$. Thus, since $\xi\cup\{b\}\sub\eta$, by Theorem \ref{theo:ideal correspondence}, $b\in\varphi_w^{-1}(\eta)$, whence $\eta\men\xi\sub\varphi_w^{-1}(\eta)$. By Theorem \ref{theo:ideal correspondence} we also get $\varphi_w^{-1}(\xi)\sub\varphi_w^{-1}(\eta)$, thus $\eta\men\xi\cup\varphi_w^{-1}(\xi)\sub\varphi_w^{-1}(\eta)$ from which we obtain $\sigma_{w}(\eta)\sub\sigma_{w}(\xi)$.
\end{proof}
If $f\subsetneq g$ for some $f,g \in\mathrsfs{F}$, then in general $\sigma_w(\varphi_w(f))\subsetneq \sigma_w(\varphi_w(g))$ do not hold. However if we keep the freedom to choose the order of the rising we can have this property. With the notation of Section \ref{sec:permutation dependancy} we have the following:
\begin{lemma}\label{lem:containing spurious}
  Let $f,g\in\mathrsfs{F}$ with $f\subseteq g$ and let $\eta\in\max(Fib(g))$, then there is a word $w'=a_{i_1}\ldots a_{i_n}$ such that $\eta = \varphi_{w'}(g)$ and
  $$
  \sigma_{w'}(\varphi_{w'}(g))\subseteq \sigma_{w'}(\varphi_{w'}(f))
  $$
\end{lemma}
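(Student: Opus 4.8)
The plan is to choose the word $w'$ so that its prefix enumerates exactly the letters of $\eta\setminus\eta^{*}$, use Lemma~\ref{lem:word generatin a maximal fiber} to force $\varphi_{w'}(g)=\eta$, and then reduce the asserted inclusion of spurious sets to a statement about the trajectory of $f$ along that prefix.

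\emph{Reduction.} By the remark following Proposition~\ref{prop:characterization of pi sigma} one has $\sigma_{w'}(\zeta)=\zeta\setminus\varphi_{w'}^{-1}(\zeta)$ for every $\zeta$ in the associated upward-closed family, and by Corollary~\ref{cor:ideal correspondence} $\varphi_{w'}^{-1}(\varphi_{w'}(g))=g$, $\varphi_{w'}^{-1}(\varphi_{w'}(f))=f$; moreover $\eta^{*}=g$ since $\eta\in Fib(g)$. Hence, once $\varphi_{w'}(g)=\eta$ is arranged, the claim $\sigma_{w'}(\varphi_{w'}(g))\subseteq\sigma_{w'}(\varphi_{w'}(f))$ becomes $\eta\setminus g\subseteq\varphi_{w'}(f)\setminus f$, and as $\eta\setminus g$ is disjoint from $g\supseteq f$ it suffices to prove $\eta\setminus g\subseteq\varphi_{w'}(f)$. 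Write $\eta\setminus g=\{a_{i_1},\dots,a_{i_l}\}$; the case $l=0$ is immediate (any word works by Lemma~\ref{lem:word generatin a maximal fiber}), so assume $l\ge 1$ and take any word $w'=a_{i_1}\cdots a_{i_l}a_{i_{l+1}}\cdots a_{i_n}$ beginning with these $l$ letters. Lemma~\ref{lem:word generatin a maximal fiber} then gives $\varphi_{w'}(g)=\eta$.

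\emph{Trajectory of $f$.} Let $f_j=\varphi_j(f)$ be the trajectory of $f$ with respect to $w'$ and $\mathrsfs{F}_j=\varphi_j(\mathrsfs{F})$ the sections. Each rising map either fixes a set or adjoins one letter, so every trajectory is non-decreasing; since the letter possibly adjoined at step $j+1$ with $j<l$ is $a_{i_{j+1}}\in\{a_{i_1},\dots,a_{i_l}\}\subseteq\eta$ and $f\subseteq g\subseteq\eta$, an easy induction gives $f\subseteq f_s\subseteq\eta$ for all $0\le s\le l$. It now suffices to show $a_{i_{s+1}}\in f_{s+1}$ for every $0\le s<l$, since then $\{a_{i_1},\dots,a_{i_l}\}\subseteq f_l\subseteq\varphi_{w'}(f)$. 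If $a_{i_{s+1}}\in f_s$ this is clear; otherwise the only way $a_{i_{s+1}}$ fails to be adjoined at step $s+1$ is $f_s\cup\{a_{i_{s+1}}\}\in\mathrsfs{F}_s$, i.e. $f_s\cup\{a_{i_{s+1}}\}=\varphi_s(f')$ for a (unique) $f'\in\mathrsfs{F}$, and $f'\neq f$ because $a_{i_{s+1}}\notin f_s=\varphi_s(f)$ and $\varphi_s$ is injective. Then $\varphi_s(f')=f_s\cup\{a_{i_{s+1}}\}\subseteq\eta$ and $f'\subseteq\varphi_s(f')$ (monotone trajectory), so $f'\subseteq\eta$; since $f'\in\mathrsfs{F}$, the definition of $\circ^{*}$ forces $f'\subseteq\eta^{*}=g$. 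On the other hand $\varphi_s(f')=\varphi_s(f)\cup\{a_{i_{s+1}}\}$ with $f'\neq f$, so Lemma~\ref{lem:matching property} yields $a_{i_{s+1}}\in f'$, whence $a_{i_{s+1}}\in g$, contradicting $a_{i_{s+1}}\in\eta\setminus g$. This contradiction finishes the proof.

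\emph{Main obstacle.} The one delicate step is ruling out that $a_{i_{s+1}}$ gets ``blocked'' by a pre-existing rising image $\varphi_s(f')$. What makes it work is that along the chosen prefix the trajectory of $f$ never leaves $\eta$, so any $f'\in\mathrsfs{F}$ lying below $f_s\cup\{a_{i_{s+1}}\}$ lies below $\eta$, hence below $\eta^{*}=g$, which is incompatible with $a_{i_{s+1}}\in f'$. Everything else is routine bookkeeping with the monotonicity of trajectories and the identities for $\sigma_{w'}$ recorded in this section.
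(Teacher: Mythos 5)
Your proof is correct, but it takes a different route from the paper's. The paper argues structurally: it forms the auxiliary set $\eta'=(\eta\setminus g)\cup f$, checks $\eta'\in Fib(f)$ (any $h\in\mathrsfs{F}$ with $f\subsetneq h\subseteq\eta'$ would satisfy $h\subseteq\eta^{*}=g$, contradicting $h\setminus f\subseteq\eta\setminus g$), extends $\eta'$ to some $\nu\in\max(Fib(f))$, so that $\eta\setminus\eta^{*}=\eta'\setminus f\subseteq\nu\setminus\nu^{*}$, and then chooses one word $w'$ whose prefixes contain first $\eta\setminus\eta^{*}$ and then $\nu\setminus\nu^{*}$; Lemma \ref{lem:word generatin a maximal fiber} applied \emph{twice} gives $\eta=\varphi_{w'}(g)$ and $\nu=\varphi_{w'}(f)$ simultaneously, and the inclusion of spurious sets is immediate from $\sigma_{w'}(\zeta)=\zeta\setminus\varphi_{w'}^{-1}(\zeta)$. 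You instead apply Lemma \ref{lem:word generatin a maximal fiber} only to $g$, fixing a word whose prefix is $\eta\setminus g$, and then redo a trajectory analysis for $f$ along that prefix: the blocking element $f'$ with $\varphi_s(f')=f_s\cup\{a_{i_{s+1}}\}$ is ruled out because $f'\subseteq\eta$ forces $f'\subseteq\eta^{*}=g$ while Lemma \ref{lem:matching property} forces $a_{i_{s+1}}\in f'$, against $a_{i_{s+1}}\in\eta\setminus g$. All steps check out (the identities $\sigma_{w'}(\varphi_{w'}(g))=\eta\setminus g$ and $\sigma_{w'}(\varphi_{w'}(f))=\varphi_{w'}(f)\setminus f$ follow from Corollary \ref{cor:ideal correspondence}, and the empty case is trivial). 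What the paper's route buys is brevity and the extra information that $\varphi_{w'}(f)$ can be taken to be a maximal element of $Fib(f)$; what your route buys is that it avoids the auxiliary construction $\eta'$ and the second application of the prefix lemma, at the cost of repeating, inside this proof, trajectory bookkeeping that the paper has already packaged into Lemma \ref{lem:word generatin a maximal fiber}.
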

\begin{proof}
Let us prove that $\eta' =(\eta\setminus g)\cup f\in Fib(f)$. It is obvious that $f\subseteq\eta'$, a let us assume, contrary to our claim, that there is $h\in\mathrsfs{F}$ such that $h\subseteq\eta'$ with $f\subsetneq h$. Thus $(h\setminus f)\cap (\eta\setminus g) \neq \emptyset$. Since $\eta'\sub\eta$, then $h\subseteq\eta$ and so $h\subseteq \eta^*=g$. In particular we have $(h\setminus f)\subseteq g$ which contradicts $(h\setminus f)\cap (\eta\setminus g) \neq \emptyset$. Therefore $\eta'\in Fib(f)$, and let $\nu\in\max(Fib(f))$ such that $\eta'\subseteq\nu$. Then we have
\begin{equation}\label{eq: inclusion elements inclusion sigma's}
  \eta\men\eta^* = \eta\men g = \eta'\men f \sub \nu\men f = \nu\men \nu^*
\end{equation}
If we prove that there is a word $w'$ such that $\eta=\varphi_{w'}(g), \nu= \varphi_{w'}(f)$ then we have proved the statement of the lemma since (\ref{eq: inclusion elements inclusion sigma's}) holds and $\sigma_{w'}(\eta) = \eta\men\eta^*, \sigma_{w'}(\nu) = \nu\men\nu^*$. Since $\eta\men\eta^*\sub\nu\men\nu^*$, then we can find a word $w'$ such that both $\eta\men\eta^*$ and $\nu\men\nu^*$ are contained in a prefix of $w'$, hence by Lemma \ref{lem:word generatin a maximal fiber}, we have $\eta=\varphi_{w'}(g), \nu= \varphi_{w'}(f)$.
\end{proof}

\subsection{The average length}
The \emph{average of the length} of the elements of $\mathrsfs{F}$, simply the \emph{average} of the family $\mathrsfs{F}$, is the integer $\frac{1}{|\mathrsfs{F}|}\sum_{f\in\mathrsfs{F}}|f|$, this number is important because the following well known equality holds
$$
\sum_{a\in X}\frac{|\mathrsfs{F}_a|}{|\mathrsfs{F}|}=\frac{1}{|\mathrsfs{F}|}\sum_{f\in\mathrsfs{F}}|f|
$$
For instance the averaged Frankl's property $\frac{1}{|\mathrsfs{F}|}\sum_{f\in\mathrsfs{F}}|f|\ge\frac{n}{2}$ implies that Frankl's conjecture is true for $\mathrsfs{F}$. Unfortunately the converse is not true, indeed it is a well know fact that many union-closed families fail to satisfy the averaged Frankl's property (see \cite{Czedeli1,CzedeliMaroti}). However the average of $\mathrsfs{F}$ is still an interesting parameter at least because any lower bound on it gives rise to a lower bound of $
\max_{a\in X}\{|\mathrsfs{F}_a|/|\mathrsfs{F}|\}$.
In \cite{Reimer} Reimer shows that
$$
\frac{1}{|\mathrsfs{F}|}\sum_{f\in\mathrsfs{F}}|f|\ge \frac{1}{2}\log_2(|\mathrsfs{F}|)
$$
and in \cite{Falgas} the bound is improved in the case of a separating family.
What we consider here is the localized version of the average of $\mathrsfs{F}$, given a subfamily $S\sub \mathrsfs{F}$, the \emph{average of $\mathrsfs{F}$ localized on $S$} is defined by
$$
\frac{1}{|\mathrsfs{F}[S]|}\sum_{f\in\mathrsfs{F}[S]}|f|
$$
and gives the average of the length of the elements contained in the principal ideal of $\mathrsfs{F}$ generated by $S$. Our aim is to provide lower bounds to such quantity. Note that we can assume without loss of generality that $S$ is an antichain. We fix the notation and for the rest of the section $\mathrsfs{F}$ denotes a $\cup$-closed family of sets of $X=\{a_1,\ldots,a_n\}$, $S\sub\mathrsfs{F}$ is an antichain, and $\mathcal{F}=\varphi_w(\mathrsfs{F})$ is the upward-closed family associate to $\mathrsfs{F}$ with respect to the word $w=a_1a_2\ldots a_n$.
\begin{proposition}\label{prop:local averaging general fact}
The following bound holds:
  $$
  \sum_{f\in\mathrsfs{F}[S]}|f|\ge\frac{n}{2}|\mathrsfs{F}[S]|+\frac{1}{2}\sum_{a\in X}|\pi_w(a)\cap S^{\uparrow}|-|\sigma_w(a)\cap S^{\uparrow}|.
  $$
  with equality if $S=\min(\mathrsfs{F})$.
\end{proposition}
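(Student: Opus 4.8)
The plan is to transport the sum through the bijection $\varphi_w\colon\mathrsfs{F}[S]\to\mathcal{F}[S]$ of Corollary~\ref{cor:ideal correspondence}, estimate the resulting sum over the upward-closed family $\mathcal{F}[S]=\mathcal{F}\cap S^{\uparrow}$, and then translate back. First I would record that each trajectory only grows, so $f\sub\varphi_w(f)$, and $\varphi_w^{-1}(\varphi_w(f))=f$; hence $\sigma_w(\varphi_w(f))=\varphi_w(f)\men f$ and $|f|=|\varphi_w(f)|-|\sigma_w(\varphi_w(f))|$. Summing over $f\in\mathrsfs{F}[S]$ and using that $\varphi_w$ is a bijection onto $\mathcal{F}[S]$ (and that $\mathcal{F}[S]=\mathcal{F}\cap S^{\uparrow}$ from the definitions) gives
$$\sum_{f\in\mathrsfs{F}[S]}|f|=\sum_{\eta\in\mathcal{F}[S]}|\eta|-\sum_{\eta\in\mathcal{F}[S]}|\sigma_w(\eta)|.$$
Switching the order of summation in the last term, an element $\eta$ is counted once for each $a$ with $\eta\in\sigma_w(a)$, and since $\sigma_w(a)\sub\mathcal{F}$ we have $\sigma_w(a)\cap\mathcal{F}[S]=\sigma_w(a)\cap S^{\uparrow}$; so that term equals $\sum_{a\in X}|\sigma_w(a)\cap S^{\uparrow}|$.

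Next I would bound $\sum_{\eta\in\mathcal{F}[S]}|\eta|$ from below. As an intersection of two upward-closed families, $\mathcal{F}[S]$ is upward-closed in $2^X$, so for each $a$ the map $\eta\mapsto\eta\cup\{a\}$ injects $(\mathcal{F}[S])_{\overline a}$ into $(\mathcal{F}[S])_a$ with image $\{\eta\in(\mathcal{F}[S])_a:\eta\men\{a\}\in\mathcal{F}[S]\}$; together with $|\mathcal{F}[S]|=|(\mathcal{F}[S])_a|+|(\mathcal{F}[S])_{\overline a}|$ and $|\mathcal{F}[S]|=|\mathrsfs{F}[S]|$ this gives
$$|(\mathcal{F}[S])_a|=\tfrac12\,|\mathrsfs{F}[S]|+\tfrac12\,\bigl|\{\eta\in\mathcal{F}[S]:a\in\eta,\ \eta\men\{a\}\notin\mathcal{F}[S]\}\bigr|.$$
Summing $\sum_{\eta\in\mathcal{F}[S]}|\eta|=\sum_a|(\mathcal{F}[S])_a|$, the problem reduces to comparing each ``gap'' set above with $(\pi_w(a)\cup\sigma_w(a))\cap S^{\uparrow}$.

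The only point requiring care is the inequality
$$\bigl|\{\eta\in\mathcal{F}[S]:a\in\eta,\ \eta\men\{a\}\notin\mathcal{F}[S]\}\bigr|\ \ge\ \bigl|(\pi_w(a)\cup\sigma_w(a))\cap S^{\uparrow}\bigr|.$$
By Lemma~\ref{lem:partition of mathcal F_a} one has $\pi_w(a)\cup\sigma_w(a)=\{z\in\mathcal{F}:z\men\{a\}\notin\mathcal{F}\}$, a disjoint union, so the right side is $|\pi_w(a)\cap S^{\uparrow}|+|\sigma_w(a)\cap S^{\uparrow}|$; intersecting with $S^{\uparrow}$ and using $\mathcal{F}\cap S^{\uparrow}=\mathcal{F}[S]$, the right-hand set is $\{z\in\mathcal{F}[S]:a\in z,\ z\men\{a\}\notin\mathcal{F}\}$, and since $z\men\{a\}\notin\mathcal{F}$ forces $z\men\{a\}\notin\mathcal{F}[S]$, this set is contained in the left-hand one — so the inequality is immediate. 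Combining the three steps yields precisely the stated bound. For the equality case $S=\min(\mathrsfs{F})$, one has $\mathrsfs{F}[S]=\mathrsfs{F}$, $\mathcal{F}[S]=\mathcal{F}$ and $\mathcal{F}\sub S^{\uparrow}$, so every intersection with $S^{\uparrow}$ is vacuous and the containment above becomes an equality; hence all inequalities collapse. The genuine subtlety, then, is simply to keep the membership relations ``$\in\mathcal{F}$'' and ``$\in\mathcal{F}[S]$'' distinct: the bound is slack exactly because passing to the smaller family $\mathcal{F}[S]$ can only enlarge the collection of $\eta$ whose lower neighbour $\eta\men\{a\}$ leaves the family.
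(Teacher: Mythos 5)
Your proof is correct and takes essentially the same route as the paper: the identity $\sum_{f\in\mathrsfs{F}[S]}|f|=\sum_{\eta\in\mathcal{F}[S]}|\eta|-\sum_{a\in X}|\sigma_w(a)\cap S^{\uparrow}|$ and the per-element counting over the upward-closed family $\mathcal{F}[S]$ are exactly the paper's steps, and your containment $\{z\in\mathcal{F}[S]:a\in z,\ z\men\{a\}\notin\mathcal{F}\}\subseteq\{z\in\mathcal{F}[S]:a\in z,\ z\men\{a\}\notin\mathcal{F}[S]\}$ is just the complementary formulation of the paper's inclusion $\psi(\mathcal{F}_{\overline{a}}\cap S^{\uparrow})\subseteq\psi(\mathcal{F}_{\overline{a}})\cap S^{\uparrow}$ based on the partition of Lemma \ref{lem:partition of mathcal F_a}. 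The equality case $S=\min(\mathrsfs{F})$ is also handled as in the paper, via $\mathcal{F}\subseteq\min(\mathrsfs{F})^{\uparrow}$.
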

\begin{proof}
  By Lemma \ref{lem:partition of mathcal F_a} there is a partition $\mathcal{F}_a=\psi(\mathcal{F}_{\overline{a}})\cup\pi_w(a)\cup\sigma_w(a)$, hence:
  \begin{equation}\label{eq: balance F_a[S]}
  \mathcal{F}_a[S]=(\psi(\mathcal{F}_{\overline{a}})\cap S^{\uparrow})\cup(\pi_w(a)\cap S^{\uparrow})\cup(\sigma_w(a)\cap S^{\uparrow})
  \end{equation}
  We have $\psi(\mathcal{F}_{\overline{a}}\cap S^{\uparrow})\subseteq \psi(\mathcal{F}_{\overline{a}})\cap S^{\uparrow}$ with equality if $S=\min(\mathrsfs{F})$, whence $\sum_a|\psi(\mathcal{F}_{\overline{a}})\cap S^{\uparrow}|\ge\sum_a|\psi(\mathcal{F}_{\overline{a}}\cap S^{\uparrow})|=\sum_{\eta\in\mathcal{F}[S]}(n-|\eta|)$. Thus summing all the equalities (\ref{eq: balance F_a[S]}) on the index $a\in X$, we get
  \begin{equation}\label{eq: 1 evaraging theorema }
    2\sum_{\eta\in\mathcal{F}[S]}|\eta|\ge n|\mathcal{F}[S]|+\sum_{a\in X}|\pi_w(a)\cap S^{\uparrow}|+|\sigma_w(a)\cap S^{\uparrow}|.
  \end{equation}
  By Theorem \ref{theo:ideal correspondence}, $\sigma_w(a)\cap S^{\uparrow}=\{\varphi_w(f), f\in\mathrsfs{F}[S], a\in\varphi_w(f)\men f\}$, and so:
  $$
  \sum_{a\in X} |\sigma_w(a)\cap S^{\uparrow}|=\sum_{f\in\mathrsfs{F}[S]}(|\varphi_w(f)|-|f|)
  $$
  Moreover by Theorem \ref{theo:ideal correspondence} we also get
  \begin{align}
  \nonumber &\sum_{f\in\mathrsfs{F}[S]}|f|=\sum_{\eta\in\mathcal{F}[S]}|\varphi_w^{-1}(\eta)|=
  \sum_{\eta\in\mathcal{F}[S]}|\eta|-|\eta\men\varphi_w^{-1}(\eta)|=\\
  \nonumber &\sum_{\eta\in\mathcal{F}[S]}|\eta|-\sum_{f\in\mathrsfs{F}[S]}(|\varphi_w(f)|-|f|)= \sum_{\eta\in\mathcal{F}[S]}|\eta|-\sum_{a\in X}|\sigma_w(a)\cap S^{\uparrow}|
  \end{align}
  Therefore using (\ref{eq: 1 evaraging theorema }) and $\mathcal{F}[S]\simeq \mathrsfs{F}[S]$ (Corollary \ref{cor:ideal correspondence}) we get
  $$
  \sum_{f\in\mathrsfs{F}[S]}|f|\ge\frac{n}{2}|\mathrsfs{F}[S]|+\frac{1}{2}\sum_{a\in X}|\pi_w(a)\cap S^{\uparrow}|-|\sigma_w(a)\cap S^{\uparrow}|
  $$
  with equality if $S=\min(\mathrsfs{F})$.
\end{proof}
We have the following corollary on the local average in the case $\min(\mathrsfs{F})$ is a maximal antichain and the elements are uniformly bounded by some integer.
\begin{corollary}\label{cor: bound average maximal}
  Let $\mathrsfs{F}$ be a $\cup$-closed family of sets such that $\mathcal{G}=\min(\mathrsfs{F})$ is a maximal antichain of $2^X$ and there is a positive integer $k$ such that for all $g\in \mathcal{G}$, $|g|\le k$, then:
  $$
  \frac{1}{|\mathrsfs{F}[S]|}\sum_{f\in\mathrsfs{F}[S]}|f|\ge\frac{n-k}{2}+\frac{1}{2|\mathrsfs{F}[S]|}\sum_{a\in X}|\pi_w(a)\cap S^{\uparrow}|.
  $$
\end{corollary}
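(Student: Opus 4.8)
The plan is to read off the Corollary from Proposition \ref{prop:local averaging general fact}. Dividing the bound of that proposition by $|\mathrsfs{F}[S]|$ we get
$$
\frac{1}{|\mathrsfs{F}[S]|}\sum_{f\in\mathrsfs{F}[S]}|f|\ge\frac{n}{2}+\frac{1}{2|\mathrsfs{F}[S]|}\sum_{a\in X}\bigl(|\pi_w(a)\cap S^{\uparrow}|-|\sigma_w(a)\cap S^{\uparrow}|\bigr),
$$
so the Corollary will follow once we establish the single inequality $\sum_{a\in X}|\sigma_w(a)\cap S^{\uparrow}|\le k\,|\mathrsfs{F}[S]|$. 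For this I would first rewrite the left-hand side in a more usable form. As recorded in the proof of Proposition \ref{prop:local averaging general fact}, $\sum_{a\in X}|\sigma_w(a)\cap S^{\uparrow}|=\sum_{f\in\mathrsfs{F}[S]}(|\varphi_w(f)|-|f|)$; and since $|\varphi_w(f)|-|f|=|\varphi_w(f)\setminus f|=|\sigma_w(\varphi_w(f))|$ while, by Corollary \ref{cor:ideal correspondence}, $\varphi_w$ maps $\mathrsfs{F}[S]$ bijectively onto $\mathcal{F}[S]$, we obtain $\sum_{a\in X}|\sigma_w(a)\cap S^{\uparrow}|=\sum_{\eta\in\mathcal{F}[S]}|\sigma_w(\eta)|$ and $|\mathcal{F}[S]|=|\mathrsfs{F}[S]|$. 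Thus it suffices to prove the pointwise bound $|\sigma_w(\eta)|\le k$ for every $\eta\in\mathcal{F}$.

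To prove $|\sigma_w(\eta)|\le k$ I would use the hypothesis that $\mathcal{G}=\min(\mathrsfs{F})$ is a maximal antichain of $2^X$, combined with the two identities $\sigma_w(\eta)=\eta\setminus\varphi_w^{-1}(\eta)$ and $\varphi_w^{-1}(\eta)=\bigcup_{\{h\in\mathrsfs{F}:\,h\subseteq\eta\}}h$ from Corollary \ref{cor:ideal correspondence}; in particular $\sigma_w(\eta)\cap\varphi_w^{-1}(\eta)=\emptyset$. Since $\mathcal{G}$ is a maximal antichain, the subset $\sigma_w(\eta)\subseteq X$ is comparable with some $g\in\mathcal{G}$. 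If $\sigma_w(\eta)\subseteq g$ then $|\sigma_w(\eta)|\le|g|\le k$ and we are done. Otherwise $g\subseteq\sigma_w(\eta)\subseteq\eta$; but $g\in\mathrsfs{F}$ with $g\subseteq\eta$ forces $g\subseteq\varphi_w^{-1}(\eta)$ by the description of $\varphi_w^{-1}$ above, whence $g\subseteq\sigma_w(\eta)\cap\varphi_w^{-1}(\eta)=\emptyset$, i.e. $g=\emptyset$, which is impossible since $\emptyset\notin\mathrsfs{F}$. Hence only the first alternative occurs, $|\sigma_w(\eta)|\le k$, and the Corollary follows.

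I expect the heart of the matter to be this last step: excluding the possibility $g\subseteq\sigma_w(\eta)$ in the dichotomy produced by maximality of $\mathcal{G}$. This is precisely where all the hypotheses cooperate — maximality of $\mathcal{G}$ gives a $g\in\mathcal{G}$ comparable to $\sigma_w(\eta)$, Corollary \ref{cor:ideal correspondence} shows such a $g$ must sit inside $\varphi_w^{-1}(\eta)$, and the disjointness $\sigma_w(\eta)\cap\varphi_w^{-1}(\eta)=\emptyset$ then collapses $g$ to $\emptyset$; the uniform bound $|g|\le k$ only enters at the very end. One caveat worth flagging is the degenerate situation $\emptyset\in\mathrsfs{F}$ (so $\mathcal{G}=\{\emptyset\}$), in which this argument breaks down and which, if not ruled out by convention, would have to be treated separately.
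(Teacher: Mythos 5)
Your proof is correct and follows essentially the same route as the paper: both reduce, via Proposition \ref{prop:local averaging general fact}, to the pointwise bound $|\sigma_w(\varphi_w(f))|=|\varphi_w(f)\setminus f|\le k$, and both obtain it by taking a $g\in\mathcal{G}$ comparable with $\varphi_w(f)\setminus f$ (maximality of the antichain) and excluding $g\subseteq\varphi_w(f)\setminus f$ through the ideal correspondence of Theorem \ref{theo:ideal correspondence}/Corollary \ref{cor:ideal correspondence}. The degenerate case $\emptyset\in\mathrsfs{F}$ that you flag is equally implicit in the paper's own argument, so it is not a point of divergence.
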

\begin{proof}
  We have already noted in the proof of Proposition \ref{prop:local averaging general fact} that:
  $$
  \sum_{a\in X} |\sigma_w(a)\cap S^{\uparrow}|=\sum_{f\in\mathrsfs{F}[S]}(|\varphi_w(f)|-|f|)=\sum_{f\in\mathrsfs{F}[S]}|\varphi_w(f)\men f|
  $$
  by the same proposition it is sufficient to prove that $|\varphi_w(f)\men f|\le k$. Since $\mathcal{G}$ is a maximal antichain, then for any $f\in\mathrsfs{F}[S]$ there is a $g\in \mathcal{G}$ such that either $g\sub \varphi_w(f)\men f$ or $\varphi_w(f)\men f\sub g$. We prove that only $\varphi_w(f)\men f\sub g$ can occur, and so $|\varphi_w(f)\men f|\le k$. Indeed, if $g\sub \varphi_w(f)\men f$, then $g\sub \varphi_w(f)$ and so, by Theorem \ref{theo:ideal correspondence}, $g\sub f$, a contradiction.
\end{proof}
\noindent Observe that Corollary \ref{cor: bound average maximal} also holds if we assume the existence of a maximal antichain $\mathcal{A}\subseteq \mathrsfs{F}$ such that $|g|\le k$ for all $g\in \mathcal{A}$.
\\
The following corollary is the analogous of Corollary \ref{cor: bound average maximal} in the case we drop the maximality condition. Let $S\sub\mathrsfs{F}$ we define $\sigma(S)=\max\{|\sigma_{w\theta}(f)|:f\in S, \theta\in\mathfrak{G}_X\}$.
\begin{corollary}\label{cor: bound average using invariant}
    $$
    \frac{1}{|\mathrsfs{F}[S]|}\sum_{f\in\mathrsfs{F}[S]}|f|\ge\frac{n-\sigma(S)}{2}+\frac{1}{2|\mathrsfs{F}[S]|}\sum_{a\in X}|\pi_w(a)\cap S^{\uparrow}|.
    $$
\end{corollary}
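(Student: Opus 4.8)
The plan is to feed a bound on the spurious term into Proposition \ref{prop:local averaging general fact}. Recall from the proof of Corollary \ref{cor: bound average maximal} that, by Theorem \ref{theo:ideal correspondence},
$$
\sum_{a\in X}|\sigma_w(a)\cap S^{\uparrow}|=\sum_{f\in\mathrsfs{F}[S]}\big(|\varphi_w(f)|-|f|\big)=\sum_{f\in\mathrsfs{F}[S]}|\varphi_w(f)\men f|.
$$
Hence it suffices to prove that $|\varphi_w(f)\men f|\le\sigma(S)$ for every $f\in\mathrsfs{F}[S]$; substituting the resulting inequality $\sum_{a\in X}|\sigma_w(a)\cap S^{\uparrow}|\le\sigma(S)\,|\mathrsfs{F}[S]|$ into Proposition \ref{prop:local averaging general fact} and dividing by $|\mathrsfs{F}[S]|$ then yields the stated bound. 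The elimination of the maximality hypothesis present in Corollary \ref{cor: bound average maximal} comes precisely from using this cruder uniform bound $\sigma(S)$ in place of $k$.

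To establish $|\varphi_w(f)\men f|\le\sigma(S)$, fix $f\in\mathrsfs{F}[S]$ and pick $z\in S$ with $z\sub f$ (possible by the definition of $\mathrsfs{F}[S]$). First observe that $\varphi_w(f)$ itself lies in $\max(Fib(f))$: since $\varphi_w(f)=\varphi_{w\theta}(f)$ for $\theta$ the identity permutation, it belongs to its own orbit, and Theorem \ref{theo: characterization orbits} gives $\varphi_w(f)\in\mathfrak{S}_X\cdot\varphi_w(f)=\max(Fib(f))$. Now apply Lemma \ref{lem:containing spurious} to the pair $z\sub f$ with $\eta=\varphi_w(f)\in\max(Fib(f))$: there is a word $w'=w\theta$, $\theta\in\mathfrak{S}_X$, with $\varphi_{w'}(f)=\varphi_w(f)$ and $\sigma_{w'}(\varphi_{w'}(f))\sub\sigma_{w'}(\varphi_{w'}(z))$. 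By Corollary \ref{cor:ideal correspondence} we have $\varphi_{w'}^{-1}(\varphi_{w'}(f))=f$ and $\varphi_{w'}^{-1}(\varphi_{w'}(z))=z$, so from $\sigma_{w'}(\eta)=\eta\men\varphi_{w'}^{-1}(\eta)$ we obtain $\sigma_{w'}(\varphi_{w'}(f))=\varphi_{w'}(f)\men f=\varphi_w(f)\men f$ and $\sigma_{w'}(\varphi_{w'}(z))=\varphi_{w'}(z)\men z$. Therefore $\varphi_w(f)\men f\sub\varphi_{w'}(z)\men z$, and so
$$
|\varphi_w(f)\men f|\le|\varphi_{w'}(z)\men z|=|\sigma_{w\theta}(\varphi_{w\theta}(z))|\le\sigma(S)
$$
since $z\in S$ and $\theta\in\mathfrak{S}_X$, as required.

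The one substantive step is the application of Lemma \ref{lem:containing spurious}: the idea is that the specific element $\varphi_w(f)$ appearing in Proposition \ref{prop:local averaging general fact} is automatically a maximal element of $Fib(f)$, so the lemma supplies a single reordering $w'$ that simultaneously realizes this maximal-fiber element of $f$ and a maximal-fiber element of the generator $z\sub f$, with the containment $\sigma_{w'}(\varphi_{w'}(f))\sub\sigma_{w'}(\varphi_{w'}(z))$ holding for that common $w'$. This containment is exactly what transfers the bound $\sigma(S)$, which by definition only controls spurious sets of members of $S$, to arbitrary $f\in\mathrsfs{F}[S]$. Everything else is the same bookkeeping as in Corollary \ref{cor: bound average maximal}, and only the trivial fact that each element of $\mathrsfs{F}[S]$ sits above a member of $S$ is used, rather than any maximal-antichain hypothesis.
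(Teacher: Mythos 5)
Your proposal is correct and takes essentially the same route as the paper: both reduce the claim via Proposition \ref{prop:local averaging general fact} to showing $|\varphi_w(f)\men f|\le\sigma(S)$ for every $f\in\mathrsfs{F}[S]$, and both obtain this by applying Lemma \ref{lem:containing spurious} with $\eta=\varphi_w(f)$ (a maximal element of $Fib(f)$, by Theorem \ref{theo: characterization orbits}) above some $z\in S$ with $z\sub f$, giving $\varphi_w(f)\men f\sub\varphi_{w'}(z)\men z$ and hence the bound by $\sigma(S)$. Your version is in fact slightly more careful than the paper's, since you state explicitly why $\varphi_w(f)\in\max(Fib(f))$, a point the paper uses implicitly.
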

\begin{proof}
  Like in the proof of Corollary \ref{cor: bound average maximal} and
  by Proposition \ref{prop:local averaging general fact} it is sufficient to show $|\varphi_w(f)\men f|=|\sigma_w(\varphi_w(g))|\le \sigma(S)$ for all $g\in \mathrsfs{F}[S]$. Consider any $g\in \mathrsfs{F}[S]$, and let $f\in S$ such that $f\subseteq g$. By Lemma \ref{lem:containing spurious} there is a word $w'$ such that
  $$
  \sigma_w(\varphi_w(g)) = \varphi_w(g)\men g = \varphi_{w'}(g)\men g \subseteq \varphi_{w'}(f)\men f = \sigma_{w'}(\varphi_{w'}(f))
  $$
  and so the claim $|\sigma_w(\varphi_w(g))|\le \sigma(S)$.
\end{proof}
The following theorem gives a lower bound of the average localized on $S$ depending on the parameter $|S^{\uparrow}|$.
\begin{theorem}\label{theo: general lower bound}
$$
  \frac{1}{|\mathrsfs{F}[S]|}\sum_{f\in\mathrsfs{F}[S]}|f|\ge\frac{n}{2}+
  \frac{1}{2|\mathrsfs{F}[S]|}\sum_{a\in X}|\pi_w(a)\cap S^{\uparrow}|-\frac{1}{2}\log_2\biggl\{\frac{|S^{\uparrow}|}{|\mathrsfs{F}[S]|}\biggl\}
$$
and the bound is attained when $S=\min(\mathrsfs{F})$ and when $\mathrsfs{F}$ is upward-closed.
\end{theorem}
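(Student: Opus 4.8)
The plan is to start from Proposition~\ref{prop:local averaging general fact}, which already gives
$$
\sum_{f\in\mathrsfs{F}[S]}|f|\ge\frac{n}{2}|\mathrsfs{F}[S]|+\frac{1}{2}\sum_{a\in X}|\pi_w(a)\cap S^{\uparrow}|-\frac{1}{2}\sum_{a\in X}|\sigma_w(a)\cap S^{\uparrow}|,
$$
with equality when $S=\min(\mathrsfs{F})$. Comparing this with the statement of the theorem, it suffices to prove the single inequality
$$
\sum_{a\in X}|\sigma_w(a)\cap S^{\uparrow}|\le|\mathrsfs{F}[S]|\cdot\log_2\Bigl(\frac{|S^{\uparrow}|}{|\mathrsfs{F}[S]|}\Bigr).
$$
Recall from the proof of Proposition~\ref{prop:local averaging general fact} that $\sum_{a\in X}|\sigma_w(a)\cap S^{\uparrow}|=\sum_{f\in\mathrsfs{F}[S]}(|\varphi_w(f)|-|f|)=\sum_{f\in\mathrsfs{F}[S]}|\varphi_w(f)\setminus f|$, so I must bound the total ``rising length'' over the localized ideal.

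First I would set up the counting device used already by Reimer for the global bound: for each $f\in\mathrsfs{F}[S]$ the interval $[f,\varphi_w(f)]$ has cardinality $2^{|\varphi_w(f)\setminus f|}$, and by Proposition~\ref{prop:interval property} these intervals are pairwise disjoint. Moreover, by Theorem~\ref{theo:ideal correspondence} (and Corollary~\ref{cor:ideal correspondence}), $\varphi_w$ maps $\mathrsfs{F}[S]$ bijectively onto $\mathcal{F}[S]$, and each such interval $[f,\varphi_w(f)]$ lies inside $S^{\uparrow}$: indeed $f\supseteq z$ for some $z\in S$ forces every element of $[f,\varphi_w(f)]$ to contain $z$. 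Hence
$$
\sum_{f\in\mathrsfs{F}[S]}2^{|\varphi_w(f)\setminus f|}=\sum_{f\in\mathrsfs{F}[S]}\bigl|[f,\varphi_w(f)]\bigr|\le|S^{\uparrow}|.
$$

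The final step is a convexity (Jensen) argument applied to the convex function $x\mapsto 2^x$. Writing $N=|\mathrsfs{F}[S]|$ and $\ell_f=|\varphi_w(f)\setminus f|$, the displayed inequality says $\frac1N\sum_{f}2^{\ell_f}\le|S^{\uparrow}|/N$, and Jensen gives $2^{\frac1N\sum_f\ell_f}\le\frac1N\sum_f 2^{\ell_f}\le|S^{\uparrow}|/N$; taking $\log_2$ yields $\frac1N\sum_f\ell_f\le\log_2(|S^{\uparrow}|/N)$, i.e.\ exactly the bound on $\sum_{a\in X}|\sigma_w(a)\cap S^{\uparrow}|$ needed above. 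Substituting back into Proposition~\ref{prop:local averaging general fact} gives the stated inequality. For the equality cases: when $S=\min(\mathrsfs{F})$ Proposition~\ref{prop:local averaging general fact} is already an equality and $S^{\uparrow}=\min(\mathrsfs{F})^{\uparrow}$ contains $\mathcal{F}=\varphi_w(\mathrsfs{F})$, with the intervals tiling it exactly only in degenerate situations; when $\mathrsfs{F}$ is upward-closed, Proposition~\ref{prop:elements fixed by the rising function} gives $\varphi_w=\mathrm{id}$, so every $\ell_f=0$, $|\sigma_w(a)\cap S^{\uparrow}|=0$, and both sides collapse to $\frac{n}{2}+\frac{1}{2|\mathrsfs{F}[S]|}\sum_a|\pi_w(a)\cap S^{\uparrow}|$ since $|S^{\uparrow}|=|\mathrsfs{F}[S]|$.

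\textbf{Main obstacle.} The only genuinely non-routine point is verifying that the disjoint intervals $[f,\varphi_w(f)]$ for $f\in\mathrsfs{F}[S]$ all sit inside $S^{\uparrow}$ rather than merely inside $2^X$; this is where the \emph{localization} to $\mathrsfs{F}[S]$ enters, and it is precisely the containment $z\subseteq f\subseteq\varphi_w(f)$ that makes it work. Everything else — the interval sizes, disjointness, the bijection $\mathrsfs{F}[S]\simeq\mathcal{F}[S]$, and Jensen's inequality — is either quoted from earlier in the paper or standard.
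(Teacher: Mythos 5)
Your proposal is correct and follows essentially the same route as the paper: reduce to bounding $\sum_{a\in X}|\sigma_w(a)\cap S^{\uparrow}|=\sum_{f\in\mathrsfs{F}[S]}(|\varphi_w(f)|-|f|)$ via Proposition \ref{prop:local averaging general fact}, then use the disjointness of the intervals $[f,\varphi_w(f)]$ (Proposition \ref{prop:interval property}), their containment in $S^{\uparrow}$, and Jensen's inequality for $x\mapsto 2^x$, exactly as in the paper's proof (which takes $C(\mathcal{F},S)=\bigcup_{f\in\mathrsfs{F}[S]}[f,\varphi_w(f)]\subseteq S^{\uparrow}$). Your treatment of the equality case matches the paper's as well, and your explicit verification that each interval lies in $S^{\uparrow}$ fills in a detail the paper leaves implicit.
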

\begin{proof}
  By Proposition \ref{prop:local averaging general fact} it is enough to give an upper bound to the quantity $\sum_{a\in X}|\sigma_w(a)\cap S^{\uparrow}|$. Following a similar argument in \cite{Reimer}, we use Jensen's inequality to upper bound $\sum_{a\in X}|\sigma_w(a)\cap S^{\uparrow}|=\sum_{f\in\mathrsfs{F}[S]}(|\varphi_w(f)|-|f|)$. Indeed, we have
  $$
  \exp_2\biggl\{\frac{1}{|\mathrsfs{F}[S]|}\sum_{f\in\mathrsfs{F}[S]}(|\varphi_w(f)|-|f|)\biggl\}\le \frac{1}{|\mathrsfs{F}[S]|}\sum_{f\in\mathrsfs{F}[S]}2^{|\varphi_w(f)|-|f|}.
  $$
  By Proposition \ref{prop:interval property}, $f\neq g$ implies $[f,\varphi_w(f)]\cap [g,\varphi_w(g)]=\emptyset$, hence since $|[f,\varphi_w(f)]|=2^{|\varphi_w(f)|-|f|}$ we get
  $$
  \frac{1}{|\mathrsfs{F}[S]|}\sum_{f\in\mathrsfs{F}[S]}(|\varphi_w(f)|-|f|)\le \log_2\biggl\{\frac{|C(\mathcal{F},S)|}{|\mathrsfs{F}[S]|}\biggl\}.
  $$
  where $C(\mathcal{F},S)=\bigcup_{f\in\mathrsfs{F}[S]}[f,\varphi_w(f)]$. The statement of the theorem thus follows from $C(\mathcal{F},S)\sub S^{\uparrow}$.
  \\
  If $S=\min(\mathrsfs{F})$ we have the equality in the bound of Proposition \ref{prop:local averaging general fact}, moreover if $\mathrsfs{F}$ is upward-closed, then $\mathcal{F}=\mathrsfs{F}$. Thus $\sum_{a\in X}|\sigma_w(a)\cap S^{\uparrow}|=0$ which is equal to $\frac{1}{2}\log_2\{|S^{\uparrow}|/|\mathrsfs{F}[S]|\}$ since $\mathrsfs{F}$ is upward-closed and so $S^{\uparrow}=\mathrsfs{F}[S]$. Therefore the lower bound in the statement is reached for $S=\min(\mathrsfs{F})$ and the class of upward-closed families.
\end{proof}
\begin{rem}\label{rem: remark tightness pure}
  In Theorem \ref{theo: general lower bound}, Corollaries \ref{cor: bound average using invariant},\ref{cor: bound average maximal}, we can give a lower bound to the quantity $\frac{1}{2|\mathrsfs{F}[S]|}\sum_{a\in X}|\pi_w(a)\cap S^{\uparrow}|$. Indeed, by Proposition \ref{prop:lower bound pure} and Theorem \ref{theo:ideal correspondence} it is not difficult to see that
  $$
  \sum_{a\in X}|\pi_w(a)\cap S^{\uparrow}|\ge \sum_{g\in \mathrsfs{F}[S] }|\{a\in g: (g\setminus\{a\})^* = \emptyset\}|
  $$
  and the equality is attained if $S=\min(\mathrsfs{F})$ and when $\mathrsfs{F}$ is upward-closed.
\end{rem}

\subsection{The invariant case}
In this section we obtain some lower bounds on the average of $\mathrsfs{F}$ localized on $S$ using the invariant upward-closed set associated to $\mathrsfs{F}$. The following definition can be considered as the analogous of the spurious and pure elements of Definition \ref{def:spurious and pure elements} in the invariant case.
\begin{definition}
  Let $\textbf{U}(\mathrsfs{F})$ be the invariant upward-closed set associated to $\mathrsfs{F}$ and let $a\in X$ the set
  $$
  \Sigma(\mathrsfs{F}, a) = \{g\in \mathrsfs{F}_{\overline{a}}:\forall\eta\in \max(Fib(g)), a\in\eta\}
  $$
  is called the set of hyper-spurious elements. The local version of this set is $\Sigma(\mathrsfs{F}, g) = \{a\in X\setminus g: \forall\eta\in\max(Fib(g)), a\in\eta \}$. The elements of the set
  $$
  \Pi(\mathrsfs{F}, a) = \{g\in\mathrsfs{F}_a: \forall \eta\in \max(Fib(g)), \eta\setminus\{a\}\notin \textbf{U}(\mathrsfs{F})\}
  $$
  are called hyper-pure. The local version of this set is $\Pi(\mathrsfs{F}, g) = \{a\in g: \forall \eta\in \max(Fib(g)), \eta\setminus\{a\}\notin \textbf{U}(\mathrsfs{F})\}$.
\end{definition}
The connection between these sets and the spurious, pure sets introduced in Definition \ref{def:spurious and pure elements} is given by the following proposition.
\begin{proposition}\label{prop: characte hyper invariant case}
The following equalities hold:
\begin{equation}\label{eq:hyper-spurious}
  \Sigma(\mathrsfs{F}, g) = \bigcap_{\eta\in\max(Fib(g))}\eta\setminus g = \bigcap_{\theta\in \mathfrak{S}_X}\sigma_{w\theta}(\mathrsfs{F}, \varphi_{w\theta}(g))
\end{equation}
\begin{equation}\label{eq:hyper-pure}
  \Pi(\mathrsfs{F}, g)  = \bigcap_{\theta\in \mathfrak{S}_X}\pi_{w\theta}(\mathrsfs{F}, \varphi_{w\theta}(g))
\end{equation}
\end{proposition}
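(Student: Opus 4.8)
The plan is to translate both identities into purely combinatorial statements using Theorem~\ref{theo: characterization orbits}, which identifies $\max(Fib(g))$ with the orbit $\{\varphi_{w\theta}(g):\theta\in\mathfrak{S}_X\}$, and then to settle the only non-formal inclusion with the prefix construction of Lemma~\ref{lem:word generatin a maximal fiber}. Throughout I would use the elementary facts that $z^*\subseteq z$ for every $z\in\min(\mathrsfs{F})^{\uparrow}$ and that, by Corollary~\ref{cor:ideal correspondence}, $\varphi_{w\theta}^{-1}(\varphi_{w\theta}(g))=g$ and $(\varphi_{w\theta}(f))^*=f$.

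For \eqref{eq:hyper-spurious} I would first note that $g=\eta^*\subseteq\eta$ for every $\eta\in Fib(g)$, so for $a\in X$ the condition ``$a\notin g$ and $a\in\eta$ for all $\eta\in\max(Fib(g))$'' defining $\Sigma(\mathrsfs{F},g)$ amounts, since $\max(Fib(g))\neq\emptyset$, to $a\in\bigcap_{\eta\in\max(Fib(g))}(\eta\setminus g)$; this gives the first equality. For the second equality I would apply the formula $\sigma_{w}(\zeta)=\zeta\setminus\varphi_{w}^{-1}(\zeta)$ (recorded just before Proposition~\ref{prop:characterization of pi sigma}) with the word $w\theta$ and $\zeta=\varphi_{w\theta}(g)$, obtaining $\sigma_{w\theta}(\varphi_{w\theta}(g))=\varphi_{w\theta}(g)\setminus g$; intersecting over $\theta\in\mathfrak{S}_X$ and replacing the index set $\{\varphi_{w\theta}(g):\theta\in\mathfrak{S}_X\}$ by $\max(Fib(g))$ via Theorem~\ref{theo: characterization orbits} closes the chain.

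For \eqref{eq:hyper-pure} I would start from Definition~\ref{def:spurious and pure elements}: since $\varphi_{w\theta}$ restricts to a bijection $\mathrsfs{F}_a\to\varphi_{w\theta}(\mathrsfs{F}_a)$, one has $\pi_{w\theta}(\varphi_{w\theta}(g))=\{a\in g:\varphi_{w\theta}(g)\setminus\{a\}\notin\varphi_{w\theta}(\mathrsfs{F})\}$, hence $\bigcap_{\theta}\pi_{w\theta}(\varphi_{w\theta}(g))=\{a\in g:\ \forall\theta,\ \varphi_{w\theta}(g)\setminus\{a\}\notin\varphi_{w\theta}(\mathrsfs{F})\}$, whereas by Theorem~\ref{theo: characterization orbits} $\Pi(\mathrsfs{F},g)=\{a\in g:\ \forall\theta,\ \varphi_{w\theta}(g)\setminus\{a\}\notin\textbf{U}(\mathrsfs{F})\}$. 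Since $\varphi_{w\theta}(\mathrsfs{F})\subseteq\textbf{U}(\mathrsfs{F})$ for every $\theta$, the inclusion $\Pi(\mathrsfs{F},g)\subseteq\bigcap_{\theta}\pi_{w\theta}(\varphi_{w\theta}(g))$ is immediate, and everything reduces to the converse.

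For the converse, take $a\in\bigcap_{\theta}\pi_{w\theta}(\varphi_{w\theta}(g))$ and assume for contradiction that $\xi:=\varphi_{w\theta}(g)\setminus\{a\}\in\textbf{U}(\mathrsfs{F})$ for some $\theta$, say $\xi=\varphi_{w\vartheta}(f)$ with $f\in\mathrsfs{F}$. Then $f=\xi^*\subseteq(\varphi_{w\theta}(g))^*=g$ because $\circ^*$ preserves inclusion, and $\xi\cup\{a\}=\varphi_{w\theta}(g)\in\max(Fib(g))$ while $\xi\in\max(Fib(f))$; moreover $(\xi\cup\{a\})\setminus g=\xi\setminus g\subseteq\xi\setminus f$ since $a\in g$ and $f\subseteq g$. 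I would then choose a word $u$ listing first the elements of $\xi\setminus g$, then those of $(\xi\setminus f)\setminus(\xi\setminus g)$, then the remaining letters of $X$, so that both $\xi\setminus g$ and $\xi\setminus f$ are contained in prefixes of $u$; Lemma~\ref{lem:word generatin a maximal fiber} then gives $\varphi_{u}(g)=\xi\cup\{a\}$ and $\varphi_{u}(f)=\xi$. Writing $u=w\theta'$, we get $\varphi_{w\theta'}(g)\setminus\{a\}=\xi=\varphi_{w\theta'}(f)\in\varphi_{w\theta'}(\mathrsfs{F})$, contradicting $a\in\pi_{w\theta'}(\varphi_{w\theta'}(g))$; hence $\varphi_{w\theta}(g)\setminus\{a\}\notin\textbf{U}(\mathrsfs{F})$ for every $\theta$, i.e.\ $a\in\Pi(\mathrsfs{F},g)$. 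The main obstacle is exactly this last step: the point is that a single rising order can simultaneously realize $\varphi_u(g)=\xi\cup\{a\}$ and $\varphi_u(f)=\xi$, which is possible only because the sets of newly added letters $\xi\setminus g$ and $\xi\setminus f$ are nested, and that nestedness is forced by $f\subseteq g$, itself a consequence of $\circ^*$ preserving inclusion. The degenerate cases $\xi\setminus g=\emptyset$ or $\xi\setminus f=\emptyset$ are covered by the convention, already used in Lemma~\ref{lem:word generatin a maximal fiber}, that every word has the empty set contained in a prefix.
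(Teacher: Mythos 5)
Your proposal is correct, and for equation \eqref{eq:hyper-spurious} as well as the easy inclusion $\Pi(\mathrsfs{F},g)\subseteq\bigcap_{\theta}\pi_{w\theta}(\mathrsfs{F},\varphi_{w\theta}(g))$ it fills in exactly the details the paper intends (definition of $\Sigma$, the identity $\sigma_{w\theta}(\varphi_{w\theta}(g))=\varphi_{w\theta}(g)\setminus\varphi_{w\theta}^{-1}(\varphi_{w\theta}(g))=\varphi_{w\theta}(g)\setminus g$, and Theorem \ref{theo: characterization orbits} to identify the index sets). Where you genuinely diverge is the converse inclusion of \eqref{eq:hyper-pure}. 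The paper stays with the single permutation $\vartheta$ witnessing $\xi=\varphi_{w\theta}(g)\setminus\{a\}=\varphi_{w\vartheta}(f)\in\textbf{U}(\mathrsfs{F})$: since $\varphi_{w\vartheta}(\mathrsfs{F})$ is upward-closed and contains $\xi\subseteq\varphi_{w\theta}(g)$, the set $\varphi_{w\theta}(g)$ itself lies in $\varphi_{w\vartheta}(\mathrsfs{F})$, and the inverse formula of Corollary \ref{cor:ideal correspondence} forces $\varphi_{w\theta}(g)=\varphi_{w\vartheta}(g)$, so the contradiction with $a\in\pi_{w\vartheta}(\mathrsfs{F},\varphi_{w\vartheta}(g))$ is immediate and no new rising order is needed. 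You instead manufacture a fresh word $u=w\theta'$ via Lemma \ref{lem:word generatin a maximal fiber}, using $f=\xi^*\subseteq g$ to get the nesting $\xi\setminus g\subseteq\xi\setminus f$ and hence a common prefix realizing $\varphi_u(g)=\xi\cup\{a\}$ and $\varphi_u(f)=\xi$ simultaneously; this is the same mechanism the paper uses in Lemma \ref{lem:containing spurious}, and it is valid here (all ingredients — Theorem \ref{theo: characterization orbits}, the prefix lemma, monotonicity of $\circ^*$ — are established earlier, so there is no circularity). The trade-off: the paper's route is shorter and exploits upward-closedness of a single rising image, while yours avoids that observation at the cost of re-running the prefix construction; both are complete proofs.
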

\begin{proof}
  The first equality in (\ref{eq:hyper-spurious}) is a consequence of the definition, the second one of Theorem \ref{theo: characterization orbits}. Let us prove (\ref{eq:hyper-pure}). Let $b\in \Pi(\mathrsfs{F}, g) $, then for any $\eta\in \max(Fib(g))=\mathfrak{S}_X\cdot \varphi_{w}(g)$ (by Theorem \ref{theo: characterization orbits}) we have $\eta\setminus\{b\}\notin \textbf{U}(\mathrsfs{F})$, hence for any $\theta\in \mathfrak{S}_X$, $\varphi_{w\theta}(g)\setminus\{b\}\notin\varphi_{w\theta}(\mathrsfs{F})$, i.e. $b\in \bigcap_{\theta\in \mathfrak{S}_X}\pi_{w\theta}(\mathrsfs{F}, \varphi_{w\theta}(g))$. On the other side, let $b\in \bigcap_{\theta\in \mathfrak{S}_X}$ $\pi_{w\theta}(\mathrsfs{F}, \varphi_{w\theta}(g))$.
  To obtain a contradiction suppose that there is $\eta\in\max(Fib(g))$, for some $g\in \mathrsfs{F}$, such that $\eta\setminus\{b\}\in \textbf{U}(\mathrsfs{F})$, say $\eta\setminus\{b\}\in \max(Fib(h))$ for some $h\in \mathrsfs{F}$. Since by Theorem \ref{theo: characterization orbits} $\max(Fib(h)) = \mathfrak{S}_X\cdot \varphi_{w}(h)$, there is a $\vartheta\in\mathfrak{S}_X $ such that $\eta\setminus\{b\}=\varphi_{w\vartheta}(h)$. Since $\eta\setminus\{b\}\subseteq \eta$ we have $\eta\in\varphi_{w\vartheta}(\mathrsfs{F})$, in particular, since $\varphi_{w\vartheta}^{-1}(\eta) = \eta^* = g$, we have $\eta = \varphi_{w\vartheta}(g)$. However $b\in \bigcap_{\theta\in \mathfrak{S}_X}\pi_{w\theta}(\mathrsfs{F}, \varphi_{w\theta}(g))$ implies $b\in \pi_{w\vartheta}(\mathrsfs{F}, \eta)$ which contradicts $\eta\setminus\{b\} \in \varphi_{w\vartheta}(\mathrsfs{F})$.
\end{proof}
We recall that at the end of Section \ref{sec:permutation dependancy} we have introduced the set
$$
    Cov_a(g) = \{h\in \mathrsfs{F}_a: (h\setminus\{a\})^* = g\}
$$
we have the following proposition:
\begin{proposition}\label{prop: invariant hyper-pure, hyper-spu}
  $$
  \Sigma(\mathrsfs{F}, g) = \{b\in X\setminus g: Cov_b(g) = \emptyset\} = X\setminus \bigcup_{\{h:g\lessdot h\}} h
  $$
    $$
  |\mathrsfs{F}_{\overline{a}}| - |\Sigma(\mathrsfs{F}, a)|\le \sum_{g\in \mathrsfs{F}_{\overline{a}}\setminus\Sigma(\mathrsfs{F}, a)} |\max(Cov_a(g))|\le |\mathrsfs{F}_{a}| - |\Pi(\mathrsfs{F}, a)|, \forall a\in X
    $$
\end{proposition}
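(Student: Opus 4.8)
The plan is to dispatch first the two set identities, then the (easy) left-hand inequality, then the right-hand one, which is the substantial part. Everything is built on Lemma~\ref{lem: characterization covering}, Proposition~\ref{prop: covered in a}, the fact that the sets $Cov_a(g)$ partition $\mathrsfs{F}_a$, and the description $\textbf{U}(\mathrsfs{F})=\bigcup_{f\in\mathrsfs{F}}\max(Fib(f))$ coming from Theorem~\ref{theo: characterization orbits}.

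\emph{Identities.} Fix $g\in\mathrsfs{F}$ and $b\in X\setminus g$, so $g\in\mathrsfs{F}_{\overline b}$. By Lemma~\ref{lem: characterization covering}, some $\eta\in\max(Fib(g))$ omits $b$ iff there is $h\in\mathrsfs{F}_b$ with $g\lessdot h$, and by Proposition~\ref{prop: covered in a} together with the definition of $Cov_b(g)$ this is the same as $Cov_b(g)\neq\emptyset$; negating yields $\Sigma(\mathrsfs{F},g)=\{b\in X\setminus g:Cov_b(g)=\emptyset\}$. Since ``$\exists h\in\mathrsfs{F}_b$ with $g\lessdot h$'' says exactly that $b$ lies in some set covering $g$, this set equals $(X\setminus g)\setminus\bigcup_{\{h:g\lessdot h\}}h$, and as each such $h$ contains $g$ the union already contains $g$ whenever $g$ has a cover, so the set simplifies to $X\setminus\bigcup_{\{h:g\lessdot h\}}h$. (One should note that when $g=X$ is the top element all these sets are empty, so the displayed chain is to be read for $g\neq X$.) Finally the left inequality: $\mathrsfs{F}_{\overline a}\setminus\Sigma(\mathrsfs{F},a)$ is precisely the set of $g\in\mathrsfs{F}_{\overline a}$ with $Cov_a(g)\neq\emptyset$, and for each such $g$ the set $\max(Cov_a(g))$ is nonempty, so summing the constant $1$ over this index set already produces $|\mathrsfs{F}_{\overline a}|-|\Sigma(\mathrsfs{F},a)|\le\sum_{g\in\mathrsfs{F}_{\overline a}\setminus\Sigma(\mathrsfs{F},a)}|\max(Cov_a(g))|$.

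\emph{Right inequality.} The blocks $Cov_a(g)=\{h\in\mathrsfs{F}_a:(h\setminus\{a\})^*=g\}$ are pairwise disjoint subsets of $\mathrsfs{F}_a$, so $\bigcup_g\max(Cov_a(g))$ is a disjoint union inside $\mathrsfs{F}_a$ and $\sum_g|\max(Cov_a(g))|=\bigl|\bigcup_g\max(Cov_a(g))\bigr|$. Hence it suffices to prove that $\bigcup_g\max(Cov_a(g))$ is disjoint from $\Pi(\mathrsfs{F},a)$, i.e. the implication: if $h\in\max(Cov_a(g))$ for some $g\in\mathrsfs{F}$ (so $g=(h\setminus\{a\})^*$), then $h\notin\Pi(\mathrsfs{F},a)$. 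I would prove this by contradiction. Assume $h\in\Pi(\mathrsfs{F},a)$ and pick $\eta\in\max(Fib(h))$ (for instance $\varphi_w(h)$). A short computation with $\circ^*$, using $h\subseteq\eta$, shows $(\eta\setminus\{a\})^*=g$, so $\eta\setminus\{a\}\in Fib(g)$; and $h\in\Pi(\mathrsfs{F},a)$ forces $\eta\setminus\{a\}\notin\textbf{U}(\mathrsfs{F})$. Since $\textbf{U}(\mathrsfs{F})\cap Fib(g)=\max(Fib(g))$ by Theorem~\ref{theo: characterization orbits}, the element $\eta\setminus\{a\}$ is not maximal in $Fib(g)$, so there is $\mu$ with $\eta\setminus\{a\}\subsetneq\mu$ and $\mu^*=g$. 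Then $a\notin\mu$ (otherwise $\mu\supseteq(\eta\setminus\{a\})\cup\{a\}=\eta\supseteq h$ would put $a$ into $\mu^*=g$, impossible as $g\in\mathrsfs{F}_{\overline a}$), whence $\mu\cup\{a\}\supsetneq\eta$ and $h\subseteq\mu\cup\{a\}$ (because $h\setminus\{a\}\subseteq\eta\setminus\{a\}\subseteq\mu$). Setting $h''=(\mu\cup\{a\})^*$, another $\circ^*$ computation gives $h\subseteq h''$, $a\in h''$ (so $h''\in\mathrsfs{F}_a$) and $(h''\setminus\{a\})^*=g$, i.e. $h''\in Cov_a(g)$, and in fact $h\subsetneq h''$ (equality would make $\mu\cup\{a\}\in Fib(h)$ strictly larger than $\eta$, contradicting $\eta\in\max(Fib(h))$). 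This contradicts $h\in\max(Cov_a(g))$, proving the implication and hence the right inequality.

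The main obstacle is precisely this last implication: extracting from the non-maximality of $\eta\setminus\{a\}$ in $Fib(g)$ a genuinely larger element $h''$ of the same block $Cov_a(g)$, and checking carefully both that $a\notin\mu$ and that $(h''\setminus\{a\})^*=g$ — this is the only point where the maximality of $h$ in $Cov_a(g)$ is actually consumed. Everything else reduces to routine bookkeeping with $\circ^*$, $Fib$, and the definitions of $\Sigma(\mathrsfs{F},a)$, $\Pi(\mathrsfs{F},a)$ and $\textbf{U}(\mathrsfs{F})$.
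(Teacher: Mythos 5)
Your proposal is correct and takes essentially the same route as the paper: the two identities and the left inequality come from Lemma \ref{lem: characterization covering} and Proposition \ref{prop: covered in a}, and the right inequality from the inclusion $\bigcup_{g}\max(Cov_a(g))\subseteq\mathrsfs{F}_a\setminus\Pi(\mathrsfs{F},a)$ combined with the pairwise disjointness of the blocks $Cov_a(g)$, all driven by the computation $(\eta\setminus\{a\})^*=g$ and the description of $\textbf{U}(\mathrsfs{F})$ via maximal fiber elements from Theorem \ref{theo: characterization orbits}. The only difference is presentational: the paper proves the key claim directly, extending $\eta\setminus\{a\}$ to a maximal $\nu\in\max(Fib(g))$ and using maximality of $h$ in $Cov_a(g)$ to identify $\nu\cup\{a\}$ as an element of $\max(Fib(h))$, whereas you run the same construction contrapositively, producing $h''=(\mu\cup\{a\})^*\in Cov_a(g)$ strictly above $h$ to contradict that maximality.
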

\begin{proof}
  By Proposition \ref{prop: covered in a} and Lemma \ref{lem: characterization covering} we have that $Cov_a(g) =\emptyset$ if and only if $a\in \Sigma(\mathrsfs{F}, g)$. The second equality is also a consequence of Proposition \ref{prop: covered in a} and the definitions. Let us prove the inequalities.
  We first claim that for any $h\in \max(Cov_a(g))$ with $g\in\mathrsfs{F}_{\overline{a}}\setminus\Sigma(\mathrsfs{F}, a)$ and for any $\eta\in\max(Fib(h))$, $\eta\setminus\{a\}\in \textbf{U}(\mathrsfs{F})$. Since $(h\setminus\{a\})^* = g$ and $(h\setminus\{a\})\subseteq \eta\setminus\{a\}$ we have $g\subseteq (\eta\setminus\{a\})^*$. On the other hand, let $(\eta\setminus\{a\})^* = g'$. Since $g' \subseteq \eta\setminus\{a\}$ and $g'\subseteq \eta^* = h$, then $g'\subseteq (h\setminus\{a\})^* = g$ from which we have the equality $(\eta\setminus\{a\})^* = g$.
  Therefore $(\eta\setminus\{a\})\in Fib(g)$, and so there is a $\nu\in\max(Fib(g))$ with $(\eta\setminus\{a\})\subseteq\nu$. Consider $\nu\cup\{a\}$ and let us prove that $\nu\cup\{a\}\in\max(Fib(h))$. Clearly $\nu\cup\{a\}\in\max(Fib(h'))$ for some $h'$, we observe that since $h\subseteq \eta\subseteq\nu\cup\{a\}$ we have $h\subseteq (\nu\cup\{a\})^* = h'$. If we prove that $h'\in Cov_a(g)$, then by the maximality of $h$, we get $h=h'$. Suppose, contrary to our claim, that $h'\notin Cov_a(g)$. Thus, if we put $g'=(h'\setminus\{a\})^*$, we have $g = (h\setminus\{a\})^*\subsetneq (h'\setminus\{a\})^* = g'$. However, we also have $g'= (h'\setminus\{a\})^*\subseteq \nu^* = g$, a contradiction.
  Therefore, the claim is true and so we can deduce the following inclusion:
  $$
    \bigcup_{g\in \mathrsfs{F}_{\overline{a}}\setminus\Sigma(\mathrsfs{F}, a)} \max(Cov_a(g)) \subseteq \mathrsfs{F}_{a}\setminus\Pi(\mathrsfs{F}, a)
  $$
  Thus, the inequality easily follows from this inclusion and the following facts: $Cov_a(g)\neq\emptyset$ iff $g\in \mathrsfs{F}_{\overline{a}}\setminus\Sigma(\mathrsfs{F}, a)$ and $Cov_a(g)\cap Cov_a(g') = \emptyset$ for $g\neq g'$.
\end{proof}
The following theorem is the analogous of Theorem \ref{theo: general lower bound} in the invariant case.
\begin{proposition}\label{prop: lower bound average invariant case}
  $$
  \frac{1}{|\mathrsfs{F}[S]|}\sum_{f\in\mathrsfs{F}[S]}|f|\ge \frac{n}{2} - \frac{1}{2|\mathrsfs{F}[S]|}\sum_{f\in\mathrsfs{F}[S]} |\Sigma(\mathrsfs{F},g)| + \frac{1}{2|\mathrsfs{F}[S]|}\sum_{a\in X}|\Pi(\mathrsfs{F},a)\cap S^{\uparrow}|
  $$
  this bound is attained for $S=\min(\mathrsfs{F})$ and when $\mathrsfs{F}$ is upward-closed.
\end{proposition}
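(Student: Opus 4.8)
The plan is to bypass the rising-function route of Theorem~\ref{theo: general lower bound} and instead derive the bound directly from the combinatorial content of Proposition~\ref{prop: invariant hyper-pure, hyper-spu}, localized to the $\cup$-closed family $\mathrsfs{F}[S]$. First I would record the elementary identity: writing $\mathrsfs{F}[S]_a$ (resp.\ $\mathrsfs{F}[S]_{\overline{a}}$) for the set of elements of $\mathrsfs{F}[S]$ containing (resp.\ not containing) $a$, one has $\sum_{f\in\mathrsfs{F}[S]}|f|=\sum_{a\in X}|\mathrsfs{F}[S]_a|$, and since $|\mathrsfs{F}[S]|=|\mathrsfs{F}[S]_a|+|\mathrsfs{F}[S]_{\overline{a}}|$ this rearranges to
\[
\sum_{f\in\mathrsfs{F}[S]}|f|=\frac{n}{2}\,|\mathrsfs{F}[S]|+\frac{1}{2}\sum_{a\in X}\bigl(|\mathrsfs{F}[S]_a|-|\mathrsfs{F}[S]_{\overline{a}}|\bigr).
\]
Thus it suffices to prove, for each $a\in X$, the localized inequality $|\mathrsfs{F}[S]_{\overline{a}}|-|\Sigma(\mathrsfs{F},a)\cap S^{\uparrow}|\le|\mathrsfs{F}[S]_a|-|\Pi(\mathrsfs{F},a)\cap S^{\uparrow}|$, together with the double count $\sum_{a\in X}|\Sigma(\mathrsfs{F},a)\cap S^{\uparrow}|=\sum_{f\in\mathrsfs{F}[S]}|\Sigma(\mathrsfs{F},f)|$, which is immediate from the definitions since $a\in\Sigma(\mathrsfs{F},f)$ if and only if $f\in\Sigma(\mathrsfs{F},a)$; substituting into the identity yields the statement.

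The per-$a$ inequality I would obtain by restricting the set-valued injection $g\mapsto\max(Cov_a(g))$ used in the proof of Proposition~\ref{prop: invariant hyper-pure, hyper-spu} to $\mathrsfs{F}[S]$. The relevant facts are: (i) if $g\in\mathrsfs{F}[S]_{\overline{a}}$ and $h\in Cov_a(g)$ then $g\subseteq h$, hence $h\in\mathrsfs{F}[S]_a$, so $Cov_a(g)\subseteq\mathrsfs{F}[S]_a$; (ii) by Proposition~\ref{prop: invariant hyper-pure, hyper-spu}, $Cov_a(g)\neq\emptyset$ exactly when $g\notin\Sigma(\mathrsfs{F},a)$, and $Cov_a(g)\cap Cov_a(g')=\emptyset$ for $g\neq g'$; (iii) the inclusion $\bigcup_{g\in\mathrsfs{F}_{\overline{a}}\setminus\Sigma(\mathrsfs{F},a)}\max(Cov_a(g))\subseteq\mathrsfs{F}_a\setminus\Pi(\mathrsfs{F},a)$ proved there restricts, by~(i), to $\bigcup_{g\in\mathrsfs{F}[S]_{\overline{a}}\setminus\Sigma(\mathrsfs{F},a)}\max(Cov_a(g))\subseteq\mathrsfs{F}[S]_a\setminus\Pi(\mathrsfs{F},a)$. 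Counting these pairwise disjoint nonempty sets, and using $\Sigma(\mathrsfs{F},a)\cap S^{\uparrow}=\Sigma(\mathrsfs{F},a)\cap\mathrsfs{F}[S]_{\overline{a}}$ and $\Pi(\mathrsfs{F},a)\cap S^{\uparrow}=\Pi(\mathrsfs{F},a)\cap\mathrsfs{F}[S]_a$, gives the desired inequality. For tightness, when $\mathrsfs{F}$ is upward-closed we have $\varphi_{w\theta}(\mathrsfs{F})=\mathrsfs{F}$ for all $\theta$, so $\max(Fib(g))=\{g\}$, whence $\Sigma(\mathrsfs{F},f)=\emptyset$, $Cov_a(g)=\{g\cup\{a\}\}$ and $\Pi(\mathrsfs{F},a)=P(\mathrsfs{F},a)$; all the inequalities above collapse to equalities, and taking in addition $S=\min(\mathrsfs{F})$ (so $\mathrsfs{F}[S]=\mathrsfs{F}$ and $S^{\uparrow}\cap\mathrsfs{F}=\mathrsfs{F}$) recovers the exact value of $\frac{1}{|\mathrsfs{F}|}\sum_{f\in\mathrsfs{F}}|f|$.

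The main obstacle is precisely this restriction step. One is tempted to imitate Theorem~\ref{theo: general lower bound} and work with a single rising function $\varphi_w$, but for a fixed word $w$ one only has $\Sigma(\mathrsfs{F},f)\subseteq\sigma_w(\varphi_w(f))$ by~(\ref{eq:hyper-spurious}), which is the wrong direction to feed into Proposition~\ref{prop:local averaging general fact}; it is the orbit-invariant bookkeeping through the sets $Cov_a(g)$ that makes the hyper-spurious quantity appear with the correct sign. Hence the crux is to check that this bookkeeping survives intersection with $\mathrsfs{F}[S]$ --- which is exactly fact~(i) --- after which the remaining steps are routine counting.
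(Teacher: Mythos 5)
Your proposal is correct and follows essentially the same route as the paper: the paper also derives, for each $a\in X$, the localized inequality $|\mathrsfs{F}_{\overline{a}}[S]|-|\Sigma(\mathrsfs{F},a)\cap S^{\uparrow}|\le|\mathrsfs{F}_{a}[S]|-|\Pi(\mathrsfs{F},a)\cap S^{\uparrow}|$ by adapting Proposition \ref{prop: invariant hyper-pure, hyper-spu} (your facts (i)--(iii) are precisely the adaptation it leaves implicit), and then sums over $a$ with the same double-counting identities. Your tightness check, collapsing the per-$a$ inequalities directly in the upward-closed case with $S=\min(\mathrsfs{F})$, is only a cosmetic variant of the paper's appeal to Theorem \ref{theo: general lower bound} and Remark \ref{rem: remark tightness pure}.
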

\begin{proof}
Proposition \ref{prop: invariant hyper-pure, hyper-spu} can be easily adapt to prove that for all $a\in X$:
  $$
  |\mathrsfs{F}_{\overline{a}}[S]|-|\Sigma(\mathrsfs{F}, a)\cap S^{\uparrow}| \le |\mathrsfs{F}_{a}[S]|- |\Pi(\mathrsfs{F}, a)\cap S^{\uparrow}|
  $$
  The statement can be thus proved summing all these inequalities with $a$ running on $X$ and using the equalities $\sum_{a\in X}|\mathrsfs{F}_{a}[S]| = \sum_{f\in\mathrsfs{F}_{a}[S]}|f|$, $\sum_{a\in X}|\mathrsfs{F}_{\overline{a}}[S]| = \sum_{f\in\mathrsfs{F}_{a}[S]}(n - |f|)$, $\sum_{a\in X}|\Sigma(\mathrsfs{F}, a)\cap S^{\uparrow}|$
  $= \sum_{f\in\mathrsfs{F}_{a}[S]} |\Sigma(\mathrsfs{F},f)|$.
  \\
  By Theorem \ref{theo: general lower bound} and Remark \ref{rem: remark tightness pure} in the case of an upward-closed family $\mathrsfs{F}$ and $S=\min(\mathrsfs{F})$, we have
  $$
  \frac{1}{|\mathrsfs{F}[S]|}\sum_{f\in\mathrsfs{F}[S]}|f| = \frac{n}{2} + \frac{1}{2|\mathrsfs{F}[S]|}\sum_{g\in \mathrsfs{F}[S] }|\{a\in g: (g\setminus\{a\})^* = \emptyset\}|
  $$
  On the other hand by Proposition \ref{prop: invariant hyper-pure, hyper-spu} it is not difficult to check that in the case $\mathrsfs{F}$ is an upward-closed family $\Sigma(\mathrsfs{F},g) = \emptyset$ and $\Pi(\mathrsfs{F},g) = \{a\in g: (g\setminus\{a\})^* = \emptyset\}$ and so the bound is attained in this case.
\end{proof}
Using the first equality of Proposition \ref{prop: invariant hyper-pure, hyper-spu} and Proposition \ref{prop: lower bound average invariant case} we can rewrite the bound of Proposition \ref{prop: lower bound average invariant case} as
$$
  \frac{1}{|\mathrsfs{F}[S]|}\sum_{f\in\mathrsfs{F}[S]}|f|\ge \frac{1}{2|\mathrsfs{F}[S]|}\sum_{f\in\mathrsfs{F}[S]} |\bigcup_{\{h:f\lessdot h\}} h| + \frac{1}{2|\mathrsfs{F}[S]|}\sum_{a\in X}|\Pi(\mathrsfs{F},a)\cap S^{\uparrow}|
$$
We observe that by Propositions \ref{prop: characte hyper invariant case}, \ref{prop:lower bound pure}, similarly to Remark \ref{rem: remark tightness pure}, we also have the following lower bound
    $$
  \sum_{a\in X}|\Pi(\mathrsfs{F},a)\cap S^{\uparrow}|\ge \sum_{g\in \mathrsfs{F}[S] }|\{a\in g: (g\setminus\{a\})^* = \emptyset\}|
    $$

\section{Upper bounds for the join-irreducible elements of a union-closed family}\label{sec: bounf join irreducible}
Let $\mathrsfs{F}$ be a $\cup$-closed family of sets of $2^X$ with $X=\{a_1,a_2,\ldots,a_n\}$, in this section we use the techniques obtained in Section \ref{sec:II} to give an upper bound to the number of join-irreducible elements of $\mathrsfs{F}$. We remark that if $m\in J(\mathrsfs{F})$ then $\mathrsfs{F}\setminus\{m\}$ is again a $\cup$-closed family of $2^X$. Therefore it is interesting and quite natural studing the effect of erasing an irreducible elements from $\mathrsfs{F}$ in the rising process. For this reasons we will denote by $\varphi_w$, $\varphi'_w$ the rising function with respect to the word $w=a_1a_2\ldots a_n$ respectively of $\mathrsfs{F}$, $\mathrsfs{F}'=\mathrsfs{F}\setminus\{m\}$. We recall that the rising function at the $i$-th step is defined by
  $$
    \begin{array}{l}
    \varphi_{\mathrsfs{F}_{i},a_{i+1}}(z)=\left\{\begin{array}{l}
    z\cup\{a_{i+1}\}\;\text{ if } z\cup\{a_{i+1}\}\notin \mathrsfs{F}_i,\\
    z \text{ otherwise};
    \end{array}\right.
    \end{array}
  $$
Here we simplify the cumbersome notation and we write $\varphi_{a_{i+1}}$, $\varphi'_{a_{i+1}}$ for $\varphi_{\mathrsfs{F}_{i},a_{i+1}}$, $\varphi_{\mathrsfs{F}'_{i},a_{i+1}}$, respectively. With this notation, the rising function with respect to $w=a_1\ldots a_n$ is the last function $\varphi_n$ of the sequence of functions defined inductively by $\varphi_i = \varphi_{a_{i}}\circ\varphi_{i-1}$ for $i = 1,\ldots n$ where $\varphi_0$ is the identity function on $2^X$. We have the following lemma.
\begin{lemma}\label{lem:erasing element}
  With the above notation, for each $i\in\{0,1,\ldots,n\}$ there is a element $\mu_i\in\mathrsfs{F}_i$ such that $\mathrsfs{F}'_i=\mathrsfs{F}_i\setminus\{\mu_i\}$. Moreover we have two possibilities
  \begin{enumerate}
    \item if there is no $z\in \mathrsfs{F}'_i$ such that $z\cup a_{i+1}=\mu_i$, then $\mathrsfs{F}'_{i+1}=\mathrsfs{F}_{i+1}\setminus\{\mu_{i+1}\}$ with $\mu_{i+1}=\varphi_{a_{i+1}}(\mu_i)$ and $\varphi_{a_{i+1}}(z)=\varphi'_{a_{i+1}}(z)$ for all $z\in\mathrsfs{F}'_{i}$.
    \item if there is $z\in \mathrsfs{F}'_i$ such that $z\cup a_{i+1}=\mu_i$, then $\mathrsfs{F}'_{i+1}=\mathrsfs{F}_{i+1}\setminus\{\mu_{i+1}\}$ with $\mu_{i+1}=z=\varphi_{a_{i+1}}(z)$, $\varphi'_{a_{i+1}}(z)=\mu_i$ and $\varphi'_{a_{i+1}}(y)=\varphi_{a_{i+1}}(y)$ for all $y\in\mathrsfs{F}'_{i}\men\{z\}$.
  \end{enumerate}
\end{lemma}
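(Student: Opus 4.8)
The proof goes by induction on $i$. For the base case $i=0$ we have $\mathrsfs{F}_0=\mathrsfs{F}$ and $\mathrsfs{F}'_0=\mathrsfs{F}'=\mathrsfs{F}\men\{m\}$, so the claim holds with $\mu_0=m$. Assume now $\mathrsfs{F}'_i=\mathrsfs{F}_i\men\{\mu_i\}$ for some $\mu_i\in\mathrsfs{F}_i$; note $\mu_i$ is unique, being the only element of $\mathrsfs{F}_i\men\mathrsfs{F}'_i$ (the rising functions are injective, so $|\mathrsfs{F}_i|=|\mathrsfs{F}|$ and $|\mathrsfs{F}'_i|=|\mathrsfs{F}|-1$). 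The idea is to compare the maps $\varphi_{a_{i+1}}=\varphi_{\mathrsfs{F}_i,a_{i+1}}$ and $\varphi'_{a_{i+1}}=\varphi_{\mathrsfs{F}'_i,a_{i+1}}$ element by element. Since $\mathrsfs{F}_i$ and $\mathrsfs{F}'_i$ differ only by $\mu_i$, for $z\in\mathrsfs{F}'_i$ the conditions ``$z\cup\{a_{i+1}\}\notin\mathrsfs{F}_i$'' and ``$z\cup\{a_{i+1}\}\notin\mathrsfs{F}'_i$'' are equivalent whenever $z\cup\{a_{i+1}\}\neq\mu_i$, so $\varphi_{a_{i+1}}(z)=\varphi'_{a_{i+1}}(z)$ for all such $z$. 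In Case (1) this covers every $z\in\mathrsfs{F}'_i$, so $\varphi'_{a_{i+1}}$ agrees with $\varphi_{a_{i+1}}$ on all of $\mathrsfs{F}'_i=\mathrsfs{F}_i\men\{\mu_i\}$; by injectivity of $\varphi_{a_{i+1}}$ this gives $\mathrsfs{F}'_{i+1}=\varphi_{a_{i+1}}(\mathrsfs{F}_i\men\{\mu_i\})=\mathrsfs{F}_{i+1}\men\{\varphi_{a_{i+1}}(\mu_i)\}$, which is the assertion with $\mu_{i+1}=\varphi_{a_{i+1}}(\mu_i)$.

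For Case (2) I would first observe that the element $z\in\mathrsfs{F}'_i$ with $z\cup\{a_{i+1}\}=\mu_i$ is unique: $a_{i+1}\notin z$ (otherwise $z=\mu_i\notin\mathrsfs{F}'_i$), hence $z=\mu_i\men\{a_{i+1}\}$. Then I evaluate the pertinent values directly from the definition of the rising function: $\varphi_{a_{i+1}}(z)=z$ because $z\cup\{a_{i+1}\}=\mu_i\in\mathrsfs{F}_i$; $\varphi'_{a_{i+1}}(z)=z\cup\{a_{i+1}\}=\mu_i$ because $\mu_i\notin\mathrsfs{F}'_i$; and $\varphi_{a_{i+1}}(\mu_i)=\mu_i$ because $a_{i+1}\in\mu_i$. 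Combining this with the observation of the previous paragraph applied to the remaining $y\in\mathrsfs{F}'_i\men\{z\}$ (for which $y\cup\{a_{i+1}\}\neq\mu_i$ by uniqueness of $z$) gives $\varphi'_{a_{i+1}}(y)=\varphi_{a_{i+1}}(y)$ for every $y\in\mathrsfs{F}'_i\men\{z\}$. Writing $\mathrsfs{F}_i=(\mathrsfs{F}'_i\men\{z\})\cup\{z,\mu_i\}$ and applying $\varphi_{a_{i+1}}$ yields $\mathrsfs{F}_{i+1}=\varphi_{a_{i+1}}(\mathrsfs{F}'_i\men\{z\})\cup\{z,\mu_i\}$, while $\mathrsfs{F}'_{i+1}=\varphi'_{a_{i+1}}(\mathrsfs{F}'_i)=\varphi_{a_{i+1}}(\mathrsfs{F}'_i\men\{z\})\cup\{\mu_i\}$; hence $\mathrsfs{F}'_{i+1}=\mathrsfs{F}_{i+1}\men\{z\}$, which is the assertion with $\mu_{i+1}=z=\varphi_{a_{i+1}}(z)$ and $\varphi'_{a_{i+1}}(z)=\mu_i$.

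The membership computations above are routine; the one point that requires care is the disjointness bookkeeping in Case (2). One must invoke the injectivity of $\varphi_{a_{i+1}}$ to be sure that $z\notin\varphi_{a_{i+1}}(\mathrsfs{F}'_i\men\{z\})$ (the preimage of $z=\varphi_{a_{i+1}}(z)$ is $z$ alone) and likewise $\mu_i\notin\varphi_{a_{i+1}}(\mathrsfs{F}'_i\men\{z\})$; only then are the two unions above genuinely disjoint, so that $\mathrsfs{F}_{i+1}$ and $\mathrsfs{F}'_{i+1}$ differ by exactly the single element $z$ and the induction step closes with $\mu_{i+1}=z$. I expect this small verification to be the main obstacle.
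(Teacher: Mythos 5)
Your proof is correct and follows essentially the same route as the paper: induction on $i$, with the observation that $\varphi_{a_{i+1}}$ and $\varphi'_{a_{i+1}}$ can only disagree at an element $z$ with $z\cup\{a_{i+1}\}=\mu_i$, followed by the same two-case analysis. You are in fact somewhat more explicit than the paper about the uniqueness of $z$ and the injectivity bookkeeping ensuring $z\notin\varphi_{a_{i+1}}(\mathrsfs{F}'_i\setminus\{z\})$, which is a welcome clarification rather than a divergence.
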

\begin{proof}
  We prove the statement by induction on the index $i$. The statement is true for $i=0$, since $\mathrsfs{F}'_0=\mathrsfs{F}'=\mathrsfs{F}\setminus\{m\}=\mathrsfs{F}_0\setminus\{m\}$. So, putting $\mu_0=m$, we can suppose that the statement is true for $i>0$ and let us prove it for $i+1$.
   By induction, there is an element $\mu_i\in \mathrsfs{F}_i$ such that $\mathrsfs{F}'_i=\mathrsfs{F}_i\setminus\{\mu_i\}$. Let $z\in \mathrsfs{F}'_{i}$, we have the following cases:
   \begin{itemize}
    \item[i)] $z\cup\{a_{i+1}\}\notin \mathrsfs{F}_i$ and so also $z\cup\{a_{i+1}\}\notin \mathrsfs{F}'_i$ which implies $\varphi_{a_{i+1}}(z)=\varphi'_{a_{i+1}}(z)=z\cup\{a_{i+1}\}$.\\
    \item[ii)] $z\cup\{a_{i+1}\}\in\mathrsfs{F}'_i\subseteq\mathrsfs{F}_i$ and so $\varphi_{a_{i+1}}(z)=\varphi'_{a_{i+1}}(z)=z$.\\
    \item[iii)] $z\cup\{a_{i+1}\}\in\mathrsfs{F}_i\setminus\mathrsfs{F}'_i=\{\mu_i\}$, and so $z\cup\{a_{i+1}\}=\mu_i$. Hence $\varphi'_{a_{i+1}}(z)=z\cup\{a_{i+1}\}=\mu_i=\varphi_{a_{i+1}}(\mu_i)$ and $\varphi_{a_{i+1}}(z)=z$.
  \end{itemize}
  Thus, if condition $z\cup\{a_{i+1}\}=\mu_i$ do not hold for any $z\in \mathrsfs{F}_i$, then i), ii) hold and so condition 1. is true. Otherwise if there is $z\in \mathrsfs{F}'_i$ such that $z\cup a_{i+1}=\mu_i$, then iii) holds and so $z=\varphi_{a_{i+1}}(z)$ is missing in $\mathrsfs{F}'_{i+1}$, whence $\mathrsfs{F}'_{i+1}=\mathrsfs{F}_{i+1}\setminus\{\mu_{i+1}\}$ with $\mu_{i+1}=z$ and $\varphi'_{a_{i+1}}(z)=\mu_i$. For any $y\in\mathrsfs{F}'_{i}\men\{z\}$ either condition i) or ii) holds and so $\varphi'_{a_{i+1}}(y)=\varphi_{a_{i+1}}(y)$, and this concludes the proof of statement 2.
\end{proof}
The previous Lemma shows that in each $i$-section $\mathrsfs{F}'_{i}$ there is exactly one missing element belonging to $\mathrsfs{F}_{i}\men\mathrsfs{F}'_{i}$, this element plays an important role in the way the raising function changes. For this reason we call $\mu_i$ of Lemma \ref{lem:erasing element}, the \emph{missing element} at the $i$-th section. The next lemma gives a more precise description of the way the rising function changes.
\begin{lemma}[swapping lemma]\label{lem:swapping lemma}
  With the notation of Lemma \ref{lem:erasing element}, there are $k+1$ different elements $m_i\in\mathrsfs{F}$, for $i=0,\ldots,k$ such that $m_0=m$ and an increasing sequence of $k$ integers $1\le i_1<\ldots <i_k< n$ such that for all $0<j\le k$
  $$
    \begin{array}{l}
    \varphi'_{t}(m_j)=\left\{\begin{array}{l}
    \varphi_{t}(m_{j})\;\text{ if } t< i_j\\
    \varphi_{t}(m_{j-1})\text{ otherwise};
    \end{array}\right.
    \end{array}
  $$
  while $\varphi'_{t}(z)=\varphi_{t}(z)$ for all $1\le t\le n$ and $z\in\mathrsfs{F}\men\{m_0,\ldots,m_k\}$. For any $0\le i\le n$ the missing element is $\mu_i=\varphi_i(m_s)$ where $0<s\le k$ satisfies $i_s\le i< i_{s+1}$ if $s < k$ or $i_k\le i<n$ if $s = k$. Moreover for all $1\le j\le k$, $\varphi_{i_j-1}'(m_j)\cup\{a_{i_j}\}=\varphi_{i_j-1}(m_{j-1})$.
\end{lemma}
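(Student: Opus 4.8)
The idea is to track, section by section, the unique missing element $\mu_i\in\mathrsfs{F}_i\setminus\mathrsfs{F}'_i$ produced by Lemma~\ref{lem:erasing element}, and to read off the $m_j$ and $i_j$ from exactly the transitions where case~2 of that lemma occurs. So let $i_1<i_2<\cdots<i_k$ be the indices $t$ for which the step $\varphi_{a_t}\colon\mathrsfs{F}_{t-1}\to\mathrsfs{F}_t$ falls under case~2, i.e.\ for which some $z\in\mathrsfs{F}'_{t-1}$ satisfies $z\cup\{a_t\}=\mu_{t-1}$, and set $m_0=m$. If $k=0$ the lemma is trivial: repeated application of case~1 of Lemma~\ref{lem:erasing element} gives $\varphi'_t(z)=\varphi_t(z)$ for all $z\in\mathrsfs{F}'$ and all $t$, and $\mu_t=\varphi_t(m)$ throughout. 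Assume $k\ge1$. At the $j$-th case~2 transition the witness $z$ lies in $\mathrsfs{F}'_{i_j-1}\sub\mathrsfs{F}_{i_j-1}$; since $\varphi_{i_j-1}\colon\mathrsfs{F}\to\mathrsfs{F}_{i_j-1}$ is a bijection there is a unique $m_j\in\mathrsfs{F}$ with $\varphi_{i_j-1}(m_j)=z$, and this is the $j$-th element claimed by the lemma.

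Next I would prove, by induction over $t=0,\dots,n$ processed segment by segment (with $i_0:=0$ and $i_{k+1}:=n$), the invariant: for $i_j\le t<i_{j+1}$ the missing element is $\mu_t=\varphi_t(m_j)$; for each $1\le l\le j$ one has $\varphi'_t(m_l)=\varphi_t(m_l)$ when $t<i_l$ and $\varphi'_t(m_l)=\varphi_t(m_{l-1})$ when $t\ge i_l$; and $\varphi'_t(z)=\varphi_t(z)$ for every $z\in\mathrsfs{F}'$ other than $m_1,\dots,m_j$ (note $m_0=m\notin\mathrsfs{F}'$). The base segment $0\le t<i_1$ consists of case~1 transitions only, so Lemma~\ref{lem:erasing element} gives $\mu_t=\varphi_t(m_0)$ and $\varphi'_t=\varphi_t$ on $\mathrsfs{F}'_t$. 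For the inductive step at $t=i_j$ I use case~2 of Lemma~\ref{lem:erasing element}: by the invariant and the distinctness established below, $m_j\in\mathrsfs{F}'$ is none of $m_1,\dots,m_{j-1}$, so $\varphi'_{i_j-1}(m_j)=\varphi_{i_j-1}(m_j)=z$ is the case~2 witness, whence $\varphi'_{i_j}(m_j)=\varphi'_{a_{i_j}}(z)=\mu_{i_j-1}=\varphi_{i_j-1}(m_{j-1})=\varphi_{i_j}(m_{j-1})$ (the last step because $a_{i_j}\in\mu_{i_j-1}$ makes $\varphi_{a_{i_j}}$ fix $\mu_{i_j-1}$), while $\mu_{i_j}=z=\varphi_{a_{i_j}}(z)=\varphi_{i_j}(m_j)$, and every other $y\in\mathrsfs{F}'_{i_j-1}\setminus\{z\}$ has $\varphi'_{a_{i_j}}(y)=\varphi_{a_{i_j}}(y)$, which is what the invariant predicts for the $m_l$ with $l<j$ and for the remaining elements of $\mathrsfs{F}'$. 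The transitions with $i_j\le t<i_{j+1}$ are all case~1, so iterating $\mu_{t+1}=\varphi_{a_{t+1}}(\mu_t)$ yields $\mu_t=\varphi_t(m_j)$ on that segment and carries $\varphi'=\varphi$ along it. The ``moreover'' identity is then immediate: $\varphi'_{i_j-1}(m_j)\cup\{a_{i_j}\}=z\cup\{a_{i_j}\}=\mu_{i_j-1}=\varphi_{i_j-1}(m_{j-1})$.

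The one step that is not mere bookkeeping is the distinctness of the $m_j$; I would prove the stronger statement $m_0\supsetneq m_1\supsetneq\cdots\supsetneq m_k$, simultaneously with the induction. At the $j$-th case~2 transition, $a_{i_j}\notin z$ (else $z=z\cup\{a_{i_j}\}=\mu_{i_j-1}$, impossible since $z\in\mathrsfs{F}'_{i_j-1}$ but $\mu_{i_j-1}\notin\mathrsfs{F}'_{i_j-1}$), hence $\varphi_{i_j-1}(m_j)=z\subsetneq z\cup\{a_{i_j}\}=\mu_{i_j-1}=\varphi_{i_j-1}(m_{j-1})$ by the invariant. Trajectories are non-decreasing, so $\varphi_{i_j-1}(m_{j-1})\sub\varphi_w(m_{j-1})$ and therefore $m_j\sub\varphi_{i_j-1}(m_j)\sub\varphi_w(m_{j-1})$; since $m_j\in\mathrsfs{F}$, Corollary~\ref{cor:ideal correspondence} gives $m_j\sub\varphi_w^{-1}(\varphi_w(m_{j-1}))=m_{j-1}$, and the inclusion is strict because $m_j=m_{j-1}$ would force $\varphi_{i_j-1}(m_j)=\varphi_{i_j-1}(m_{j-1})$. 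In particular $m_j\notin\{m_0,\dots,m_{j-1}\}$ and $m_j\ne m$, which is exactly what is used above to identify $z$ with $\varphi'_{i_j-1}(m_j)$ and to keep the invariant self-consistent at $t=i_j$.

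The main obstacle I foresee is organizational rather than conceptual: the invariant must be phrased so that it propagates, and there is a small circularity to manage — identifying the case~2 witness as the (as yet undisplaced) trajectory of $m_j$ needs the distinctness, while distinctness needs the already-proved identity $\mu_{i_j-1}=\varphi_{i_j-1}(m_{j-1})$ — so the three clauses of the invariant have to be ordered and carried through the induction together. A short separate check is also needed at the endpoints: the initial segment before the first swap, where the missing element is indexed by $m_0$ rather than by some $m_s$ with $0<s\le k$, and the behaviour at the last step $t=n$ (the assertion $i_k<n$). Apart from that, the whole argument is a direct unwinding of Lemma~\ref{lem:erasing element} together with the single use of Corollary~\ref{cor:ideal correspondence} for distinctness.
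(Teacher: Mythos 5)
Your proposal is correct for the lemma as stated (under the standing hypothesis of Section \ref{sec: bounf join irreducible} that $\mathrsfs{F}$ is $\cup$-closed), and its skeleton — taking the $i_j$ to be the case-2 transitions of Lemma \ref{lem:erasing element}, recovering $m_j$ through the unprimed bijection $\varphi_{i_j-1}$, and propagating a segment-by-segment invariant for the missing element — is the same as the paper's. Where you genuinely diverge is at the one non-trivial point, the distinctness of the $m_j$: you derive the strict chain $m_0\supsetneq m_1\supsetneq\cdots\supsetneq m_k$ from Corollary \ref{cor:ideal correspondence}, via $m_j\subseteq z\subsetneq\mu_{i_j-1}\subseteq\varphi_w(m_{j-1})$ and hence $m_j\subseteq\varphi_w^{-1}(\varphi_w(m_{j-1}))=m_{j-1}$, and this ordering of the three clauses does resolve the circularity you mention, since the chain step only needs the definition of $m_j$ through $\varphi_{i_j-1}$ and the inductive identity $\mu_{i_j-1}=\varphi_{i_j-1}(m_{j-1})$. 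The paper instead carries the additional invariant $a_{i_1},\ldots,a_{i_s}\notin\mu_{i_s}$ and gets distinctness from Lemma \ref{lem:matching property}: if the new case-2 witness were an old $m_s$, then $a_{i_s}\in\varphi'_{h}(m_s)\setminus\mu_h$, contradicting $\varphi'_h(m_s)\cup\{a_{h+1}\}=\mu_h$. That letter-tracking argument never uses $\cup$-closure, which is precisely what justifies the remark following the lemma and the application of the swapping lemma inside Proposition \ref{prop:swapping proposition}, which is stated for an arbitrary family of subsets; your route buys a stronger conclusion (a strictly decreasing chain of the $m_j$, which is pleasant in the irreducible setting of Proposition \ref{prop:just to cases in the irreducible case}) at the cost of that generality, so as a replacement proof it would force Proposition \ref{prop:swapping proposition} to be restated or reproved for general families. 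Finally, the boundary assertion $i_k<n$ that you set aside is in fact not provable: a swap can occur at the last letter (for $X=\{a_1,a_2\}$, $\mathrsfs{F}=\{\{a_1\},\{a_1,a_2\}\}$, $m=\{a_1,a_2\}$ one gets $i_1=2=n$), so the correct reading is $i_k\le n$; the paper's own induction is equally loose at this endpoint, and nothing downstream depends on the strict inequality, so your decision not to prove it is harmless.
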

\begin{proof}
  By Lemma \ref{lem:erasing element} we have $\varphi'_{a_t}(z)=\varphi_{a_t}(z)$ for all $z\in\mathrsfs{F}$ and the missing element is $\mu_t=\varphi_t(m_0)$ for all $1\le t< i_1\le n$ where $i_1$ is the first integer such that there is an element $\varphi_{i_1-1}'(m_1)\in\mathrsfs{F}'_{i_1-1}$, for some $m_1\in\mathrsfs{F}$ with $m_1\neq m_0$, satisfying $
  \varphi_{i_1-1}'(m_1)\cup\{a_{i_1}\}=\mu_{i_1-1}=\varphi_{i_1-1}(m_0)$.
  Therefore by Lemma \ref{lem:erasing element} we have that $\varphi_{i_1}'(m_1)=\varphi_{i_1-1}(m_0)=\varphi_{i_1}(m_0)$ and the missing element becomes $\mu_{i_1}=\varphi_{i_1}(m_1)$. Moreover, since $\varphi_{i_1-1}'(m_1)=\varphi_{i_1-1}(m_1)$ and $\varphi_{i_1-1}'(m_1)\cup\{a_{i_1}\}=\mu_{i_1-1}=\varphi_{i_1-1}(m_0)\in\mathrsfs{F}_{i_1-1}$, by Lemma \ref{lem:matching property} we get $a_{i_1}\notin\varphi_{i_1}(m_1)=\mu_{i_1}$. In this way we have proved the base case of the following property
  \begin{itemize}
  \item[$P_h:$]
  There is a sequence of integers $1\le i_1<\ldots i_j\le h< n$ and $j+1$ different elements $m_0,\ldots, m_j\in\mathrsfs{F}$ such that for all $0<l\le j$ and for all $t\le h$
  $$
    \begin{array}{l}
    \varphi'_{t}(m_l)=\left\{\begin{array}{l}
    \varphi_{t}(m_{l})\;\text{ if } t< i_l\\
    \varphi_{t}(m_{l-1})\text{ otherwise};
    \end{array}\right.
    \end{array}
  $$
  $\varphi'_{t}(z)=\varphi_{t}(z)$ for all $1\le t\le h$ and $z\in\mathrsfs{F}\men\{m_0,\ldots,m_j\}$. For all $0\le i\le n$, $\mu_i=\varphi_i(m_s)$ where $0<s\le j$ satisfies $i_s\le i< i_{s+1}$, $\mu_i=\varphi_i(m_j)$ for $i_j\le i\le h$. Moreover for all $0<s\le j$, $\varphi_{i_s-1}'(m_s)\cup\{a_{i_s}\}=\varphi_{i_s-1}(m_{s-1})$ and $a_{i_1},\ldots,a_{i_s}\notin\mu_{i_s}$.
  \end{itemize}
  Let us prove this property by induction. By Lemma \ref{lem:erasing element} it is clear that if for any $z\in\mathrsfs{F}_h$ the condition $z\cup\{a_{h+1}\}=\mu_h$ does not occur, then $P_{h+1}$ is true.
  \\
  Suppose that $z\cup\{a_{h+1}\}=\mu_h$. Let us prove that there is an element $m_{j+1}\in\mathrsfs{F}$ different from $m_{l}$ for all $l\le j$ such that $z=\varphi_{h}'(m_{j+1})$. Suppose, contrary to our claim, that $m_{j+1}=m_{s}$ for some $s\le j$. We first claim that $a_{i_s}\in\varphi_{h}'(m_{s})\men\mu_{h}$. Indeed conditions $a_{i_1},\ldots,a_{i_j}\notin\mu_{i_j}$ and $s\le j$ yields to $a_{i_s}\notin\mu_{i_j}=\varphi_{i_j}(m_j)$ and so, since $\mu_{h}=\varphi_h(m_j)$ and $i_s\le i_j\le h$, we get $a_{i_s}\notin\mu_{h}$.
  Since $\varphi_{i_s-1}'(m_s)\cup\{a_{i_s}\}=\varphi_{i_s-1}(m_{s-1})$, by Lemma \ref{lem:matching property} $a_{i_s}\in\varphi_{i_s-1}(m_{s-1})=\varphi_{i_s}(m_{s-1})$, hence by property $P_h$, we get $a_{i_s}\in\varphi_{i_s-1}(m_{s-1})=\varphi_{i_s}(m_{s-1})=\varphi'_{i_s}(m_{s})$. Thus $a_{i_s}\in\varphi'_{h}(m_s)$ and so, with $a_{i_s}\notin\mu_{h}$, we get the claim $a_{i_s}\in \varphi_{h}'(m_{s})\men \mu_{h}$. However this contradicts $\varphi_{h}'(m_{s})\cup\{a_{h+1}\}=z\cup\{a_{h+1}\}=\mu_{h}$.
  \\
  Therefore we can suppose that there is a $m_{j+1}\in\mathrsfs{F}$ different from $m_0,\ldots, m_j$ such that $\varphi_{h}'(m_{j+1})\cup\{a_{h+1}\}=\mu_h$. Therefore by induction we get $\varphi_{t}'(m_{j+1})=\varphi_t(m_{j+1})$ for all $t\le h=i_{j+1}-1$. Putting $i_{j+1}=h+1$ we get, by Lemma \ref{lem:erasing element} and $P_h$
  $$
  \varphi_{i_{j+1}}'(m_{j+1})=\mu_{i_{j+1}-1}=\mu_{h}=\varphi_{h}(m_{j})=\varphi_{h+1}(m_{j})=\varphi_{i_{j+1}}(m_{j})
  $$
  since $a_{h+1}\in\mu_{h}$ and so $\mu_{h}=\varphi_{h}(m_{j})=\varphi_{h+1}(m_{j})$. Moreover we also have $\varphi_{i_{j+1}-1}'(m_{j+1})\cup\{a_{i_{j+1}}\}=\varphi_{i_{j+1}-1}(m_{j})$.
  \\
  Since $\varphi_{h}'(m_{j+1})\cup\{a_{h+1}\}=\mu_h\in \mathrsfs{F}_h$ and $\varphi_{h}'(m_{j+1})=\varphi_{h}(m_{j+1})$ we have that $\varphi_{h}(m_{j+1})=\varphi_{h+1}(m_{j+1})$ and so by Lemma \ref{lem:erasing element} the missing element becomes
  $$
  \mu_{i_{j+1}}=\varphi_{h}'(m_{j+1})=\varphi_{h}(m_{j+1})=\varphi_{h+1}(m_{j+1})=\varphi_{i_{j+1}}(m_{j+1})
  $$
  Hence $\mu_{i_{j+1}}\cup\{a_{h+1}\}=\mu_h$ and so, by Lemma \ref{lem:matching property}, $a_{h+1}\notin\mu_{i_{j+1}}$. To conclude the proof we have to show $a_{i_1},\ldots,a_{i_{j+1}}\notin\mu_{i_{j+1}}$. By induction $a_{i_1},\ldots,a_{i_j}\notin\mu_{i_j}$, hence $a_{i_1},\ldots,a_{i_j}\notin\mu_{h}$. Therefore, from $\mu_{i_{j+1}}\cup\{a_{h+1}\}=\mu_h$ and $a_{h+1}\notin\mu_{i_{j+1}}$ we get $a_{i_1},\ldots,a_{i_j},a_{i_{j+1}}\notin\mu_{i_{j+1}}$.
\end{proof}
We remark that the above swapping Lemma holds for a general family of subsets $\mathrsfs{F}$ since the hypothesis of $\cup$-closure is never used in the proof. As a consequence of the previous swapping Lemma we have the following proposition.
\begin{proposition}\label{prop:swapping proposition}
  Let $\mathrsfs{F}$ be a family of subsets of $X=\{a_1,\ldots,a_n\}$ and let $m\in \mathrsfs{F}$. Consider the rising functions $\varphi_w$, $\varphi'_w$ with respect to the word $w=a_1a_2\ldots a_n$ respectively of $\mathrsfs{F}$, $\mathrsfs{F}'=\mathrsfs{F}\setminus\{m\}$. There are $k+1$ different elements $m_i\in\mathrsfs{F}$, for $i=0,\ldots,k$ such that $m_0=m$ and for all $z\in\mathrsfs{F}\men\{m_0,\ldots,m_k\}$ we have $\varphi_w'(z)=\varphi_w(z)$ while for all $0<j\le k$
  $$
  \varphi_w'(m_j)=\varphi_w(m_{j-1})
  $$
  in particular $\varphi_w'(\mathrsfs{F}')=\varphi_w(\mathrsfs{F})\men \{\varphi_w(m_{k})\}$ and $\varphi_w(m_{k})\in\min(\varphi_w(\mathrsfs{F}))$. Moreover $a_{i_j}\in m_{j-1}\men\varphi_w(m_{j})$ for all $1\le j\le k$.
\end{proposition}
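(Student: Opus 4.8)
The plan is to deduce the whole proposition from the swapping Lemma (Lemma \ref{lem:swapping lemma}) by evaluating it at the last level $t = n$. First I would apply Lemma \ref{lem:swapping lemma} to obtain the $k+1$ pairwise distinct elements $m_0 = m, m_1, \ldots, m_k \in \mathrsfs{F}$ together with the strictly increasing sequence $1 \le i_1 < \cdots < i_k < n$. Because $i_j < n$ for every $j$, the value $t = n$ always falls in the ``otherwise'' branch of the piecewise description in Lemma \ref{lem:swapping lemma}, so that $\varphi'_w(m_j) = \varphi'_n(m_j) = \varphi_n(m_{j-1}) = \varphi_w(m_{j-1})$ for all $0 < j \le k$, and $\varphi'_w(z) = \varphi'_n(z) = \varphi_n(z) = \varphi_w(z)$ for all $z \in \mathrsfs{F} \setminus \{m_0, \ldots, m_k\}$. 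These two facts are exactly the first assertions of the proposition.

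Next I would compute the image. Write $\mathrsfs{F}' = (\mathrsfs{F} \setminus \{m_0, \ldots, m_k\}) \cup \{m_1, \ldots, m_k\}$, a disjoint union. Since $\varphi'_w$ is injective, $\varphi'_w(\mathrsfs{F}')$ is the disjoint union of $\varphi'_w(\mathrsfs{F} \setminus \{m_0, \ldots, m_k\}) = \varphi_w(\mathrsfs{F}) \setminus \{\varphi_w(m_0), \ldots, \varphi_w(m_k)\}$ and $\varphi'_w(\{m_1, \ldots, m_k\}) = \{\varphi_w(m_0), \ldots, \varphi_w(m_{k-1})\}$, where I use that $\varphi_w$ is injective and the $m_i$ are distinct. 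Taking the union and cancelling gives $\varphi'_w(\mathrsfs{F}') = \varphi_w(\mathrsfs{F}) \setminus \{\varphi_w(m_k)\}$. For the minimality of $\varphi_w(m_k)$, I would invoke the fact recalled just before Lemma \ref{lem:matching property} that the image of any family under its rising function is upward-closed; hence $\varphi'_w(\mathrsfs{F}') = \varphi_w(\mathrsfs{F}) \setminus \{\varphi_w(m_k)\}$ is upward-closed. If there were some $\eta \in \varphi_w(\mathrsfs{F})$ with $\eta \subsetneq \varphi_w(m_k)$, then $\eta$ would lie in this upward-closed set, forcing $\varphi_w(m_k)$ into it too, a contradiction; so $\varphi_w(m_k) \in \min(\varphi_w(\mathrsfs{F}))$.

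Finally, for $a_{i_j} \in m_{j-1} \setminus \varphi_w(m_j)$, I would start from the last identity supplied by Lemma \ref{lem:swapping lemma}, namely $\varphi'_{i_j - 1}(m_j) \cup \{a_{i_j}\} = \varphi_{i_j - 1}(m_{j-1})$, and replace $\varphi'_{i_j - 1}(m_j)$ by $\varphi_{i_j - 1}(m_j)$ using the same Lemma (the two rising processes agree on $m_j$ below level $i_j$). This rewrites the identity as $\varphi_{i_j - 1}(m_{j-1}) = \varphi_{i_j - 1}(m_j) \cup \{a_{i_j}\}$, which is precisely the hypothesis of the matching property (Lemma \ref{lem:matching property}) for the distinct pair $m_{j-1} \ne m_j$ at index $i_j - 1$; its conclusion is $a_{i_j} \in m_{j-1} \setminus \varphi_w(m_j)$.

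Since Lemma \ref{lem:swapping lemma} already carries the inductive weight, I do not expect a genuine obstacle here: the only delicate points are the bookkeeping in the image computation (ensuring the two parts of $\mathrsfs{F}'$ and their images line up after cancellation, which rests on injectivity and on distinctness of the $m_i$) and the observation that the minimality of $\varphi_w(m_k)$ is forced by upward-closedness of $\varphi'_w(\mathrsfs{F}')$ rather than needing a direct argument. The reduction of the last claim to Lemma \ref{lem:matching property} is immediate once one notices that $\varphi'$ and $\varphi$ agree on $m_j$ up to level $i_j - 1$.
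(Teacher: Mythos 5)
Your proposal is correct and follows essentially the same route as the paper: evaluate the swapping lemma at $t=n$ for the pointwise statements, use upward-closedness of both $\varphi_w(\mathrsfs{F})$ and $\varphi'_w(\mathrsfs{F}')$ to force minimality of $\varphi_w(m_k)$, and feed $\varphi_{i_j-1}(m_j)\cup\{a_{i_j}\}=\varphi_{i_j-1}(m_{j-1})$ into Lemma \ref{lem:matching property} for the last claim. The only cosmetic difference is that the paper reads the image identity $\varphi'_w(\mathrsfs{F}')=\varphi_w(\mathrsfs{F})\setminus\{\varphi_w(m_k)\}$ directly off the missing-element clause $\mu_n=\varphi_w(m_k)$ of the swapping lemma, whereas you rederive it by injectivity and cancellation, which is equally valid.
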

\begin{proof}
  The first claim is an immediate consequence of Lemma \ref{lem:swapping lemma} when $t=n$. In particular the missing element $\mu_{n}=\varphi_w(m_k)$ and so $\varphi_w'(\mathrsfs{F}')=\varphi_w(\mathrsfs{F})\men \{\varphi_w(m_{k})\}$. Moreover since both $\varphi_w'(\mathrsfs{F}'),\varphi_w(\mathrsfs{F})$ are upward-closed sets, then it is straightforward to prove that necessarily the missing elements must be minimal, otherwise $\varphi_w(\mathrsfs{F})\men \{\varphi_w(m_{k})\}$ would not be upward-closed, whence $\varphi_w(m_{k})\in\min(\varphi_w(\mathrsfs{F}))$.
  \\
  From Lemma \ref{lem:swapping lemma} we have that for all $1\le j\le k$, $\varphi_{i_j-1}'(m_j)\cup\{a_{i_j}\}=\varphi_{i_j-1}(m_{j-1})$ and $\varphi_{i_j-1}'(m_j)=\varphi_{i_j-1}(m_j)$, whence
  $$
  \varphi_{i_j-1}(m_j)\cup\{a_{i_j}\}=\varphi_{i_j-1}(m_{j-1})
  $$
  and so, by Lemma \ref{lem:matching property}, we get for all $1\le j\le k$, $a_{i_j}\in m_{j-1}\setminus \varphi_{w}(m_j)$.
\end{proof}
We now assume $\mathrsfs{F}$ $\cup$-closed and we consider the situation when we take away an irreducible element $m\in J(\mathrsfs{F})$. In this case we have a limitation on the number of possible swappings, indeed the following proposition holds.
\begin{proposition}\label{prop:just to cases in the irreducible case}
  Let $\mathrsfs{F}$ be a $\cup$-closed family of subsets of a set $X=\{a_1,\ldots,a_n\}$ and let $m\in J(\mathrsfs{F})$. Consider the rising functions $\varphi_w$, $\varphi'_w$ with respect to the word $w=a_1a_2\ldots a_n$ respectively of $\mathrsfs{F}$, $\mathrsfs{F}'=\mathrsfs{F}\setminus\{m\}$ and denote by $\mathcal{F} = \varphi_w(\mathrsfs{F})$, $\mathcal{F}' = \varphi'_w(\mathrsfs{F}')$. There are two possibilities: \begin{enumerate}
    \item $\varphi_w(m)\in\min(\mathcal{F})$, $\mathcal{F}'=\mathcal{F}\men \{\varphi_w(m)\}$ and for all $z\in\mathrsfs{F}'$ $\varphi_w'(z)=\varphi_w(z)$.
    \item The set $\overline{m}=\cup_{\{f\in\mathrsfs{F}: f\varsubsetneq m\}}f$ is non-empty. For all $z\in\mathrsfs{F}\men\{m, \overline{m}\}$ we have $\varphi_w'(z)=\varphi_w(z)$ and $\varphi_w'(\overline{m})=\varphi_w(m)$. Moreover $\varphi_w(\overline{m})\in\min(\mathcal{F})$ and  $\mathcal{F}'=\mathcal{F}\men \{\varphi_w(\overline{m})\}$.
  \end{enumerate}
\end{proposition}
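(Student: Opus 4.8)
The plan is to read the whole dichotomy off Proposition~\ref{prop:swapping proposition}, once we establish that for an irreducible $m$ the swapping chain has length at most one and, when it has length one, its second element must equal $\overline{m}$. Since $m\in J(\mathrsfs{F})$, the family $\mathrsfs{F}'=\mathrsfs{F}\setminus\{m\}$ is $\cup$-closed, so Proposition~\ref{prop:swapping proposition} applies and produces distinct elements $m=m_0,m_1,\dots,m_k\in\mathrsfs{F}$ with $\varphi'_w(z)=\varphi_w(z)$ for all $z\notin\{m_0,\dots,m_k\}$, with $\varphi'_w(m_j)=\varphi_w(m_{j-1})$ for $0<j\le k$, and with $\mathcal{F}'=\mathcal{F}\setminus\{\varphi_w(m_k)\}$ and $\varphi_w(m_k)\in\min(\mathcal{F})$. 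If $k=0$ this is literally conclusion~1 (here $m_k=m$). If $k=1$ it becomes conclusion~2 as soon as we know $m_1=\overline{m}$ (then $m_k=m_1=\overline{m}$ and $\mathrsfs{F}\setminus\{m_0,m_1\}=\mathrsfs{F}\setminus\{m,\overline{m}\}$). So the proof reduces to two claims: (a) if $k\ge1$ then $m_1=\overline{m}$, and (b) $k\le1$.

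For (a), assume $k\ge1$. Then $m_1\in\mathrsfs{F}'$, $m_1\ne m$, and $\varphi'_w(m_1)=\varphi_w(m)$, so $\varphi_w(m)\in\varphi'_w(\mathrsfs{F}')=\mathcal{F}'$; applying the inverse formula of Corollary~\ref{cor:ideal correspondence} to the $\cup$-closed family $\mathrsfs{F}'$ gives $m_1=(\varphi'_w)^{-1}(\varphi_w(m))=\bigcup_{\{f\in\mathrsfs{F}':f\subseteq\varphi_w(m)\}}f$, while the same corollary applied to $\mathrsfs{F}$ gives $m=\varphi_w^{-1}(\varphi_w(m))=\bigcup_{\{f\in\mathrsfs{F}:f\subseteq\varphi_w(m)\}}f$. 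Since any $f\in\mathrsfs{F}$ with $f\subseteq\varphi_w(m)$ is automatically contained in $\varphi_w^{-1}(\varphi_w(m))=m$, and conversely every $f\subsetneq m$ satisfies $f\subseteq m\subseteq\varphi_w(m)$, we get $\{f\in\mathrsfs{F}':f\subseteq\varphi_w(m)\}=\{f\in\mathrsfs{F}:f\subsetneq m\}$, whence $m_1=\overline{m}$. In particular $\overline{m}=m_1\in\mathrsfs{F}$ is nonempty, and $\overline{m}\subsetneq m$: if instead $\overline{m}=m$, then $m$ would be a union of proper subsets of $m$ lying in $\mathrsfs{F}$; choosing a minimal subfamily $D$ with $\bigcup D=m$ (necessarily $|D|\ge2$, since one proper subset cannot exhaust $m$) and any $f_0\in D$, the set $h=\bigcup(D\setminus\{f_0\})$ lies in $\mathrsfs{F}$, satisfies $h\ne m$ by minimality, and $m=f_0\cup h$ with $f_0\ne m$ --- contradicting $m\in J(\mathrsfs{F})$.

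For (b), suppose towards a contradiction that $k\ge2$. Running the argument one step further, $m_2\in\mathrsfs{F}'$ with $\varphi'_w(m_2)=\varphi_w(m_1)=\varphi_w(\overline{m})\in\mathcal{F}'$, so Corollary~\ref{cor:ideal correspondence} for $\mathrsfs{F}'$ gives $m_2=\bigcup_{\{f\in\mathrsfs{F}:\,f\ne m,\ f\subseteq\varphi_w(\overline{m})\}}f$. But $m\not\subseteq\varphi_w(\overline{m})$: otherwise Corollary~\ref{cor:ideal correspondence} for $\mathrsfs{F}$ would give $m\subseteq\varphi_w^{-1}(\varphi_w(\overline{m}))=\overline{m}$, contradicting $\overline{m}\subsetneq m$. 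Hence the restriction $f\ne m$ is vacuous, so $m_2=\bigcup_{\{f\in\mathrsfs{F}:f\subseteq\varphi_w(\overline{m})\}}f=\varphi_w^{-1}(\varphi_w(\overline{m}))=\overline{m}=m_1$, contradicting the distinctness of $m_1$ and $m_2$. Therefore $k\le1$, completing the proof.

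I expect the main difficulty to be conceptual rather than computational. The key realization is that $\varphi_w^{-1}$ (equivalently the operator $\circ^{*}$ of Section~\ref{sec:permutation dependancy}) does not depend on the word $w$ and is given by the explicit union formula of Corollary~\ref{cor:ideal correspondence}; this forces each successive swap element to be a $\varphi_w^{-1}$-image computed \emph{inside} the smaller family $\mathrsfs{F}'$, so the chain is bound to look like $m,\overline{m},\overline{m},\dots$, and it is precisely the irreducibility of $m$ --- through the strict inclusion $\overline{m}\subsetneq m$ --- that cuts the chain down to length at most one. The only routine bookkeeping is to check at each step that the set being inverted (first $\varphi_w(m)$, then $\varphi_w(\overline{m})$) actually belongs to $\mathcal{F}'$, so that the inverse formula for $\mathrsfs{F}'$ legitimately applies.
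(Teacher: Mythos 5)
Your proof is correct and follows essentially the paper's own route: apply Proposition \ref{prop:swapping proposition} to $\mathrsfs{F}'=\mathrsfs{F}\setminus\{m\}$ (legitimately, since irreducibility of $m$ keeps $\mathrsfs{F}'$ $\cup$-closed) and then use the word-independent inverse formula of Corollary \ref{cor:ideal correspondence} both to identify $m_1$ with $\overline{m}$ and to force $m_1=m_2$ if the swapping chain had length $k\ge 2$. The only cosmetic deviations are that you establish $m\nsubseteq\varphi_w(m_1)$ through $m_1=\overline{m}\subsetneq m$ instead of through the clause $a_{i_1}\in m\setminus\varphi_w(m_1)$ of Proposition \ref{prop:swapping proposition}, and you prove the identification of $m_1$ before excluding $k\ge2$, whereas the paper does the reverse; these are reorderings of the same argument, not a different method.
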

\begin{proof}
  Using the notation of Proposition \ref{prop:swapping proposition}, suppose $k\ge 2$. Therefore, there are two distinct elements $m_1,m_2$ different from $m$ such that $\varphi_w'(m_2)=\varphi_w(m_{1})$ and $\varphi_w'(m_1)=\varphi_w(m)$. We claim
  \begin{equation}\label{eq:prop swapping with irreducible}
  \{f\in\mathrsfs{F}:f\sub \varphi_w(m_1)\}=\{f\in\mathrsfs{F}':f\sub \varphi_w(m_1)\}
  \end{equation}
  Clearly $\{f\in\mathrsfs{F}':f\sub \varphi_w(m_1)\}\sub \{f\in\mathrsfs{F}:f\sub \varphi_w(m_1)\}$ and to prove the other inclusion it is sufficient to prove that $m\nsubseteq \varphi_w(m_1)$. Suppose on the contrary that actually $m\subseteq\varphi_w(m_1)$, however by Proposition \ref{prop:swapping proposition}, $a_{i_1}\in m\men\varphi_w(m_{1})$, a contradiction. Thus (\ref{eq:prop swapping with irreducible}) holds.
  \\
  Since $m\in J(\mathrsfs{F})$, then $\mathrsfs{F}'=\mathrsfs{F}\setminus\{m\}$ is a $\cup$-closed family and so by Corollary \ref{cor:ideal correspondence}, equality (\ref{eq:prop swapping with irreducible}) and $\varphi_w'(m_2)=\varphi_w(m_{1})$ we get
  \begin{align}
  \nonumber & m_1=\varphi_w^{-1}(\varphi_w(m_1))=\bigcup_{\{f\in\mathrsfs{F}:f\sub \varphi_w(m_1)\}}f=\bigcup_{\{f\in\mathrsfs{F}':f\sub \varphi_w(m_1)\}}f\\
  \nonumber &=\bigcup_{\{f\in\mathrsfs{F}':f\sub \varphi_w'(m_2)\}}f=\varphi_w'^{-1}(\varphi_w'(m_2))=m_2
  \end{align}
  a contradiction. Therefore we have two possibilities either $k=0$ or $k=1$.
  \\
  Applying Proposition \ref{prop:swapping proposition} to the case $k=0$ we get for all $z\in\mathrsfs{F}\men\{m\}$ $\varphi_w'(z)=\varphi_w(z)$ and $\mathcal{F}'=\mathcal{F}\men \{\varphi_w(m)\}$ and $\varphi_w(m)\in\min(\mathcal{F})$.
  \\
  Consider the case $k=1$. We prove that in this case $m_1=\overline{m}$ where $\overline{m}=\cup_{\{f\in\mathrsfs{F}: f\varsubsetneq m\}}f$.
  Since $\varphi_w'(m_1)=\varphi_w(m)$ then $m_1\subsetneq\varphi_w(m)$ and so by Theorem \ref{theo:ideal correspondence} $m_1\subsetneq m$, hence $\overline{m}\neq\emptyset$. Since $\varphi_w'(m_1)=\varphi_w(m)$ then
  \begin{equation}\label{eq: 2 prop swapping with irreducible}
  \{f\in\mathrsfs{F}':f\sub \varphi'_w(m_1)\}=\{f\in\mathrsfs{F}':f\sub\varphi_w(m)\}
  \end{equation}
  moreover $\{f\in\mathrsfs{F}:f\varsubsetneq m\}\sub \{f\in\mathrsfs{F}':f\sub\varphi_w(m)\}$ and by Theorem \ref{theo:ideal correspondence} it is not difficult to check that $\{f\in\mathrsfs{F}':f\sub\varphi_w(m)\}\sub\{f\in\mathrsfs{F}:f\varsubsetneq m\}$ also holds. Hence by equality (\ref{eq: 2 prop swapping with irreducible}) we have $\{f\in\mathrsfs{F}':f\sub \varphi'_w(m_1)\}=\{f\in\mathrsfs{F}:f\varsubsetneq m\}$ and so by Corollary \ref{cor:ideal correspondence}
  $$
  m_1=\varphi_w'^{-1}(\varphi_w'(m_1))=\bigcup_{\{f\in\mathrsfs{F}':f\sub \varphi'_w(m_1)\}}f=\bigcup_{\{f\in\mathrsfs{F}:f\varsubsetneq m\}}f=\overline{m}
  $$
  The other properties are consequences of Proposition \ref{prop:swapping proposition}.
\end{proof}
We have the following theorem.
\begin{theorem}\label{theo:count irreducible}
  Let $\mathrsfs{F}$ be a $\cup$-closed family of sets of $2^X$ with $X=\{a_1,\ldots,a_n\}$. Consider the rising function $\varphi_w$ with respect to the word $w=a_1a_2\ldots a_n$ and let $\mathcal{F}=\varphi_w(\mathrsfs{F})$, then
  $$
  |J(\mathrsfs{F})|\le 2|\min(\mathcal{F})|+|\min(\mathcal{F}\men\min(\mathcal{F}))|
  $$
\end{theorem}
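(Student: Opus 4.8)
The plan is to apply Proposition \ref{prop:just to cases in the irreducible case} once to each irreducible element. For $m\in J(\mathrsfs{F})$ deleting $m$ from $\mathrsfs{F}$ must fall into case~1 or case~2 of that proposition, so $J(\mathrsfs{F})$ splits as a disjoint union $J_1\sqcup J_2$ accordingly. I would then produce an injection of $J(\mathrsfs{F})$ into the disjoint union $\min(\mathcal{F})\sqcup\min(\mathcal{F})\sqcup\min(\mathcal{F}\setminus\min(\mathcal{F}))$ of two copies of $\min(\mathcal{F})$ together with one copy of the ``second layer'' $\min(\mathcal{F}\setminus\min(\mathcal{F}))$; since $\mathcal{F}\setminus\min(\mathcal{F})$ is again upward-closed this second layer is well defined, and such an injection gives the claimed inequality immediately.

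For $m\in J_1$ we have $\varphi_w(m)\in\min(\mathcal{F})$ by case~1, and because $\varphi_w$ is injective on $\mathrsfs{F}$ (Theorem \ref{theo:ideal correspondence}) this is already an injection $J_1\hookrightarrow\min(\mathcal{F})$; place its image in the first copy. For $m\in J_2$, case~2 produces the nonempty set $\overline{m}=\bigcup_{\{f\in\mathrsfs{F}:f\subsetneq m\}}f$ with $\overline{m}\subsetneq m$, $\varphi_w(\overline{m})\in\min(\mathcal{F})$, $\varphi_w'(\overline{m})=\varphi_w(m)$ and $\mathcal{F}'=\mathcal{F}\setminus\{\varphi_w(\overline{m})\}$, where $\varphi_w'$ is the rising function of $\mathrsfs{F}\setminus\{m\}$. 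First I would record that every $f\in\mathrsfs{F}$ with $f\subsetneq m$ satisfies $f\subseteq\overline{m}$ (otherwise $f\cup\overline{m}\in\mathrsfs{F}$ would lie strictly between $\overline{m}$ and $m$, contradicting irreducibility of $m$ and the definition of $\overline{m}$); hence $m$ covers $\overline{m}$ in $\mathrsfs{F}$, $\overline{m}=\varphi_w^{-1}(\varphi_w(\overline{m}))$ by Corollary \ref{cor:ideal correspondence}, and all $m\in J_2$ with a common value $x:=\varphi_w(\overline{m})\in\min(\mathcal{F})$ share the same $\overline{m}=\varphi_w^{-1}(x)$. I would then send $m\in J_2$ to $\varphi_w(m)$, placed in the second copy of $\min(\mathcal{F})$ when $\varphi_w(m)\in\min(\mathcal{F})$, and in $\min(\mathcal{F}\setminus\min(\mathcal{F}))$ otherwise; injectivity is automatic from injectivity of $\varphi_w$, from $J_1\cap J_2=\emptyset$, and from the disjointness of the three target pieces.

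The hard part will be checking that this assignment really lands where claimed: that for $m\in J_2$ with $\varphi_w(m)\notin\min(\mathcal{F})$ the element $\varphi_w(m)$ is actually minimal in $\mathcal{F}\setminus\min(\mathcal{F})$, and that the finitely many exceptions $\varphi_w(m)\in\min(\mathcal{F})$ are absorbed by the second copy of $\min(\mathcal{F})$, which needs that for each fixed $x\in\min(\mathcal{F})$ at most one $m\in J_2$ with $\varphi_w(\overline{m})=x$ has this property. For the minimality statement one uses the monotonicity of $\varphi_w^{-1}$ (Corollary \ref{cor:ideal correspondence}) to see that every $\eta\in\mathcal{F}$ with $\eta\subsetneq\varphi_w(m)$ has $\varphi_w^{-1}(\eta)\subsetneq m$, hence $\varphi_w^{-1}(\eta)\subseteq\overline{m}$, and then one must argue, through the swapping description of Proposition \ref{prop:swapping proposition} — the identity $\varphi_w'(\overline{m})=\varphi_w(m)$, the single swap index $i_1$ with $a_{i_1}\in m\setminus\varphi_w(\overline{m})$, and the comparison of the trajectory of $\overline{m}$ in $\mathrsfs{F}$ with its trajectory in $\mathrsfs{F}\setminus\{m\}$ — that no two of these images can be strictly nested below $\varphi_w(m)$, so each such $\eta$ is already minimal in $\mathcal{F}$. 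Once these membership facts are in place the conclusion is a routine count: adding $|J_1|\le|\min(\mathcal{F})|$ to $|J_2|\le|\min(\mathcal{F})|+|\min(\mathcal{F}\setminus\min(\mathcal{F}))|$, where the latter follows by grouping $J_2$ over the values of $\varphi_w(\overline{m})$ with each fibre contributing at most one exceptional element plus a distinct family of second-layer elements, yields $|J(\mathrsfs{F})|\le 2|\min(\mathcal{F})|+|\min(\mathcal{F}\setminus\min(\mathcal{F}))|$.
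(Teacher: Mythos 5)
Your overall architecture (splitting $J(\mathrsfs{F})$ via Proposition \ref{prop:just to cases in the irreducible case}, injecting the part with $\varphi_w(m)\in\min(\mathcal{F})$ into one copy of $\min(\mathcal{F})$, and charging the rest to a second copy of $\min(\mathcal{F})$ plus $\min(\mathcal{F}\men\min(\mathcal{F}))$) is the same as the paper's, but the step you yourself flag as ``the hard part'' is not merely hard: it is false, so your injection is not well defined. You claim that every $m\in J_2$ with $\varphi_w(m)\notin\min(\mathcal{F})$ has $\varphi_w(m)\in\min(\mathcal{F}\men\min(\mathcal{F}))$, equivalently that no two elements of $\mathcal{F}$ are strictly nested below $\varphi_w(m)$. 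Counterexample: take $X=\{a,b,c,d\}$, $w=abcd$ and
$$
\mathrsfs{F}=\{\{a\},\{a,b\},\{a,c\},\{a,b,c\},\{a,b,c,d\}\}.
$$
This is $\cup$-closed, and $m=\{a,b,c,d\}$ is irreducible (it is the only member containing $d$) with $\overline{m}=\{a,b,c\}\neq\emptyset$, so $m\in J_2$. The first three rising steps fix every member (e.g.\ $\{a\}$ never rises because $\{a,b\}$ and then $\{a,c\}$ lie in the relevant sections), and at the step for $d$ one obtains
$$
\mathcal{F}=\varphi_w(\mathrsfs{F})=\{\{a,d\},\{a,b,d\},\{a,c,d\},\{a,b,c\},\{a,b,c,d\}\},
$$
so $\min(\mathcal{F})=\{\{a,d\},\{a,b,c\}\}$ and $\min(\mathcal{F}\men\min(\mathcal{F}))=\{\{a,b,d\},\{a,c,d\}\}$. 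Here $\varphi_w(m)=\{a,b,c,d\}$ lies in neither set, because $\{a,d\}\subsetneq\{a,b,d\}\subsetneq\varphi_w(m)$; note the nested preimages $\{a\}\subsetneq\{a,b\}$ are both contained in $\overline{m}$, so your monotonicity argument cannot exclude this configuration, and your sketch that ``no two of these images can be strictly nested below $\varphi_w(m)$'' fails. Your fallback for exceptions only covers elements with $\varphi_w(m)\in\min(\mathcal{F})$, which does not apply to this $m$, so it has no admissible target under your map. (The theorem itself is of course not contradicted: $|J(\mathrsfs{F})|=4\le 6$ here.)

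The paper's proof diverges exactly at this point. It never claims that each image individually is minimal in the second layer; instead it groups $J_2$ by the value $\overline{m}=\iota_{\mathrsfs{F}}(m)$ and applies Proposition \ref{prop:just to cases in the irreducible case} twice, removing two members $m_i, m_j$ of the same fibre in succession, to show that at most one member per fibre can fail to have $\varphi_w(m)$ minimal in $\mathcal{F}\men\{\varphi_w(\overline{m})\}$ (hence, for the non-exceptional ones, minimal in $\mathcal{F}\men\min(\mathcal{F})$ as well, since their images are not in $\min(\mathcal{F})$). That single exceptional member per fibre --- in the example above, $m=\{a,b,c,d\}$ --- is charged to $\varphi_w(\overline{m})\in\min(\mathcal{F})$, which is what the second copy of $\min(\mathcal{F})$ is really for; the exceptions are not characterized by $\varphi_w(m)\in\min(\mathcal{F})$ (such elements belong to $J_1$ already). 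To repair your argument you would need precisely this fibrewise ``at most one exception'' lemma, proved via the double application of Proposition \ref{prop:just to cases in the irreducible case} together with equality (\ref{eq:theo limitation}), and the reassignment $m\mapsto\varphi_w(\iota_{\mathrsfs{F}}(m))$ for the exceptional elements.
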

\begin{proof}
    $J(\mathrsfs{F})$ can be partitioned into two subsets $J_1,J_2$ respectively of the elements $m\in J(\mathrsfs{F})$ such that $\varphi_w(m)\in\min(\mathcal{F})$ and the elements $m$ for which condition 2 of Proposition \ref{prop:just to cases in the irreducible case} holds but $\varphi_w(m)\notin\min(\mathcal{F})$ (conditions 1 and 2 of Proposition \ref{prop:just to cases in the irreducible case} are not mutually exclusive). Since $\varphi_w$ is an injection we immediately have
    \begin{equation}\label{eq: J_1 less than min}
      |J_1|\le |\min(\mathcal{F})|
    \end{equation}
    We define the partial function $\iota_{\mathrsfs{F}}:J(\mathrsfs{F})\rightarrow \mathrsfs{F}$ taking an element $m$ into
    $$
    \iota_{\mathrsfs{F}}(m)=\bigcup_{f\in\mathrsfs{F}:f\subsetneq m}f
    $$
    It is straightforward to check that whenever it is defined: $\iota_{\mathrsfs{F}}(m)\subsetneq m$ (it can not be equal since $m$ is irreducible) and if $m'\subsetneq m$, then $m'\subseteq \iota_{\mathrsfs{F}}(m)$. In view of Proposition \ref{prop:just to cases in the irreducible case}, we consider the restriction $\iota_{\mathrsfs{F}}:J_2\rightarrow \mathrsfs{F}$ which is a function. Thus for a $\overline{m}\in\iota_{\mathrsfs{F}}(J_2)$, the set $\iota_{\mathrsfs{F}}^{-1}(\overline{m})$ is clearly non-empty and let $\iota_{\mathrsfs{F}}^{-1}(\overline{m})=\{m_1,\ldots, m_k\}$ for some $k\ge 1$. We observe that for all $i\neq j$, $m_i\nsubseteq m_j$ since, otherwise $m_i\subsetneq m_j$ would imply the contradiction $\overline{m}\subsetneq m_i\subseteq \iota_{\mathrsfs{F}}(m_j)=\overline{m}$. Therefore for all $i\neq j$
    \begin{equation}\label{eq:theo limitation}
    \iota_{\mathrsfs{F}}(m_j)=\iota_{\mathrsfs{F}\men\{m_i\}}(m_j)
    \end{equation}
    We claim that for all $i\neq j$ we have that at least one between $\varphi_w(m_i),\varphi_w(m_j)$ is minimal in $\mathcal{F}'=\mathcal{F}\men\{\varphi_w(\overline{m})\}$. This is a consequence of the application of Proposition \ref{prop:just to cases in the irreducible case} twice. Indeed, consider $\mathrsfs{F}\men\{m_i\}$ and let $\varphi_w'$ be the rising function of this set with respect to $w$. By Proposition \ref{prop:just to cases in the irreducible case} we have $\varphi_w(\overline{m})\in\min(\mathcal{F})$, $\varphi_w'(\overline{m})=\varphi_w(m_i)$ and $\varphi_w'(m_j)=\varphi_w(m_j)$. It is evident that $m_j\in J(\mathrsfs{F}\men\{m_i\})$ and so consider the $\cup$-closed set $(\mathrsfs{F}\men\{m_i\})\men\{m_j\}$. Let $\varphi_w''$ be the rising function of this set with respect to $w$. By Proposition \ref{prop:just to cases in the irreducible case} we have two possibilities: either
    $\varphi_w'(m_j)=\varphi_w(m_j)$ is minimal in $\mathcal{F}\men\{\varphi_w(\overline{m})\}$, or by (\ref{eq:theo limitation}), we have that
    $$
    \varphi'_w(\iota_{\mathrsfs{F}\men\{m_i\}}(m_j))=\varphi'_w(\iota_{\mathrsfs{F}}(m_j))=\varphi'_w(\overline{m})=\varphi_w(m_i)
    $$
    is minimal in $\mathcal{F}\men\{\varphi_w(\overline{m})\}$. Therefore, it is straightforward to prove that all the $m_i$ except at most one, say $m_k$, are minimal in $\mathcal{F}\men\{\varphi_w(\overline{m})\}$. Hence, denoting by $J_2'$ the set of elements $m\in J_2$ such that $\varphi_w(m)$ is minimal in $\mathcal{F}\men\{\varphi_w(\iota_{\mathrsfs{F}}(m))\}$, we get that there is an injection of $J_2\men J_2'$ into $\iota_{\mathrsfs{F}}(J_2\men J_2')$ which is in one to one correspondence with the elements of $\varphi_w(\iota_{\mathrsfs{F}}(J_2\men J_2'))$ (being $\varphi_w$ injective) which is in turn a subset of $\min(\mathcal{F})$ (by definition of the set $J_2$ and Proposition \ref{prop:just to cases in the irreducible case}), whence:
    \begin{equation}\label{eq: J_2 minus J_2'}
      |J_2\men J_2'|\le|\min(\mathcal{F})|
    \end{equation}
    We now prove that $\varphi_w(J_2')\subseteq \min(\mathcal{F}\men\min(\mathcal{F}))$. Since $J_2'\subseteq J_2$, then, by definition of $J_2$, we have that $\varphi_w(m)\notin \min(\mathcal{F})$ for all $m\in J_2'$. Thus $\varphi_w(J_2')\subseteq \mathcal{F}\men\min(\mathcal{F})$. Moreover, if $m\in J_2'$, then $\varphi_w(m)$ is minimal in $\mathcal{F}\men\{\varphi_w(\iota_{\mathrsfs{F}}(m))\}$ and since $\varphi_w(\iota_{\mathrsfs{F}}(m))\in min(\mathcal{F})$ we have
    $$
    \mathcal{F}\men\min(\mathcal{F})\subseteq\mathcal{F}\men\{\varphi_w(\iota_{\mathrsfs{F}}(m))\}
    $$
    hence $\varphi_w(m)$ is also minimal in $\mathcal{F}\men\min(\mathcal{F})$, and so the claim $\varphi_w(J_2')\subseteq \min(\mathcal{F}\men\min(\mathcal{F}))$. Therefore $|J_2'|\le |\min(\mathcal{F}\men\min(\mathcal{F}))|$, and so by (\ref{eq: J_1 less than min}), (\ref{eq: J_2 minus J_2'}) we obtain the upper bound of the statement
    $$
    |J(\mathrsfs{F})|=|J_1|+|J_2\men J_2'|+|J_2'|\le 2|\min(\mathcal{F})|+|\min(\mathcal{F}\men\min(\mathcal{F}))|
    $$
\end{proof}
As an immediate consequence of the previous theorem and Sperner's Theorem we have $|J(\mathrsfs{F})|\le 3{n\choose \lfloor\frac{n}{2}\rfloor}$. This bound is not the best that can be obtained from Theorem \ref{theo:count irreducible}. Indeed, we devote Subsection \ref{subsec: extremal problem} to prove Theorem \ref{theo: antichain + nabla bound} showing that for an upward-closed family $\mathcal{F}$ on a set $X$ with $|X|=n$ we have $2|\min(\mathcal{F})|+|\min(\mathcal{F}\men\min(\mathcal{F}))|\le 2{n\choose \lfloor\frac{n}{2}\rfloor}+{n\choose \lfloor\frac{n}{2}\rfloor+1}$ and this bound is tight. Therefore we have the following corollary.
\begin{corollary}\label{cor:bound irreducible}
  Let $\mathrsfs{F}$ be a $\cup$-closed family of sets of $2^X$ with $X=\{a_1,\ldots,a_n\}$, then
  $$
  |J(\mathrsfs{F})|\le 2{n\choose \lfloor\frac{n}{2}\rfloor}+{n\choose \lfloor\frac{n}{2}\rfloor+1}
  $$
  In particular any family $S\sub 2^X$ with $|S|>2{n\choose \lfloor\frac{n}{2}\rfloor}+{n\choose \lfloor\frac{n}{2}\rfloor+1}$ is not $\cup$-independent.
\end{corollary}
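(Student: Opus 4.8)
The idea is to combine the structural Theorem~\ref{theo:count irreducible} with two classical results of Sperner theory applied to the upward-closed family $\mathcal{F}=\varphi_w(\mathrsfs{F})$, and then to read off the statement about $\cup$-independent families.

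\textbf{Step 1 (reduction to an extremal problem).} Fix $w=a_1a_2\ldots a_n$ and set $\mathcal{F}=\varphi_w(\mathrsfs{F})$, which is upward-closed by Theorem~\ref{theo:ideal correspondence}. Theorem~\ref{theo:count irreducible} gives $|J(\mathrsfs{F})|\le 2|\min(\mathcal{F})|+|\min(\mathcal{F}\men\min(\mathcal{F}))|$, so it suffices to bound the right-hand side over \emph{all} upward-closed families $\mathcal{F}\sub 2^X$ with $|X|=n$; this is precisely Theorem~\ref{theo: antichain + nabla bound}.

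\textbf{Step 2 (the extremal bound, $=$ Theorem~\ref{theo: antichain + nabla bound}).} Put $\mathcal{M}=\min(\mathcal{F})$ and $\mathcal{N}=\min(\mathcal{F}\men\mathcal{M})$; both are antichains and $\mathcal{M}\cap\mathcal{N}=\emptyset$. The key elementary observation is that $\mathcal{M}\cup\mathcal{N}$ contains no chain of three sets: if $A\subsetneq B\subsetneq C$ all lay in $\mathcal{M}\cup\mathcal{N}$, then $A\in\mathcal{M}$ (otherwise $A\in\mathcal{N}\sub\mathcal{F}$ forces $B\notin\mathcal{M}$, hence $B\in\mathcal{N}$, contradicting that $\mathcal{N}$ is an antichain), hence $B\notin\mathcal{M}$ so $B\in\mathcal{N}$, and finally $C\notin\min(\mathcal{F})=\mathcal{M}$ since $A\subsetneq C$, so $C\in\mathcal{N}$ — impossible because $B\subsetneq C$. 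Now I would invoke two theorems: Sperner's theorem gives $|\mathcal{M}|\le\binom{n}{\lfloor n/2\rfloor}$, and Erd\H{o}s's theorem on $2$-Sperner families (no $3$-chain) gives $|\mathcal{M}|+|\mathcal{N}|\le$ the sum of the two largest binomial coefficients of $n$, which equals $\binom{n}{\lfloor n/2\rfloor}+\binom{n}{\lfloor n/2\rfloor+1}$ for every $n$. Adding these,
$$
2|\min(\mathcal{F})|+|\min(\mathcal{F}\men\min(\mathcal{F}))| = |\mathcal{M}|+\bigl(|\mathcal{M}|+|\mathcal{N}|\bigr)\le 2\binom{n}{\lfloor n/2\rfloor}+\binom{n}{\lfloor n/2\rfloor+1},
$$
which with Step~1 yields the first claim. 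Tightness is witnessed by the upward-closed family $\mathcal{F}=\{A\sub X:|A|\ge\lfloor n/2\rfloor\}$, for which $\mathcal{M}=\binom{X}{\lfloor n/2\rfloor}$ and $\mathcal{N}=\binom{X}{\lfloor n/2\rfloor+1}$.

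\textbf{Step 3 ($\cup$-independence).} Let $S\sub 2^X$ be $\cup$-independent and let $\mathrsfs{F}$ be the $\cup$-closed family generated by $S$, with ground set $Y=\bigcup_{z\in S}z$. Each $g\in S$ is irreducible in $\mathrsfs{F}$: if $g=h\cup t$ with $h,t\in\mathrsfs{F}$ properly contained in $g$, then $h$ and $t$ are unions of members of $S$, all of which are contained in $g$, whence $g$ is a union of members of $S\men\{g\}$, contradicting $\cup$-independence. Thus $S\sub J(\mathrsfs{F})$, and since $m\mapsto 2\binom{m}{\lfloor m/2\rfloor}+\binom{m}{\lfloor m/2\rfloor+1}$ is nondecreasing, $|S|\le|J(\mathrsfs{F})|\le 2\binom{|Y|}{\lfloor|Y|/2\rfloor}+\binom{|Y|}{\lfloor|Y|/2\rfloor+1}\le 2\binom{n}{\lfloor n/2\rfloor}+\binom{n}{\lfloor n/2\rfloor+1}$, which is the second claim.

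\textbf{Where the difficulty lies.} Granting Theorem~\ref{theo:count irreducible} — whose proof, built on the swapping Lemma~\ref{lem:swapping lemma} and Proposition~\ref{prop:just to cases in the irreducible case}, is the technical heart of the section — Corollary~\ref{cor:bound irreducible} is a short argument. The only point that requires care is the verification in Step~2 that $\min(\mathcal{F})$ together with $\min(\mathcal{F}\men\min(\mathcal{F}))$ is a $2$-Sperner family, so that Erd\H{o}s's bound can be added to Sperner's; note that the ``obvious'' pairing would instead only produce $3\binom{n}{\lfloor n/2\rfloor}$ if one mis-estimated the two-largest-coefficients sum, so pinning that sum down as $\binom{n}{\lfloor n/2\rfloor}+\binom{n}{\lfloor n/2\rfloor+1}$ is what makes the constant sharp and the tight example match.
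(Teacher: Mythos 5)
Your proposal is correct, and Steps 1 and 3 match the paper's intent: the paper likewise obtains the corollary by feeding Theorem \ref{theo:count irreducible} into the extremal bound, and the ``in particular'' clause is exactly the observation you make explicit (every element of a $\cup$-independent $S$ is join-irreducible in the $\cup$-closed family it generates), which the paper leaves implicit. Where you genuinely diverge is Step 2. The paper does not bound $2|\min(\mathcal{F})|+|\min(\mathcal{F}\men\min(\mathcal{F}))|$ directly by classical Sperner theory; it proves the general antichain inequality $2|\mathcal{A}|+|\overline{\nabla}(\mathcal{A})|\le 2\binom{n}{\lfloor n/2\rfloor}+\binom{n}{\lfloor n/2\rfloor+1}$ (Theorem \ref{theo: antichain + nabla bound}) by a compression-style augmentation argument: it reduces to ``augmentable'' antichains, applies upward- and lower-augmenting maps whose validity rests on the normalized-matching inequalities of \cite{Anderson} for $\nabla$ and $\Delta$, and finishes with a symmetric chain decomposition. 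You instead observe that $\mathcal{M}=\min(\mathcal{F})$ and $\mathcal{N}=\min(\mathcal{F}\men\mathcal{M})$ are disjoint antichains, so $\mathcal{M}\cup\mathcal{N}$ contains no chain of three sets, and then add Sperner's bound for $\mathcal{M}$ to Erd\H{o}s's $2$-Sperner bound for $\mathcal{M}\cup\mathcal{N}$ (whose value is indeed $\binom{n}{\lfloor n/2\rfloor}+\binom{n}{\lfloor n/2\rfloor+1}$ for both parities of $n$). This is a much shorter route, and since $\mathcal{A}$ and $\overline{\nabla}(\mathcal{A})$ are always disjoint antichains it in fact recovers the paper's Theorem \ref{theo: antichain + nabla bound} in full generality, together with the same tight example. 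What each approach buys: yours is economical but imports a classical external theorem, while the paper's argument is self-contained and develops machinery (augmentable antichains, the operator $\overline{\nabla}$ and its properties) that it uses to describe the extremal structure; your monotonicity remark in Step 3 is fine, though unnecessary, since Theorem \ref{theo:count irreducible} can be applied with ground set $X$ directly.
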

A natural question that arises from this corollary is the precise upper bound of the quantity
$$
J(n) = \max\{|J(\mathrsfs{F})|: \mathrsfs{F}\mbox{ is a }\cup-\mbox{closed family on a set }X\mbox{ with } |X| = n\}
$$
Although we are not able to answer to this question we can easily give a lower bound to $J(n)$. Indeed, consider the $\cup$-closed family of $2^X$ consisting of elements whose cardinality is greater than or equal to $\lfloor\frac{n}{2}\rfloor$. The set of joint-irreducible elements consists of the subsets of cardinality exactly $\lfloor\frac{n}{2}\rfloor$, whence we can bound the function $J(n)$ as
$$
{n\choose \lfloor\frac{n}{2}\rfloor}\le J(n) \le 2{n\choose
\lfloor\frac{n}{2}\rfloor}+{n\choose
\lfloor\frac{n}{2}\rfloor+1}
$$

\subsection{An extremal problem}\label{subsec: extremal problem}
In this section we study the extremal problem of maximizing the quantity $2|\min(\mathcal{F})|+|\min(\mathcal{F}\men\min(\mathcal{F}))|$ where $\mathcal{F}$ is an upward-closed set on the set $X$. We can restate this problem in the following way. Given an antichain $\mathcal{A}$ of $2^X$, we want to maximize the quantity $2|\mathcal{A}|+|\min(\mathcal{A}^\uparrow\setminus \mathcal{A})|$. Before studying this problem more in detail we give some definitions. For an integer $0 <k\le n$ we denote by $\mathcal{A}_k =\{A\in\mathcal{A}:|A| = k\}$, in general a family of $k$-subsets $\mathcal{B}$ is a collection of sets of $X$ with cardinality $k$. We recall that the \emph{shade} (see \cite{Anderson}) of $\mathcal{A}_k$ is defined by
$$
\nabla(\mathcal{A}_k) = \{B\in 2^X: |B| = k+1, A\subseteq B\mbox{ for some }A\in\mathcal{A}_k\}
$$
Similarly the \emph{shadow} of $\mathcal{A}_k$ is defined by
$$
\Delta(\mathcal{A}_k) = \{B\in 2^X: |B| = k-1, B\subseteq A\mbox{ for some }A\in\mathcal{A}_k\}
$$
We can extend these definitions to the whole set $\mathcal{A}$ by taking $\nabla(\mathcal{A})= \cup_{k = 1}^n\nabla(\mathcal{A}_k)$ and $\Delta(\mathcal{A})=\cup_{k = 1}^n\Delta(\mathcal{A}_k)$. Note that $\min(\mathcal{A}^\uparrow\setminus \mathcal{A})\subseteq \nabla(\mathcal{A})$, in particular, since $\mathcal{A}$ is an antichain, $\min(\mathcal{A}^\uparrow\setminus \mathcal{A}) =  \min(\nabla(\mathcal{A}))$. Thus, it makes sense defining the \emph{first upward level} of an antichain $\mathcal{A}$ as the set $\overline{\nabla}\mathcal{A} = \min(\nabla(\mathcal{A}))$. The operator $\overline{\nabla}$ is also interesting because $\mathcal{A}^{\uparrow}$ can be partitioned into ``foils", where the \emph{$i$-th foil} for $i\ge 1$ is given by $\overline{\nabla}^i(\mathcal{A}) = \overline{\nabla}(\overline{\nabla}^{i-1}(\mathcal{A}))$ and $\overline{\nabla}^0(\mathcal{A}) = \mathcal{A}$. We state some useful properties whose proofs are left to the reader.
\begin{lemma}\label{lem: properties overline nabla}
  Let $\mathcal{A},\mathcal{B}$ be two antichains, then:
  \begin{enumerate}
    \item\label{nabla prop 6} $\nabla(\mathcal{A}\cup\mathcal{B})= \nabla(\mathcal{A})\cup\nabla(\mathcal{B})$, $\Delta(\mathcal{A}\cup\mathcal{B})= \Delta(\mathcal{A})\cup\Delta(\mathcal{B})$, $\mathcal{A}\subseteq\nabla(\Delta(\mathcal{A})), \mathcal{A}\subseteq\Delta(\nabla(\mathcal{A}))$.
    \item\label{nabla prop 7} Assume $\mathcal{A}\subseteq \mathcal{B}$, then for any $g\in\nabla(\mathcal{A})$ there is a $g'\in\overline{\nabla}(\mathcal{B})$ such that $g'\subseteq g$.
    \item\label{nabla prop 5} $\overline{\nabla}(\mathcal{A})\subseteq \nabla(\mathcal{A})$, moreover $g\in \nabla(\mathcal{A})\setminus \overline{\nabla}(\mathcal{A})$ iff there is $g'\in \nabla(\mathcal{A})$ such that $g'\subsetneq g$.
    \item\label{nabla prop 1} If $\mathcal{A}\cup\mathcal{B}$ is an antichain and for all $g\in\overline{\nabla}(\mathcal{A})$ there is no $g'\in \overline{\nabla}(\mathcal{B})$ such that $g'\subsetneq g$, then $\overline{\nabla}(\mathcal{A})\subseteq\overline{\nabla}(\mathcal{A}\cup\mathcal{B})$.
    \item\label{nabla prop 4} Assume $\mathcal{A}\subseteq \mathcal{B}$, if for any $g\in\nabla(\mathcal{A})$ there is a $g'\in \nabla(\mathcal{B}\setminus\mathcal{A})$ such that $g'\subseteq g$, then $\overline{\nabla}(\mathcal{B}\setminus\mathcal{A})=\overline{\nabla}(\mathcal{B})$.
  \end{enumerate}
\end{lemma}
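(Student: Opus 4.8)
The plan is to reduce every item to two elementary principles: that $\nabla$ and $\Delta$ distribute over unions rank by rank (hence are monotone with respect to inclusion of families), and that in the finite poset $(2^X,\subseteq)$ every nonempty subfamily has minimal elements and every member of a subfamily lies above one of them. I would also isolate one auxiliary observation, since it gets used twice: \emph{if $P,Q$ are families of subsets of $X$ such that every $p\in P$ contains some $q\in Q$, then $\min(P\cup Q)=\min(Q)$}. This is a short double inclusion: a minimal element $r$ of $P\cup Q$ that happened to lie in $P$ would, by minimality, coincide with the member of $Q$ it contains, so $\min(P\cup Q)\subseteq Q$, and being minimal in the larger family $P\cup Q$ it is minimal in $Q$; conversely, a member of $\min(Q)$ is not strictly above any element of $Q$ by definition, nor above any $p\in P$ since $p$ is already above a member of $Q$, hence it is minimal in $P\cup Q$.

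\textbf{Items 1 and 3.} For the union identities, $B\in\nabla(\mathcal{A}\cup\mathcal{B})$ says $|B|=|A|+1$ and $A\subseteq B$ for some $A\in\mathcal{A}\cup\mathcal{B}$, which splits into $A\in\mathcal{A}$ or $A\in\mathcal{B}$; the argument for $\Delta$ is dual. For $\mathcal{A}\subseteq\nabla(\Delta(\mathcal{A}))$, given $A\in\mathcal{A}$ choose $B\subsetneq A$ with $|B|=|A|-1$, so that $B\in\Delta(\mathcal{A})$ and $A\in\nabla(\Delta(\mathcal{A}))$; the inclusion $\mathcal{A}\subseteq\Delta(\nabla(\mathcal{A}))$ is dual, and one may assume throughout that $\emptyset,X\notin\mathcal{A}$, which holds in the intended applications. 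Item 3 is merely the definition of minimality: $\overline{\nabla}(\mathcal{A})=\min(\nabla(\mathcal{A}))\subseteq\nabla(\mathcal{A})$, and $g\in\nabla(\mathcal{A})$ fails to be minimal precisely when some $g'\in\nabla(\mathcal{A})$ satisfies $g'\subsetneq g$.

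\textbf{Items 2 and 4.} For item 2, monotonicity from item 1 gives $\nabla(\mathcal{A})\subseteq\nabla(\mathcal{B})$, so $g\in\nabla(\mathcal{A})$ lies in $\nabla(\mathcal{B})$ and thus, by finiteness, above some $g'\in\min(\nabla(\mathcal{B}))=\overline{\nabla}(\mathcal{B})$. For item 4, take $g\in\overline{\nabla}(\mathcal{A})$; by item 1 it lies in $\nabla(\mathcal{A})\subseteq\nabla(\mathcal{A}\cup\mathcal{B})=\nabla(\mathcal{A})\cup\nabla(\mathcal{B})$, so it remains to see it is minimal there. An $h\subsetneq g$ with $h\in\nabla(\mathcal{A})$ would contradict $g\in\min(\nabla(\mathcal{A}))$; an $h\subsetneq g$ with $h\in\nabla(\mathcal{B})$ would lie above some $g'\in\overline{\nabla}(\mathcal{B})$ with $g'\subseteq h\subsetneq g$, contradicting the hypothesis that no element of $\overline{\nabla}(\mathcal{B})$ sits strictly below an element of $\overline{\nabla}(\mathcal{A})$. (The antichain hypothesis on $\mathcal{A}\cup\mathcal{B}$ only serves to keep $\overline{\nabla}(\mathcal{A}\cup\mathcal{B})$ within the stated framework.)

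\textbf{Item 5 and the obstacle.} Write $\mathcal{B}=\mathcal{A}\cup(\mathcal{B}\setminus\mathcal{A})$, so $\nabla(\mathcal{B})=\nabla(\mathcal{A})\cup\nabla(\mathcal{B}\setminus\mathcal{A})$ by item 1; the hypothesis is precisely that every member of $P:=\nabla(\mathcal{A})$ contains a member of $Q:=\nabla(\mathcal{B}\setminus\mathcal{A})$, so the auxiliary observation yields $\overline{\nabla}(\mathcal{B})=\min(\nabla(\mathcal{B}))=\min(Q)=\overline{\nabla}(\mathcal{B}\setminus\mathcal{A})$. I expect no serious difficulty anywhere; the one place that rewards care is item 4, where the case split on whether a hypothetical smaller element of the combined shade lies in $\nabla(\mathcal{A})$ or in $\nabla(\mathcal{B})$ is essential, and one must remember that the shade of a non-uniform antichain need not be rank-uniform, so $h\subsetneq g$ alone does not locate $h$ in one of the two shades.
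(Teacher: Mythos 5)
Your proof is correct; the paper in fact states this lemma with its proof explicitly left to the reader, so there is no authorial argument to compare against, and your elementary treatment (rank-wise distribution of $\nabla,\Delta$ over unions, existence of minimal elements below any element in the finite poset $2^X$, and the auxiliary observation that $\min(P\cup Q)=\min(Q)$ when every member of $P$ contains a member of $Q$, which cleanly disposes of item 5) is exactly the routine verification intended, with the case split in item 4 handled correctly. Your caveat that one should assume $\emptyset, X\notin\mathcal{A}$ for the inclusions $\mathcal{A}\subseteq\nabla(\Delta(\mathcal{A}))$ and $\mathcal{A}\subseteq\Delta(\nabla(\mathcal{A}))$ is an honest remark about an edge case the statement glosses over, and it is harmless for the applications later in the paper.
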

We devote the rest of the paper to the proof of following theorem.
\begin{theorem}\label{theo: antichain + nabla bound}
    Let $\mathcal{A}$ be an antichain of $2^X$ with $|X| = n$, then
    $$
    2|\mathcal{A}|+|\overline{\nabla}(\mathcal{A})|\le 2{n\choose \lfloor\frac{n}{2}\rfloor} + {n\choose \lfloor\frac{n}{2}\rfloor + 1}
    $$
    and this bound is tight.
\end{theorem}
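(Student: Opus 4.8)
The plan is to observe that $\mathcal{A}\cup\overline{\nabla}(\mathcal{A})$ is a family of subsets of $X$ containing no chain of three sets, and then to squeeze the desired inequality out of this by combining the classical extremal bound for such families with plain Sperner's theorem. The only genuinely new input is the structural observation; everything else is arithmetic and citation.

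First I would record the elementary structural facts. The set $\overline{\nabla}(\mathcal{A})=\min(\nabla(\mathcal{A}))$ is an antichain, being the set of minimal elements of a family. Moreover $\mathcal{A}\cap\overline{\nabla}(\mathcal{A})=\emptyset$: every $B\in\overline{\nabla}(\mathcal{A})\sub\nabla(\mathcal{A})$ satisfies $|B|=|A|+1$ and $B\supsetneq A$ for some $A\in\mathcal{A}$, so $B$ cannot belong to the antichain $\mathcal{A}$. Hence $\mathcal{A}\cup\overline{\nabla}(\mathcal{A})$ is a disjoint union of two antichains, and therefore it contains no chain $C_1\subsetneq C_2\subsetneq C_3$: among three pairwise comparable sets, the pigeonhole principle would put two of them into the same one of the two antichains, a contradiction. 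Thus $\mathcal{A}\cup\overline{\nabla}(\mathcal{A})$ is a family with no $3$-chain, of cardinality exactly $|\mathcal{A}|+|\overline{\nabla}(\mathcal{A})|$.

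Next I would invoke the Erdős generalization of Sperner's theorem: a family of subsets of an $n$-set containing no chain of $k+1$ sets has at most $\sum$ of the $k$ largest binomial coefficients ${n\choose i}$ elements. For $k=2$ that sum equals ${n\choose \lfloor\frac{n}{2}\rfloor}+{n\choose \lfloor\frac{n}{2}\rfloor+1}$ (a one-line check in each parity of $n$), so $|\mathcal{A}|+|\overline{\nabla}(\mathcal{A})|\le {n\choose \lfloor\frac{n}{2}\rfloor}+{n\choose \lfloor\frac{n}{2}\rfloor+1}$. Adding to this the plain Sperner bound $|\mathcal{A}|\le {n\choose \lfloor\frac{n}{2}\rfloor}$ gives
\[
2|\mathcal{A}|+|\overline{\nabla}(\mathcal{A})|=|\mathcal{A}|+\bigl(|\mathcal{A}|+|\overline{\nabla}(\mathcal{A})|\bigr)\le 2{n\choose \lfloor\frac{n}{2}\rfloor}+{n\choose \lfloor\frac{n}{2}\rfloor+1},
\]
which is the assertion. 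If a self-contained proof is preferred, the Erdős bound can be replaced by the symmetric chain decomposition of $2^X$ into ${n\choose \lfloor\frac{n}{2}\rfloor}$ symmetric chains: a $3$-chain-free family meets each chain in at most two elements, and $\sum_{\text{chains}}\min(2,\text{length})$ telescopes to the sum of the two largest binomial coefficients.

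Finally, for tightness I would take $\mathcal{A}=\{A\sub X:|A|=\lfloor n/2\rfloor\}$. This is an antichain with $|\mathcal{A}|={n\choose \lfloor\frac{n}{2}\rfloor}$; since every $(\lfloor n/2\rfloor+1)$-subset of $X$ contains a $\lfloor n/2\rfloor$-subset, $\nabla(\mathcal{A})$ is the entire level $\{B\sub X:|B|=\lfloor n/2\rfloor+1\}$, which is already an antichain, so $\overline{\nabla}(\mathcal{A})=\nabla(\mathcal{A})$ has cardinality ${n\choose \lfloor\frac{n}{2}\rfloor+1}$ and equality holds throughout. I do not expect any real obstacle: the one point that needs to be \emph{seen} rather than computed is that adjoining to an antichain its first upward level never creates a chain of length three; after that the statement is just bookkeeping with two standard extremal results.
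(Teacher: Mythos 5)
Your proof is correct, and it takes a genuinely different and much shorter route than the paper. Your key observation is that $\mathcal{A}\cup\overline{\nabla}(\mathcal{A})$ is a disjoint union of two antichains (disjoint because every member of $\overline{\nabla}(\mathcal{A})$ strictly contains a member of $\mathcal{A}$), hence contains no chain of three sets; the bound then drops out of Erd\H{o}s's theorem on families with no $(k+1)$-chain (with $k=2$, the sum of the two largest binomial coefficients, which in both parities equals ${n\choose \lfloor\frac{n}{2}\rfloor}+{n\choose \lfloor\frac{n}{2}\rfloor+1}$) added to the plain Sperner bound $|\mathcal{A}|\le{n\choose \lfloor\frac{n}{2}\rfloor}$, and your tightness example (the full level $\lfloor n/2\rfloor$, whose shade is the full level $\lfloor n/2\rfloor+1$) is the same as the paper's. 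The paper instead runs a lengthy augmentation argument: it first normalizes $\mathcal{A}$ to an ``augmentable'' antichain, then repeatedly applies the operators $\alpha^{+},\alpha^{-}$ (whose monotonicity rests on the normalized-matching inequalities of \cite[Corollary 2.1.2]{Anderson}) to push everything into the levels $\frac{n}{2}-1,\frac{n}{2}$, and finishes with a symmetric chain decomposition. What your argument buys is brevity and generality: it really proves that $2|\mathcal{A}|+|\mathcal{B}|\le 2{n\choose \lfloor\frac{n}{2}\rfloor}+{n\choose \lfloor\frac{n}{2}\rfloor+1}$ for \emph{any} two disjoint antichains $\mathcal{A},\mathcal{B}$, using the specific structure of $\overline{\nabla}(\mathcal{A})$ only to get disjointness for free; your fallback via symmetric chain decompositions (each chain meets a $3$-chain-free family at most twice) also makes it self-contained with tools the paper already cites. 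What the paper's compression-style proof buys is finer structural control of how $\mathcal{A}$ and $\overline{\nabla}(\mathcal{A})$ interact level by level, which could matter for characterizing the extremal configurations, but it is not needed for the stated inequality.
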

Note that if $n$ is odd the theorem can be easily proved. Indeed, both $|\mathcal{A}|$ and $|\overline{\nabla}(\mathcal{A})|$ are antichains, hence by Sperner's theorem we get $2|\mathcal{A}|+|\overline{\nabla}(\mathcal{A})|\le 3{n\choose \lfloor\frac{n}{2}\rfloor}=2{n\choose \lfloor\frac{n}{2}\rfloor} + {n\choose \lfloor\frac{n}{2}\rfloor + 1}$ since ${n\choose \lfloor\frac{n}{2}\rfloor} = {n\choose \lfloor\frac{n}{2}\rfloor + 1}$. It is not difficult to check that this bound is attained when $\mathcal{A}$ consists of all the $\frac{n-1}{2}$-subsets of $X$. Therefore, in the sequel we can assume that $n$ is even. We prove the theorem using an augmentation argument. More precisely, we define two maps $\alpha^{+}, \alpha^{-}$, called respectively the \emph{upward-augmenting}, \emph{lower-augmenting} map, with the property of transforming $\mathcal{A}$ into the antichains $\alpha^{+}(\mathcal{A})$, $\alpha^{-}(\mathcal{A})$ with
\begin{eqnarray*}
  \nonumber 2|\mathcal{A}|+|\overline{\nabla}(\mathcal{A})| &\le & 2|\alpha^{+}(\mathcal{A})|+|\overline{\nabla}(\alpha^{+}(\mathcal{A}))| \\
  \nonumber 2|\mathcal{A}|+|\overline{\nabla}(\mathcal{A})| &\le & 2|\alpha^{-}(\mathcal{A})|+|\overline{\nabla}(\alpha^{-}(\mathcal{A}))|
\end{eqnarray*}
then we repetitively apply these operators to obtain an antichain formed by $k$-subsets of $X$ with $k=\frac{n}{2},\frac{n}{2}-1$.
However we define these maps only for particular classes of antichains that we are going to introduce, first we need some preliminary definitions. Given a family $\mathcal{B}\subseteq 2^X$ we denote the maximum (minimum) of the lengths of the elements of $\mathcal{B}$ by $\parallel \mathcal{B}\parallel_M $ ($\parallel \mathcal{B}\parallel_m $), and we put $Max(\mathcal{B}) = \{B\in\mathcal{B}: |B|= \parallel \mathcal{B}\parallel_M\}$, $Min(\mathcal{B}) = \{B\in\mathcal{B}: |B|= \parallel \mathcal{B}\parallel_m\}$.
The following lemma shows that we can restrict our attention to a particular class of antichains.
\begin{lemma}\label{lem:augmentable antichains}
  Let $\mathcal{A}'$ be an antichain in $2^X$, then
  \begin{enumerate}
    \item[1)] for any $h\in Min(\mathcal{A}')$ and $a\in X\setminus h$ we have $h\cup\{a\}\in\overline{\nabla}(\mathcal{A}')$.
    \end{enumerate}
  Moreover there is an antichain $\mathcal{A}\supseteq\mathcal{A}'$ such that $\parallel \mathcal{A}\parallel_M = \parallel \mathcal{A}'\parallel_M$, $\parallel \mathcal{A}\parallel_m = \parallel \mathcal{A}'\parallel_m$, $2|\mathcal{A}'|+|\overline{\nabla}(\mathcal{A}')|\le 2|\mathcal{A}|+|\overline{\nabla}(\mathcal{A})|$ and with the following property:
  \begin{enumerate}
    \item[2)]let $k = \parallel \overline{\nabla}(\mathcal{A})\parallel_M$, then either $\cup_{i\ge k}\mathcal{A}_i\neq\emptyset$ or for any $h\in Max(\overline{\nabla}(\mathcal{A}))$ and $a\in h$ we have $h\setminus\{a\}\in\mathcal{A}$.
  \end{enumerate}
\end{lemma}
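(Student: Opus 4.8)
The plan is to derive item 1) directly from the definitions and then to build $\mathcal{A}$ from $\mathcal{A}'$ by a finite augmentation that only ever inserts sets of one fixed cardinality. For item 1): let $h\in Min(\mathcal{A}')$, so $|h|=\parallel\mathcal{A}'\parallel_m$, and let $a\in X\setminus h$. Then $h\subseteq h\cup\{a\}$ gives $h\cup\{a\}\in\nabla(\mathcal{A}')$; if it were not minimal in $\nabla(\mathcal{A}')$, then by Lemma~\ref{lem: properties overline nabla}(\ref{nabla prop 5}) there is $g'\in\nabla(\mathcal{A}')$ with $g'\subsetneq h\cup\{a\}$, hence $|g'|\le|h|=\parallel\mathcal{A}'\parallel_m$; but any set in $\nabla(\mathcal{A}')$ of cardinality $q$ contains a set of $\mathcal{A}'$ of cardinality $q-1$, which here would be a member of $\mathcal{A}'$ of cardinality strictly less than $\parallel\mathcal{A}'\parallel_m$ --- impossible.

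For the ``moreover'' clause I would assume $\mathcal{A}'\neq\emptyset$ (if $\mathcal{A}'=\emptyset$ take $\mathcal{A}=\mathcal{A}'$, property 2) being vacuous) and run the following process. Put $\mathcal{A}^{(0)}=\mathcal{A}'$. Given an antichain $\mathcal{A}^{(t)}\supseteq\mathcal{A}'$, set $k=\parallel\overline{\nabla}(\mathcal{A}^{(t)})\parallel_M$; if property 2) holds for $\mathcal{A}^{(t)}$, stop and let $\mathcal{A}=\mathcal{A}^{(t)}$; otherwise $\cup_{i\ge k}(\mathcal{A}^{(t)})_i=\emptyset$ and there are $h\in Max(\overline{\nabla}(\mathcal{A}^{(t)}))$ (so $|h|=k$) and $a\in h$ with $h\setminus\{a\}\notin\mathcal{A}^{(t)}$, and I pass to $\mathcal{A}^{(t+1)}=\mathcal{A}^{(t)}\cup\{h\setminus\{a\}\}$.

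For one augmenting move I would establish three things. (i) Since $\overline{\nabla}(\mathcal{A}^{(t)})\subseteq\nabla(\mathcal{A}^{(t)})$ contains a set of cardinality $k$, $\mathcal{A}^{(t)}$ has a member of cardinality $k-1$; with $\cup_{i\ge k}(\mathcal{A}^{(t)})_i=\emptyset$ this forces $\parallel\mathcal{A}^{(t)}\parallel_M=k-1$, so the inserted set $g:=h\setminus\{a\}$ has cardinality $k-1$ and $\parallel\mathcal{A}^{(t+1)}\parallel_M=\parallel\mathcal{A}^{(t)}\parallel_M$, $\parallel\mathcal{A}^{(t+1)}\parallel_m=\parallel\mathcal{A}^{(t)}\parallel_m$. (ii) $\mathcal{A}^{(t+1)}$ is again an antichain: a $B\in\mathcal{A}^{(t)}$ comparable with $g$ cannot contain $g$ (cardinality reasons, using $g\notin\mathcal{A}^{(t)}$), so $B\subsetneq g\subsetneq h$ with $|B|\le k-2$; choosing $c\in h\setminus(B\cup\{a\})$, nonempty because $|h\setminus B|\ge 2$ and $a\in h\setminus B$, gives $B\cup\{c\}\in\nabla(\mathcal{A}^{(t)})$ with $B\cup\{c\}\subsetneq h$, contradicting $h\in\overline{\nabla}(\mathcal{A}^{(t)})$ via Lemma~\ref{lem: properties overline nabla}(\ref{nabla prop 5}). (iii) $\nabla(\mathcal{A}^{(t+1)})=\nabla(\mathcal{A}^{(t)})\cup\nabla(\{g\})$ and every set of $\nabla(\{g\})$ has cardinality $k$ while every set of $\overline{\nabla}(\mathcal{A}^{(t)})$ has cardinality at most $k$, so no set of $\nabla(\{g\})$ is a proper subset of a set of $\overline{\nabla}(\mathcal{A}^{(t)})$; hence $\overline{\nabla}(\mathcal{A}^{(t)})\subseteq\overline{\nabla}(\mathcal{A}^{(t+1)})$, which keeps $k=\parallel\overline{\nabla}(\mathcal{A}^{(t+1)})\parallel_M$ and $\cup_{i\ge k}(\mathcal{A}^{(t+1)})_i=\emptyset$, and yields $2|\mathcal{A}^{(t+1)}|+|\overline{\nabla}(\mathcal{A}^{(t+1)})|\ge 2|\mathcal{A}^{(t)}|+|\overline{\nabla}(\mathcal{A}^{(t)})|+2$.

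Since $|\mathcal{A}^{(t)}|$ strictly increases and is bounded by $2^{n}$, the process terminates at an antichain $\mathcal{A}\supseteq\mathcal{A}'$ satisfying property 2), and telescoping (i) and (iii) over all moves gives $\parallel\mathcal{A}\parallel_M=\parallel\mathcal{A}'\parallel_M$, $\parallel\mathcal{A}\parallel_m=\parallel\mathcal{A}'\parallel_m$ and $2|\mathcal{A}'|+|\overline{\nabla}(\mathcal{A}')|\le 2|\mathcal{A}|+|\overline{\nabla}(\mathcal{A})|$. The bookkeeping of invariants and termination is routine; the delicate points are (ii) and (iii), which both rest on the same structural observation: in the non-trivial case all of $\mathcal{A}^{(t)}$ lies strictly below cardinality $k=\parallel\overline{\nabla}(\mathcal{A}^{(t)})\parallel_M$, so adjoining a $(k-1)$-subset of a longest element of the first upward level can only produce shade sets of cardinality $k$, which are too large to undercut the minimality of $h$ in $\nabla(\mathcal{A}^{(t)})$ or the minimality of the sets already in $\overline{\nabla}(\mathcal{A}^{(t)})$.
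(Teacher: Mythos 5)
Your proof is correct and follows essentially the same route as the paper: item 1) by the same minimality-of-cardinality argument, and the ``moreover'' part by repeatedly adjoining $h\setminus\{a\}$ for a violating pair and checking that each single augmentation preserves the antichain property, the norms $\parallel\cdot\parallel_M,\parallel\cdot\parallel_m$, and does not decrease $2|\cdot|+|\overline{\nabla}(\cdot)|$. The only differences are cosmetic: you verify $\overline{\nabla}(\mathcal{A}^{(t)})\subseteq\overline{\nabla}(\mathcal{A}^{(t+1)})$ by a direct cardinality comparison (all new shade sets have size exactly $k$) instead of the paper's contradiction argument, and you make the termination of the iteration explicit via the strictly increasing cardinality of the antichain.
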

\begin{proof}
    Let us prove Condition 1). To obtain a contradiction suppose that there is $h\in Min(\mathcal{A}')$ and $a\in X\setminus h$ such that $h\cup\{a\}\notin\overline{\nabla}(\mathcal{A})$. Thus $h\cup\{a\}\in\nabla(\mathcal{A}')\setminus \overline{\nabla}(\mathcal{A}')$, and so by property \ref{nabla prop 5} of Lemma \ref{lem: properties overline nabla}, there is a $g\in \nabla(\mathcal{A}')$ and $h'\in\mathcal{A}'$ with $h'\subsetneq g\subsetneq h\cup\{a\}$, whence $|h'|<|g|<|h|+1$. Thus $|h'|<|h|$ which contradicts the minimality of $|h|$.
    \\
    The second statement is proved if we show that given an antichain $\mathcal{B}'$ with $k = \parallel \overline{\nabla}(\mathcal{B}')\parallel_M$ either $\cup_{i\ge k}\mathcal{B}'_i\neq\emptyset$ or if there is an $h\in Max(\overline{\nabla}(\mathcal{B}'))$ and $a\in h$ such that $h\setminus\{a\}\notin\mathcal{B}'$, then the family $\mathcal{B}=\mathcal{B}'\cup\{h\setminus\{a\}\}$ is an antichain with $2|\mathcal{B}'|+|\overline{\nabla}(\mathcal{B}')|\le 2|\mathcal{B}|+|\overline{\nabla}(\mathcal{B})|$. Indeed starting from $\mathcal{A}'$ by repetitively adding elements for which condition 2) does not hold, we eventually end with an antichain $\mathcal{A}$ satisfying property 2). Suppose that $\cup_{i\ge k}\mathcal{B}'_i = \emptyset$, otherwise we have done. It is easily seen that $\parallel \mathcal{B}\parallel_M = \parallel \mathcal{B}'\parallel_M$, $\parallel \mathcal{B}\parallel_m = \parallel \mathcal{B}'\parallel_m$.
    We now prove that $\mathcal{B}$ is an antichain. Note first that $h$ can not be a singleton, thus to reach a contradiction suppose that $\mathcal{B}$ is not an antichain. Since $\mathcal{B}'$ is an antichain, there is a $g\in\mathcal{B}'$ such that either $g\subsetneq h\setminus\{a\}$ or $h\setminus\{a\} \subsetneq g $. Suppose that $g\subsetneq h\setminus\{a\}$, hence there is a $h'\in \overline{\nabla}(\mathcal{B}')$ such that $h'\subseteq g\cup\{a\}\subsetneq h$ which contradicts the fact that $\overline{\nabla}(\mathcal{B}')$ is an antichain. On the other hand suppose that $h\setminus\{a\} \subsetneq g $. Thus, $|g|\ge |h| = k$ which implies $g\in\cup_{i\ge k}\mathcal{B}'_i=\emptyset$, a contradiction. Therefore $\mathcal{B} = \mathcal{B}'\cup\{h\setminus\{a\}\}$ is an antichain. We now prove that $\overline{\nabla}(\mathcal{B}')\subseteq \overline{\nabla}(\mathcal{B})$ from which, with $\mathcal{B} = \mathcal{B}'\cup\{h\setminus\{a\}\}$, implies our claim $2|\mathcal{B}'|+|\overline{\nabla}(\mathcal{B}')|\le 2|\mathcal{B}|+|\overline{\nabla}(\mathcal{B})|$. Suppose, contrary to our claim, that there is a $t\in \overline{\nabla}(\mathcal{B}')\setminus \overline{\nabla}(\mathcal{B}) \neq\emptyset$. It is straightforward to check that there is a $t'\in \overline{\nabla}(\mathcal{B})$ with $t'\subsetneq t$. It follows easily that $h\setminus\{a\}\subsetneq t'\subsetneq t$ (otherwise we would have the contradiction $t'\in \overline{\nabla}(\mathcal{B}')$). Thus we have $|t|>|t'|\ge |h|=k$, against $\parallel\overline{\nabla}(\mathcal{B}')\parallel_M = k$.
\end{proof}
An antichain $\mathcal{A}$ satisfying the two properties in Lemma \ref{lem:augmentable antichains} is called \emph{augmentable}. We now define the lower-augmenting, upward-augmenting map on the set of augmentable antichains over $X$. Given an augmentable antichain $\mathcal{A}$ with $k = \parallel \overline{\nabla}(\mathcal{A})\parallel_M, k'=\parallel \mathcal{A}\parallel_M$, $s=\parallel \mathcal{A}\parallel_m$, the lower-augmenting map is defined by
$$
\alpha^{-}(\mathcal{A}) = \left\{
  \begin{array}{ll}
    (\mathcal{A}\setminus\mathcal{A}_{k'})\cup \Delta(\mathcal{A}_{k'}), & \hbox{if } k'\ge k,\: k'\ge \frac{n}{2}+1 \\
    (\mathcal{A}\setminus\mathcal{A}_{k-1}) \cup \Delta(\mathcal{A}_{k-1}), & \hbox{if } k'<k,\: k > \frac{n}{2}+1\\
    \mathcal{A} & \hbox{otherwise.}
  \end{array}
\right.
$$
and the upward-augmenting map by
$$
\alpha^{+}(\mathcal{A}) = \left\{
  \begin{array}{ll}
    (\mathcal{A}\setminus \mathcal{A}_s)\cup \nabla(\mathcal{A}_s), & \hbox{if } s < \frac{n}{2}-1 \\
    \mathcal{A} & \hbox{otherwise.}
  \end{array}
\right.
$$
The following lemma shows that $\alpha^+(\mathcal{A}),\alpha^-(\mathcal{A})$ are antichains.
\begin{lemma}\label{lem: well defined}
  Let $\mathcal{A}$ be an antichain and let $M=\parallel \mathcal{A}\parallel_M$, $m=\parallel \mathcal{A}\parallel_m$, then
  $$
  \mathcal{A}\setminus \mathcal{A}_m\cup\nabla(\mathcal{A}_m), \: \mathcal{A}\setminus \mathcal{A}_M\cup\Delta(\mathcal{A}_M)
  $$
  are antichains with $\mathcal{A}\setminus \mathcal{A}_m\cap\nabla(\mathcal{A}_m)=\emptyset$, $\mathcal{A}\setminus \mathcal{A}_M\cap\Delta(\mathcal{A}_M)=\emptyset$.
\end{lemma}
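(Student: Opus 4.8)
The plan is to prove one of the two assertions directly by a short comparability analysis and to deduce the other by complementation. Write $\mathcal{S}^c=\{X\men Y:Y\in \mathcal{S}\}$ for a family $\mathcal{S}\sub 2^X$, consistently with the notation fixed in Section~\ref{sec:preliminaries}. Since complementation is an inclusion-reversing bijection of $2^X$, the family $\mathcal{A}^c$ is again an antichain, $\parallel\mathcal{A}^c\parallel_m=n-M$, $\parallel\mathcal{A}^c\parallel_M=n-m$, $(\mathcal{A}_M)^c=(\mathcal{A}^c)_{n-M}$, and one checks at once that $(\Delta(\mathcal{A}_M))^c=\nabla\bigl((\mathcal{A}^c)_{n-M}\bigr)$ and $(\mathcal{A}\men\mathcal{A}_M)^c=\mathcal{A}^c\men(\mathcal{A}^c)_{n-M}$. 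Hence $\bigl((\mathcal{A}\men\mathcal{A}_M)\cup\Delta(\mathcal{A}_M)\bigr)^c=\bigl(\mathcal{A}^c\men(\mathcal{A}^c)_{m'}\bigr)\cup\nabla\bigl((\mathcal{A}^c)_{m'}\bigr)$ with $m'=n-M=\parallel\mathcal{A}^c\parallel_m$, and, as $\mathcal{S}\mapsto\mathcal{S}^c$ preserves both the antichain property and disjointness of these two pieces, it is enough to prove the statement for $\nabla$ and the minimum length.

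So I would prove: for an antichain $\mathcal{A}$ with $m=\parallel\mathcal{A}\parallel_m$, the family $\mathcal{B}:=(\mathcal{A}\men\mathcal{A}_m)\cup\nabla(\mathcal{A}_m)$ is an antichain with $(\mathcal{A}\men\mathcal{A}_m)\cap\nabla(\mathcal{A}_m)=\emptyset$. The two facts driving this are that every set in $\mathcal{A}\men\mathcal{A}_m$ has size at least $m+1$, while every set in $\nabla(\mathcal{A}_m)$ has size exactly $m+1$ and properly contains some member of $\mathcal{A}_m\sub\mathcal{A}$. Disjointness then follows immediately, since a set in the intersection would have size $m+1$ and properly contain an element of $\mathcal{A}$, contradicting that $\mathcal{A}$ is an antichain. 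For the antichain property I would take $B\subsetneq C$ with $B,C\in\mathcal{B}$ and split cases: if $B\in\nabla(\mathcal{A}_m)$, pick $A\in\mathcal{A}_m$ with $A\subsetneq B\subsetneq C$, so $C\in\mathcal{A}$ gives the forbidden chain $A\subsetneq C$ in $\mathcal{A}$, whereas $C\in\nabla(\mathcal{A}_m)$ forces $|C|=m+1=|B|$, against $B\subsetneq C$; if instead $B\in\mathcal{A}\men\mathcal{A}_m\sub\mathcal{A}$, then $C\in\mathcal{A}$ contradicts that $\mathcal{A}$ is an antichain, while $C\in\nabla(\mathcal{A}_m)$ forces $|B|\le|C|-1=m$, against $|B|\ge m+1$. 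Every case yields a contradiction.

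I do not expect a genuinely hard step; the argument is a finite case check. The only points deserving a word of care are the degenerate situations $m=n$ (so $\mathcal{A}=\{X\}$ and $\nabla(\mathcal{A}_m)=\emptyset$) and, after complementation, $M=0$ (so $\mathcal{A}=\{\emptyset\}$ and $\Delta(\mathcal{A}_M)=\emptyset$), where the claim is trivial, and the bookkeeping in the complementation step, which I would record once at the outset so that the comparability analysis need only be performed on the $\nabla$-side.
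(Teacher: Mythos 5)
Your proof is correct. On the $\nabla$ side it is essentially the paper's own argument: disjointness follows because a member of the intersection would lie in $\mathcal{A}$ and properly contain an element of $\mathcal{A}_m\subseteq\mathcal{A}$, and the antichain property follows from the same length bookkeeping ($|B|\ge m+1$ for $B\in\mathcal{A}\setminus\mathcal{A}_m$, $|B|=m+1$ for $B\in\nabla(\mathcal{A}_m)$) combined with the fact that every element of $\nabla(\mathcal{A}_m)$ sits above some element of $\mathcal{A}_m$. Where you differ is the $\Delta$ half: the paper simply repeats the symmetric case analysis for $\mathcal{A}\setminus\mathcal{A}_M\cup\Delta(\mathcal{A}_M)$, while you reduce it to the $\nabla$ case via the inclusion-reversing bijection $S\mapsto S^c$, using $(\Delta(\mathcal{A}_M))^c=\nabla\bigl((\mathcal{A}^c)_{n-M}\bigr)$ and $\parallel\mathcal{A}^c\parallel_m=n-M$; the identities you record are all correct, and complementation does preserve the antichain property and disjointness, so the reduction is sound. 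The duality route buys you half the case analysis at the price of a little bookkeeping; the paper's direct route keeps everything self-contained and symmetric. Either way the lemma is established, and your attention to the degenerate cases $m=n$ and $M=0$ is harmless though not strictly needed, since the general argument covers them with $\nabla(\mathcal{A}_m)$ or $\Delta(\mathcal{A}_M)$ empty.
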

\begin{proof}
  Suppose, contrary to our claim, that there is $g\in\mathcal{A}\setminus \mathcal{A}_m\cap\nabla(\mathcal{A}_m) \neq\emptyset$. Thus $g\in\nabla(\mathcal{A}_m)$ implies that there is a $g'\in\mathcal{A}_m$ such that $g'\subsetneq g$ which contradicts the fact that $\mathcal{A}$ is an antichain. Similarly, $\mathcal{A}_M\cap\Delta(\mathcal{A}_M)\neq\emptyset$ contradicts the fact that $\mathcal{A}$ is an antichain. Let us prove that $\mathcal{A}\setminus \mathcal{A}_m\cup\nabla(\mathcal{A}_m)$ is an antichain. Since the two terms of the union are disjoint antichains, to reach a contradiction, we can suppose that there is a $g\in \mathcal{A}\setminus \mathcal{A}_m$ and $g'\in \nabla(\mathcal{A}_m)$ such that either $g\subsetneq g'$ or $g'\subsetneq g$. Since $m$ is the minimum of the length of the elements of $\mathcal{A}$, then $g\in \mathcal{A}\setminus \mathcal{A}_m$ implies $|g|\ge m+1$, while $g'\in \nabla(\mathcal{A}_m)$ implies $|g'|=m+1$. Thus only $g'\subsetneq g$ can occur. However $g'\in \nabla(\mathcal{A}_m)$ implies $g''\subsetneq g'$, for some $g''\in \mathcal{A}_m$. Hence $g''\subsetneq g'\subsetneq g$ which contradicts the fact that $\mathcal{A}$ is an antichain. Hence $\mathcal{A}\setminus \mathcal{A}_m\cup\nabla(\mathcal{A}_m)$ is an antichain. Let us prove that $\mathcal{A}\setminus \mathcal{A}_M\cup\Delta(\mathcal{A}_M)$ is also an antichain. Suppose, contrary to our claim, that $\mathcal{A}\setminus \mathcal{A}_M\cup\Delta(\mathcal{A}_M)$ is not an antichain.
  Similarly to the above situation, we can assume that only $g\subsetneq g'$ for $g\in \mathcal{A}\setminus \mathcal{A}_M$ and $g'\in \Delta(\mathcal{A}_M)$ can occur. However, $g'\in \Delta(\mathcal{A}_M)$ implies that there is a $g''\in \mathcal{A}_M$ with $g'\subsetneq g''$, hence $g\subsetneq g'\subsetneq g''$ contradicts the fact that $\mathcal{A}$ is an antichain. Therefore $\mathcal{A}\setminus \mathcal{A}_M\cup\Delta(\mathcal{A}_M)$ is an antichain and this concludes the proof of the lemma.
\end{proof}

\begin{lemma}\label{lem: alpha +}
  Let $\mathcal{A}$ be an augmentable antichain, then $\alpha^+(\mathcal{A})$ is antichain with $\parallel \alpha^+(\mathcal{A})\parallel_m> \parallel \mathcal{A}\parallel_m$, if $\parallel \mathcal{A}\parallel_m<\parallel \mathcal{A}\parallel_M$ then $\parallel \alpha^+(\mathcal{A})\parallel_M = \parallel \mathcal{A}\parallel_M$, moreover:
   $$
   2|\mathcal{A}|+|\overline{\nabla}(\mathcal{A})| \le 2|\alpha^{+}(\mathcal{A})|+|\overline{\nabla}(\alpha^{+}(\mathcal{A}))|
   $$
\end{lemma}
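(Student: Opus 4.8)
Set $s=\parallel\mathcal{A}\parallel_m$. If $s\ge\frac{n}{2}-1$ then $\alpha^{+}(\mathcal{A})=\mathcal{A}$ and there is nothing to do, so the plan is to work under the assumption $s<\frac{n}{2}-1$, i.e. $s\le\frac{n}{2}-2$, so that the three levels $s,s+1,s+2$ all lie strictly below $n/2$. The structural assertions are quick. That $\alpha^{+}(\mathcal{A})=(\mathcal{A}\setminus\mathcal{A}_s)\cup\nabla(\mathcal{A}_s)$ is an antichain with the two pieces disjoint is Lemma \ref{lem: well defined} with $m=s$. Every set of $\mathcal{A}\setminus\mathcal{A}_s$ has length $\ge s+1$ and every set of $\nabla(\mathcal{A}_s)$ has length exactly $s+1$, and $\nabla(\mathcal{A}_s)\neq\emptyset$, so $\parallel\alpha^{+}(\mathcal{A})\parallel_m=s+1>s$; and if $s<\parallel\mathcal{A}\parallel_M=:M$ then $\mathcal{A}\setminus\mathcal{A}_s$ still carries the length-$M$ sets while $\nabla(\mathcal{A}_s)$ sits at level $s+1\le M$, so $\parallel\alpha^{+}(\mathcal{A})\parallel_M=M$. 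Hence the only real content is the inequality for $2|\cdot|+|\overline{\nabla}(\cdot)|$.

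For that I would pass to the up-sets. Using that $\mathcal{A}_s$ is precisely the bottom level of $\mathcal{A}$, one checks $\alpha^{+}(\mathcal{A})^{\uparrow}=\mathcal{A}^{\uparrow}\setminus\mathcal{A}_s$, and therefore, since $\overline{\nabla}(\mathcal{C})=\min(\mathcal{C}^{\uparrow}\setminus\mathcal{C})$ for an antichain $\mathcal{C}$,
$$
\overline{\nabla}(\alpha^{+}(\mathcal{A}))=\min\bigl(\mathcal{A}^{\uparrow}\setminus(\mathcal{A}\cup\nabla(\mathcal{A}_s))\bigr),\qquad \overline{\nabla}(\mathcal{A})=\min(\mathcal{A}^{\uparrow}\setminus\mathcal{A}).
$$
So the passage $\mathcal{A}\rightsquigarrow\alpha^{+}(\mathcal{A})$ simply deletes $\nabla(\mathcal{A}_s)$ from the complement before taking minimal elements; note $\nabla(\mathcal{A}_s)$ is exactly the level-$(s+1)$ slice of $\overline{\nabla}(\mathcal{A})$, which is its lowest slice. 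I then compare the first upward levels slice by slice, with the help of Lemma \ref{lem: properties overline nabla}(\ref{nabla prop 6},\ref{nabla prop 5}): at level $s+1$ we lose all of $\nabla(\mathcal{A}_s)$ and gain nothing; at level $s+2$ the slice changes from $\nabla(\mathcal{A}_{s+1})\setminus\nabla(\nabla(\mathcal{A}_s))$ to $\nabla(\mathcal{A}_{s+1})\cup\nabla(\nabla(\mathcal{A}_s))$, a net gain of $|\nabla(\nabla(\mathcal{A}_s))|$; and at every level $\ell\ge s+3$ the two slices coincide. The last point is the crux: if a set $g$ of size $\ge s+3$ lies above a member of $\nabla(\mathcal{A}_s)$, then it already lies above some size-$(s+2)$ member of $\mathcal{A}^{\uparrow}\setminus(\mathcal{A}\cup\nabla(\mathcal{A}_s))$, hence was never minimal; so deleting $\nabla(\mathcal{A}_s)$ can disturb minimality only at levels $s+1$ and $s+2$.

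Collecting the bookkeeping yields
$$
|\overline{\nabla}(\alpha^{+}(\mathcal{A}))|-|\overline{\nabla}(\mathcal{A})|=|\nabla(\nabla(\mathcal{A}_s))|-|\nabla(\mathcal{A}_s)|,\qquad |\alpha^{+}(\mathcal{A})|-|\mathcal{A}|=|\nabla(\mathcal{A}_s)|-|\mathcal{A}_s|,
$$
so that
$$
\bigl(2|\alpha^{+}(\mathcal{A})|+|\overline{\nabla}(\alpha^{+}(\mathcal{A}))|\bigr)-\bigl(2|\mathcal{A}|+|\overline{\nabla}(\mathcal{A})|\bigr)=|\nabla(\mathcal{A}_s)|+|\nabla(\nabla(\mathcal{A}_s))|-2|\mathcal{A}_s|.
$$
To close, I would invoke the elementary fact that the shade expands below the middle: for a nonempty family $\mathcal{B}$ of $k$-subsets of $X$ with $2k<n$ one has $|\nabla(\mathcal{B})|\ge|\mathcal{B}|$, obtained by double counting the containment pairs between $\mathcal{B}$ and $\nabla(\mathcal{B})$, namely $|\mathcal{B}|(n-k)\le|\nabla(\mathcal{B})|(k+1)$ together with $n-k\ge k+1$. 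Since $s\le\frac{n}{2}-2$ this applies both to $\mathcal{A}_s$ (level $s$) and to $\nabla(\mathcal{A}_s)$ (level $s+1$), giving $|\nabla(\nabla(\mathcal{A}_s))|\ge|\nabla(\mathcal{A}_s)|\ge|\mathcal{A}_s|$, whence the displayed difference is $\ge|\mathcal{A}_s|+|\mathcal{A}_s|-2|\mathcal{A}_s|=0$, as required. The main obstacle is the slice-by-slice identification of $\overline{\nabla}(\mathcal{A})$ and $\overline{\nabla}(\alpha^{+}(\mathcal{A}))$ — pinning down the exact gain $|\nabla(\nabla(\mathcal{A}_s))|$ at level $s+2$ and the agreement above it; once that is done the numeric inequality is a one-line consequence of the containment double count.
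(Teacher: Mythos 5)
Your proposal is correct and follows essentially the same route as the paper: replace the bottom level $\mathcal{A}_s$ by $\nabla(\mathcal{A}_s)$, observe that $\overline{\nabla}$ loses $\nabla(\mathcal{A}_s)$ and gains $\nabla(\nabla(\mathcal{A}_s))$, and finish with the shade-expansion inequality below the middle level (the paper cites \cite[Corollary 2.1.2]{Anderson}, you reprove it by double counting). The only difference is presentational: your up-set/slice-by-slice argument pins down $\overline{\nabla}(\alpha^{+}(\mathcal{A}))$ exactly (an equality), whereas the paper only establishes the inclusion $\overline{\nabla}(\alpha^{+}(\mathcal{A}))\supseteq(\overline{\nabla}(\mathcal{A})\setminus\nabla(\mathcal{A}_s))\cup\nabla(\nabla(\mathcal{A}_s))$ via Lemma \ref{lem: properties overline nabla}, which is all that is needed.
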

\begin{proof}
  Let $s=\parallel \mathcal{A}\parallel_m$, it is evident that $\alpha^+$ substitutes $Min(\mathcal{A})$ with $\nabla(Min(\mathcal{A}))$. Thus $\parallel \alpha^+(\mathcal{A})\parallel_m  > \parallel \mathcal{A}\parallel_m$. Moreover if $s<\parallel \mathcal{A}\parallel_M$ then, since we just add elements of cardinality $s+1$, it is also immediate that $\parallel \alpha^+(\mathcal{A})\parallel_M = \parallel \mathcal{A}\parallel_M$.
  \\
  By Lemma \ref{lem: well defined} $\alpha^{+}(\mathcal{A})$ is an antichain with:
  \begin{equation}\label{eq: disjoint antichain con nabla}
    \mathcal{A}\setminus \mathcal{A}_s\cap\nabla(\mathcal{A}_s) =\emptyset
  \end{equation}
  Let us prove the inequality $2|\mathcal{A}|+|\overline{\nabla}(\mathcal{A})| \le 2|\alpha^{+}(\mathcal{A})|+|\overline{\nabla}(\alpha^{+}(\mathcal{A}))|$. We first claim that
  \begin{equation}\label{eq: decomposition nabla}
  \overline{\nabla}(\alpha^{+}(\mathcal{A})) \supseteq \overline{\nabla}(\mathcal{A})\setminus \nabla(\mathcal{A}_s)\cup \nabla(\nabla(\mathcal{A}_s))
  \end{equation}
  where $ \nabla(\mathcal{A}_s)\subseteq \overline{\nabla}(\mathcal{A})$ and
  \begin{equation}\label{eq: nabla intersection empty}
    \overline{\nabla}(\mathcal{A})\setminus \nabla(\mathcal{A}_s)\cap \nabla(\nabla(\mathcal{A}_s)) = \emptyset
  \end{equation}
  By property 1) of an augmentable chain $\mathcal{A}$ we have $ \nabla(\mathcal{A}_s)\subseteq \overline{\nabla}(\mathcal{A})$. Let us prove (\ref{eq: nabla intersection empty}). Suppose that (\ref{eq: nabla intersection empty}) do not hold and let $h\in \overline{\nabla}(\mathcal{A})\setminus \nabla(\mathcal{A}_s)\cap \nabla(\nabla(\mathcal{A}_s))$. Thus $h = g\cup\{a,b\}$ for some $g\in \mathcal{A}_s$ and $a,b\notin g$, since $g' = g\cup\{a\}\in\overline{\nabla}(\mathcal{A})$ we have $g'\subsetneq h$ for $g',h\in \overline{\nabla}(\mathcal{A})$, a contradiction. Let us prove (\ref{eq: decomposition nabla}). We split the proof of (\ref{eq: decomposition nabla}) by showing first $\overline{\nabla}(\mathcal{A})\setminus \nabla(\mathcal{A}_s)\subseteq \overline{\nabla}(\mathcal{A}\setminus \mathcal{A}_s\cup \nabla(\mathcal{A}_s))$ and then $\nabla(\nabla(\mathcal{A}_s))\subseteq \overline{\nabla}(\mathcal{A}\setminus \mathcal{A}_s\cup \nabla(\mathcal{A}_s))$.
  \\
  \begin{itemize}
    \item Case $\overline{\nabla}(\mathcal{A})\setminus \nabla(\mathcal{A}_s)\subseteq \overline{\nabla}(\mathcal{A}\setminus \mathcal{A}_s\cup \nabla(\mathcal{A}_s))$.   Since $\overline{\nabla}(\mathcal{A})_{s+1}\subseteq \nabla(\mathcal{A}_s)\subseteq\overline{\nabla}(\mathcal{A})_{s+1}$, then $\overline{\nabla}(\mathcal{A})_{s+1}= \nabla(\mathcal{A}_s)$. Thus $\overline{\nabla}(\mathcal{A})\setminus \nabla(\mathcal{A}_s)\subseteq \nabla(\mathcal{A}\setminus\mathcal{A}_s)$, and so, by property \ref{nabla prop 6}) of Lemma \ref{lem: properties overline nabla}, we get $\overline{\nabla}(\mathcal{A})\setminus \nabla(\mathcal{A}_s)\subseteq\nabla(\mathcal{A}\setminus\mathcal{A}_s\cup\nabla(\mathcal{A}_s))$. Suppose, contrary to our claim, that there is a $g\in\overline{\nabla}(\mathcal{A})\setminus \nabla(\mathcal{A}_s) $ such that $g\in \nabla(\mathcal{A}\setminus\mathcal{A}_s\cup\nabla(\mathcal{A}_s))\setminus\overline{\nabla}(\mathcal{A}\setminus\mathcal{A}_s\cup\nabla(\mathcal{A}_s))$. Therefore, by properties \ref{nabla prop 5}), \ref{nabla prop 6}) of Lemma \ref{lem: properties overline nabla} there is a $g'\in \nabla(\mathcal{A}\setminus\mathcal{A}_s\cup\nabla(\mathcal{A}_s))=\nabla(\mathcal{A}\setminus\mathcal{A}_s)\cup\nabla(\nabla(\mathcal{A}_s))$ such that $g'\subsetneq g$. We consider two cases, either $g'\in \nabla(\mathcal{A}\setminus\mathcal{A}_s)$ or $g'\in\nabla(\nabla(\mathcal{A}_s))$. Suppose that $g'\in \nabla(\mathcal{A}\setminus\mathcal{A}_s)$, then by property \ref{nabla prop 7}) of Lemma \ref{lem: properties overline nabla}, there is a $g''\in \overline{\nabla}(\mathcal{A})$ such that $g''\subseteq g'\subsetneq g\in \overline{\nabla}(\mathcal{A})$ which contradicts the fact that $\overline{\nabla}(\mathcal{A})$ is an antichain. On the other hand, suppose that $g'\in\nabla(\nabla(\mathcal{A}_s))$. Hence there is a $g''\in \nabla(\mathcal{A}_s)\subseteq\overline{\nabla}(\mathcal{A})$ such that $g''\subsetneq g'\subsetneq g\in \overline{\nabla}(\mathcal{A})$ which again contradicts the fact that $\overline{\nabla}(\mathcal{A})$ is an antichain. Hence we conclude $\overline{\nabla}(\mathcal{A})\setminus \nabla(\mathcal{A}_s)\subseteq \overline{\nabla}(\mathcal{A}\setminus \mathcal{A}_s\cup \nabla(\mathcal{A}_s))$.
    \item Case $\nabla(\nabla(\mathcal{A}_s))\subseteq \overline{\nabla}(\mathcal{A}\setminus \mathcal{A}_s\cup \nabla(\mathcal{A}_s))$. It is evident by property \ref{nabla prop 7}) of Lemma \ref{lem: properties overline nabla} that $\nabla(\nabla(\mathcal{A}_s))\subseteq \nabla(\mathcal{A}\setminus \mathcal{A}_s\cup \nabla(\mathcal{A}_s))$. Suppose, contrary to our claim, that there is a $g\in \nabla(\nabla(\mathcal{A}_s))$ such that $g\in \nabla(\mathcal{A}\setminus \mathcal{A}_s\cup \nabla(\mathcal{A}_s))\setminus\overline{\nabla}(\mathcal{A}\setminus \mathcal{A}_s\cup \nabla(\mathcal{A}_s))$. Therefore, by properties \ref{nabla prop 5}), \ref{nabla prop 6}) of Lemma \ref{lem: properties overline nabla} there is a $g'\in \nabla(\mathcal{A}\setminus\mathcal{A}_s\cup\nabla(\mathcal{A}_s))=\nabla(\mathcal{A}\setminus\mathcal{A}_s)\cup\nabla(\nabla(\mathcal{A}_s))$ such that $g'\subsetneq g$. Also in this case we consider the two cases either $g'\in \nabla(\mathcal{A}\setminus\mathcal{A}_s)$ or $g'\in\nabla(\nabla(\mathcal{A}_s))$. Suppose that $g'\in \nabla(\mathcal{A}\setminus\mathcal{A}_s)$. Since $s=\parallel \mathcal{A}\parallel_m$, then $|g'|\ge s+2$, while $g\in \nabla(\nabla(\mathcal{A}_s))$ implies $|g|=s+2$ which contradicts $g'\subsetneq g$. In the other case, if $g'\in \nabla(\nabla(\mathcal{A}_s))$, then $g\in \nabla(\nabla(\mathcal{A}_s))$ and $g'\subsetneq g$ contradict the fact that $\nabla(\nabla(\mathcal{A}_s))$ is an antichain. Hence $\nabla(\nabla(\mathcal{A}_s))\subseteq \overline{\nabla}(\mathcal{A}\setminus \mathcal{A}_s\cup \nabla(\mathcal{A}_s))$ and this completes the proof of (\ref{eq: decomposition nabla}).
  \end{itemize}
  Let us complete the proof of the lemma showing the inequality in the statement. Since $s<\frac{n}{2}-1$, then by \cite[Corollary 2.1.2]{Anderson}, $|\nabla(\mathcal{A}_s)|-|\mathcal{A}_s|\ge 0$ and $|\nabla(\nabla(\mathcal{A}_s))|-|\nabla(\mathcal{A}_s)|\ge 0$. By (\ref{eq: disjoint antichain con nabla}), (\ref{eq: decomposition nabla}), (\ref{eq: nabla intersection empty}) we have $ 2|\alpha^{+}(\mathcal{A})|+|\overline{\nabla}(\alpha^{+}(\mathcal{A}))| \ge 2|\mathcal{A}| - 2|\mathcal{A}_s| + 2|\nabla(\mathcal{A}_s)|+ |\overline{\nabla}(\mathcal{A})\setminus \nabla(\mathcal{A}_s)| + |\nabla(\nabla(\mathcal{A}_s))|$. Furthermore, using $\nabla(\mathcal{A}_s)\subseteq \overline{\nabla}(\mathcal{A})$, $|\nabla(\mathcal{A}_s)|-|\mathcal{A}_s|\ge 0$ and $|\nabla(\nabla(\mathcal{A}_s))|-|\nabla(\mathcal{A}_s)|\ge 0$ we get $2|\alpha^{+}(\mathcal{A})|+|\overline{\nabla}(\alpha^{+}(\mathcal{A}))| \ge 2|\mathcal{A}| +  |\overline{\nabla}(\mathcal{A})|$.
 \end{proof}
\begin{lemma}\label{lem: alpha -}
    Let $\mathcal{A}$ be an augmentable antichain, then $\alpha^-(\mathcal{A})$ is an antichain with $\parallel \alpha^-(\mathcal{A})\parallel_M< \parallel \mathcal{A}\parallel_M$, if $\parallel \mathcal{A}\parallel_m<\parallel \mathcal{A}\parallel_M$ then $\parallel \alpha^-(\mathcal{A})\parallel_m = \parallel \mathcal{A}\parallel_m$, moreover:
    $$
   2|\mathcal{A}|+|\overline{\nabla}(\mathcal{A})| \le 2|\alpha^{-}(\mathcal{A})|+|\overline{\nabla}(\alpha^{-}(\mathcal{A}))|
    $$
\end{lemma}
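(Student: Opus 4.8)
The plan is to dualize the proof of Lemma \ref{lem: alpha +}. Write $k'=\parallel\mathcal{A}\parallel_M$ and $k=\parallel\overline{\nabla}(\mathcal{A})\parallel_M$; since every element of $\nabla(\mathcal{A})$ has cardinality at most $k'+1$, we always have $k\le k'+1$. Consequently the three branches defining $\alpha^-$ collapse to two effective ones: the branch $k'\ge k$ (together with $k'\ge\frac n2+1$), and the branch $k=k'+1$ (together with $k>\frac n2+1$); in the ``otherwise'' case $\alpha^-(\mathcal{A})=\mathcal{A}$ and everything is trivial. In both effective branches $\alpha^-$ simply replaces the top layer $Max(\mathcal{A})=\mathcal{A}_{k'}$ by its shadow $\Delta(\mathcal{A}_{k'})$, and the side conditions force $k'\ge\frac n2+1$, so $\mathcal{A}_{k'}$ sits strictly above the middle level. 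Hence, by \cite[Corollary 2.1.2]{Anderson}, $|\Delta(\mathcal{A}_{k'})|\ge|\mathcal{A}_{k'}|$ (more generally, shadows of families living on a level $>\frac n2$ do not shrink). That $\alpha^-(\mathcal{A})$ is an antichain follows from Lemma \ref{lem: well defined} applied with $M=k'$, and the length statements $\parallel\alpha^-(\mathcal{A})\parallel_M<\parallel\mathcal{A}\parallel_M$, and $\parallel\alpha^-(\mathcal{A})\parallel_m=\parallel\mathcal{A}\parallel_m$ when $\parallel\mathcal{A}\parallel_m<\parallel\mathcal{A}\parallel_M$, follow by inspection, since $\alpha^-$ only deletes sets of cardinality $k'$ and inserts sets of cardinality $k'-1$.

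It remains to prove $2|\mathcal{A}|+|\overline{\nabla}(\mathcal{A})|\le 2|\alpha^{-}(\mathcal{A})|+|\overline{\nabla}(\alpha^{-}(\mathcal{A}))|$, which I would do case by case. \emph{Case $k'\ge k$.} Here the top layer contributes nothing to $\overline{\nabla}(\mathcal{A})$: the sets in $\nabla(\mathcal{A}_{k'})$ have cardinality $k'+1>k$, so $\overline{\nabla}(\mathcal{A})\subseteq\nabla(\mathcal{A}\setminus\mathcal{A}_{k'})\subseteq\nabla(\alpha^-(\mathcal{A}))$. I would then check that each $h\in\overline{\nabla}(\mathcal{A})$ remains minimal in $\nabla(\alpha^-(\mathcal{A}))=\nabla(\mathcal{A}\setminus\mathcal{A}_{k'})\cup\nabla(\Delta(\mathcal{A}_{k'}))$: a proper subset of $h$ lying in the first piece is already in $\nabla(\mathcal{A})$, contradicting minimality there, while one lying in the second piece would have cardinality exactly $k'\ge k\ge|h|$, impossible for a proper subset. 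Thus $|\overline{\nabla}(\alpha^-(\mathcal{A}))|\ge|\overline{\nabla}(\mathcal{A})|$, and combined with $|\alpha^-(\mathcal{A})|=|\mathcal{A}|-|\mathcal{A}_{k'}|+|\Delta(\mathcal{A}_{k'})|\ge|\mathcal{A}|$ this settles the case.

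\emph{Case $k=k'+1$.} Now the top layer does feed the first upward level, precisely $\overline{\nabla}(\mathcal{A})_k\subseteq\nabla(\mathcal{A})_k=\nabla(\mathcal{A}_{k'})$ (using $\parallel\mathcal{A}\parallel_M=k'=k-1$). Put $\mathcal{P}=\overline{\nabla}(\mathcal{A})\setminus\overline{\nabla}(\mathcal{A})_k$. I would establish the disjoint inclusion $\mathcal{P}\cup\mathcal{A}_{k'}\subseteq\overline{\nabla}(\alpha^-(\mathcal{A}))$: the inclusion $\mathcal{P}\subseteq\overline{\nabla}(\alpha^-(\mathcal{A}))$ is verified exactly as in the previous case (using $|h|\le k'$ for $h\in\mathcal{P}$); the inclusion $\mathcal{A}_{k'}\subseteq\overline{\nabla}(\alpha^-(\mathcal{A}))$ follows from $\mathcal{A}_{k'}\subseteq\nabla(\Delta(\mathcal{A}_{k'}))$ (property \ref{nabla prop 6} of Lemma \ref{lem: properties overline nabla}) together with the observation that nothing strictly below a set of $\mathcal{A}_{k'}$ lies in $\nabla(\alpha^-(\mathcal{A}))$ (such a set has cardinality $<k'$, hence is not in $\nabla(\Delta(\mathcal{A}_{k'}))$, and cannot be in $\nabla(\mathcal{A}\setminus\mathcal{A}_{k'})$ since $\mathcal{A}$ is an antichain); disjointness holds because $\mathcal{P}\subseteq\nabla(\mathcal{A})$ while $\mathcal{A}_{k'}=Max(\mathcal{A})$ is disjoint from $\nabla(\mathcal{A})$. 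Hence $|\overline{\nabla}(\alpha^-(\mathcal{A}))|\ge|\overline{\nabla}(\mathcal{A})|-|\overline{\nabla}(\mathcal{A})_k|+|\mathcal{A}_{k'}|$, and since $|\alpha^-(\mathcal{A})|=|\mathcal{A}|-|\mathcal{A}_{k'}|+|\Delta(\mathcal{A}_{k'})|$, the inequality reduces to
$$
2|\Delta(\mathcal{A}_{k'})|\ \ge\ |\mathcal{A}_{k'}|+|\overline{\nabla}(\mathcal{A})_k|.
$$
For this I would use the augmentability of $\mathcal{A}$: as $\parallel\mathcal{A}\parallel_M=k'<k$, the first alternative in property 2) of Lemma \ref{lem:augmentable antichains} is excluded, so for every $h\in Max(\overline{\nabla}(\mathcal{A}))=\overline{\nabla}(\mathcal{A})_k$ and $a\in h$ one has $h\setminus\{a\}\in\mathcal{A}$, i.e.\ $h\setminus\{a\}\in\mathcal{A}_{k'}$; thus $\Delta(\overline{\nabla}(\mathcal{A})_k)\subseteq\mathcal{A}_{k'}$, and since level $k$ lies above the middle, \cite[Corollary 2.1.2]{Anderson} gives $|\mathcal{A}_{k'}|\ge|\Delta(\overline{\nabla}(\mathcal{A})_k)|\ge|\overline{\nabla}(\mathcal{A})_k|$. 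Combining with $|\Delta(\mathcal{A}_{k'})|\ge|\mathcal{A}_{k'}|$ yields the displayed estimate.

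I expect the second case to be the main obstacle. There one must bookkeep precisely what is lost (the top foil $\overline{\nabla}(\mathcal{A})_k$) and what is gained ($Max(\mathcal{A})$, plus the surplus $|\Delta(\mathcal{A}_{k'})|-|\mathcal{A}_{k'}|\ge0$ coming from the expansion of the shadow) in the first upward level as the top layer is pushed down, and the balance closes only because the augmentability hypothesis forces $\Delta(\overline{\nabla}(\mathcal{A})_k)\subseteq Max(\mathcal{A})$ while at the same time $Max(\mathcal{A})$ does not shrink under taking shadows --- which is exactly where staying strictly above the middle level (the side conditions $k'\ge\frac n2+1$, $k>\frac n2+1$) is used. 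The remaining inclusion and disjointness checks are routine, though slightly more delicate than their counterparts in the proof of Lemma \ref{lem: alpha +}.
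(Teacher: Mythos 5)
Your proof is correct and follows essentially the same route as the paper: the same two effective branches of $\alpha^-$, the antichain property via Lemma \ref{lem: well defined}, and \cite[Corollary 2.1.2]{Anderson} applied to $\mathcal{A}_{k'}$ and to $\overline{\nabla}(\mathcal{A})_k$, with property 2) of Lemma \ref{lem:augmentable antichains} supplying $\Delta(\overline{\nabla}(\mathcal{A})_k)\subseteq\mathcal{A}_{k-1}$. The only (harmless) difference is in the second case, where you exhibit $Max(\mathcal{A})=\mathcal{A}_{k'}$ inside $\overline{\nabla}(\alpha^{-}(\mathcal{A}))$ and use $\Delta(\overline{\nabla}(\mathcal{A})_k)\subseteq\mathcal{A}_{k'}$ only in the final count, whereas the paper proves the inclusion $\overline{\nabla}(\alpha^{-}(\mathcal{A}))\supseteq(\overline{\nabla}(\mathcal{A})\setminus\overline{\nabla}(\mathcal{A})_k)\cup\Delta(\overline{\nabla}(\mathcal{A})_k)$ directly; since $\Delta(\overline{\nabla}(\mathcal{A})_k)\subseteq\mathcal{A}_{k'}$, your estimate is at least as strong and the arithmetic closes in the same way.
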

\begin{proof}
  Let $k = \parallel \overline{\nabla}(\mathcal{A})\parallel_M, k'=\parallel \mathcal{A}\parallel_M$. For this operator we need to consider two cases: either $k'\ge k$ and $k'\ge\frac{n}{2}+1$, or $k'< k$ and $k>\frac{n}{2}+1$. Note that in the case $k'< k$, since $\mathcal{A}$ is augmentable, then by property 2) of Lemma \ref{lem:augmentable antichains} we have $k' = k-1$. In both cases the map $\alpha^-$ substitutes $ Max(\mathcal{A})$ with $\Delta( Max(\mathcal{A}))$, thus $\parallel \alpha^-(\mathcal{A})\parallel_M< \parallel \mathcal{A}\parallel_M$ holds and if $\parallel \mathcal{A}\parallel_m<\parallel \mathcal{A}\parallel_M$ then it is also obvious that $\parallel \alpha^-(\mathcal{A})\parallel_m = \parallel \mathcal{A}\parallel_m$.
  \\
  Consider now the case $k'\ge k$. By Lemma \ref{lem: well defined}, $\alpha^-(\mathcal{A}) = (\mathcal{A}\setminus\mathcal{A}_{k'})\cup \Delta(\mathcal{A}_{k'})$ is an antichain with:
  \begin{equation}\label{eq: disjoint antichain con nabla case - 1}
  (\mathcal{A}\setminus\mathcal{A}_{k'})\cap \Delta(\mathcal{A}_{k'})=\emptyset
  \end{equation}
  We now prove the inequality of the statement. We claim
  \begin{equation}\label{eq: nabla dec of alpha - first case}
  \overline{\nabla}(\alpha^-(\mathcal{A})) \supseteq \overline{\nabla}(\mathcal{A})
  \end{equation}
  We first prove that $\overline{\nabla}(\mathcal{A}\setminus\mathcal{A}_{k'})=\overline{\nabla}(\mathcal{A})$. Since $k = \parallel \overline{\nabla}(\mathcal{A})\parallel_M$ and $k'\ge k$, then any element in $\nabla(\mathcal{A}_{k'})$ contains some element in $\nabla(\mathcal{A}\setminus\mathcal{A}_{k'})$. Thus by property \ref{nabla prop 4}) of Lemma \ref{lem: properties overline nabla} we have the claim $\overline{\nabla}(\mathcal{A}\setminus\mathcal{A}_{k'})=\overline{\nabla}(\mathcal{A})$. If we show that for any $g\in\overline{\nabla}(\mathcal{A})$ there is no $g'\in\overline{\nabla}(\Delta(\mathcal{A}_{k'}))$ such that $g'\subsetneq g$, then the inclusion (\ref{eq: nabla dec of alpha - first case}) follows from property 4) of Lemma \ref{lem: properties overline nabla} and $\overline{\nabla}(\mathcal{A}\setminus\mathcal{A}_{k'})=\overline{\nabla}(\mathcal{A})$. Indeed suppose, contrary to our claim, that there are $g\in\overline{\nabla}(\mathcal{A})$, $g'\in\overline{\nabla}(\Delta(\mathcal{A}_{k'}))$ such that $g'\subsetneq g$. Since $\overline{\nabla}(\mathcal{A})$ is formed by elements of cardinality less or equal to $k$ and $\overline{\nabla}(\Delta(\mathcal{A}_{k'}))$ of elements whose cardinality is $k'\ge k$ we have the contradiction $k\ge |g|> |g'|\ge k$ and this concludes the proof of (\ref{eq: nabla dec of alpha - first case}). We now prove the inequality in the statement of the lemma. By (\ref{eq: disjoint antichain con nabla case - 1}) and inclusion (\ref{eq: nabla dec of alpha - first case}) we have $2|\alpha^{-}(\mathcal{A})|+|\overline{\nabla}(\alpha^{-}(\mathcal{A}))| = 2|\mathcal{A}| -2|\mathcal{A}_{k'}| +2|\Delta(\mathcal{A}_{k'})| + |\overline{\nabla}(\alpha^{-}(\mathcal{A}))|\ge 2|\mathcal{A}| +2(|\Delta(\mathcal{A}_{k'})|-|\mathcal{A}_{k'}|) + |\overline{\nabla}(\mathcal{A})|$. Since $k'\ge \frac{n}{2}+1$, then by \cite[Corollary 2.1.2]{Anderson} we have $|\Delta(\mathcal{A}_{k'})|-|\mathcal{A}_{k'}|\ge 0$, whence $2|\alpha^{-}(\mathcal{A})|+|\overline{\nabla}(\alpha^{-}(\mathcal{A}))| \ge 2|\mathcal{A}| + |\overline{\nabla}(\mathcal{A})|$.
  \\
  Consider now the other case $k'< k$ and $k>\frac{n}{2}+1$. Therefore, by Lemma \ref{lem: well defined}, $\alpha^-(\mathcal{A}) = (\mathcal{A}\setminus\mathcal{A}_{k-1})\cup \Delta(\mathcal{A}_{k-1})$ is an antichain with:
  \begin{equation}\label{eq: disjoint antichain con nabla case - 2}
  (\mathcal{A}\setminus\mathcal{A}_{k-1})\cap \Delta(\mathcal{A}_{k-1})=\emptyset
  \end{equation}
  We now prove the inequality of the statement. We first prove the following inclusion:
  \begin{equation}\label{eq: pseudodecomposition nabla case 2}
    \overline{\nabla}((\mathcal{A}\setminus\mathcal{A}_{k-1})\cup \Delta(\mathcal{A}_{k-1}))\supseteq \overline{\nabla}(\mathcal{A})\setminus \overline{\nabla}(\mathcal{A})_k\cup \Delta(\overline{\nabla}(\mathcal{A})_k)
  \end{equation}
  with $\overline{\nabla}(\mathcal{A})\setminus \overline{\nabla}(\mathcal{A})_k\cap\Delta(\overline{\nabla}(\mathcal{A})_k)=\emptyset$. Let us prove first this last property. Suppose on the contrary that there is an $h\in \overline{\nabla}(\mathcal{A})\setminus \overline{\nabla}(\mathcal{A})_k\cap\Delta(\overline{\nabla}(\mathcal{A})_k)\neq\emptyset$.
  Since $\mathcal{A}$ is augmentable and $k'<k$, then by property 2) of Lemma \ref{lem:augmentable antichains} we have
  \begin{equation}\label{eq: nabla contained in the antichain}
    \Delta(\overline{\nabla}(\mathcal{A})_k)\subseteq \mathcal{A}_{k-1}
  \end{equation}
  Therefore we have $h\in \mathcal{A}_{k-1}\cap \overline{\nabla}(\mathcal{A})\subseteq \mathcal{A}\cap \overline{\nabla}(\mathcal{A})=\emptyset$, a contradiction. Hence the two terms in the right part of the inclusion (\ref{eq: pseudodecomposition nabla case 2}) are disjoint. We divide the proof of (\ref{eq: pseudodecomposition nabla case 2}) into two cases. We first prove $\overline{\nabla}(\mathcal{A})\setminus \overline{\nabla}(\mathcal{A})_k\subseteq \overline{\nabla}((\mathcal{A}\setminus\mathcal{A}_{k-1})\cup \Delta(\mathcal{A}_{k-1}))$ and then $\Delta(\overline{\nabla}(\mathcal{A})_k)\subseteq \overline{\nabla}((\mathcal{A}\setminus\mathcal{A}_{k-1})\cup \Delta(\mathcal{A}_{k-1}))$.
  \begin{itemize}
    \item Case $\overline{\nabla}(\mathcal{A})\setminus \overline{\nabla}(\mathcal{A})_k\subseteq \overline{\nabla}((\mathcal{A}\setminus\mathcal{A}_{k-1})\cup \Delta(\mathcal{A}_{k-1}))$.   It is evident that $\overline{\nabla}(\mathcal{A})\setminus \overline{\nabla}(\mathcal{A})_k\subseteq \overline{\nabla}(\mathcal{A}\setminus\mathcal{A}_{k-1})$, thus it is sufficient to show $\overline{\nabla}(\mathcal{A}\setminus\mathcal{A}_{k-1})\subseteq \overline{\nabla}((\mathcal{A}\setminus\mathcal{A}_{k-1})\cup \Delta(\mathcal{A}_{k-1}))$ and to prove this inclusion we use property \ref{nabla prop 1}) of Lemma \ref{lem: properties overline nabla}. Indeed consider $g\in \overline{\nabla}(\mathcal{A}\setminus\mathcal{A}_{k-1})$ and $g'\in\overline{\nabla}(\Delta(\mathcal{A}_{k-1}))$, then $|g|\le k-1$, $|g'|= k-1$. Therefore the inclusion $g'\subsetneq g$ can not occur and so $\overline{\nabla}(\mathcal{A})\setminus \overline{\nabla}(\mathcal{A})_k\subseteq \overline{\nabla}(\mathcal{A}\setminus\mathcal{A}_{k-1})\subseteq \overline{\nabla}((\mathcal{A}\setminus\mathcal{A}_{k-1})\cup \Delta(\mathcal{A}_{k-1}))$.
    \item Case $\Delta(\overline{\nabla}(\mathcal{A})_k)\subseteq \overline{\nabla}((\mathcal{A}\setminus\mathcal{A}_{k-1})\cup \Delta(\mathcal{A}_{k-1}))$. Using (\ref{eq: nabla contained in the antichain}) and property \ref{nabla prop 6}) of Lemma \ref{lem: properties overline nabla} we have
  $$
  \Delta(\overline{\nabla}(\mathcal{A})_k)\subseteq \mathcal{A}_{k-1}\subseteq \nabla(\Delta(\mathcal{A}_{k-1}))\subseteq \nabla((\mathcal{A}\setminus\mathcal{A}_{k-1})\cup \Delta(\mathcal{A}_{k-1}))
  $$
  To reach a contradiction suppose that there is a $g\in \Delta(\overline{\nabla}(\mathcal{A})_k)$ such that $g\in \nabla((\mathcal{A}\setminus\mathcal{A}_{k-1})\cup \Delta(\mathcal{A}_{k-1}))\setminus \overline{\nabla}((\mathcal{A}\setminus\mathcal{A}_{k-1})\cup \Delta(\mathcal{A}_{k-1}))$. By property \ref{nabla prop 5}) of Lemma \ref{lem: properties overline nabla} there is a $g'\in \nabla((\mathcal{A}\setminus\mathcal{A}_{k-1})\cup \Delta(\mathcal{A}_{k-1}))= \nabla(\mathcal{A}\setminus\mathcal{A}_{k-1})\cup \nabla(\Delta(\mathcal{A}_{k-1}))$ with $g'\subsetneq g$. We consider two cases, either $g'\in \nabla(\mathcal{A}\setminus\mathcal{A}_{k-1})$ or $g'\in \nabla(\Delta(\mathcal{A}_{k-1}))$. If $g'\in \nabla(\mathcal{A}\setminus\mathcal{A}_{k-1})$, then there is a $g''\in \mathcal{A}\setminus\mathcal{A}_{k-1}$ such that $g''\subsetneq g'\subsetneq g$, a contradiction since $g''\in \mathcal{A}$, $g\in \Delta(\overline{\nabla}(\mathcal{A})_k)\subseteq \mathcal{A}_{k-1}\subseteq\mathcal{A}$ and $\mathcal{A}$ is an antichain. On the other hand, if $g'\in \nabla(\Delta(\mathcal{A}_{k-1}))$ then $|g'|=k-1$, moreover, since $g\in \Delta(\overline{\nabla}(\mathcal{A})_k)\subseteq \mathcal{A}_{k-1}$, then $|g|=k-1$ which contradicts $g'\subsetneq g$ and this concludes the proof of inclusion (\ref{eq: pseudodecomposition nabla case 2}).
  \end{itemize}
 We can now conclude the proof of the lemma showing the inequality in the statement. By (\ref{eq: disjoint antichain con nabla case - 2}) and (\ref{eq: pseudodecomposition nabla case 2}) we have
  \begin{eqnarray*}
    2|\alpha^{-}(\mathcal{A})|+|\overline{\nabla}(\alpha^{-}(\mathcal{A}))| &=& 2|\mathcal{A}|+ 2( |\Delta(\mathcal{A}_{k-1})| - |\mathcal{A}_{k-1}|) + |\overline{\nabla}(\alpha^{-}(\mathcal{A}))|\\
    &\ge &  2|\mathcal{A}|+ |\overline{\nabla}(\mathcal{A})| + 2( |\Delta(\mathcal{A}_{k-1})| - |\mathcal{A}_{k-1}|) + \\
    & + & (|\Delta(\overline{\nabla}(\mathcal{A})_k)|-|\overline{\nabla}(\mathcal{A})_k|)
  \end{eqnarray*}
  Since $k>\frac{n}{2}+1$ we have by \cite[Corollary 2.1.2]{Anderson}
  $$
  |\Delta(\mathcal{A}_{k-1})|-|\mathcal{A}_{k-1}|\ge 0,\;|\Delta(\overline{\nabla}(\mathcal{A})_k)|-|\overline{\nabla}(\mathcal{A})_k|\ge 0
  $$
  from which it follows $2|\alpha^{-}(\mathcal{A})|+|\overline{\nabla}(\alpha^{-}(\mathcal{A}))| \ge 2|\mathcal{A}|+ |\overline{\nabla}(\mathcal{A})|$ and this concludes the proof of the lemma.
\end{proof}
We are now in position to prove Theorem \ref{theo: antichain + nabla bound}.
\begin{proof}[Proof of Theorem \ref{theo: antichain + nabla bound}]
  The bound is clearly attained when the antichain consists of all the $\frac{n}{2}$-subsets of $X$. Let us now prove the bound.
  Starting from $\mathcal{A}_0=\mathcal{A}$ by Lemma \ref{lem:augmentable antichains} we suppose without loos of generality that $\mathcal{A}_0$ is augmentable, then applying for instance the upward-augmenting map we obtain a new antichain $\mathcal{A}_1$ for which, by Lemmas \ref{lem: alpha +}, $2|\mathcal{A}_0|+ |\overline{\nabla}(\mathcal{A}_0)|\le 2|\mathcal{A}_1|+|\overline{\nabla}(\mathcal{A}_1)|$
  and $\parallel \mathcal{A}_1\parallel_m> \parallel \mathcal{A}_0\parallel_m$. Furthermore by Lemma \ref{lem:augmentable antichains} we can suppose that $\mathcal{A}_1$ is also augmentable. In this way, by a repeated application of Lemmas \ref{lem:augmentable antichains}, \ref{lem: alpha +}, \ref{lem: alpha -} we can find a sequence of augmentable antichains $\mathcal{A}_i$ such that $2|\mathcal{A}_{i-1}|+ |\overline{\nabla}(\mathcal{A}_{i-1})|\le 2|\mathcal{A}_i|+|\overline{\nabla}(\mathcal{A}_i)|$ and either $\parallel \mathcal{A}_{i}\parallel_m> \parallel \mathcal{A}_{i-1}\parallel_m$ and $\parallel \mathcal{A}_i\parallel_M = \parallel \mathcal{A}_{i-1}\parallel_M$, or $\parallel \mathcal{A}_i\parallel_M<\parallel \mathcal{A}_{i-1}\parallel_M$ and $\parallel \mathcal{A}_{i}\parallel_m= \parallel \mathcal{A}_{i-1}\parallel_m$. This process stops when it is reached an augmentable antichain $\mathcal{A}_j$ with $\frac{n}{2}\ge \parallel \mathcal{A}_j\parallel_M\ge \parallel \mathcal{A}_j\parallel_m \ge \frac{n}{2}-1$. If $\parallel \mathcal{A}_j\parallel_M = \parallel \mathcal{A}_j\parallel_m$, $\mathcal{A}_j$ consists of either $\frac{n}{2}$-subsets or $\frac{n}{2}-1$-subsets and the statement of the theorem clearly holds. Thus we can assume $\parallel \mathcal{A}_j\parallel_M > \parallel \mathcal{A}_j\parallel_m$ and let $\mathcal{B}_1=Min(\mathcal{A}_j)$, $\mathcal{B}_2=Max(\mathcal{A}_j)$. Since $\mathcal{A}_j$ is augmentable, by property 1) of by Lemma \ref{lem:augmentable antichains}, $\nabla(\mathcal{B}_1)\subseteq\overline{\nabla}{\mathcal{A}_j}$. Thus, putting $\mathcal{C}_1=\nabla(\mathcal{B}_1)$, we can decompose $\overline{\nabla}{\mathcal{A}_j}=\mathcal{C}_1\cup\mathcal{C}_2$ where $\mathcal{C}_2\subseteq \nabla{\mathcal{B}_2}$. Let $b_i=|\mathcal{B}_i|$, $c_i=|\mathcal{C}_i|$, for $i=1,2$. Since $\overline{\nabla}(\mathcal{A}_j)\cap\mathcal{A}_j=\emptyset $, then $\mathcal{C}_1\cap\mathcal{B}_2=\emptyset$, moreover since both $\mathcal{B}_2$ and $\mathcal{C}_1$ are $\frac{n}{2}$-subsets of $X$ we get $b_2+c_1\le {n\choose \frac{n}{2}}$. Furthermore, since $\mathcal{A}_j$ is an antichain we also get $b_1+b_2\le {n\choose \frac{n}{2}}$. Hence, since $2|\mathcal{A}|+|\overline{\nabla}(\mathcal{A})|=2(b_1+b_2)+(c_1+c_2)$, we have: $2|\mathcal{A}|+|\overline{\nabla}(\mathcal{A})|\le 2{n\choose n/2} +(b_1+c_2)$. Thus to prove the theorem, it is enough to show $b_1+c_2\le {n\choose n/2 + 1}$. Note that $\mathcal{B}_1$ is formed by $\frac{n}{2}-1$-subsets of $X$, while the elements of $\mathcal{C}_2$ are $\frac{n}{2}+1$-subsets. We claim that $\mathcal{B}_1\cup\mathcal{C}_2$ is an antichain. Indeed, if there is a $z\in \mathcal{B}_1$ and $z'\in \mathcal{C}_2$ with $z\subsetneq z'$, then, since $|z'|=\frac{n}{2}+1$ and $|z|=\frac{n}{2}-1$ there is a $a\in X$ with $z\cup\{a\}\subsetneq z'$. However $z\cup\{a\}\in\nabla{\mathcal{B}_1}=\mathcal{C}_1$ and $z'\in \mathcal{C}_2$ contradict the fact that $\overline{\nabla}{\mathcal{A}_j}$ is an antichain. Therefore $\mathcal{B}_1\cup\mathcal{C}_2$ is an antichain. Let $A_1,\ldots, A_{\ell}$
  be a symmetric chains decomposition of the set of subsets of $X$ (see \cite[Section 3.2]{Anderson}). We define the map $\varphi:\mathcal{B}_1\rightarrow 2^X$ which associates to each $z\in \mathcal{B}_1$ with $z\in A_i$, for some $i\in\{1,\ldots\ell\}$, the ``specular'' set $\varphi(z)$ in $A_i$ with $|z|+|\varphi(z)|=n$. Note that $\varphi$ is clearly injective, furthermore it sends $\frac{n}{2}-1$-subsets into $\frac{n}{2}+1$-subsets. Thus, to prove $b_1+c_2\le {n\choose \frac{n}{2} + 1}$, it is enough to show $\varphi(\mathcal{B}_1)\cap\mathcal{C}_2=\emptyset$. Indeed, suppose, contrary to our claim, that there is $z'\in \varphi(\mathcal{B}_1)\cap\mathcal{C}_2$, then we can find a $z\in \mathcal{B}_1$ with $\varphi(z)=z'$. Since $z,z'$ belong to the same symmetric chain, we get $z\subsetneq z'$ which contradicts the fact that $\mathcal{B}_1\cup\mathcal{C}_2$ is an antichain and this concludes the proof of the theorem.
\end{proof}

\section*{Acknowledgements}
The author acknowledges the support of the Centro de Matem\'{a}tica da Universidade do Porto funded by the European Regional Development Fund through the programme COMPETE and by the Portuguese Government through the FCT under the project PEst-C/MAT/UI0144/2011 and the support of the FCT project SFRH/BPD/65428/2009.

\end{document}